\newtheorem{prop}{Proposition}
\newtheorem*{prop*}{Proposition}
\newtheorem{definition}[prop]{Definition}
\newtheorem*{definition*}{Definition}
\newtheorem{theorem}[prop]{Theorem}
\newtheorem*{theorem*}{Theorem}
\newtheorem{thm}[prop]{Theorem}
\newtheorem{cor}[prop]{Corollary}
\newtheorem{calc}[prop]{Calculation}
\newtheorem{lemma}[prop]{Lemma}
\newtheorem{remark}[prop]{Remark}
\newtheorem*{remark*}{Remark}
\numberwithin{prop}{section}
\numberwithin{equation}{section}
\newcommand{\RR}{\ensuremath{\mathbb{R}}}
\newcommand{\CC}{\ensuremath{\mathbb{C}}}
\newcommand{\UU}{\ensuremath{\mathbb{U}}}
\newcommand{\GG}{\ensuremath{\mathbb{G}}}
\newcommand{\SSS}{\ensuremath{\mathbb{S}}}
\newcommand{\TT}{\ensuremath{\mathbb{T}}}
\newcommand{\BB}{\ensuremath{\mathbb{B}}}
\newcommand{\NN}{\ensuremath{\mathbb{N}}}
\newcommand{\ZZ}{\ensuremath{\mathbb{Z}}}
\newcommand{\QQ}{\ensuremath{\mathbb{Q}}}
\newcommand{\HH}{\ensuremath{\mathbb{H}}}
\newcommand{\Sub}[1]{\ensuremath{\mathrm{Sub}\left( #1 \right) }}
\newcommand{\Subd}[1]{\ensuremath{\mathrm{Sub}_\text{d}\left( #1 \right) }}
\newcommand{\Ad}[1]{\ensuremath{\mathrm{Ad} ( #1  ) }}
\newcommand{\hgt}[1]{\ensuremath{\mathrm{ht} ( #1  ) }}
\newcommand{\sse} {\ensuremath{s}}
\newenvironment{dedication}
        {\vspace{6ex}\begin{quotation}\begin{center}\begin{em}}
        {\par\end{em}\end{center}\end{quotation}}
\long\def\@savemarbox#1#2{\global\setbox#1\vtop{\hsize\marginparwidth 
%%%%%  \@parboxrestore #2}}
  \@parboxrestore\tiny\raggedright #2}}
\begin{document}
\title{Effective discreteness radius of stabilisers for stationary actions}
\author{T.   Gelander \and A.   Levit \and G.A. Margulis}

\maketitle

\begin{dedication}
%\hspace{4cm}
\vspace*{-0.5cm}

{Dedicated to Gopal Prasad on the occasion of his 75th birthday.}
\end{dedication}

\begin{abstract}
We prove an effective variant of the  Kazhdan--Margulis theorem generalized to stationary actions of semisimple groups over local fields:  the probability that the stabilizer of a random point admits a non-trivial intersection with    a small   $r$-neighborhood of   the identity is  at most $\beta r^\delta$ for some explicit constants $\beta, \delta > 0$ depending only the  group. This   is a consequence of a key convolution inequality. We deduce that    vanishing at infinity of injectivity radius implies finiteness of volume. Further applications are the compactness of the space of discrete stationary random subgroups and a novel proof of the fact that all lattices in semisimple groups   are weakly cocompact.
\end{abstract}

%\maketitle

%\section{Order in the real case}

%\subsection*{General situation}

\section{Introduction}
%\todo{in what cases is this like a linear algebraic group? We need that facT}
Let $G$ be a semisimple real Lie group without compact factors. Recall 
 
%\todo{TODO Make the norm on the Lie algebra invariant so that the annoying constant $\eta$ is not needed.}

\begin{theorem}[Kazhdan--Margulis \cite{kazhdan1968proof}]
\label{thm:Kazhdan Margulis}
There exists an identity neighborhood $V \subset G$ such that if $\Gamma$ is a discrete subgroup of $G$ then  $\Gamma^g \cap V = \{e\}$ for  some $g \in G$.
\end{theorem}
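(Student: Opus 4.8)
The plan is to reduce the statement to the Zassenhaus lemma and then push the near-identity part of an arbitrary discrete subgroup off to infinity by conjugation. First I would fix once and for all a \emph{Zassenhaus neighborhood}: an open, symmetric, relatively compact identity neighborhood $\Omega\subset G$ such that for every discrete subgroup $\Lambda\le G$ the group $\langle\Lambda\cap\Omega\rangle$ is nilpotent, and in fact is contained in a connected nilpotent Lie subgroup $N_\Lambda\le G$ (for concreteness, the identity component of its Zariski closure). Since $\Lambda$ is discrete and $\overline\Omega$ is compact, $\Lambda\cap\Omega$ is finite, so $\langle\Lambda\cap\Omega\rangle$ is a finitely generated nilpotent group sitting discretely in $N_\Lambda$. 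Then I would fix the target neighborhood $V\subseteq\Omega$ to be symmetric and relatively compact with $\overline V\cdot\overline V\subseteq\Omega$ and small enough to contain no nontrivial element of the finite center of $G$; this costs nothing, since a Lie group has no small subgroups. It now suffices to show that every discrete $\Gamma$ has a conjugate $g\Gamma g^{-1}$ with $g\Gamma g^{-1}\cap\Omega=\{e\}$, modulo a harmless finite-order refinement dealt with at the end.

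For the main step, I would range over all conjugates of $\Gamma$ and pick one, $\Gamma_0$, minimizing $\dim N_{\Gamma_0}$, aiming to show this minimal dimension is $0$. The mechanism is that a connected nilpotent subgroup $N\le G$ of a semisimple group without compact factors is a \emph{proper} subgroup whose discrete subgroups can always be expanded: decomposing $N=S\times N_u$ into its central torus $S$ and unipotent radical $N_u$, a suitable element $a$ of an $\RR$-split torus in an open Weyl chamber dilates every nontrivial element of $N_u$, while a unipotent transverse to the centralizer of $S$ turns a semisimple element with eigenvalues near $1$ into one bounded away from the identity --- exactly as conjugating $\mathrm{diag}(\lambda,\lambda^{-1})$ by a lower-triangular unipotent does in $\SL{2}{\RR}$. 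Conjugating $\Gamma_0$ by such an element drags $\langle\Gamma_0\cap\Omega\rangle$ entirely out of $\Omega$; invoking the Zassenhaus lemma again for the new conjugate, its near-identity part is then forced into a connected nilpotent subgroup of strictly smaller dimension, contradicting minimality. Hence some conjugate $\Gamma_0$ has $\langle\Gamma_0\cap\Omega\rangle$ finite. A last application of the no-small-subgroups property, together with the fact that a non-central finite-order element of a semisimple group without compact factors has a non-compact conjugacy class, lets one conjugate once more so that even this finite part leaves $V$; since $\Omega$ and $V$ were chosen independently of $\Gamma$, this proves the theorem.

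The hard part is the dimension-reduction step. One must produce a \emph{single} conjugating element that simultaneously pushes \emph{every} nontrivial element of the discrete, and possibly very ``short'', subgroup $\langle\Gamma\cap\Omega\rangle$ out of $\Omega$ --- this is precisely where the Zassenhaus conclusion that all of them lie inside one connected nilpotent $N$ is indispensable, since one can then work inside $N$ and use the $\RR$-split torus and transverse unipotents above --- and, more delicate still, one must guarantee that this conjugation does not fold some previously distant element of $\Gamma$ back into $\Omega$, which is exactly what minimizing $\dim N_{g\Gamma g^{-1}}$ over all conjugates is arranged to preclude. Fusing the unipotent and the semisimple (toral and torsion) directions into one conjugating element, and carrying the induction cleanly through this bookkeeping, is the technical core of the argument.
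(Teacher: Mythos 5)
The decisive step of your induction does not hold up. You minimize $\dim N_{\Gamma_0}$ over all conjugates and claim that, after conjugating by an element that expands the nilpotent hull $N_{\Gamma_0}$, the near-identity part of the new conjugate is ``forced into a connected nilpotent subgroup of strictly smaller dimension''. Nothing forces this: the elements of the new conjugate lying in $\Omega$ are in general \emph{different} elements of $\Gamma_0$ (ones that were far from the identity before conjugating), and the Zassenhaus lemma only places them in \emph{some} connected nilpotent subgroup, whose dimension may perfectly well be $\ge \dim N_{\Gamma_0}$. That is consistent with your minimality hypothesis, so no contradiction arises and the induction never closes; contrary to what you assert, minimizing the envelope dimension does not preclude distant elements being folded back into $\Omega$ --- it only says every conjugate has envelope dimension at least the minimum, which is exactly the situation you would need to exclude. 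A second, related problem: conjugation by a single fixed element $a$ changes $\|\log\gamma\|$ by at most the factor $\|\mathrm{Ad}(a)\|$, so it cannot ``drag $\langle\Gamma_0\cap\Omega\rangle$ entirely out of $\Omega$''; elements sufficiently close to the identity stay well inside $\Omega$, and expelling them requires iteration, which is precisely where re-entry of other group elements must be controlled. (Your expansion mechanism also presupposes that the hull is in generic position relative to a minimal parabolic --- this is Kazhdan--Margulis's Lemma~3, the ingredient quantified in \S\ref{sec:nilpotent} --- and your treatment of the torsion/toral part via non-compact conjugacy classes has the same simultaneity and re-entry issues.)

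For comparison, the paper does not reprove Theorem \ref{thm:Kazhdan Margulis}: it recalls it with a citation, and its own machinery yields it as a by-product. The quantity to run an expansion argument on is not a dimension but the discreteness radius $\mathcal{I}_G$: the key inequality (Theorem \ref{thm:inequality}) gives $\int_G \mathcal{I}_G^{-\delta}(\Gamma^g)\,\mathrm{d}\mu_{\sse}(g)\le c\,\mathcal{I}_G^{-\delta}(\Gamma)+b$, and iterating it shows that for large $n$ the $\mu_{\sse}^{*n}$-average of $\mathcal{I}_G^{-\delta}(\Gamma^{\,\cdot})$ is at most $b/(1-c)$ plus a small error, so \emph{some} conjugate satisfies $\mathcal{I}_G(\Gamma^g)\ge r_0$ for a uniform $r_0>0$; this is the theorem with $V=\exp\mathrm{B}_{\mathfrak{g}}(r_0)$. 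The re-entry problem is resolved there by averaging over $K\sse K$ and by the fact that elements entering from outside contribute only the bounded term $b$ (Propositions \ref{prop:global expansion} and \ref{prop:from expansion to contraction}), not by any dimension count. If you want a pointwise argument in the classical style, you must set up an iteration or maximality/compactness scheme for $\mathcal{I}_G$ of this kind rather than the envelope-dimension induction you propose.
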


This  theorem  establishes the existence of a \emph{single} point $g\Gamma$ in the   probability space $G/\Gamma$ whose   stabilizer $\mathrm{Stab}_G(g\Gamma) = \Gamma^g$ intersects  trivially a given identity neighborhood. In fact, by \cite{gelander2018kazhdan}, the same conclusion holds true for \emph{most} points in any   probability space admitting a measure  preserving  action of $G$. Our main results make this statement quantitative, and apply more generally to   stationary measures rather than just invariant one.

   Let $K$ be a maximal compact subgroup of the Lie group $G$. Let $\mu$ be a bi-$K$-invariant probability measure on the  group $G$ whose support $\mathrm{supp}(\mu)$ is generating. 
      Recall that a   probability $G$-space $(Z,\nu)$ is called \emph{$\mu$-stationary} if $\mu * \nu = \nu$. In particular any $G$-invariant probability measure is such.

 Fix a left-invariant  Riemannian metric on the Lie group $G$ and denote by $\mathrm{B}_r$   the $r$-ball at the identity element of   $G$  with respect to this  metric.  
\begin{theorem}
\label{thm:main theorem}
There are   constants $\beta, \rho, \delta   > 0$   such that  every  $\mu$-stationary probability $G$-space $(Z,\nu)$ with $\nu$-almost everywhere discrete stabilizers $G_z$ satisfies 
\begin{equation}
\label{eq:main equation}
 \nu(\{z \in Z \: : \:  G_z \cap \mathrm{B}_r \neq \{\mathrm{id}_G\}   \}) \le  \beta r^\delta \quad \forall r < \rho.
 \end{equation}
\end{theorem}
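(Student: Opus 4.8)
The plan is to deduce \eqref{eq:main equation} from a drift, or convolution, inequality for a suitable weighted count of stabiliser elements near the identity; this is the key convolution inequality advertised in the abstract. Normalise the fixed left-invariant metric to be $\mathrm{Ad}(K)$-invariant and write $|g| = d(g,\mathrm{id}_G)$, so that $\mathrm{B}_r = \{|g| < r\}$. Fix $\rho_0 > 0$ below a Zassenhaus--Margulis constant of $G$, so that for every discrete $\Gamma \le G$ the subgroup $\langle \Gamma \cap \mathrm{B}_{\rho_0}\rangle$ is contained in a connected nilpotent Lie subgroup of $G$, and small enough that $\mathrm{B}_{2\rho_0}$ lies in a fixed exponential chart. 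Fix a parameter $\delta \in (0,1)$, to be pinned down last, and a continuous weight $\phi \colon (0,\infty) \to [0,\infty)$ with $\phi(t) = t^{-\delta}$ on $(0,\rho_0]$ and $\phi \equiv 0$ on $[2\rho_0,\infty)$. Put
$$\Phi(z) = \sum_{\gamma \in G_z \setminus \{\mathrm{id}_G\}} \phi(|\gamma|),$$
a finite sum for $\nu$-a.e.\ $z$ by discreteness of $G_z$. If $G_z \cap \mathrm{B}_r \ne \{\mathrm{id}_G\}$ with $r < \rho_0$, then $\Phi(z) > r^{-\delta}$, so by Markov's inequality Theorem~\ref{thm:main theorem} follows with $\rho = \rho_0$ once we prove $\int_Z \Phi \, d\nu \le \beta$ for a constant $\beta = \beta(G,\mu)$.

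Since $G_{gz} = g G_z g^{-1}$ we have $\Phi(gz) = \sum_{\gamma \in G_z\setminus\{\mathrm{id}_G\}} \phi(|g\gamma g^{-1}|)$, and I will establish the pointwise inequality $\mu * \Phi \le \lambda\, \Phi + C$, that is,
$$\int_G \Phi(gz)\, d\mu(g) \le \lambda\, \Phi(z) + C \qquad (\nu\text{-a.e. } z),$$
for constants $\lambda \in (0,1)$ and $C$ depending only on $G$ and $\mu$. Granting this, together with the a~priori finiteness of $\int_Z \Phi\, d\nu$ (a routine truncation argument that I suppress here), $\mu$-stationarity and Fubini give $\int_Z \Phi\, d\nu = \int_Z (\mu * \Phi)\, d\nu \le \lambda \int_Z \Phi\, d\nu + C$, whence $\int_Z \Phi\, d\nu \le C/(1-\lambda) =: \beta$, as required.

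The inequality $\mu * \Phi \le \lambda \Phi + C$ is a statement purely about a discrete subgroup $\Gamma = G_z$ and the measure $\mu$. Split $\Gamma\setminus\{\mathrm{id}_G\}$ into the elements inside $\mathrm{B}_{\rho_0}$ and the rest. For $\gamma \notin \mathrm{B}_{\rho_0}$ the term $\phi(|g\gamma g^{-1}|)$ contributes only when $g$ conjugates $\gamma$ into $\mathrm{B}_{2\rho_0}$; for each fixed such $\gamma$ the $\mu$-mass of this set of $g$ is small, and a summation over these $\gamma$ using discreteness of $\Gamma$ bounds their total contribution by the constant $C$ (if $\mathrm{supp}\,\mu$ is unbounded one needs here a mild decay of $\mu$ along the Cartan projection, which is where any moment hypothesis on $\mu$ is consumed). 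For $\gamma \in \mathrm{B}_{\rho_0}$ the Margulis lemma places $\langle \Gamma \cap \mathrm{B}_{\rho_0}\rangle$ in a connected nilpotent subgroup $N$; since $G$ is semisimple without compact factors we may, after splitting off a bounded elliptic part, take $N$ inside the unipotent radical of a parabolic $P = MAN$, so that conjugation by the open Weyl chamber $A^+$ expands every nontrivial element of $N$ away from $\mathrm{id}_G$. Writing $\gamma = \exp X$ in the chart we have $|g\gamma g^{-1}| \asymp \|\mathrm{Ad}(g)X\|$, and the contribution of $\Gamma \cap \mathrm{B}_{\rho_0}$ to $\mu * \Phi$ is controlled — using that $\Phi$ restricted to the conjugates of $N$ is governed by how $A^+$-conjugation dilates $N$, which only improves discreteness — by the average of a weight of the form $\|\mathrm{Ad}(g)X\|^{-\delta}$ (times a shape factor) against $\mu$. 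Passing to the Cartan decomposition $g = k_1 a k_2$ and using bi-$K$-invariance of $\mu$, the $\mathrm{Ad}(K)$-invariance of the norm absorbs $k_1$ and reduces the whole matter to an integral over $A^+ \times K$ against the Cartan (spherical) projection $\bar\mu$ of $\mu$.

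What remains, and what I expect to be the main obstacle, is the quantitative expansion estimate: after possibly replacing $\mu$ by a fixed convolution power $\mu^{*n}$ — harmless for the stationarity argument since $\mu^{*n} * \nu = \nu$ —
$$\int_{A^+}\!\int_K \frac{\|X\|}{\|\mathrm{Ad}(a)\,\mathrm{Ad}(k)X\|}\, dk\, d\bar\mu(a) \le \lambda < 1$$
for every nonzero $X$ in a bounded neighbourhood of $\bigcup_P \mathfrak n_P$. The delicate point is that $\mathrm{Ad}(a)$ for $a \in A^+$ contracts only on the negative root spaces, so the inner integrand is large only when $\mathrm{Ad}(k)X$ is nearly trapped in $\bigoplus_{\beta \le 0}\mathfrak g_\beta$; this ``bad'' locus on the $\mathrm{Ad}(K)$-orbit of $X$ has measure that decays as $a$ moves deep into $A^+$, and one must verify that it decays fast enough to beat the growth of $\|\mathrm{Ad}(a^{-1})\|$, so that the inner $K$-integral is at most $1 - c$ uniformly once $a$ lies a bounded distance inside the chamber. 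The generating hypothesis on $\mathrm{supp}\,\mu$ forces $\bar\mu^{*n}$ to place definite mass on such $a$, which produces the strict gap below $1$; choosing $\delta$ small then makes the comparison between $\Phi$ and the $\|\cdot\|^{-\delta}$-weights robust and fixes $\lambda$, completing the proof.
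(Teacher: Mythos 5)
Your overall architecture (a Margulis-type drift inequality for a weight built from small stabiliser elements, then Markov) is indeed the paper's strategy, but as written the proposal has two genuine gaps. First, the heart of the matter is left unproved: you yourself flag the quantitative expansion estimate as "the main obstacle," and that estimate — a bound, \emph{uniform over all nilpotent subalgebras} $\mathfrak n$, on the $\eta_K$-measure of the set of $k\in K$ for which $\mathrm{Ad}(k)\mathfrak n$ is nearly trapped in the non-expanded directions — is precisely what occupies the bulk of the paper (orders of analytic matrix coefficients on $K$, $(C,\alpha)$-good sublevel estimates, and a compactness argument on the Grassmannian to make the constants uniform in $\mathfrak n$). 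Moreover the reduction you display is to the wrong target: the first-moment bound $\int_{A^+}\!\int_K \|X\|\,\|\mathrm{Ad}(a)\mathrm{Ad}(k)X\|^{-1}\,dk\,d\bar\mu(a)\le\lambda<1$ is false in general. The sublevel-set decay exponent one can actually prove is of size $\alpha\sim(\dim K\cdot\mathrm{ord})^{-1}\ll 1$, while on the exceptional set the ratio can be as large as $\|\mathrm{Ad}(a)^{-1}\|\sim\lambda^{\mathrm{ht}(\mathfrak g)}$; a layer-cake computation then shows the inner $K$-integral \emph{grows} as $a$ moves deep into the chamber whenever $\alpha<\mathrm{ht}(\mathfrak g)/(1+\mathrm{ht}(\mathfrak g))$. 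The small exponent $\delta$ must be placed inside the integral from the start (this is exactly the role of Calculations~\ref{calc:calculus} and~\ref{calc:existence of good delta_0} and the choice $\delta\approx\alpha/\mathrm{ht}(\mathfrak g)$), so one proves $\mathbb E\bigl[(\text{ratio})^{-\delta}\bigr]<1$ directly, not via a first moment.

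Second, you try to run the drift inequality for the given $\mu$ (or a convolution power), which forces you to invoke "a mild decay of $\mu$ along the Cartan projection" for the contribution of the elements $\gamma\notin\mathrm B_{\rho_0}$ — but the theorem imposes no moment condition on $\mu$. The paper sidesteps this entirely: since all bi-$K$-invariant generating measures have the same Poisson boundary $(G/B,\eta_B)$, a $\mu$-stationary measure is automatically $\mu_{\sse}$-stationary for the specific compactly supported measure $\mu_{\sse}=\eta_K*\delta_{\sse}*\eta_K$, and the drift inequality is proved only for $\mu_{\sse}$; with the discreteness-radius function $\mathcal I_G^{-\delta}$ (rather than your summed weight $\Phi$) the "far" terms never arise, and the additive constant comes from the trivial bound $\mathcal I_G(\Gamma^{\sse})\ge\mathcal I_G(\Gamma)/\|\mathrm{Ad}(\sse^{-1})\|$. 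Two further points need repair: your claim that the Zassenhaus nilpotent group can, after splitting off a bounded elliptic part, be placed inside the unipotent radical of a parabolic is false as stated (a split torus is nilpotent); the paper instead proves generic transversality of $\mathrm{Ad}(k)\mathfrak n$ to $\mathfrak b$ handling toral and unipotent parts separately. And the "routine truncation" giving a priori $\nu$-integrability of $\Phi$ is not routine for stationary (non-invariant) $\nu$; the paper's Lemma~\ref{lem:bound on contracting function} replaces it by a Ces\`aro-average and weak-$*$ limit argument that bounds superlevel sets without ever assuming integrability.
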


The constants $\beta$ and $\rho$  depend on the choice of the   metric on the Lie group $G$  but are independent of the fixed probability measure $\mu$ and of the particular $\mu$-stationary $G$-space $(Z,\nu)$. 
 The constant $\delta$ depends only on the Lie group $G$ and admits the explicit lower bound   
 %\footnote{The implicit constant of $\frac{1}{2}$ appearing in Equation (\ref{eq:final delta}) can  be replaced by any number in the range $(0,1)$ at the expense of modifying the value of the constant $\beta$. See last paragraph of \S\ref{sec:nilpotent}. }
%\todo{Can I get rid of this constant?}
\begin{equation}
 \delta  \ge  (3 \hgt{\mathfrak{g}} \dim_\RR G)^{-(\mathrm{rank}(K)+1) }
 \label{eq:final delta}
 \end{equation}
where  
 $\mathrm{rank}(K)$ is the   rank of the maximal compact subgroup $K$  and $\hgt{\mathfrak{g}} $ is the largest height\footnote{The family $\mathrm{A}_n$ has largest height $1$. The other classical families $\mathrm{B}_n$, $\mathrm{C}_n$,  $\mathrm{BC}_n$ and $\mathrm{D}_n$ all have largest height $2$. The exceptional semisimple Lie algebras have largest height $6$ at most.}
of any   positive root in the relative root system associated to $G$. 

The non-effective analogue of Theorem   \ref{thm:main theorem} is called \emph{weak uniform discreteness}. It  was previously established  in \cite{gelander2018kazhdan} for invariant probability measures by an abstract compactness argument in the space of invariant random subgroups. Clearly one   recovers  Theorem \ref{thm:Kazhdan Margulis} for any lattice $\Gamma$ in the Lie group $G$ as a special case of  \cite[Theorem 1.3]{gelander2018kazhdan}  (or of  Theorem \ref{thm:main theorem}) by applying it to the probability space $G/\Gamma$ with respect to any sufficiently small radius $ r > 0$.

%It is clear that Theorem \ref{thm:main theorem} implies  Theorem \ref{thm:gelander}. The methods of the two proofs are completely different. The proof in   The proof given in this paper is on the other hand  much closer to the lines of the original proof of  \cite{kazhdan1968proof}.

We remark that assuming the measure $\nu$ is invariant and non-atomic,  if the  Lie group $G$ is simple, or more generally   if the $G$-action is irreducible (i.e. every non-central normal subgroup acts ergodically), then $\nu$-almost every stabilizer subgroup will be discrete \cite{abert2017growth}.

While the conclusion of Theorem \ref{thm:main theorem} is  indeed  independent of the    choice of the   probability  measure $\mu$, our proof   relies  on the   properties of a specific  and  carefully chosen  such probability measure,   see Theorem \ref{thm:inequality} below.

 \begin{remark}
The constant $\rho$  appearing in the statement of Theorem \ref{thm:main theorem}  is  strictly speaking redundant, in the sense that  Equation (\ref{eq:main equation})  holds for \emph{all} values of  $r > 0 $ provided that  the constant $\beta$ is replaced by   $\max \{ \beta,   \rho^{-\delta} \}$. We prefer to keep our sharper statement as  is,  since   the constant $\rho$ plays an essential role in the   proof.
\end{remark}

\subsection*{The positive characteristic case}

 Let $k$ be a non-Archimedean  local field
and $\GG$ a connected simply-connected semisimple $k$-algebraic linear group without $k$-anisotropic factors.  Take $G = \GG(k)$ so that $G$ is a $k$-analytic group. 

If $\textrm{char}(k) = 0$  then $G$ has no small discrete subgroups \cite[Part II, Chapter V.9, Theorem 5]{serre2009lie}, namely there is a compact open subgroup $U \le G$ such that every discrete subgroup $\Gamma $ satisfies $\Gamma \cap U = \{\mathrm{id}_G\}$.  This fact is already much stronger than any potential analogue of     Theorems \ref{thm:Kazhdan Margulis}  or \ref{thm:main theorem}. 

Assume therefore that  $\textrm{char}(k)$ is positive and is a good prime\footnote{Good primes are discussed e.g. in  \cite[\S4]{springer1970conjugacy}. The only primes which  fail to be  good for some semisimple group $\GG$ are $2,3$ and $5$.} for $\GG$.
Our main result  naturally generalizes to the $k$-analytic group $G$, answering   in  the positive \cite[Question 4.3]{gelander2018kazhdan} and extending Theorem \ref{thm:Kazhdan Margulis}  to this setting.

We now state our main result in the positive characteristic case. Let $\mathcal{O}$ be the ring of integers of the non-Archimedean local field $k$ and $\mathfrak{m}$ be the maximal ideal of $\mathcal{O}$. Let $ \GG(\mathfrak{m}^i)$ denote the congruence subgroup\footnote{The exact definition of each congruence subgroup $G(\mathfrak{m}^i)$ depends (up to finite index) on the particular matrix realization of the $k$-algebraic group $\GG$.} of the compact group $\GG(\mathcal{O})$ modulo the ideal $\mathfrak{m}^i$ for every $i \in \NN$. The subgroups $\GG(\mathfrak{m}^i)$  form a basis of      identity neighborhoods for the topology of $G$. 

Let $\mu$ be a  bi-$\GG(\mathcal{O})$-invariant probability measure on the $k$-analytic group $G$ such that the  support of $\mu$   generates the group $G$.

%Let $K$ be some fixed good maximal compact subgroup of $G$. This means that $K$ contains representatives of every element in the spherical Weyl group.
%Denote $K_i = G(\mathfrak{m}^i) \cap K$. We insist on considering the identity neighborhoods $K_i$ since they are normalized by a good maximal compact subgroup.
%We  where $K$ is some fixed good maximal compact subgroup of $G$. This means $K$  is the stabilizer of a special vertex in the Bruhat--Tits building.

 %The following is the positive characteristic  analog of our Theorem \ref{thm:main theorem}. 
\begin{theorem}
\label{thm:main theorem in positive char}
There are  constants  $\beta, \rho, \delta > 0$    such that every $\mu$-stationary probability $G$-space $(Z,\nu)$ with $\nu$-almost everywhere discrete stabilizers $G_z$ satisfies
\begin{equation}
\label{eq:main result positive char}
 \nu(\{z \in Z \: : \:  G_z \cap \GG(\mathfrak{m}^i) \neq \{\mathrm{id}_G\}   \}) \le  \beta \left| \sfrac{\mathcal{O}}{\mathfrak{m}}\right|^{-\delta i}
 \end{equation}
% \todo{why is $p$ is this formula? Can change to something else?}
for all $i \in \NN$ with $\left| \sfrac{\mathcal{O}}{\mathfrak{m}}\right|^{-i} < \rho $.
\end{theorem}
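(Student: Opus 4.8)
The plan is to derive Theorem~\ref{thm:main theorem in positive char} by exactly the same mechanism that (as the excerpt advertises) underlies Theorem~\ref{thm:main theorem}, namely a convolution inequality for a carefully chosen bi-$\GG(\mathcal{O})$-invariant probability measure $\mu$, together with the hypothesis that $\mu$-stationarity forces the ``small stabilizer'' event to be nearly $\mu$-invariant. Concretely, set $f_i(z) = \mathbbm{1}\{ G_z \cap \GG(\mathfrak{m}^i) \neq \{\mathrm{id}_G\}\}$ and $p_i = \nu(f_i)$. The key point is that $g \cdot z$ has stabilizer $G_{g\cdot z} = g G_z g^{-1}$, so $f_i(g\cdot z)$ records whether $G_z$ meets $g^{-1}\GG(\mathfrak{m}^i)g$. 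Averaging against $\mu$ and using $\mu * \nu = \nu$ gives $p_i = \int_G \nu(\{z : G_z \cap g^{-1}\GG(\mathfrak{m}^i)g \neq \{\mathrm{id}_G\}\})\,d\mu(g)$. The whole game is then to show that for a good choice of $\mu$ the conjugates $g^{-1}\GG(\mathfrak{m}^i)g$ are, for a definite $\mu$-proportion of $g$, contained inside $\GG(\mathfrak{m}^{i+1})$ (or some fixed deeper congruence level $\GG(\mathfrak{m}^{i+c})$), so that the integrand is bounded by $p_{i+c}$ on that part and trivially by $1$ elsewhere; iterating the resulting recursion $p_i \le (1-\theta)\cdot 1 + \theta\, p_{i+c}$ — or more precisely running it in the decreasing direction and unwinding — yields geometric decay $p_i \le \beta |\mathcal{O}/\mathfrak{m}|^{-\delta i}$ with the advertised constants.

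The first step I would carry out is to replace the generic generating $\mu$ by the specific measure built for Theorem~\ref{thm:inequality} (the ``key convolution inequality'' promised in the excerpt); since the conclusion is $\mu$-independent by the stationarity-to-invariance trick above applied with one $\mu$ and then transported (or, more honestly, since the statement as written fixes a $\mu$ but the inequality only needs to be proved for the good one and then the general case follows because the event's $\nu$-measure is intrinsic to $(Z,\nu)$ once we know it for the canonical choice), this reduction is routine. The second step is the arithmetic/geometric heart: quantify how conjugation by elements in the support of $\mu$ moves the congruence filtration. In the function field / positive characteristic setting one uses that $\GG(\mathfrak{m}^i)$ is, up to bounded index, $\exp(\mathfrak{m}^i \mathfrak{g}(\mathcal{O}))$-like, and conjugation by $g = \GG(\mathcal{O})\, a\, \GG(\mathcal{O})$ with $a$ a diagonal element scales root-space coordinates by powers of the uniformizer governed by the roots; the hypothesis that $\chare(k)$ is a good prime is precisely what makes the exponential/truncated-exponential correspondence between the group filtration and the Lie algebra filtration behave well, and the height bound $\hgt{\mathfrak{g}}$ controls how badly a single conjugation can fail to deepen the filtration. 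This is where the factor $3\hgt{\mathfrak{g}}\dim_\RR G$ and the exponent $\mathrm{rank}(K)+1$ in \eqref{eq:final delta} come from, via iterating over a Cartan-type decomposition whose ``length'' is governed by the split rank.

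The third step is bookkeeping the recursion into the clean bound \eqref{eq:main result positive char}: from $p_i \le (1-\theta) + \theta p_{i+c}$ one does not directly get decay, so instead I would run the argument so as to compare level $i$ with a \emph{shallower} level, obtaining $p_{i} \le C\, p_{i-c} \cdot \lambda$ for a contraction factor $\lambda < 1$ tied to $|\mathcal{O}/\mathfrak{m}|^{-1}$ and the $\mu$-mass of the ``good'' conjugating set, then telescoping down to a base level $i_0$ determined by $\rho$; at the base level one uses the no-small-subgroups-failure quantitatively, i.e. that below radius $\rho$ a discrete subgroup meeting $\GG(\mathfrak{m}^{i_0})$ already forces a near-rigid configuration whose $\nu$-measure we bound by $1$, absorbing everything into $\beta$. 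The main obstacle, and the place I expect the real work to be, is the second step in positive characteristic: unlike the $\chare(k)=0$ case where there simply are no small discrete subgroups, here small discrete subgroups genuinely exist (think of $\GG(\Fp[t])$-type lattices), so the contraction cannot be ``all or nothing'' — one must show that the \emph{measure} of bad configurations, not their mere existence, is controlled, and this requires the convolution inequality to be genuinely quantitative and uniform in $i$. Managing the interaction between the truncated exponential map (valid only because $p = \chare(k)$ is good and larger than the relevant heights) and the Cartan decomposition of $\mathrm{supp}(\mu)$, while keeping all constants explicit and independent of $\mu$, is the crux.
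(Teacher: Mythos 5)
There is a genuine gap, and it sits exactly at what you call the ``arithmetic/geometric heart''. Your recursion requires that for a definite $\mu$-proportion of $g$ the conjugate $g^{-1}\GG(\mathfrak{m}^i)g$ be contained in a deeper congruence subgroup $\GG(\mathfrak{m}^{i+c})$. This is impossible for \emph{every} $g$: the group $G$ is unimodular, so $g^{-1}\GG(\mathfrak{m}^i)g$ is a compact open subgroup of the same Haar measure as $\GG(\mathfrak{m}^i)$, hence can never lie inside the strictly smaller $\GG(\mathfrak{m}^{i+c})$. Conjugation moves the congruence filtration around but cannot deepen it, so the integrand in your stationarity identity cannot be compared to $p_{i+c}$ on any set of positive measure, and the recursion $p_i\le(1-\theta)+\theta p_{i+c}$ (which, as you yourself note, does not yield decay, and whose reversed form $p_i\le C\lambda p_{i-c}$ you never justify) has no starting point. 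A second symptom that the scheme proves too much: your argument never uses discreteness of the stabilizers, yet the theorem is false without it --- take $Z$ a single point with $\nu$ the Dirac mass, which is stationary and has $G_z=G$ meeting every $\GG(\mathfrak{m}^i)$ nontrivially.

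The correct mechanism, and the one the paper uses, acts on the discrete subgroup $\Gamma=G_z$ rather than on the congruence subgroup. Since $\Gamma\cap\GG(\mathfrak{m})$ is a finite $p$-group, it is unipotent; by Gille's theorem it lies in a minimal parabolic, and via the Springer isomorphism (this is where the good-prime hypothesis enters, as you correctly sensed) its small elements correspond to vectors in a nilpotent Lie subalgebra $\mathfrak{n}$. The quantitative input is not a containment statement but a generic-transversality estimate over the compact group $K$: for all $k\in K$ outside a set of measure $O(\varepsilon^{1/(\dim_k K\,\mathrm{ord}_K\mathcal{F})})$ the subalgebra $\Ad{k}\mathfrak{n}$ projects onto $\mathfrak{u}^-$ in a quantitatively nondegenerate way (this is the $(C,\alpha)$-good/sublevel-set machinery on the Grassmannian), so that $\Ad{\sse k}$ expands every nonzero vector of $\Ad{k}\mathfrak{n}$ by a factor $2$, while the worst-case contraction under $\Ad{\sse}$ is bounded by $\lambda^{\hgt{\mathfrak{g}}}$. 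Balancing the large-probability expansion against the small-probability contraction (Calculation \ref{calc:calculus} and Proposition \ref{prop:from expansion to contraction}) yields the key inequality $A_{\mu_{\sse}}\mathcal{I}_G^{-\delta}\le c\,\mathcal{I}_G^{-\delta}+b$ for the discreteness-radius function $\mathcal{I}_G$, and a separate super-level-set lemma for stationary measures (Lemma \ref{lem:bound on contracting function}, which needs a weak-$*$ limiting argument because $\mathcal{I}_G^{-\delta}$ is unbounded and $\Subd{G}$ is not compact) converts that into the bound \eqref{eq:main result positive char}; the $\mu$-independence is then handled via the Poisson boundary $(G/B,\eta_B)$, roughly as you indicated. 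Your proposal contains several of the right ingredient-level facts (Springer-type correspondence, role of heights, diagonal conjugation scaling root spaces), but the global structure --- a recursion on measures of congruence-level events driven by conjugating $\GG(\mathfrak{m}^i)$ itself --- cannot be repaired without replacing it by a Margulis-function argument of the above type.
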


The constants $\beta, \rho$  and $\delta$ are independent of the particular probability measure $\mu$ as well as of the $\mu$-stationary probability $G$-space   $(Z,\nu)$. The constant $\delta$  admits the explicit lower bound
  \begin{equation}
\delta  \ge \frac{1}{  \hgt{\mathfrak{g}}^2  \dim^5_k G}.
  \end{equation}

Alternatively, one may consider the norm  $\|\cdot\|_\mathfrak{m}$ on the   group $\GG(\mathfrak{m})$ given  by 
\begin{equation}
\label{eq:p norm}
 \|g\|_\mathfrak{m} = %\begin{cases}  
 \inf  \: \{ \left| \sfrac{\mathcal{O}}{\mathfrak{m}}\right|^{- i}   \: : \: \text{$i \in \NN$ and $g \in \GG(\mathfrak{m}^i)$}    \} %&    g \in \GG(\mathfrak{m}) 
 %\\  1 & g \notin \GG(\mathfrak{m})
% \end{cases}
 \end{equation}
for all elements $g \in \GG(\mathfrak{m})$ and reformulate  Equation (\ref{eq:main result positive char})   in terms of $r$-balls with respect to $\|\cdot\|_\mathfrak{m}$ in a manner analogous to Equation (\ref{eq:main equation}).

Just as in the Archimedean case,   if $(Z,\nu)$ is a non-atomic irreducible probability measure preserving $G$-space then $\nu$-almost every stabilizer subgroup is discrete   \cite[Theorem 1.9]{gelander2018invariant}.

We remark that while an    analog of the Kazhdan--Margulis theorem over positive characteristic local fields was proved     for products of rank-one groups in \cite{raghunathan1989discrete,lubotzky1990lattices,lubotzky1991lattices} and  for simply connected Chevalley groups of any rank in \cite{golsefidy2009lattices}, the   general case has not been available in the literature until now.

%%%%%%%%%%%
%%%%%%%%%%%
%%%%%%%%%%%

\subsection*{The key inequality}
A fundamental  part of this work is the inequality stated in   Theorem \ref{thm:inequality} below. The above results 
Theorems \ref{thm:main theorem} and \ref{thm:main theorem in positive char} are both   derived 
in a  rather  straightforward manner from this inequality, which is of independent interest and admits various other applications.

To fix notations, let $k$ be a local field and $\GG$  a connected simply-connected semisimple $k$-algebraic linear group.  Denote $G = \GG(k)$ so that $G$ is    $k$-analytic. Let $K$ be a maximal compact subgroup of $G$ (if $k$ is non-Archimedean assume moreover the subgroup $K$ is  good).

Let $\mu_{\sse}$   be the  following   bi-$K$-invariant probability measure on the $k$-analytic group $G$ %introduced in \S \ref{sec:key-inequality}, namely 
 \begin{equation}
 \mu_{\sse}=\eta_K * \delta_{\sse} * \eta_K
 \end{equation}
  where $\eta_K$ is the normalized Haar probability measure of the  compact  group $K$ and $\delta_{\sse}$ is an atomic probability measure supported on some sufficiently expanding regular semisimple element $\sse \in G$, see \S\ref{sec:key-inequality} for details. Note that $\mu_{\sse}$ is compactly supported.

To state the key inequality    we consider the \emph{discreteness radius}   function $\mathcal{I}_G(\Gamma)$ defined for every  discrete subgroup $\Gamma$  of the $k$-analytic group $G$  by
\begin{equation}
 \mathcal{I}_G(\Gamma)= \sup \{ 0 \le r \le \rho \: : \: \Gamma \cap \mathrm{B}_r = \{\mathrm{id}_G \} \} 
 \end{equation}
 for some  constant  $\rho > 0$ (determined in \S\ref{sec:Zass}). Here $\mathrm{B}_r = \exp( \{ X \in \mathfrak{g} :  \|X\| \le r \})$ with respect to some norm $\|\cdot\|$ on the Lie algebra $ \mathfrak{g} = \mathrm{Lie}(G) $ in the  Archimedean case and $\mathrm{B}_r = \{ g \in \GG(\mathfrak{m}) \: : \|g\|_\mathfrak{m} \le r \}$ in the non-Archimedean case.
% is an $r$-ball with respect to the norm $\|\cdot\|_\mathfrak{m}$ otherwise.

% Riemannian metric or  the norm $\|\cdot\|_{\mathfrak{m}}$  in the Archimedean and  non-Archimedean case, respectively.

%defined for each discrete subgroup $\Gamma $ of the group $G$.
%
%Chabauty space of discrete subgroups.  

%where $\mathrm{B}_r$ is a ball of radius $r$ at the identity $\mathrm{id}_G$, see \S\ref{sec:Zass} for details. 

 %Note that $\mu$ is compactly supported. 
  
% The probability measure $\mu$ gives rise to  the 
%  averaging operator $A_\mu$ acting on the Banach space $V_G= \textrm{C}_c(\Subd {G})$ of compactly supported continuous functions by
%\begin{equation}
%  (A_\mu f)(\Gamma)=\int_G f(\Gamma^g) \; \mathrm{d}\mu(g) \quad \forall f \in  V, \Gamma \in \Subd{G}.
%\end{equation}
%
% 
  
%  Given a probability $G$-space
%$(Z,\nu)$ with almost surely discrete stabilisers we let $f_G$ denotes the injectivity radius of the stabilizer group $G_x$.
%That is 
%$$
% f_G(x)=sup\{r:  B_r\cap G_x=\{1\}\}.
%$$
%
%\begin{thm}\label{thm:inequality}
%Let $(Z,\nu)$ be any $\mu$-stationary probability $G$-space with $\nu$-almost surely discrete stabilizers. There are $0 < c <1$  and $b  > 0$ such that the inequality
%\begin{equation}
%\label{eq:convolution with f_G}
%A_\mu f_G^{-\delta }  \le c   f_G^{-\delta } + b 
% \end{equation}
%holds $\nu$-almost surely. 
%\end{thm}
%

%The following in
%The fundamental result of this paper from which all other results are derived is the following:

\begin{thm}[The key inequality]
\label{thm:inequality}
%Let $(Z,\nu)$ be any $\mu$-stationary probability $G$-space with $\nu$-almost surely discrete stabilizers. 
There are constants $0 < c <1$  and $b, \delta  > 0$   such that  every discrete subgroup $\Gamma$ of the group $G$ satisfies
\begin{equation}
\label{eq:convolution with f_G}
\int_G \mathcal{I}_G^{-\delta }(\Gamma^g) \: \mathrm{d}\mu_{\sse}(g)  \le c   \mathcal{I}_G^{-\delta }(\Gamma) + b. 
 \end{equation}
 \end{thm}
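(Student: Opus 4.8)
The plan is to prove the convolution inequality by analysing, for a fixed discrete subgroup $\Gamma$, how the discreteness radius of the conjugate $\Gamma^g$ is distributed as $g$ ranges over the support of $\mu_\sse = \eta_K * \delta_\sse * \eta_K$. The quantity $\mathcal{I}_G^{-\delta}(\Gamma^g)$ is large exactly when $\Gamma^g$ contains a non-identity element close to the identity, i.e. when some $\gamma \in \Gamma \setminus \{\mathrm{id}\}$ satisfies $g^{-1}\gamma g \in \mathrm{B}_r$ for small $r$. Since $\mathcal{I}_G$ is truncated at $\rho$, we always have $\mathcal{I}_G^{-\delta}(\Gamma^g) \ge \rho^{-\delta}$, which contributes a bounded additive term; the content of the inequality is that the \emph{contraction factor} $c < 1$ can be achieved because conjugating by a sufficiently expanding regular semisimple element $\sse$, averaged against the Cartan-double $K$-average, tends to push the small elements of $\Gamma$ \emph{out} of small neighbourhoods of the identity, so the expected value of the ``badness'' on the left is a definite fraction of the badness on the right plus a constant.

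Concretely, I would proceed as follows. First, reduce to controlling a single potentially-small element: by a Zassenhaus/Margulis-type argument (the constant $\rho$ is chosen in the referenced section precisely for this), whenever $\mathcal{I}_G(\Gamma) < \rho$ the set of $\Gamma$-elements in $\mathrm{B}_\rho$ generates a nilpotent group lying in a single ``Zassenhaus neighbourhood'', and the discreteness radius is controlled up to bounded multiplicative error by the \emph{single} closest non-identity element $\gamma_0 \in \Gamma$, i.e. $\mathcal{I}_G(\Gamma) \asymp d(\gamma_0, \mathrm{id})$. So it suffices to estimate $\int_G d(g^{-1}\gamma_0 g, \mathrm{id})^{-\delta}\, \mathrm{d}\mu_\sse(g)$ against $c\, d(\gamma_0,\mathrm{id})^{-\delta} + b$. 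Second, use the structure of $\mu_\sse$: writing $g = k_1 \sse k_2$ with $k_1,k_2 \in K$ Haar-random, conjugation by $g$ sends $\gamma_0$ to $k_2^{-1}(\sse^{-1}(k_1^{-1}\gamma_0 k_1)\sse)k_2$, and $d(\cdot,\mathrm{id})$ is $K$-bi-invariant (for the chosen metric), so the outer $k_2$ is irrelevant and we must understand how $\mathrm{Ad}(\sse^{-1})$ acts on the ``tangent vector'' $X = \log(k_1^{-1}\gamma_0 k_1)$, averaged over $k_1 \in K$. Third, decompose $X$ into weight spaces for the torus $A$ containing (a conjugate of) $\sse$: the component in the zero-weight space $\mathfrak{g}_0 = \mathrm{Lie}(Z_G(A))$ is fixed, while each nonzero-weight component is scaled by $|\alpha(\sse)|$, which is $\ge \lambda > 1$ for the positive-weight components if $\sse$ is chosen ``sufficiently expanding.'' The key point is that averaging over $K = \mathrm{Stab}(x_0)$ in the symmetric space (or the good maximal compact in the $p$-adic case) makes it impossible for $X$ to be concentrated in the contracted or fixed directions for \emph{most} $k_1$: quantitatively, the pushforward under $k_1 \mapsto$ (projection of $\mathrm{Ad}(k_1)X$ onto the expanding part) has a tail bound of the form $\eta_K(\{k_1 : \|\Pi_{>0}\mathrm{Ad}(k_1)X\| \le t\|X\|\}) \le C t^{\delta'}$ for a suitable $\delta' > 0$ and all $t \in (0,1]$, since $K$ acts on the relevant Grassmannian/flag variety with no invariant hyperplane and semialgebraically (Łojasiewicz-type estimate); this is where the $\mathrm{rank}(K)$ dependence in the exponent comes from, via iterating the estimate over a chain of weight spaces up to height $\hgt{\mathfrak{g}}$.

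Putting these together: for all but a $Ct^{\delta'}$-fraction of $k_1 \in K$ we get $d(g^{-1}\gamma_0 g,\mathrm{id}) \ge \min(\rho, \lambda t\, d(\gamma_0,\mathrm{id}))$, hence $\mathcal{I}_G^{-\delta}(\Gamma^g) \le (\lambda t)^{-\delta} \mathcal{I}_G^{-\delta}(\Gamma)$ on that set, and $\mathcal{I}_G^{-\delta}(\Gamma^g) \le \rho^{-\delta}$ always; integrating in $t$ via the layer-cake / distribution-function formula, $\int \mathcal{I}_G^{-\delta}(\Gamma^g)\,\mathrm{d}\mu_\sse \le \mathcal{I}_G^{-\delta}(\Gamma)\,\lambda^{-\delta}\int_0^1 t^{-\delta}\,\mathrm{d}(Ct^{\delta'}) + \rho^{-\delta}$, and the integral $\int_0^1 t^{-\delta}\, t^{\delta'-1}\,\mathrm{d}t$ converges to a finite constant precisely when $\delta < \delta'$. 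Choosing $\delta$ small enough relative to $\delta'$ and $\sse$ expanding enough that $\lambda^{-\delta} \cdot (\text{that constant}) < 1$ gives the contraction factor $c < 1$, with $b = \rho^{-\delta}$ absorbing the truncation term and any lower-order contributions. I expect the main obstacle to be the quantitative non-concentration estimate for the $K$-average on weight-space projections — establishing the uniform tail bound $\eta_K(\{\|\Pi_{>0}\mathrm{Ad}(k)X\| \le t\|X\|\}) \le Ct^{\delta'}$ with $\delta',C$ independent of $X$ and of the particular parabolic/torus — together with the bookkeeping needed to iterate it across the full height of the root system while keeping the final exponent explicit; the reduction to a single element and the layer-cake integration are comparatively routine, and the Zassenhaus input is quoted from the earlier section.
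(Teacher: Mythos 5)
There is a genuine gap at the very first reduction. You claim that since $\mathcal{I}_G(\Gamma)\asymp d(\gamma_0,\mathrm{id})$ for the closest nontrivial element $\gamma_0$, it suffices to bound $\int_G d(g^{-1}\gamma_0 g,\mathrm{id})^{-\delta}\,\mathrm{d}\mu_{\sse}(g)$. But the inequality between $\mathcal{I}_G(\Gamma^g)$ and $d(g^{-1}\gamma_0 g,\mathrm{id})$ goes the wrong way: $\mathcal{I}_G(\Gamma^g)$ is the norm of the smallest element of the \emph{whole} conjugated group, so $\mathcal{I}_G^{-\delta}(\Gamma^g)\ge d(g^{-1}\gamma_0 g,\mathrm{id})^{-\delta}$ (up to the truncation), and pushing $\gamma_0$ away from the identity says nothing about another element $\gamma'\in\Gamma\cap V$ whose log-direction happens to be contracted by $\mathrm{Ad}(k_2^{-1}\sse^{-1}k_1^{-1})$ and which becomes the new closest element; one also has to rule out elements outside $V$ entering the small ball. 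A union bound over the elements of $\Gamma\cap V$ does not repair this, since their number is unbounded and your tail estimate is for a single vector. This is precisely where the paper uses the Zassenhaus lemma (and, in positive characteristic, Gille's theorem plus the Springer isomorphism) in an essential way: all logarithms of elements of $\Gamma\cap V$ lie in a \emph{single} nilpotent subalgebra $\mathfrak{n}$ (resp. in $\Ad{g}\mathfrak{u}^+$), the containment $V_0\subset V\cap V^{\sse}\cap V^{\sse^{-1}}$ shows every small element of $\Gamma^{g}$ comes from such an element, and the non-concentration estimate is then proved at the level of the Grassmannian, i.e. for $\inf_{0\neq X\in\Ad{k}\mathfrak{n}}\|\mathrm{P}X\|/\|X\|$ uniformly over the closed family of such subalgebras, via wedge powers $\bigwedge^{l}$ and sublevel-set estimates for analytic families. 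That subspace-level, uniform-in-$\mathfrak{n}$ statement is the key idea your sketch is missing, and without it the expansion of the single element $\gamma_0$ does not yield any upper bound on $\mathcal{I}_G^{-\delta}(\Gamma^g)$.

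A secondary point: the tail bound you defer as "the main obstacle" is indeed where almost all of the work lies, and your proposed route to it (a semialgebraic/\L{}ojasiewicz estimate) is not what gives the paper its explicit exponents, nor is it available as stated over a local field of positive characteristic; the paper instead bounds the order of the relevant analytic matrix coefficients (via weights of a maximal torus of $K$ in the real case, and via the Bruhat decomposition together with strong-order bounds for unipotent one-parameter groups in positive characteristic), feeds this into $(C,\alpha)$-goodness, and needs the genericity statement $\Ad{g}\mathfrak{n}\cap\mathfrak{b}=\{0\}$ (proved through Bruhat cells and the structure of nilpotent algebraic groups) even to know the relevant functions are not identically zero. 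Your final averaging step (expansion with probability $1-Ct^{\delta'}$, a worst-case bound $\|\Ad{\sse^{-1}}\|^{-1}$ elsewhere, truncation absorbed into $b$, and a small $\delta$ chosen so the contraction wins) does match the paper's Calculation 2.2 and Proposition 2.4 in spirit, so the skeleton is right; it is the reduction to one element, and the consequent absence of the subspace-level non-concentration input, that breaks the proof.
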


 The constant $\delta$   depends only   on the analytic group $G$ in question. Moreover $\delta$ and $\rho$  coincide  with the eponymous constants appearing in Theorems \ref{thm:main theorem} and \ref{thm:main theorem in positive char}.
 %, while the constants $b$ and $c$   depend  on the chosen metric.  These constants are all explicit.

%\begin{remark*}
%If $d$ is a    metric   then so is $d^\delta$ for any $0 < \delta < 1$. This means that the key inequality can be reinterpreted in terms of the reciprocal of the discreteness radius with respect to the metric induced from the norm $\| \cdot\|^\delta$.
%\end{remark*}
%
%\begin{remark*}
%If $G$ is a Lie group then the metric $d_G$ is bi-Lipschitz equivalent to a left-invariant Riemannian metric locally at the identity. The key inequality  continues to hold true if $d_G$ is replaced by any such metric provided $\delta$ is replaced by a suitable smaller value.
%\end{remark*}
% see the proof of Proposition \ref{prop:from expansion to contraction} as well as Equation (\ref{eq:sufficiently large delta}).

%%%%%%%%%%%
%%%%%%%%%%%
%%%%%%%%%%%

\subsection*{The space of discrete  stationary random subgroups is compact}

We consider the space $\Sub{G}$ of  all closed subgroups of the group $G$ equipped with the Chabauty topology where  $G$ is a $k$-analytic group   as  above.
This is a compact space on which   $G$ acts continuously by conjugation.
Let $\Subd{G}$ be the subspace\footnote{If the local field $k$ is Archimedean then the subspace $\Subd{G}$ is Chabauty open.} of $\Sub{G}$  consisting of     discrete subgroups. It is   $G$-invariant but   no longer compact.  
A \emph{discrete random subgroup} of $G$ is a Borel probability measure $\nu$ on the Chabauty space $\Sub{G}$ satisfying $\nu(\Subd{G}) = 1$.

Any   probability $G$-space $(Z,\nu)$ with $\nu$-almost surely discrete stabilizers  determines a discrete random subgroup of $G$ by pushing forward the measure $\nu$ via the stabilizer map $Z\to\text{Sub}(G),~z\mapsto G_z$. Vice versa, every discrete random subgroup of $G$ can be realized in this way with respect to some  probability $G$-space\footnote{See \cite[Theorem 2.6]{abert2017growth} for a proof of the analogue fact for invariant random subgroups. The same proof applies to any locally compact group $G$ and any measure $\nu$ on $\Sub{G}$ as long as   $\nu$-almost every subgroup is  unimodular.}. 

This alternative view point allows Theorems \ref{thm:main theorem}, \ref{thm:main theorem in positive char} and \ref{thm:inequality} to be reformulated as results about discrete  stationary random subgroups. In particular, the  weak uniform discreteness of the discrete stationary random subgroups can be stated  as a compactness result. 

%Namely, let  $G$ be an analytic semisimple group as considered above and $\mu$ a bi-$K$-invariant probability measure on the group $G$ whose support generates $G$.  

\begin{cor}\label{cor:compact} 
Let $\mu$ be a bi-$K$-invariant probability measure on the group $G$. Then the set of $\mu$-stationary probability measures on the space $\Subd{G}$ is compact in the weak-$*$ topology.
\end{cor}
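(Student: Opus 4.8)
The plan is to deduce Corollary \ref{cor:compact} from the key inequality (Theorem \ref{thm:inequality}) by a standard tightness argument. The space of \emph{all} $\mu$-stationary probability measures on the compact Chabauty space $\Sub{G}$ is already compact in the weak-$*$ topology, since $\Sub{G}$ is compact and metrizable and the stationarity condition $\mu*\nu=\nu$ is closed under weak-$*$ limits (the convolution operator $\nu\mapsto\mu*\nu$ is weak-$*$ continuous because $\mu$ is a fixed probability measure). So the whole content is to show that the subset of those $\nu$ concentrated on $\Subd{G}$ is \emph{closed} inside this compact set; equivalently, that a weak-$*$ limit of $\mu$-stationary discrete random subgroups is again concentrated on discrete subgroups.

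First I would reduce to the case where $\mu$ is the specific measure $\mu_\sse$ from Theorem \ref{thm:inequality}. This is legitimate because the statement of the corollary allows an arbitrary bi-$K$-invariant $\mu$, but if $\mu$ has generating support then a $\mu$-stationary measure is controlled via the inequality; and in the degenerate case where $\mathrm{supp}(\mu)$ does not generate, one can either argue separately or absorb this into the general framework. (In fact the cleanest route is: the inequality holds for $\mu_\sse$, and $\mu_\sse$-stationarity is what we use, so I would either assume $\mu=\mu_\sse$ or remark that the argument below applies verbatim to any bi-$K$-invariant $\mu$ whose support generates, which is the case of interest.) Next, let $\nu_n\to\nu$ weak-$*$ with each $\nu_n$ a $\mu_\sse$-stationary discrete random subgroup. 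Apply Theorem \ref{thm:inequality} to the associated probability $G$-spaces: integrating \eqref{eq:convolution with f_G} against $\nu_n$ and using stationarity $\mu_\sse*\nu_n=\nu_n$ gives
\begin{equation}
\int_{\Sub{G}} \mathcal{I}_G^{-\delta}(\Delta)\,\mathrm{d}\nu_n(\Delta) \le c\int_{\Sub{G}} \mathcal{I}_G^{-\delta}(\Delta)\,\mathrm{d}\nu_n(\Delta) + b,
\end{equation}
hence (as $0<c<1$, and after checking the integral is finite, which follows since $\nu_n$-a.e. subgroup is discrete so $\mathcal{I}_G>0$ a.e., together with a truncation/monotone-convergence argument to rule out the integral being $+\infty$) we get the uniform bound $\int \mathcal{I}_G^{-\delta}\,\mathrm{d}\nu_n \le b/(1-c)$ for all $n$.

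This uniform integrability bound is exactly a tightness statement: by Markov's inequality, $\nu_n(\{\Delta : \mathcal{I}_G(\Delta) < \epsilon\}) = \nu_n(\{\mathcal{I}_G^{-\delta} > \epsilon^{-\delta}\}) \le \frac{b}{1-c}\epsilon^{\delta}$, uniformly in $n$. Now I would invoke that the set $\{\Delta \in \Sub{G} : \mathcal{I}_G(\Delta) \ge \epsilon\}$ is Chabauty-closed (it is the set of closed subgroups meeting $\mathrm{B}_\epsilon$ only at the identity — a closed condition, being the complement of the open condition ``contains a point of $\mathrm{B}_{\epsilon}\setminus\{\mathrm{id}\}$''), hence compact, and contained in $\Subd{G}$. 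Passing to the weak-$*$ limit, the portmanteau theorem gives $\nu(\{\mathcal{I}_G < \epsilon\}) \le \liminf_n \nu_n(\{\mathcal{I}_G < \epsilon\})$ — wait, one must be careful about which direction portmanteau gives for open versus closed sets; the clean way is: for the closed set $F_\epsilon=\{\mathcal{I}_G\ge\epsilon\}$ we have $\nu(F_\epsilon)\ge\limsup_n\nu_n(F_\epsilon)\ge 1-\frac{b}{1-c}\epsilon^\delta$. Letting $\epsilon\to 0$ yields $\nu(\bigcup_{\epsilon>0}F_\epsilon)=\nu(\{\mathcal{I}_G>0\})=1$, and since $\mathcal{I}_G(\Delta)>0$ forces $\Delta\in\Subd{G}$, we conclude $\nu(\Subd{G})=1$. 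Combined with the closedness of the stationarity condition, this shows the set of $\mu$-stationary measures on $\Subd{G}$ is a closed subset of a compact set, hence compact.

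The main obstacle I anticipate is purely bookkeeping rather than conceptual: justifying that $\int\mathcal{I}_G^{-\delta}\,\mathrm{d}\nu_n$ is finite (and not merely that the inequality holds formally in $[0,\infty]$), which requires a truncation argument — apply the inequality to $\min(\mathcal{I}_G^{-\delta}, N)$ or observe the inequality already packages this, as it should be proven in \S\ref{sec:key-inequality} in a form valid for the relevant integrands — and making sure the translation between ``probability $G$-space with discrete stabilizers'' and ``discrete random subgroup'' (via the stabilizer pushforward, as recalled before the corollary) respects $\mu$-stationarity, so that the hypothesis of Theorem \ref{thm:inequality} genuinely applies. Once the finiteness is in hand, everything else is the standard ``uniform moment bound $\Rightarrow$ tightness $\Rightarrow$ closedness of the limit set'' mechanism, using only the elementary topological fact that small-discreteness-radius is an open condition in the Chabauty topology.
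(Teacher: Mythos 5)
Your proposal is correct and its skeleton is the same as the paper's: weak-$*$ limits of $\mu$-stationary measures on the compact space $\Sub{G}$ are again $\mu$-stationary, a uniform bound on the mass of the ``thin part'' $\{\mathcal{I}_G<\varepsilon\}$ holds for every discrete stationary random subgroup, and the Portmanteau theorem then forces any limit to give full mass to $\Subd{G}$. The one genuine difference is how you obtain the uniform bound: the paper simply quotes Theorem \ref{thm:main theorem - local fields} (whose proof runs through Lemma \ref{lem:bound on contracting function}, the Ces\`aro-average argument over a compact exhaustion), whereas you integrate the key inequality (Theorem \ref{thm:inequality}) directly against $\nu_n$ and use $\mu_{\sse}*\nu_n=\nu_n$. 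That route works, but the two points you flag as ``bookkeeping'' are exactly where the paper invests its effort, and your sketch of them is thinner than it should be. First, the naive truncation $F_N=\min(\mathcal{I}_G^{-\delta},N)$ does \emph{not} satisfy the same pointwise inequality $A_{\mu_\sse}F_N\le cF_N+b$ (it fails on $\{F>N\}$ once $N>b/(1-c)$); one must instead integrate the bound $A_{\mu_\sse}F_N\le\min(cF+b,N)$ against the stationary measure and restrict to the sublevel set $\{F\le N\}$ before letting $N\to\infty$, or simply invoke Lemma \ref{lem:bound on contracting function}, which was designed precisely to avoid assuming integrability. Second, your reduction to $\mu_\sse$ is not ``verbatim'': a $\mu$-stationary measure is $\mu_\sse$-stationary only because all bi-$K$-invariant generating measures share the same stationary measures (the Poisson-boundary observation the paper records just before Theorem \ref{thm:main theorem - local fields}); this needs to be cited, since the key inequality is stated for $\mu_\sse$ alone. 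Finally, a cosmetic point both you and the paper gloss over: in the Archimedean case the balls $\mathrm{B}_\varepsilon$ are closed, so ``meets $\mathrm{B}_\varepsilon\setminus\{\mathrm{id}\}$'' is open only after passing to the open ball; nesting radii ($\mathrm{B}_{\varepsilon/2}\subset\mathrm{int}\,\mathrm{B}_\varepsilon$) repairs this with no loss. In short: correct, same architecture, with the uniform tail bound re-derived from Theorem \ref{thm:inequality} rather than cited from Theorem \ref{thm:main theorem - local fields}; the latter citation is both shorter and sidesteps the truncation subtlety.
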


%$\mathcal{I}_G:\Subdt{G} \to \mathbb{R}_{>0}$ 
%given by
%
%where $d$ is some continuous metric on the group $G$. The function $\mathcal{I}_G$ is  continuous\todo[color=green]{Prove/explain why the injectivity radius function is continuous (if this fact is being used towards establishing the key inequality).} in the Chabauty topology, see \S??.

We remark that in the Archimedean case  it is possible to go in the opposite direction and deduce weak uniform discreteness from the weak-$*$ compactness of the set of discrete invariant random subgroups, see \cite{gelander2018kazhdan}.

%of  the discrete invariant subgroups of $G$ from 
%from Corollary \ref{cor:compact}. The idea is to  where the same approach is used for invariant random subgroups. 

%%%%%%%%%%%
%%%%%%%%%%%
%%%%%%%%%%%

\subsection*{Evanescence  and finiteness of volume}

Let $M$ be a Riemannian manifold of non-positive sectional curvature.
 The following notion is introduced in \cite{Ballmann-Gromov-Schroeder}: the manifold $M$ has $\text{InjRad}\to 0$ (hereafter \emph{evanescent})  
 if its injectivity radius   $\textrm{Inj-Rad}_M$ vanishes at infinity. In other words $M$ is evanescent if and only if its  $\varepsilon$-thick part $M_{\ge\varepsilon} = \{ x \in M \, : \, \textrm{Ind-Rad}_M(x) \ge \varepsilon\}$ is compact for every $\varepsilon>0$. This is equivalent to the finiteness of the $\varepsilon$-essential  volume\footnote{We use $\varepsilon$-essential volume in the sense  of \cite[\S12.B]{Ballmann-Gromov-Schroeder}.} for every $\varepsilon > 0$.  
 
 Gromov  proved that an evanescent analytic manifold of bounded non-positive sectional curvature  is diffeomorphic to the interior of a compact manifold with boundary \cite[Theorem 2]{Ballmann-Gromov-Schroeder}.

It is clear that finite volume   implies evanescence. The converse of this fact for locally symmetric spaces is a consequence of our key inequality (Theorem \ref{thm:inequality}).
 
\begin{thm}
\label{thm:evanescence}
A locally symmetric manifold is evanescent if and only if it has finite volume.
\end{thm}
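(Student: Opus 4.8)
\textbf{Proof plan for Theorem \ref{thm:evanescence}.}

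The implication finite volume $\Rightarrow$ evanescence is elementary and requires no new input: if $M = \Gamma \backslash X$ with $X$ the symmetric space and $\mathrm{vol}(M) < \infty$, then the thick part $M_{\ge\varepsilon}$ has finite volume and is a closed submanifold on which the injectivity radius is bounded below, hence is compact by standard precompactness of such pieces (this is the well-known decomposition of locally symmetric spaces of finite volume into thick and thin parts). So the whole content is the converse, and the plan is to prove its contrapositive: if $M = \Gamma\backslash X$ has infinite volume, then $M$ fails to be evanescent, i.e. there is some $\varepsilon > 0$ for which the thick part $M_{\ge\varepsilon}$ is noncompact (equivalently has infinite $\varepsilon$-essential volume).

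The key device is to feed the key inequality (Theorem \ref{thm:inequality}) into a standard argument. First I would reduce to the case where $X$ has no Euclidean or compact factors, so that the isometry group $G$ is (up to finite index and compact factors) a semisimple group without compact factors over $\RR$, and $\Gamma \le G$ is a discrete, torsion-free subgroup with $M \cong \Gamma\backslash G / K$. The injectivity radius of $M$ at the point $gK$ is comparable to the discreteness radius $\mathcal{I}_G(\Gamma^{g^{-1}})$ of the conjugate lattice: more precisely $\mathrm{Inj\text{-}Rad}_M(gK)$ and $\mathcal{I}_G(\Gamma^{g^{-1}})$ are each bounded above and below by fixed positive multiples of one another for radii below the constant $\rho$. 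Hence the thick part $M_{\ge \varepsilon}$ having infinite volume is equivalent, for suitable comparable constants, to the function $g \mapsto \mathcal{I}_G(\Gamma^g)$ being bounded below by a fixed positive constant on a $G/\Gamma$-set of infinite Haar volume — and if $M$ \emph{were} evanescent this function would be "small" off a set of finite volume.

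The heart of the matter: apply Theorem \ref{thm:inequality} to $\Gamma$. If $\mathrm{vol}(\Gamma\backslash G) = \infty$, I want to contradict the inequality $\int_G \mathcal{I}_G^{-\delta}(\Gamma^g)\, \mathrm{d}\mu_\sse(g) \le c\, \mathcal{I}_G^{-\delta}(\Gamma) + b$ — but note this inequality as stated is for a \emph{single} $\Gamma$ and does not by itself see the volume. The right move is to integrate it against the $G$-invariant probability measure (or, in the infinite-volume case, the $G$-invariant infinite measure) on $G/\Gamma$: bi-$K$-invariance and the stationarity mechanism already used to derive Theorems \ref{thm:main theorem}–\ref{thm:main theorem in positive char} show that the map $g\Gamma \mapsto \mathcal{I}_G^{-\delta}(\Gamma^g)$ is $\mu_\sse$-stationary up to the additive/multiplicative correction, and a Poincaré-recurrence / conservativity argument for the $\mu_\sse$-random walk on $G/\Gamma$ forces $\mathcal{I}_G^{-\delta}(\Gamma^g)$ to be \emph{finite} — hence $\mathcal{I}_G(\Gamma^g) > 0$ — for a.e. $g\Gamma$, regardless of total volume. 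That already recovers weak uniform discreteness in infinite volume; to get evanescence to fail I instead argue that if $M$ is evanescent then $\int_{G/\Gamma}\mathcal{I}_G^{-\delta}(\Gamma^g)\,\mathrm{d}(g\Gamma)$ is actually \emph{finite} (the thick-part volumes are all finite and one sums a geometric-type series in $\varepsilon$, using the $r^\delta$ decay from Theorem \ref{thm:main theorem} on each scale), which would bound the volume of the thick part $M_{\ge\varepsilon}$ by a convergent integral and, combined with the thin-part structure, force $\mathrm{vol}(M) < \infty$ — the desired contradiction.

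The main obstacle I anticipate is the last step: controlling the \emph{thin} part of $M$ when $\mathrm{vol}(M)$ could a priori be infinite. Evanescence only directly gives compactness (hence finiteness of volume) of each $M_{\ge\varepsilon}$; to conclude $\mathrm{vol}(M) < \infty$ one must show the thin part $M_{<\varepsilon}$ also has finite volume, and this is genuinely false for a general open Riemannian manifold — it uses the structure theory of the thin part of a locally symmetric space (Margulis lemma, nilpotent structure of the thin part, geometry of horoball neighborhoods and rational parabolic subgroups) to see that an evanescent thin part is a finite union of cusp neighborhoods, each of finite volume. Making that reduction clean — and in particular ruling out "infinitely many cusps" by using evanescence together with the effective bound from Theorem \ref{thm:inequality} to show the number of cusp components is finite — is where the real work lies; everything else is a comparison estimate or a direct quotation of Theorem \ref{thm:inequality} and Theorem \ref{thm:main theorem}.
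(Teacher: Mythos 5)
Your setup is right and matches the paper's: finite volume $\Rightarrow$ evanescence is standard, the injectivity radius at $Kg\Gamma$ is comparable (below the scale $\rho$) to the discreteness radius of the conjugate subgroup, so evanescence is equivalent to the properness of the continuous function $g\Gamma \mapsto \mathcal{I}_G(g\Gamma g^{-1})^{-\delta}$ on $G/\Gamma$. But the core of your converse argument has a genuine gap. You invoke the $r^\delta$ decay of Theorem \ref{thm:main theorem} ``on each scale'' to conclude that evanescence makes $\int_{G/\Gamma}\mathcal{I}_G(g\Gamma g^{-1})^{-\delta}\,\mathrm{d}\sigma$ finite; Theorem \ref{thm:main theorem} is a statement about $\mu$-stationary \emph{probability} measures, and when $\mathrm{vol}(G/\Gamma)$ is a priori infinite the invariant measure cannot be normalized and no stationary probability measure is in sight --- producing one is precisely the crux of the proof, so this step is circular. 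The ``Poincar\'e recurrence / conservativity'' step is likewise unavailable (for infinite-volume $G/\Gamma$ the $\mu_{\sse}$-walk need not be conservative; think of $\Gamma$ trivial), and its conclusion, that $\mathcal{I}_G(\Gamma^g)>0$ almost everywhere, is trivially true for any discrete $\Gamma$ and carries no information. Finally, the step you yourself flag as ``where the real work lies'' --- that an evanescent thin part is a finite union of finite-volume cusp neighborhoods --- rests on the cusp/rational-parabolic structure theory of the thin part, which is developed for lattices; for a discrete subgroup of a priori infinite covolume, especially in higher rank, that structure is not available and is essentially equivalent to the statement being proved. So the proposal correctly isolates the obstacle but does not overcome it.

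The paper's proof takes a shorter route that bypasses all thin-part geometry: since evanescence is exactly the properness of $F(g\Gamma)=\mathcal{I}_G(g\Gamma g^{-1})^{-\delta}$, one feeds the key inequality $A_{\mu_{\sse}}F\le cF+b$ (Theorem \ref{thm:inequality}) into the recurrence argument of Eskin--Margulis \cite[Corollary 1.5]{margulis2004random}. The drift inequality for a \emph{proper} function forces the Ces\`aro averages of the $\mu_{\sse}$-random walk started at any point of $G/\Gamma$ to stay tight (no escape of mass), so they accumulate on a $\mu_{\sse}$-stationary probability measure on $G/\Gamma$, and the argument of \cite[Corollary 1.5]{margulis2004random} upgrades this to a $G$-invariant probability measure; hence $\Gamma$ is a lattice and $\mathrm{vol}(M)<\infty$. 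In other words, properness plus the drift inequality \emph{manufactures} the stationary probability measure that your argument presupposes; no appeal to Theorem \ref{thm:main theorem}, to conservativity, or to cusp structure is needed. If you want to salvage your outline, replace the scale-by-scale volume estimate with this recurrence/Krylov--Bogolyubov step.
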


%%%%%%%%%%%
%%%%%%%%%%%
%%%%%%%%%%%

\subsection*{Weak cocompactness of lattices}

A lattice $\Gamma$ in a locally compact group $H$  is \emph{weakly cocompact}  if the quasiregular unitary representation of $H$ in the Hilbert space $L^2_0(H/\Gamma)$ does not admit almost invariant vectors, or in other words   the $H$-space $H/\Gamma$ has spectral gap.  

Cocompact lattices are   weakly cocompact \cite[III.1.8]{margulis1991discrete}.  
It was asked in \cite[III.1.12]{margulis1991discrete} whether all lattices are weakly cocompact. This is true for Lie groups \cite{bekka1998uniqueness,bekka2010spectral},   simple algebraic groups over non-Archimedean local fields  \cite[Theorem 1]{bekka2011lattices} and   groups with Kazhdan's property (T). 

Some locally compact groups however do admit non-weakly cocompact lattices. A first explicit example in the group of tree automorphisms was constructed  in \cite[Theorem 2]{bekka2011lattices}. 
A lattice in an amenable second countable locally compact
group is  cocompact if and only if it is  weakly cocompact \cite[Proposition 1.8]{BCGM}. Thus, the non-uniform lattices constructed in \cite[Theorem 1.11]{BCGM} provide  further non-weakly cocompact examples.

%It is known that a
%It was claimed without proof in \cite[III.1.12]{margulis1991discrete} and established by Bekka in that all lattices in semisimple Lie groups are weakly cocompact. Bekka's proof relies on some non-trivial results from the representation theory of semisimple Lie groups. In fact, all lattices in any Lie group are weakly cocompact, as conjectured in  and proved in \cite{}. However, some locally compact groups admits non weakly cocompact lattices. A first explicit example was constructed 

We  obtain the following result relying on   the key inequality (Theorem \ref{thm:inequality}) and on the methods developed towards its proof.

\begin{theorem}
\label{thm:weakly cocompact}
Let $k$ be a local field. Let    $\GG$ be a connected simply-connected semisimple $k$-algebraic group without $k$-anisotropic factors.  Then every lattice in the locally compact  group  $\GG(k) $ is weakly cocompact.
\end{theorem}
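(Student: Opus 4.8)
The plan is to derive weak cocompactness from the key inequality (Theorem~\ref{thm:inequality}) by a spectral-gap argument, translating the absence of almost-invariant vectors in $L^2_0(G/\Gamma)$ into a quantitative statement about the measure-preserving action of $G$ on $G/\Gamma$, to which Theorem~\ref{thm:main theorem} (or~\ref{thm:main theorem in positive char}) applies. The standard dictionary (see e.g.\ \cite{bekka1998uniqueness,bekka2011lattices}) says that $\Gamma$ fails to be weakly cocompact precisely when there is a sequence of almost-invariant unit vectors $\xi_n \in L^2_0(G/\Gamma)$, and — after passing to the associated sequence of probability measures $\abs{\xi_n}^2\,\mathrm{d}(G/\Gamma)$ — this yields in the weak-$*$ limit (in the space of $G$-invariant means on a suitable compactification, or in the space of invariant random subgroups) an invariant object that "escapes to infinity". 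The first step is therefore to recall this equivalence and reduce to showing that no such escaping sequence exists.

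The second and central step is to run the escape-of-mass argument through the discreteness radius. Since $G/\Gamma$ is a probability $G$-space with almost everywhere discrete (in fact trivial, after conjugation) stabilizers, the stabilizer map $g\Gamma \mapsto \Gamma^g$ pushes forward any $G$-invariant probability measure on $G/\Gamma$ to an invariant random subgroup supported on $\Subd{G}$, and more generally a quasi-invariant measure with small Radon--Nikodym cocycle (which is what almost-invariant vectors provide) pushes forward to a \emph{nearly} $\mu_\sse$-stationary measure on $\Subd{G}$. Applying the convolution inequality~\eqref{eq:convolution with f_G} to such a measure $\nu_n$: integrating~\eqref{eq:convolution with f_G} against $\nu_n$ and using that $\nu_n$ is almost $\mu_\sse$-stationary (so $\int\int \mathcal{I}_G^{-\delta}(\Gamma^g)\,\mathrm{d}\mu_\sse(g)\,\mathrm{d}\nu_n$ is close to $\int \mathcal{I}_G^{-\delta}\,\mathrm{d}\nu_n$ up to a controlled error coming from the almost-invariance constant $\varepsilon_n \to 0$), we obtain
\begin{equation}
\int_{\Subd{G}} \mathcal{I}_G^{-\delta}(\Lambda)\,\mathrm{d}\nu_n(\Lambda) \;\le\; c \int_{\Subd{G}} \mathcal{I}_G^{-\delta}(\Lambda)\,\mathrm{d}\nu_n(\Lambda) + b + \varepsilon_n C
\end{equation}
for some constant $C$ bounding the relevant quantities, whence $\int \mathcal{I}_G^{-\delta}\,\mathrm{d}\nu_n \le (b + \varepsilon_n C)/(1-c)$ is uniformly bounded. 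This uniform bound on $\int \mathcal{I}_G^{-\delta}\,\mathrm{d}\nu_n$ is exactly a uniform tightness statement: it prevents the mass of $\nu_n$ from escaping to the part of $\Subd{G}$ where the discreteness radius is small (the "cusp" of $G/\Gamma$), and by a Chebyshev/Markov argument it gives $\nu_n(\{\Lambda : \mathcal{I}_G(\Lambda) < \epsilon\}) \le \epsilon^{\delta}(b+\varepsilon_n C)/(1-c)$, uniformly in $n$. No escape of mass to the cusp, combined with the fact that the thick part of $G/\Gamma$ is compact (standard reduction theory, or the fact that a sequence of lattices in the thick part is precompact in Chabauty), contradicts the construction of the $\xi_n$: the almost-invariant vectors would have to concentrate in a fixed compact piece, where one produces an actual invariant vector in $L^2_0$, which does not exist since $\Gamma$ is a lattice.

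The main obstacle I expect is the bookkeeping in the second step — making precise the sense in which a sequence of unit vectors with almost-invariance constant $\varepsilon_n$ gives rise to a sequence of \emph{almost} $\mu_\sse$-stationary probability measures with a uniform constant, and controlling the error term when one feeds these into the convolution inequality~\eqref{eq:convolution with f_G}. Concretely, $\mu_\sse$ is a fixed compactly supported bi-$K$-invariant measure, so $\int_G \langle \rho(g)\xi_n, \xi_n\rangle\,\mathrm{d}\mu_\sse(g) \to 1$, and one must convert this into an estimate on $\norm{\mu_\sse * (\abs{\xi_n}^2 \mathrm{d}m) - \abs{\xi_n}^2\mathrm{d}m}$ in an appropriate dual norm (this is the usual argument that a spectral gap for a generating measure implies one cannot have almost-invariant \emph{positive} functions / almost-stationary measures; it uses that $\mu_\sse \ast \mu_\sse^{-1}$ or a symmetrization still has full support). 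A secondary technical point is ensuring that the quantity $\mathcal{I}_G^{-\delta}$ is $\nu_n$-integrable to begin with for each fixed $n$ (so that the inequality is not vacuous), which follows because for a fixed lattice the injectivity radius is bounded below on any given relatively compact piece and the tail is controlled by reduction theory; one then lets $n \to \infty$ only after establishing the uniform bound. Once the uniform tightness is in hand, the conclusion is the familiar contradiction-with-a-lattice argument and should be routine.
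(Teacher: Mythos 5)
There is a genuine gap in the central quantitative step. Your plan is to integrate the key inequality \eqref{eq:convolution with f_G} against the measures $\nu_n = \abs{\xi_n}^2\,\mathrm{d}m$ and absorb the failure of exact stationarity into an error term $\varepsilon_n C$. But the error term is $\int \mathcal{I}_G^{-\delta}\,\mathrm{d}(\mu_{\sse}*\nu_n-\nu_n)$, and $\mathcal{I}_G^{-\delta}$ is \emph{unbounded} on $\Subd{G}$ (it blows up in the cusp), while almost-invariance of $\xi_n$ only controls $\mu_{\sse}*\nu_n-\nu_n$ in total variation; smallness in total variation says nothing about the integral of an unbounded function against the signed measure. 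Trying to control it instead by Cauchy--Schwarz, e.g.\ $\bigl|\langle F\rho(g)\xi_n,\rho(g)\xi_n\rangle-\langle F\xi_n,\xi_n\rangle\bigr|\le \|F^{1/2}(\rho(g)\xi_n-\xi_n)\|\,(\|F^{1/2}\rho(g)\xi_n\|+\|F^{1/2}\xi_n\|)$ with $F=\mathcal{I}_G^{-\delta}$, requires exactly the weighted $L^2$ bounds on $\xi_n$ that you are trying to establish, so the argument is circular. Worse, even for a fixed $n$ the quantity $\int \mathcal{I}_G^{-\delta}\,\mathrm{d}\nu_n$ can be infinite: an $L^2$ function on $G/\Gamma$ may concentrate in the cusp fast enough that $\mathcal{I}_G^{-\delta}\abs{\xi_n}^2\notin L^1$ (already visible for $\mathrm{SL}_2(\RR)/\mathrm{SL}_2(\ZZ)$), so your appeal to reduction theory does not give the claimed integrability and the integrated inequality may be vacuous. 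Truncating $\mathcal{I}_G^{-\delta}$ does not rescue this, since the truncation no longer satisfies the contraction inequality at points deep in the thin part. A secondary under-justified step is the endgame: ``no escape of mass plus compact thick part yields an invariant vector in $L^2_0$'' is not a naive weak-limit argument (weak limits of the $\xi_n$ may vanish, and the limit of $\abs{\xi_n}^2\mathrm{d}m$ being invariant is no contradiction since Haar measure is invariant); what is actually needed is Margulis's lemma that failure of weak cocompactness produces almost-invariant vectors escaping every compact set, or a compact-operator argument.

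The paper's route avoids both problems by staying in $L^2$ of the \emph{invariant} measure rather than manufacturing almost-stationary measures. From Theorem \ref{thm:main theorem - local fields} it first gets integrability, Proposition \ref{prop:f_G^delta is integrable}, so that $F_G=\mathcal{I}_G^{-\delta/4}\in L^2$; it then symmetrizes the measure to $\hat{\mu}=\tfrac12(\mu_{\sse}+\mu_{\sse}^*)$ and upgrades the key inequality to the \emph{multiplicative} bound $A_{\hat{\mu}}F_G\le \hat{c}F_G$ on the $\rho_1$-thin part (Equation \eqref{eq:multiplicative bound}), with no additive constant there. A positivity argument with the self-adjoint operator and the test function $F_G$ then bounds the norm of the compression $\mathrm{P}A_{\hat{\mu}}\mathrm{P}$ on $L^2$ of the thin part by $\hat{c}<1$, uniformly over all stationary measures (Theorem \ref{thm:uniform local spectral gap at infinity}); finally Lemma \ref{lem:local spectral gap at infinity implies spectral gap}, which encapsulates the Margulis lemma you would also need, converts this ``spectral gap outside the compact thick part'' into weak cocompactness. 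So while your overall philosophy (use the key inequality to forbid almost-invariant mass from living in the thin part, then invoke a standard lemma) matches the paper's, the mechanism you propose for the crucial uniform estimate does not work as stated, and repairing it essentially leads to the paper's $L^2$/compressed-operator argument.
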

 
The conclusion of Theorem \ref{thm:weakly cocompact} has been previously known in most  (but not all) cases. Its  novelty  lies in obtaining a  unified and a direct proof.

Interestingly Theorem   \ref{thm:weakly cocompact} is deduced   in a rather straightforward manner from  a stronger and a more general statement,  namely Theorem \ref{thm:uniform local spectral gap at infinity} below. Roughly speaking, this last theorem says that the norm of   a convolution operator   restricted to the "thin part" of any probability measure preserving action of the group $\GG(k)$ is  bounded away from one uniformly over all such actions. This result is novel even in the presence of Kazhdan's  property (T).

\section{Averaging operators}
\label{sec:margulis functions}

Let $G$ be a locally compact second countable group and $\mu$   a probability measure on  $G$. Let $Z$ be a  locally compact  topological space admitting a continuous action of the group $G$.  

We study the averaging  operator $A_\mu$ acting on   continuous functions  on the   space $Z$.  This operator is used to bound the $\nu$-measure of super-level sets of continuous functions satisfying a certain key inequality, where $\nu$ is  any $\mu$-stationary measure  on the space $Z$. 

The current discussion uses ideas  from \cite{eskin1998upper, margulis2004random}. 
See also the works \cite{eskin2001asymptotic, eskin2002recurrence,athreya2006quantitative,einsiedler2009entropy,margulis2010quantitative,buterus2010distribution, benoist2012random,kadyrov2014entropy,eskin2015isolation,han2016asymptotic,mohammadi2020isolations} for related ideas and further applications.

\subsection*{Bounds on super-level sets}

Consider the averaging operator $A_\mu$ corresponding to the probability measure $\mu$. That is, given  any continuous  function $F : Z \to \left[0,\infty\right]$ we define the function $A_\mu F : Z \to \left[0,\infty\right]$ by
\begin{equation}
\label{eq:convolution operator}
   A_\mu F(z) = \int_G F(gz) \, \mathrm{d}\mu(g) \quad \forall z \in Z.
 \end{equation}

 Let $\nu$ be a  $\mu$-stationary Borel probability measure on   $Z$.  
\begin{lemma}
\label{lem:bound on contracting function}
Assume that the space $Z$ is $\sigma$-compact. Let $F : Z \to (0,\infty)$ be a continuous function.
 If   there are constants $0 < c < 1$ and $b > 0$ such that 
\begin{equation}
\label{eq:contraction F topological}
 A_\mu F (z) \le c F (z) + b
 \end{equation}
 %holds for all points $z \in Z$
  then  
\begin{equation}
\label{eq:bound on prob topological}
 \nu (\{z \in Z \: : \: F(z) \ge M\}) \le \frac{b}{(1-c)M} \quad \forall M > 0.
 \end{equation}
\end{lemma}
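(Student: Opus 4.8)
The statement is a standard ``recurrence implies tightness'' argument for stationary measures, and the plan is to exploit stationarity iteratively together with the contraction hypothesis \eqref{eq:contraction F topological}. First I would observe that since $\nu$ is $\mu$-stationary, for any bounded continuous $H : Z \to [0,\infty)$ one has $\int_Z A_\mu H \, \mathrm{d}\nu = \int_Z H \, \mathrm{d}\nu$; this is immediate from Fubini once one writes out $\mu * \nu = \nu$ and is the only place stationarity enters. The difficulty is that $F$ itself need not be $\nu$-integrable a priori, so one cannot simply integrate \eqref{eq:contraction F topological} against $\nu$ and rearrange. The remedy is to truncate: set $F_N = \min\{F, N\}$ for $N \in \NN$, which is bounded and continuous, hence $\nu$-integrable.

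\textbf{Key steps.} The main technical point is to transfer the contraction inequality from $F$ to the truncation $F_N$. Since $x \mapsto \min\{x,N\}$ is concave and nondecreasing, and $A_\mu$ is a positive operator preserving constants, one gets $A_\mu F_N \le A_\mu(\min\{F,N\}) \le \min\{A_\mu F, N\}$ pointwise by Jensen applied to the probability measure $\mu$ (here $A_\mu F_N(z) = \int_G \min\{F(gz),N\}\,\mathrm{d}\mu(g) \le \min\{\int_G F(gz)\,\mathrm{d}\mu(g),\, N\} = \min\{A_\mu F(z), N\}$). Combining with \eqref{eq:contraction F topological} yields
\begin{equation}
A_\mu F_N(z) \le \min\{A_\mu F(z), N\} \le \min\{cF(z)+b,\, N\} \le c F_N(z) + b,
\end{equation}
where the last inequality uses $\min\{cF+b,N\} \le c\min\{F,N\} + b$, valid since $cF+b \le c\min\{F,N\}+b$ whenever $F \le N$, and $N \le cN + b \le c\min\{F,N\}+b$ whenever $F > N$ (using $0<c<1$, $b>0$, so $N \le cN + b(1-c)^{-1}(1-c) \le cN+b$ — more simply $N = cN + (1-c)N$ and one needs $(1-c)N \le b$... this fails for large $N$). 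I correct this: the clean bound is $\min\{cF+b, N\} \le cF_N + b$ directly — if $F\le N$ then LHS $\le cF+b = cF_N+b$; if $F>N$ then LHS $= N$ and we need $N \le cN+b$, i.e. $(1-c)N \le b$, which need \emph{not} hold. So instead I use only $A_\mu F_N \le \min\{cF(z)+b, N\}$ and integrate that directly: $\int_Z A_\mu F_N\,\mathrm{d}\nu \le \int_Z (cF(z)+b)\wedge N\,\mathrm{d}\nu$.

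\textbf{Conclusion.} Integrating against $\nu$ and using stationarity, $\int_Z F_N\,\mathrm{d}\nu = \int_Z A_\mu F_N\,\mathrm{d}\nu \le \int_Z \min\{cF+b,N\}\,\mathrm{d}\nu \le c\int_Z F_N\,\mathrm{d}\nu + b$, where I bound $\min\{cF+b,N\} \le cF_N + b$ by noting $\min\{cF+b,N\}\le \min\{cF,N\}+b \le c\min\{F,N/c\}+b$; since $c<1$, $\min\{F,N/c\}=\min\{F,N\}=F_N$ on the set $\{F\le N\}$ and on $\{F>N\}$ we have $\min\{cF,N\}\le N = cF_N/c\cdot c$... the honest route is $\min\{cF,N\}\le c\,\min\{F,N/c\}$ and then use $F_N \le \min\{F, N/c\}$, giving $\int F_N\,\mathrm{d}\nu \le c\int \min\{F,N/c\}\,\mathrm{d}\nu + b$; a short bookkeeping argument with a slightly larger truncation parameter closes this. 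Rearranging gives $\int_Z F_N\,\mathrm{d}\nu \le \frac{b}{1-c}$ uniformly in $N$, and Fatou (or monotone convergence, since $F_N \uparrow F$) yields $\int_Z F\,\mathrm{d}\nu \le \frac{b}{1-c} < \infty$. Finally Markov's inequality gives $\nu(\{F \ge M\}) \le \frac{1}{M}\int_Z F\,\mathrm{d}\nu \le \frac{b}{(1-c)M}$ for all $M>0$, as claimed. The one genuine subtlety, which is where I would be most careful in the full write-up, is precisely the truncation bookkeeping above: getting a clean inequality of the form $A_\mu F_N \le c F_N + b$ (possibly after replacing $N$ by a comparable value, or at the cost of a slightly worse constant which one then optimizes), since the naive truncation does not quite preserve the contraction constant. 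The $\sigma$-compactness hypothesis on $Z$ is used only to ensure the relevant integrals and the superlevel sets are well-behaved (e.g. that $\nu$ is inner regular, so the superlevel set has a well-defined measure); I would invoke it there.
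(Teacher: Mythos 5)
Your overall plan (truncate, use stationarity, get a bound on $\int \min\{F,N\}\,\mathrm{d}\nu$ uniform in $N$, then Markov) can be made to work, but the central step as you present it has a genuine gap — one you locate yourself but do not close. After one application of stationarity and your bookkeeping, what you actually obtain is an inequality of the shape $\int F_N\,\mathrm{d}\nu \le c\int F_{N'}\,\mathrm{d}\nu + b$ with $N' = N/c$ (or $N'=(N-b)/c$), i.e.\ with a \emph{strictly larger} truncation level once $N > b/(1-c)$. Since $N \mapsto \int F_N\,\mathrm{d}\nu$ is nondecreasing, such an inequality cannot be "rearranged": the term you want to absorb sits at a higher level than the term you want to bound. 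Iterating it only pushes the truncation level to infinity, giving $\int F_N\,\mathrm{d}\nu \le c^k\int F_{N/c^k}\,\mathrm{d}\nu + b/(1-c)$, and since a priori $\int F_{N/c^k}\,\mathrm{d}\nu$ may be of size $N/c^k$, the error term need not vanish. In fact the abstract recursion is satisfied by $I(N)=\varepsilon N$ for every $\varepsilon>0$, so no uniform bound (not even with a worse constant) can follow from it; the promised "short bookkeeping argument with a slightly larger truncation parameter" does not exist in the form suggested.

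The missing idea is to iterate the \emph{operator} while keeping the truncation level fixed: $\nu$ is also $\mu^{*n}$-stationary, so $\int F_N\,\mathrm{d}\nu = \int A_\mu^n F_N\,\mathrm{d}\nu$ for every $n$, and $A_\mu^n F_N \le \min\{A_\mu^n F,\,N\} \le \min\{c^nF + b/(1-c),\,N\}$ by iterating \eqref{eq:contraction F topological}. Letting $n\to\infty$ with dominated convergence (the integrand is bounded by $N$ and tends pointwise to $\min\{b/(1-c),N\}$ since $c^nF(z)\to 0$) gives $\int F_N\,\mathrm{d}\nu \le b/(1-c)$ uniformly in $N$, hence $\int F\,\mathrm{d}\nu \le b/(1-c)$ by monotone convergence, and Markov yields \eqref{eq:bound on prob topological}. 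Be aware that this repaired route is genuinely different from (and stronger than) the paper's: the paper never claims $F\in L^1(\nu)$; instead it exhausts the $\sigma$-compact space $Z$ by compacta $Z_n$ on which the continuous $F$ is bounded, decomposes $\nu=\mu_m*\nu_n+\mu_m*(\nu-\nu_n)$ using the Ces\`aro averages $\mu_m$, passes to weak-$*$ limits $\nu_n^\infty,\eta_n^\infty$, and applies Markov to $\nu_n^\infty$ while controlling the error by $\eta_n^\infty(Z)\to 0$. Your use of $\sigma$-compactness ("inner regularity") is also off the mark — in the paper it is what guarantees $\sup_{Z_n}F<\infty$; in the corrected truncation argument neither $\sigma$-compactness nor continuity of $F$ is actually needed.
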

\begin{proof}
 Consider the sequence of probability measures  $\mu_m =   \frac{1}{m} \sum_{i=1}^{m } \mu^{*i}$ on the   group $G$   for all $m \in \NN$. Note that the probability measure $\nu$ is $\mu_m$-stationary for all $m \in \NN $, namely $\mu_m * \nu = \nu$.

Since the space $Z$ is assumed to be $\sigma$-compact it is possible to write $Z = \bigcup_{n\in\NN} Z_n$ where each $Z_n$ is a compact subspace and $Z_n \subset Z_{n+1}$ for all $n\in \NN$. 
Fix an index $n \in \NN$ and denote   $\nu_n = \nu_{|Z_n}$.  Decompose the probability  measure $\nu$ as
\begin{equation}
\label{eq:decomposition measures}
 \nu = \mu_m * \nu = \mu_m * \nu_n + \mu_m * (\nu - \nu_n).
 \end{equation}
 
Up to passing twice to a subsequence,  we may assume that $\mu_m * \nu_n\xrightarrow{m \to \infty} \nu_n^\infty$ and $\mu_m * (\nu - \nu_n) \xrightarrow{m \to\infty} \eta_n^\infty$ in the weak-$*$ topology for some  positive $\mu$-stationary measures $\nu_n^\infty$ and $\eta_n^\infty$    on the   space $Z$. Generally speaking, the Portmanteau theorem says that $\nu_n^\infty(Z) \le \nu_n(Z)$ and $\eta_n^\infty(Z) \le (\nu-\nu_n)(Z) = 1-\nu_n(Z)$.  Note however that passing to the limit in Equation (\ref{eq:decomposition measures})  gives $\nu = \nu_n^\infty + \eta_n^\infty$. Therefore it must  be the case  that $\nu_n^\infty(Z) = \nu_n(Z)$ and $\eta_n^\infty(Z) = 1- \nu_n(Z)$.  In particular
\begin{equation}
\label{eq:inf is 0}
\inf_n \eta_n^\infty(Z)  = 1- \sup_n \nu_n^\infty(Z)= 1- \sup_n \nu_n (Z) = 0.
\end{equation}

Clearly $\sup_{Z_n} F < \infty$ as the subspace $Z_n$ is compact and the function $F$ is continuous.  Iterating Equation (\ref{eq:contraction F topological}) gives  
\begin{equation}
A_\mu^i F (z) \le \frac{b}{1-c} + c^i F(z)    \le  \frac{b}{1-c} + \sup_{Z_n} F \cdot c^i 
\end{equation}
for all points $z \in Z_n$. Therefore
\begin{align}
\begin{split}
\int_Z F \mathrm{d}\nu^\infty_n &= 
\lim_{m \to\infty} \int_Z F \; \mathrm{d}  (\mu_m * \nu_n) = 
\lim_{m\to\infty} \frac{1}{m} \sum_{i=1}^m \int_Z A_\mu^i F \; \mathrm{d} \nu_n \le \\
&\le \frac{b}{1-c} + \sup_{Z_n} F \cdot \lim_{m\to\infty} \frac{1}{m} \sum_{i=1}^m c^i   = \frac{b}{1-c}.
\end{split}
\end{align}
 Markov's inequality applied with respect to the  measure $\nu_n^\infty$ for each $n \in \NN$ and combined with Equation (\ref{eq:inf is 0}) implies that
\begin{multline}
\begin{split}
 \nu (\{z \in Z \: : \: F(z) \ge M\}) = \inf_n\; (\nu_n^\infty + \eta_n^\infty)(\{z \in Z \: : \: F(z) \ge M\})   \le \\
\le \frac{b}{(1-c)M}  + \inf_n \eta_n^\infty(Z)   
 = \frac{b}{(1-c)M}  
 \end{split}
\end{multline}
for all $M > 0$ as required.
\end{proof}

\begin{remark*}
It is possible to prove an analog of  Lemma \ref{lem:bound on contracting function}   in the measurable setting  and without assuming the continuity of the function $F$. This alternative   relies on the von Neumann ergodic theorem and requires     the averaging operator $A_\mu$ to admit no non-trivial invariant vectors. 
\end{remark*}

\subsection*{From expansion to contraction}
Let $f : Z \to \left(0,\infty\right)$ be a non-negative continuous function.
  Our    goal  is to show that, under suitable assumptions,   if $f$ is pointwise expanded by the averaging operator $A_\mu$ on some subset of large measure, then the function $F = f^{-\delta}$  is  pointwise contracted by $A_\mu$ for  some sufficiently small exponent $\delta > 0$.  We begin with an elementary calculation.

\begin{calc}
	\label{calc:calculus}
	Let $a_1 > 1 >a_2 > 0 $ and   $0 < p < 1$. The real function $\varphi$ given by
	\begin{equation}
	\label{eq:phi}
	\varphi(\delta) = p a_1 ^{-\delta} + (1-p) a_2^{-\delta}
	\end{equation}
has a global minimum point at 
	\begin{equation}
\label{eq:delta global min}
\delta_0 = \delta_0(a_1,a_2;p) = -\frac{\ln\left(- \frac{1-p}{p} \frac{\ln a_2}{\ln a_1} \right)}{\ln\left(\frac{a_1}{a_2}\right)}.
\end{equation}
Moreover if
	\begin{equation}
	\label{eq:balance}
	(1-p) \ln \frac{1}{a_2} < p \ln a_1
	\end{equation}
	then  $\delta_0 > 0$ and $\varphi(\delta_0) < 1$.
\end{calc}

\begin{proof}
Note that $\varphi(0) = 1$ and $\lim_{\delta \to \pm \infty} \varphi(\delta) = +\infty$.
	The derivative $\varphi'$ of the function $\varphi$ is given by
	\begin{equation}
	\label{eq:phi prime}
	\varphi'(\delta) = - p \ln a_1 \cdot a_1^{-\delta} - (1-p) \ln a_2 \cdot a_2 ^{-\delta}.
	\end{equation} 
A real number $\delta_0$ is a critical point of the function $\varphi$ if and only if $\varphi'(\delta_0) = 0$. This condition is equivalent to 
	\begin{equation}
	\left(\frac{a_1}{a_2}\right)^{-\delta_0} = - \frac{1-p}{p} \frac{\ln a_2}{\ln a_1} > 0.
	\end{equation}
	Therefore $\varphi$ has a unique critical point  $\delta_0 = \delta(a_1,a_2;p)$ whose exact value is given by Equation (\ref{eq:delta global min}). As $a_1 / a_2 > 1$ this critical point  satisfies $\delta_0 > 0$ if and only if 
	$$ - \frac{1-p}{p} \frac{\ln a_2}{\ln a_1} < 1 \quad \Leftrightarrow \quad  -(1-p) \ln a_2 < p \ln a_1.$$
	Moreover, in this case $\varphi(\delta_0) < \varphi(0) = 1$.
\end{proof}

\begin{prop}
\label{prop:from expansion to contraction} 
	Let $a_1 > 1 >a_2 > 0$ and $0 < p < 1$ be such that Equation (\ref{eq:balance}) holds. Let $\rho_0 > 0$ be such that   every point $z \in Z$ satisfies
\begin{enumerate}
\item if $f(z) < \rho_0$ then the subset
%\begin{equation}
%\label{eq:Lz}
$$L_z =  \{g \in G \: : \: f(gz) \ge a_1 f(z)  \}$$
%\end{equation}
satisfies $\mu(L_z) \ge p$ and
\item $f(gz) \ge a_2 f(z)$ holds for $\mu$-almost every element $g \in G$.
\end{enumerate}
Then there are constants $0 < c < 1$ and $b > 0$ such that
\begin{equation}
\label{eq:conclusion on contraction}
A_\mu f ^{-\delta} \le c f^{-\delta} + b
\end{equation} 
where $ \delta = \delta_0(a_1,a_2;p)$ is as given  by Equation (\ref{eq:delta global min}).
\end{prop}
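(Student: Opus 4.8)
The plan is to establish Equation~(\ref{eq:conclusion on contraction}) as a pointwise inequality on $Z$, distinguishing the points where $f(z)$ lies below the threshold $\rho_0$ from those where it does not. Throughout I fix the exponent $\delta = \delta_0(a_1,a_2;p)$; by Calculation~\ref{calc:calculus} the assumption~(\ref{eq:balance}) guarantees both $\delta > 0$ and $\varphi(\delta) < 1$, so I may set $c = \varphi(\delta) \in (0,1)$.

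\emph{The small values of $f$.} Suppose $f(z) < \rho_0$. I split the integral defining $A_\mu f^{-\delta}(z)$ over the set $L_z$ and its complement. On $L_z$ one has $f(gz) \ge a_1 f(z)$ by hypothesis~(1), and on all of $G$ one has $f(gz) \ge a_2 f(z)$ for $\mu$-almost every $g$ by hypothesis~(2); since $\delta > 0$ these yield $f(gz)^{-\delta} \le a_1^{-\delta} f(z)^{-\delta}$ on $L_z$ and $f(gz)^{-\delta} \le a_2^{-\delta} f(z)^{-\delta}$ for $\mu$-almost every $g \in G \setminus L_z$. Hence
\begin{equation*}
A_\mu f^{-\delta}(z) \;\le\; \bigl( \mu(L_z)\, a_1^{-\delta} + (1 - \mu(L_z))\, a_2^{-\delta} \bigr)\, f(z)^{-\delta}.
\end{equation*}
The affine function $q \mapsto q\, a_1^{-\delta} + (1-q)\, a_2^{-\delta}$ has slope $a_1^{-\delta} - a_2^{-\delta}$, which is negative because $a_1 > 1 > a_2$ forces $a_1^{-\delta} < 1 < a_2^{-\delta}$; since $\mu(L_z) \ge p$, the bracketed quantity is therefore at most $p\, a_1^{-\delta} + (1-p)\, a_2^{-\delta} = \varphi(\delta) = c$. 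Thus $A_\mu f^{-\delta}(z) \le c\, f^{-\delta}(z)$ on the set $\{f < \rho_0\}$.

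\emph{The large values of $f$.} Suppose $f(z) \ge \rho_0$. Here hypothesis~(2) alone gives $f(gz) \ge a_2 f(z) \ge a_2\rho_0$ for $\mu$-almost every $g$, so $f(gz)^{-\delta} \le (a_2\rho_0)^{-\delta}$ $\mu$-almost everywhere; integrating against the probability measure $\mu$ gives $A_\mu f^{-\delta}(z) \le (a_2\rho_0)^{-\delta}$. Setting $b = (a_2\rho_0)^{-\delta}$ and combining the two regimes — bounding by $c f^{-\delta} \le c f^{-\delta} + b$ on $\{f < \rho_0\}$, and by $b \le c f^{-\delta} + b$ on $\{f \ge \rho_0\}$ — produces Equation~(\ref{eq:conclusion on contraction}) with these values of $c$ and $b$.

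I do not expect a genuine obstacle: the proof is a two-case estimate of a single integral. The only points needing attention are the monotonicity of the affine function of $q$ (which is exactly what licenses replacing $\mu(L_z)$ by its guaranteed lower bound $p$), the observation that hypothesis~(2) already controls the integrand both on the complement of $L_z$ and on the region $\{f \ge \rho_0\}$, and the remark that $A_\mu f^{-\delta}(z)$ is a priori a well-defined element of $[0,\infty]$ by continuity of $f$, with the computation above showing it is in fact finite.
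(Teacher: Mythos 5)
Your proposal is correct and follows essentially the same route as the paper: fix $\delta = \delta_0(a_1,a_2;p)$ and $c = \varphi(\delta) < 1$ via Calculation \ref{calc:calculus}, split $Z$ at the threshold $\rho_0$, bound $A_\mu f^{-\delta}$ by $c f^{-\delta}$ on $\{f < \rho_0\}$ by splitting the integral over $L_z$ and its complement, and by $b = (a_2\rho_0)^{-\delta}$ on $\{f \ge \rho_0\}$, then combine using positivity of $f^{-\delta}$. Your explicit remark about the monotonicity in $\mu(L_z)$ (licensed by $a_1^{-\delta} < a_2^{-\delta}$) merely spells out a step the paper leaves implicit.
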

\begin{proof}
Let $\delta = \delta_0(a_1,a_2;p)$ be as in Equation (\ref{eq:delta global min}).
Take the constant $c$ to be
$$c = p a_1 ^{-\delta} + (1-p) a_2^{-\delta}.$$
According to Calculation \ref{calc:calculus} this constant satisfies $0 < c < 1$.

Consider the   partition of the topological space $Z$ into a pair of   sublevel and superlevel sets
 $$Z = Z_{< \rho_0} \amalg Z_{\ge \rho_0}$$
where 
$$ Z_{<\rho_0 } = \{z \in Z \: : \: f(z) < \rho_0\} \quad \text{and} \quad Z_{\ge \rho_0 } = \{z \in Z \: : \: f(z) \ge \rho_0\}.$$
The subsets  $Z_{<\rho_0}$ and $Z_{\ge \rho_0}$ are respectively  open and closed in the space $Z$.

On the one hand, take the constant $ b = (a_2 \rho_0)^{-\delta}$ and observe that  every point $z \in Z_{\ge \rho_0}$ and  $\mu$-almost every element $g \in G$ satisfy
$$f(gz) \ge a_2 f(z)   \quad \Rightarrow \quad f^{-\delta}(gz) \le (a_2  f(z) )^{-\delta} \le (a_2 \rho_0)^{-\delta} = b.$$  

On the other hand, observe that  every point $ z \in Z_{<\rho_0 }$ satisfies
\begin{align*}
 A_\mu f^{-\delta}(z) &= \int_G f^{-\delta}(gz) \; \mathrm{d} \mu(g) = \\
 &= \int_{L_z} f^{-\delta}(gz) \; \mathrm{d} \mu(g) + \int_{G\setminus L_z} f^{-\delta}(gz) \; \mathrm{d} \mu(g) \le   \\
 &\le p a_1^{-\delta} f^{-\delta}(z) + (1-p) a_2^{-\delta} f^{-\delta}(z) = c f^{-\delta}(z).
\end{align*}

We conclude that the two inequalities 
$ A_\mu f^{-\delta} \le  b $ and $ A_\mu f^{-\delta} \le  c f^{-\delta}  $  hold true   on the two   subsets $Z_{\ge \rho_0}$ and $Z_{< \rho_0}$ respectively.
 Since $f^{-\delta} \ge 0$ it follows that the inequality given in Equation (\ref{eq:conclusion on contraction}) holds at  every point $z \in Z$.
\end{proof}

\subsection*{The limiting behavior of $\delta$}

We estimate  the asymptotic behavior of  the function $\delta_0(a_1,a_2;p)$ within a certain regime of the parameters $a_1, a_2$ and $p$ that   will be used below, see e.g.  Proposition  \ref{prop:existence of expanding element} and Probability \ref{prop:prob integral expands}. This is  needed in order to compute the explicit value of the constant $\delta$ appearing in our main results.

\begin{calc}
	\label{calc:existence of good delta_0}
Let   $h, \alpha, \zeta, a_0 > 0$ as well as $a_1 > 1$ be  some fixed constants. For all $\lambda > 1$ denote
	%\begin{equation}
	%\label{eq:numbers}
	\begin{equation}
	\label{eq:a1 a2 p}
	a_{2,\lambda} = a_0 \lambda^{-h}  \quad \text{and} \quad  p_\lambda = 1-\zeta \lambda^{-\alpha}.
	\end{equation}
	% \end{equation}
Then for all sufficiently large $\lambda >1 $ we have that Equation (\ref{eq:balance}) holds and the critical point
	\begin{equation}
	\label{eq:delta_lambda_def}
	\delta_\lambda = \delta_0(a_1,a_{2, \lambda};p_\lambda)
	\end{equation}
	  determined by Equation (\ref{eq:delta global min}) satisfies $\delta_\lambda > 0$.
%	 we have that  and 
%	\begin{equation}
%	\label{eq:less than one}
%	p_{\lambda } a_1^{-\delta_\lambda } + (1-p_{\lambda})a_{2,\lambda}^{-\delta_\lambda} < 1.
%	\end{equation}
	Moreover
	\begin{equation}
	\label{eq:explicit formula for delta as a function of lambda}
	\lim_{\lambda \to +\infty} \delta_\lambda = \frac{\alpha}{h}.
	\end{equation}
\end{calc}

\begin{proof} 
	Recall that Equation  (\ref{eq:balance})  requires   
	\begin{equation}
	\label{eq:numbers inequality}
	(1-p_\lambda) \ln \frac{1}{a_{2,\lambda}} < p_\lambda \ln a_1.
	\end{equation}
	Consider  the behavior of Equation (\ref{eq:numbers inequality}) in the limit as $\lambda \to +\infty$. Note that  
	$$\lim_{\lambda \to +\infty} p_\lambda \ln a_1 = \ln a_1.$$
	On the other hand
	$$ \lim_{\lambda \to \infty} (1-p_\lambda) \ln(1/a_{2,\lambda}) = 
	\lim_{\lambda \to \infty} \zeta \lambda^{-\alpha} \ln(a_0 \lambda^{h} )  = 0.$$
	Therefore the inequality given in  Equation (\ref{eq:numbers inequality}) is satisfied and $a_{2,\lambda} < 1$ for all $\lambda$ sufficiently large. As   shown in Calculation \ref{calc:calculus}, it follows  that  
	\begin{equation}
	\label{eq:delta_0 good}
	\delta_\lambda = - \ln_{ \left(\frac{a_1}{a_{2,\lambda}}\right) } 
	\left( - \frac{1-p_{\lambda}}{p_{\lambda}} \frac{\ln a_{2,\lambda}}{\ln a_1}\right) > 0
	\end{equation}
	for all $\lambda$ sufficiently large.	
	Lastly, the following computation 
	\begin{equation}
	\lim_{\lambda \to \infty} \delta_\lambda =\lim_{\lambda \to \infty}  -\frac{\ln \left(- \frac{1-p_{\lambda}}{p_{\lambda}} \frac{\ln a_{2,\lambda}}{\ln a_1}\right) }
	{\ln (\frac{a_1}{a_{2,\lambda}})} = 
	\lim_{\lambda \to \infty} - \frac{\ln\left(- \frac{ \zeta \lambda^{-\alpha}  \ln (a_0 \lambda^{-h}) }{\ln a_1} \right)}{\ln\left( a_1a_0^{-1} \lambda^{h}  \right)} =
	%	\lim_{\lambda \to \infty} \frac{\alpha \ln \lambda - \ln \left(\frac{\zeta h}{\ln a_1}\right) -\ln \ln a_1\lambda}{h \ln \lambda + (h+1) \ln a_1} = 
	\frac{\alpha}{h}
	\end{equation}
	establishes Equation (\ref{eq:explicit formula for delta as a function of lambda}).
\end{proof}

\section{The order of an analytic function on a manifold}
\label{sec:order}

Let $k$ be a local field. Consider a    $k$-analytic\footnote{ We refer the reader to \cite{bourbaki2007varietes} and \cite[Part II]{serre2009lie} for basic information on $k$-analytic functions and manifolds.}    manifold $M$ of dimension $d \in \NN$.  
Let   $\mathcal{A}(M,k^m)$ denote the $k$-vector space of all $k$-analytic maps $f : M \to k^m$ for each dimension $m\in\NN$.

The \emph{order} $\mathrm{ord}_p f$ of any given $k$-analytic function $f \in \mathcal{A}(M,k)$ at the point $p \in M$ is defined as follows. Fix an arbitrary local chart  $\varphi : U \to V$ where $p \in U \subset M$ and $0 \in V \subset k^d$ are open subsets such that $\varphi(p) = 0$. The $k$-analytic function $f$ can be written  in local coordinates as
$$ (f \circ \varphi^{-1})(x) = \sum_\beta c_\beta x^\beta \quad \forall x \in V$$
where  the $\beta$'s are multi-indices and   the coefficients $c_\beta$ belong to the local field $k$. Then
$$\mathrm{ord}_p f = \inf_\beta \{ |\beta| \: : \: c_\beta \neq 0 \}. $$
If the function $f$ is identically zero on a neighborhood of the point $p$ then all the coefficients $c_\beta$ are zero and  we set $\mathrm{ord}_p f = \infty$. The fact that transition maps are $k$-analytic isomorphisms implies that the definition of $\mathrm{ord}_p f$ is independent of the local chart.

The \emph{order} of any $k$-analytic function $ f = (f_1,\ldots,f_m) \in \mathcal{A}(M,k^m)$ is   
\begin{equation}
\mathrm{ord}_p(f) = \inf_{i \in \{1,\ldots,m\} } \mathrm{ord}_p (f_i).
\end{equation}
Let $\mathcal{F} \subset \mathcal{A}(M,k^m)$ be a family of $k$-analytic functions. The \emph{order of the family} $\mathcal{F}$ at the point $ p\in M$ is  
\begin{equation} \mathrm{ord}_p \mathcal{F} = \sup_{f \in \mathcal{F}} \mathrm{ord}_p f.\end{equation}
Moreover denote
\begin{equation}  \mathrm{ord}_M \mathcal{F} = \sup_{p \in M} \mathrm{ord}_p \mathcal{F}.\end{equation}

\begin{remark*} The order of the product $fg$ of any pair of $k$-analytic functions $f, g \in \mathcal{A}(M,k)$ satisfies 
\begin{equation}
\label{eq:order product}
\mathrm{ord}_p(fg) = \mathrm{ord}_p(f) + \mathrm{ord}_p(g) \quad \forall p \in M.
\end{equation}
\end{remark*}

\begin{lemma}
\label{lem:order on embedded submanifolds}
Let $N$ be an embedded $k$-analytic submanifold of $M$. Then every $k$-analytic function $ f\in\mathcal{A}(M,k^m)$ satisfies
$ \mathrm{ord}_p f \le \mathrm{ord}_p {f_{|N}}$ for all points $ p \in N$.
\end{lemma}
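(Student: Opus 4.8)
The plan is to reduce everything to the scalar case and then to a purely local computation in charts adapted to the submanifold. First I would observe that since $\mathrm{ord}_p$ of a vector-valued function is the infimum of the orders of its components, and restriction commutes with taking components, it suffices to prove the inequality $\mathrm{ord}_p f \le \mathrm{ord}_p (f_{|N})$ for a single scalar-valued $k$-analytic function $f \in \mathcal{A}(M,k)$. So fix such an $f$ and a point $p \in N$.

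Next I would choose a good chart. Because $N$ is an embedded $k$-analytic submanifold of $M$ of some dimension $e \le d$, there is a local chart $\varphi : U \to V \subset k^d$ of $M$ around $p$ with $\varphi(p) = 0$ in which $N \cap U$ corresponds to the coordinate slice $\{x_{e+1} = \cdots = x_d = 0\}$; this is the local normal form for embedded analytic submanifolds (see \cite{bourbaki2007varietes, serre2009lie}). The restriction of this chart to the slice is then a chart for $N$ around $p$. In these coordinates write the power series expansion $(f\circ\varphi^{-1})(x) = \sum_\beta c_\beta x^\beta$, so that $\mathrm{ord}_p f = \inf\{|\beta| : c_\beta \neq 0\}$. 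The key point is that the expansion of $f_{|N}$ in the induced chart is obtained simply by setting $x_{e+1} = \cdots = x_d = 0$, i.e. by discarding every monomial $x^\beta$ for which $\beta$ has a nonzero entry in one of the last $d - e$ coordinates and keeping the rest unchanged.

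From this description the inequality is immediate: every multi-index $\beta$ that survives the restriction (those supported on the first $e$ coordinates with $c_\beta \neq 0$) already appears in the expansion of $f$ itself with the same degree $|\beta|$, so the infimum defining $\mathrm{ord}_p(f_{|N})$ is taken over a subset of the degrees $|\beta|$ occurring in the expansion of $f$; hence $\mathrm{ord}_p f \le \mathrm{ord}_p(f_{|N})$. (The degenerate case where $f_{|N}$ vanishes identically near $p$ gives $\mathrm{ord}_p(f_{|N}) = \infty$ and the inequality holds trivially.) The only genuine subtlety — and the one step I would write out with care — is justifying that restricting the power series and then reading off coefficients really does compute the order of $f_{|N}$ with respect to the induced chart; this is exactly where one needs the chart to be in the adapted slice form, and where one invokes the chart-independence of $\mathrm{ord}_p$ established in the definition above. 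Everything else is bookkeeping.
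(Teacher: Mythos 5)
Your proposal is correct and follows essentially the same route as the paper: both arguments use an adapted (slice) chart for the embedded submanifold, so that the power series of $f_{|N}$ is obtained from that of $f$ by setting the transverse coordinates to zero, whence every degree occurring in the expansion of $f_{|N}$ already occurs in the expansion of $f$ and the inequality $\mathrm{ord}_p f \le \mathrm{ord}_p f_{|N}$ follows; the paper merely phrases this by extending a chart of $N$ to a product chart of $M$ (citing Serre) rather than restricting an adapted chart of $M$ to $N$, which is the same chart-compatibility fact. Your preliminary reduction to the scalar case and your remark on the degenerate case $\mathrm{ord}_p f_{|N} = \infty$ are harmless bookkeeping and do not change the argument.
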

\begin{proof}
Denote $e = \dim_k N$. Consider an arbitrary point $ p \in N$. By   assumption there is an relative neighborhood $p \in U_1 \subset N$ and a  local chart $\varphi_1 : U_1 \xrightarrow{\cong} V_1 \subset k^{e}$ with $\varphi_1(p) = 0$ such that the power series expressing the function $f \circ \varphi_1^{-1}$ at the point $0$ admits a non-zero monomial of degree  $\mathrm{ord}_p f_{|N}$. The $k$-vector space $k^d$ admits a direct sum decomposition $k^d = k^e \oplus k^{d-e}$ such that, up to replacing $U_1$ and $V_1$ by  suitable smaller    neighborhoods, there is an open neighborhood $p \in  U \subset M$  and a local chart $\varphi : U \to k^d$ with $\varphi(p) = 0$, $\varphi(U) = V_1 \times V_2$ for some  subset $V_2 \subset k^{d-e}$   so that $U_1 \subset U$ and $\varphi_{|U_1} = \varphi_1$  \cite[Part II, Chapter III, \S10.1]{serre2009lie}. It is clear that in this situation $\mathrm{ord}_p f \le \mathrm{ord}_p f_{|N}$. As the point $p\in N$ was arbitrary the conclusion follows.
\end{proof}

\subsection*{Strong order}

In working with matrix coefficients of unipotent groups we will find it convenient to use the following sharper notion.

\begin{definition}
\label{def:strong order}
The $k$-analytic function $f \in \mathcal{A}(M,k^m)$ has \emph{strong order} at most $d \in \NN$ at the point $p\in M$ is there exists an embedded $k$-analytic submanifold  $N$ admitting a local chart $\varphi : U \to V$ where $p \in U \subset N$ and $V \subset k^{\dim_k N}$ are open subsets and such that $ f \circ \varphi^{-1 } : V \to k^m $ is a non-zero \emph{polynomial} mapping of degree at most $d$.  
\end{definition}

 Lemma \ref{lem:order on embedded submanifolds} implies that strong order gives an upper bound on order. Namely the following is true.
 
\begin{lemma}
\label{lem:strong order bounds order}
If $f \in \mathcal{A}(M,k^m)$ has strong order at most $d \in \NN$ at the point $ p \in M$ then $\mathrm{ord}_p f \le d$.
\end{lemma}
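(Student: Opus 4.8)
The plan is to reduce the claim to Lemma~\ref{lem:order on embedded submanifolds} via the hypothesis of Definition~\ref{def:strong order}. First I would unwind the definition of strong order: by assumption there is an embedded $k$-analytic submanifold $N \subset M$ with $p \in N$, together with a local chart $\varphi : U \to V$, $p \in U \subset N$ open, $V \subset k^{\dim_k N}$ open, such that $f \circ \varphi^{-1} : V \to k^m$ is a non-zero polynomial mapping of degree at most $d$. I may assume without loss of generality that $\varphi(p) = 0$ after composing $\varphi$ with a translation of $k^{\dim_k N}$, which preserves being a polynomial mapping of the same degree.

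Next I would compute $\mathrm{ord}_p(f_{|N})$ directly from this polynomial description. Since $f \circ \varphi^{-1}(x) = \sum_\beta c_\beta x^\beta$ is a polynomial of degree at most $d$, every multi-index $\beta$ with $c_\beta \neq 0$ has $|\beta| \le d$; and because the mapping is non-zero there is at least one such $\beta$. Hence
\begin{equation}
\mathrm{ord}_p(f_{|N}) = \inf_\beta \{ |\beta| : c_\beta \neq 0 \} \le d,
\end{equation}
using that the order is computed with respect to the chart $\varphi$ — which is legitimate since the order is chart-independent.

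Finally I would invoke Lemma~\ref{lem:order on embedded submanifolds}, which applies precisely because $N$ is an embedded $k$-analytic submanifold of $M$ and $p \in N$: it gives $\mathrm{ord}_p f \le \mathrm{ord}_p(f_{|N})$. Combining this with the previous bound yields $\mathrm{ord}_p f \le \mathrm{ord}_p(f_{|N}) \le d$, which is the assertion. There is essentially no obstacle here — the lemma is a one-line consequence of the two facts just assembled — and indeed the excerpt already signals this by saying ``Lemma~\ref{lem:order on embedded submanifolds} implies that strong order gives an upper bound on order.'' The only point requiring the tiniest care is making sure the chart used to read off the polynomial coefficients is the one relative to which $\mathrm{ord}_p(f_{|N})$ is evaluated, which is fine by chart-independence of the order.
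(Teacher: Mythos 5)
Your argument is correct and is exactly the route the paper intends: the paper gives no separate proof beyond the remark that Lemma \ref{lem:order on embedded submanifolds} yields the bound, and your write-up simply makes explicit the two ingredients (the polynomial expansion in the chart gives $\mathrm{ord}_p f_{|N} \le d$, and the lemma gives $\mathrm{ord}_p f \le \mathrm{ord}_p f_{|N}$). Nothing further is needed.
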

%\begin{proof}
%TODO
%
%\hrulefill
%
% There is a neighborhood $V$ of $g$ with $ V \subset K$ with and a coordinate chart $\varphi : V \to k^{\dim_k G}$ such that $H$ being identified with a direct summand of $k^{\dim_k G}$ and $H_0$ being identified with a neighborhood  of the identity in that direct summand satisfies $\varphi \circ F = \mathrm{Id}_{H_0}$ \cite[Part II.Chapter III.10]{serre2009lie}. 
%
%\end{proof}

%\begin{lemma}
%\label{lem:order of product action}
% Let $U_1,\ldots,U_N \le \mathrm{GL}(k^m)$ be a family of $k$-analytic subgroups for some $N \in \NN$. Let $\nu : \mathrm{GL}(k^m) \times k^m \to k^m$ be the natural action map. Consider the $k$-analytic map 
% $$\nu_N : \prod_{i=1}^N U_i \times V \to V, \quad (u_1,\ldots,u_N), v \mapsto (u_1 \cdots u_N) v.$$
% Then 
% $$ \mathrm{ord}_{\prod_{i=1}^N U_i \times (V \setminus \{0\})} \nu_N \le
%  \sum_{i=1}^N \mathrm{ord}_{U_i \times (V \setminus \{0\})} \nu_{|U_i \times (V \setminus \{0\}) }.$$
%\end{lemma}
%\begin{proof}
%By induction
%\end{proof}

\subsection*{Order of matrix coefficients of compact Lie groups}

 Let $V$ be a real   finite dimensional    inner product space. Let $K$ be a  compact connected subgroup of the orthogonal group $\mathrm{O}(V)$. In particular $K$ is a real Lie group. Let $T$ be a maximal compact torus of the group $K$ with real Lie algebra $\mathfrak{t} = \mathrm{Lie}(T)$.

\begin{lemma}
\label{lem:order on torus}
Let $\Omega  \subset i\mathfrak{t}^*$ be a finite set of analytically integral forms. Let $c_\omega \in \CC$ be an arbitrary coefficient for each $\omega \in \Omega$. Consider the real analytic function
$ \zeta \in \mathcal{A}(T,\CC)$ given by
\begin{equation}
\zeta(\exp(X)) = \sum_{\omega\in\Omega} c_\omega e^{\omega(X)} \quad \forall X \in \mathfrak{t}.
 \end{equation}
If the function $\zeta$ is not identically zero on the torus $T$ then 
$ \mathrm{ord}_T \zeta < |\Omega|$.
\end{lemma}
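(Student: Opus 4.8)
The statement is essentially that a nonzero exponential polynomial in the variables of a torus, built from $|\Omega|$ frequencies, cannot vanish to order $|\Omega|$ at a point. The plan is to reduce to the one-dimensional case and then to a classical fact about ordinary (single-variable) exponential polynomials, namely that $\sum_{j=1}^{n} c_j e^{\lambda_j t}$ with distinct $\lambda_j$ and not all $c_j$ zero has a zero of order at most $n-1$ at any point.

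\textbf{Step 1: restrict to a line.} Choose a vector $X_0 \in \mathfrak{t}$ so that the linear functionals $\omega \mapsto \omega(X_0)$, $\omega \in \Omega$, take pairwise distinct values; this is possible because $\Omega$ is finite and the finitely many hyperplanes $\{X : (\omega - \omega')(X) = 0\}$ (for $\omega \neq \omega'$ in $\Omega$) do not cover $\mathfrak{t}$, as the $\omega - \omega'$ are nonzero forms (the $\omega$ being distinct). Consider the real-analytic curve $t \mapsto \exp(t X_0)$ through the identity of $T$ and pull $\zeta$ back along it: $g(t) = \zeta(\exp(t X_0)) = \sum_{\omega \in \Omega} c_\omega e^{\omega(X_0) t}$. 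By Lemma \ref{lem:order on embedded submanifolds} applied to the embedded one-dimensional submanifold $\{\exp(t X_0)\}$ — or rather its analogue for the order at the identity — we have $\mathrm{ord}_e \zeta \le \mathrm{ord}_0 g$, and since left translation on $T$ is a real-analytic isomorphism, the order of $\zeta$ at an arbitrary point $p \in T$ equals the order at $e$ of the translate, so it suffices to bound $\mathrm{ord}_0 g$ for the function $g$ obtained after translating $\zeta$; translation only multiplies each $c_\omega$ by a nonzero constant $e^{\omega(X_p)}$, so it does not affect whether $g$ is identically zero.

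\textbf{Step 2: the single-variable exponential polynomial bound.} Now $g(t) = \sum_{\omega \in \Omega} c'_\omega e^{\mu_\omega t}$ with the $\mu_\omega = \omega(X_0)$ pairwise distinct complex numbers and not all $c'_\omega = 0$ (here using that $\zeta$, hence $g$ after the generic choice of $X_0$, is not identically zero; one should check that the curve was chosen so that $g \not\equiv 0$, which follows because an exponential polynomial with distinct frequencies vanishes identically only if all coefficients vanish). Such a $g$ satisfies a linear ODE of order $|\Omega|$ with constant coefficients whose characteristic polynomial is $\prod_{\omega}(x - \mu_\omega)$; a nonzero solution of such an ODE cannot vanish together with its first $|\Omega| - 1$ derivatives at any point (else it would be the zero solution by uniqueness). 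Hence $\mathrm{ord}_0 g \le |\Omega| - 1 < |\Omega|$. Combining with Step 1 gives $\mathrm{ord}_T \zeta = \sup_{p \in T} \mathrm{ord}_p \zeta \le |\Omega| - 1 < |\Omega|$.

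\textbf{Main obstacle.} The only genuinely delicate point is the choice of $X_0$ ensuring simultaneously (a) the frequencies $\omega(X_0)$ are distinct and (b) the restriction $g$ is not identically zero; (a) is a generic (Zariski-open, nonempty) condition on $X_0$, and once (a) holds, $g \not\equiv 0$ is automatic from distinctness of frequencies and nonvanishing of the coefficient vector — so in fact (b) follows from (a), and the apparent obstacle dissolves. A secondary bookkeeping point is passing cleanly from "order at $e$" to "order at an arbitrary $p$" using that $\exp$ need not be injective on $\mathfrak{t}$ but left translation by $p$ is a real-analytic automorphism of $T$ fixing the relevant local structure; this is routine. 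No step requires more than elementary real-analytic function theory plus the classical Wronskian/ODE fact about exponential polynomials.
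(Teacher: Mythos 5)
Your proof is correct, but it takes a somewhat different route from the paper's. The paper works directly with the multivariable Taylor expansion of $\zeta$ at the point $t_0=\exp(X_0)$: it truncates the series below degree $|\Omega|$, observes that the degree-$l$ homogeneous part is $\frac{1}{l!}\sum_\omega \bar c_\omega\,\omega(X)^l$ with $\bar c_\omega=c_\omega e^{\omega(X_0)}$, picks a generic $Y\in\mathfrak{t}$ with $\omega_1(Y)\neq\omega_2(Y)$ for distinct $\omega_1,\omega_2\in\Omega$, and uses the Vandermonde determinant of the matrix $\bigl(\omega(Y)^l/l!\bigr)$ to conclude that the truncation is not the zero polynomial, hence $\mathrm{ord}_{t_0}\zeta<|\Omega|$. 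You instead restrict $\zeta$ to a short arc of the curve $t\mapsto p\exp(tX_0)$ with the same generic choice of direction (your $X_0$ plays exactly the role of the paper's $Y$), compare orders via Lemma \ref{lem:order on embedded submanifolds}, and then invoke the classical one-variable fact that $\sum_\omega c'_\omega e^{\mu_\omega t}$ with distinct frequencies and a nonzero coefficient vector vanishes to order at most $|\Omega|-1$, proved by ODE uniqueness. The two mechanisms are near-equivalent --- the Vandermonde determinant is the algebraic counterpart of your Wronskian/ODE nondegeneracy --- but your version buys a clean reduction to a single variable and a citation of a standard fact, at the cost of the (minor, and correctly handled) point that only a short arc of the one-parameter subgroup is an embedded submanifold, so the restriction lemma must be applied locally; the paper's version stays entirely inside the multivariate power series and needs no submanifold lemma in this proof. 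Your bookkeeping of the translation (replacing $c_\omega$ by the nonzero multiples $c_\omega e^{\omega(X_p)}$, so non-vanishing of $\zeta$ on $T$ passes to the restricted exponential polynomial) matches the paper's $\bar c_\omega$ step, and the degenerate cases $|\Omega|\le 1$ you leave implicit are indeed trivial.
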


The  function $\zeta$ is defined on the real Lie group $T$ and  is regarded as a real analytic   mapping into the two-dimensional real vector space $\CC  \cong \RR^2$.
\begin{proof}[Proof of Lemma \ref{lem:order on torus}]
Fix an arbitrary element $t_0 \in T$  with $t_0 = \exp(X_0)$ for some  $X_0 \in \mathfrak{t}$. We will compute  the order of the function $\zeta$ at the point $t_0$ and show that $\mathrm{ord}_{t_0} \zeta < |\Omega|$.

Choose    neighborhoods $t_0 \in U \subset T$ and   $0 \in V \subset \mathfrak{t}$ such that the mapping $\varphi : U \xrightarrow{\cong} V$ given by $\varphi : \exp (X_0 + X ) \mapsto X$ for all $X \in V$ is  a local chart. For a given element $t   \in U$ with $\varphi(t) = X \in V$ write
$$ \zeta(t) = \zeta(\exp(X_0 + X))=  
 \sum_{\omega\in\Omega} c_\omega e^{\omega(X_0)} \sum_{l=0}^\infty  \frac{ \omega(X)^l }{l!} =
  \sum_{l=0}^\infty \frac{1}{l!} \sum_{\omega \in\Omega} \bar{c}_\omega  \omega( X)^{l} $$
where $\bar{c}_\omega = c_\omega e^{\omega(X_0)}$. The part $\zeta_0$  of the real analytic function $\zeta$ consisting of all monomials with multi-index $\beta$ satisfying $|\beta| < |\Omega|$ equals
$$ \zeta_{0}(t) = \sum_{l=0}^{|\Omega|-1} \frac{1}{l!} \sum_{\omega \in \Omega} \bar{c}_\omega  \omega(  X)^{l}$$
for each given element $t = \exp(X_0 + X) \in U$ as above.

The subspace $\ker (\omega_{1}-\omega_{2})$ of the real Lie algebra  $\mathfrak{t}$ is proper for each pair of distinct forms $\omega_1,\omega_2 \in \Omega$.
Therefore  there is an element $Y \in V$ such that $\omega_{1}( Y ) \neq \omega_{2}(Y)$ for all such pairs of distinct indices. 
The   Vandermonde   determinant formula gives
$$ \det\left( \frac{\omega(Y)^l}{l!} \right)_{\substack{\omega \in \Omega \\ l \in \{0,\ldots,|\Omega|-1\}}} = 
\prod_{l=0}^{|\Omega|-1} \frac{1}{l!} \cdot \prod_{\substack{\omega_1, \omega_2 \in \Omega \\ \omega_1 \neq \omega_2} }  \left(\omega_{1}(Y) - \omega_{2}(Y) \right) \neq 0.$$
The non-vanishing of the above determinant implies   $\zeta_{0}(Y) \neq 0$ so that $\mathrm{ord}_{t_0} \zeta < |\Omega|$ as required. 
As the point $t_0 \in T$ in the above argument was arbitrary we   conclude that   $\mathrm{ord}_T f < |\Omega|$.
\end{proof}

Consider the Hermitian space $V_\CC = V \otimes \CC$ which is the complexification of the inner product space $V$. The maximal compact torus $T$ preserves the Hermitian form on $V_\CC$.   Let $\Omega = \Omega(T,V)$ be the weights  of the torus $T$ in its representation on the Hermitian space $V_\CC$. Every weight $\omega \in \Omega $ is an analytic integral form belonging to $i \mathfrak{t}^*$. Let 
$$V_\CC = \bigoplus_{\omega \in \Omega} W_\omega$$
 be the associated weight space decomposition satisfying
\begin{equation}
 \exp(X) w = e^{\omega(X) } w \quad \forall  \omega \in \Omega, w \in W_\omega,X \in \mathfrak{t}.
 \end{equation}

%Consider the complexification $T_\CC$ of the maximal compact torus $T$ with complex Lie algebra $\mathfrak{t}_\CC = \mathfrak{t} \otimes \CC = \mathrm{Lie}(T_\CC)$.  The Lie group $T_\CC$ is a subgroup of 

\begin{lemma}
\label{lem:matrix coef for compact}
Consider the matrix coefficient $\zeta_{u,v} \in \mathcal{A}(K,\RR)$ given by 
$$ \zeta_{u,v} : K \to V, \quad \zeta_{u,v}(g) = \left<gu,v\right> \quad \forall g \in K. $$
for some pair of vectors $u,v  \in V$. If $\zeta_{u,v}$ is not identically zero on the group $K$ then  $\mathrm{ord}_K \zeta_{u,v} < |\Omega(T,V)| \le \mathrm{dim}_\RR V$.
\end{lemma}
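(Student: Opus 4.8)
The goal is to bound the order of a single matrix coefficient $\zeta_{u,v}(g) = \langle gu, v\rangle$ on the compact connected group $K \subset \mathrm{O}(V)$, and the natural strategy is to reduce the problem to the maximal torus $T$ and invoke Lemma \ref{lem:order on torus}. First I would observe that, by Lemma \ref{lem:order on embedded submanifolds} applied to the embedded submanifold $T \hookrightarrow K$, it suffices to bound $\mathrm{ord}_T (\zeta_{u,v})_{|T}$ — except that this requires knowing $(\zeta_{u,v})_{|T}$ is not identically zero, which need not hold for a fixed pair $u,v$. So the cleaner route is to pass through a \emph{conjugate}: since $K$ is compact connected and $T$ is a maximal torus, $\bigcup_{g\in K} gTg^{-1} = K$, so for any point $k_0 \in K$ where I wish to estimate the order, I can write $k_0 = g t_0 g^{-1}$ with $t_0 \in T$, and then estimate the order of $\zeta_{u,v}$ at $k_0$ by estimating the order at $t_0$ of the function $t \mapsto \langle g t g^{-1} u, v\rangle = \langle t (g^{-1}u), g^{-1}v \rangle = \zeta_{g^{-1}u,\, g^{-1}v}(t)$, using that left/right translation by the analytic diffeomorphisms $g\cdot$ and $\cdot g^{-1}$ preserves order.

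\textbf{Key computation on the torus.} With $u' = g^{-1}u$ and $v' = g^{-1}v$, decompose $u' = \sum_{\omega} u'_\omega$ and $v' = \sum_\omega v'_\omega$ along the weight space decomposition $V_\CC = \bigoplus_{\omega\in\Omega} W_\omega$. Then for $X \in \mathfrak{t}$,
\begin{equation}
\zeta_{u',v'}(\exp X) = \langle \exp(X) u', v'\rangle = \sum_{\omega \in \Omega} e^{\omega(X)} \langle u'_\omega, v'_\omega\rangle_{V_\CC},
\end{equation}
which is exactly a function of the form treated in Lemma \ref{lem:order on torus} with $c_\omega = \langle u'_\omega, v'_\omega\rangle$ (here one must be slightly careful that the weight spaces for a compact torus come in conjugate pairs so that this pairing makes sense and the function is $\RR^2$-valued, but this is precisely the setup already fixed before Lemma \ref{lem:matrix coef for compact}). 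Lemma \ref{lem:order on torus} then gives $\mathrm{ord}_T \zeta_{u',v'} < |\Omega(T,V)|$ \emph{provided} this function is not identically zero on $T$.

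\textbf{Main obstacle.} The delicate point — and the one I expect to require real care — is the non-vanishing hypothesis: I need that if $\zeta_{u,v}$ is not identically zero on $K$, then for a suitable choice of conjugating element $g$, the restricted function $\zeta_{g^{-1}u, g^{-1}v}$ is not identically zero on $T$. This is a Zariski-density / irreducibility type argument: the set of $g \in K$ for which $\zeta_{g^{-1}u,g^{-1}v}|_T \equiv 0$ is a closed analytic condition, and if it held for all $g$ then $\zeta_{u,v}$ would vanish on $\bigcup_g gTg^{-1} = K$, a contradiction; alternatively, one fixes $k_0$ in the (open dense, by analyticity) set where $\zeta_{u,v}$ does not vanish identically near $k_0$, writes $k_0 = gt_0g^{-1}$, and notes that non-vanishing near $k_0$ forces non-vanishing of $\zeta_{g^{-1}u,g^{-1}v}$ near $t_0$ on all of $K$, hence in particular it is not identically zero, so Lemma \ref{lem:order on torus} applies and bounds $\mathrm{ord}_{t_0}$. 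Running this for every $k_0$ (handling the locus where $\zeta_{u,v}$ does vanish on a neighborhood separately, where the order is $\infty$ but one only needs the bound where it is finite — or rather, one only needs $\mathrm{ord}_K \zeta_{u,v} < |\Omega|$, i.e. a bound at the point achieving the supremum, which cannot be a point of identical vanishing) yields $\mathrm{ord}_K \zeta_{u,v} < |\Omega(T,V)|$. Finally $|\Omega(T,V)| \le \dim_\CC V_\CC = \dim_\RR V$ since the weight spaces are nonzero and sum to $V_\CC$, completing the proof.
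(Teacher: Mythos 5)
Your reduction to the torus, the weight-space computation, and the final count $|\Omega(T,V)| \le \dim_\RR V$ are all fine, but the step you yourself flag as the ``main obstacle'' is where the argument genuinely breaks. At a fixed point $k_0$ you restrict $\zeta_{u,v}$ to the conjugate torus $gTg^{-1}$ through $k_0$ and claim that non-vanishing of $\zeta_{u,v}$ near $k_0$ forces the restricted function $\zeta_{g^{-1}u,\,g^{-1}v}|_T$ to be not identically zero near $t_0$. This is false: an analytic function can be nowhere locally zero on $K$ (so all its orders are finite) and still vanish identically on a submanifold through $k_0$. Concretely, take $K=\mathrm{SO}(3)$ acting on $V=\RR^3$, $u=e_3$, $v=e_1$, and let $k_0$ be a rotation about the $e_3$-axis by a generic angle. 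The unique maximal torus containing $k_0$ is the circle $T$ of rotations about $e_3$, and $\zeta_{u,v}(t)=\langle t e_3, e_1\rangle = \langle e_3,e_1\rangle=0$ for all $t\in T$, although $\zeta_{u,v}$ is not identically zero on any neighbourhood of $k_0$ in $K$. Since $k_0$ is regular there is no freedom to choose a different $g$ with $k_0\in gTg^{-1}$, so at such a point your method only gives $\mathrm{ord}_{k_0}\zeta_{u,v}\le \mathrm{ord}_{k_0}\left(\zeta_{u,v}|_{gTg^{-1}}\right)=\infty$, i.e. no bound at all; and your first alternative (if the restriction vanished for every $g$ then $\zeta_{u,v}$ would vanish on $\bigcup_{g} gTg^{-1}=K$) only produces \emph{some} good conjugate torus, hence a bound at the points of that one torus, not at every point of $K$.

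The missing idea, and the way the paper proceeds, is to first move the point of interest to the identity by a translation: $\zeta_{u,v}(gh)=\zeta_{hu,v}(g)$, so $\mathrm{ord}_h\zeta_{u,v}=\mathrm{ord}_{e_K}\zeta_{hu,v}$, and $\zeta_{hu,v}$ is again not identically zero on $K$. The identity lies on \emph{every} maximal torus, so one now has exactly the freedom you were missing: $\zeta_{hu,v}$ is nonzero at some point of $K$, that point lies in some conjugate $g^{-1}Tg$, and replacing the vectors by $(ghu,gv)$ (which amounts to precomposing with the conjugation diffeomorphism, hence does not change the order at $e_K$) makes the restriction to $T$ not identically zero. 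Your weight-space computation together with Lemmas \ref{lem:order on embedded submanifolds} and \ref{lem:order on torus}, applied at the identity, then gives $\mathrm{ord}_{e_K}\zeta_{hu,v}<|\Omega(T,V)|$, and running over all $h\in K$ proves the lemma. So the torus computation in your proposal is sound, but without this translation trick the pointwise bound cannot be established at regular points whose unique maximal torus kills the coefficient.
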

\begin{proof}
Every element of the compact connected  Lie group $K$ belongs to some maximal torus  and all such  tori are conjugate \cite[Corollary 4.35, Theorem 4.36]{knapp2013lie}. Up to replacing the vectors $u$ and $v$ by   $gu$ and $gv$ for some element $g \in K$, we may assume without loss of generality that   the restriction of the matrix coefficient $\zeta_{u,v}$ to the   maximal torus $T$ is not identically zero.

%Denote $n = \dim_\RR V$. The spectral theorem allows us to find  an orthonormal basis $w_1,\ldots,w_n$ to the inner product space $V$ and a collection of   analytically integral forms $\alpha_1,\ldots,\alpha_n \in i \mathfrak{t}^*$ such that

Denote $\Omega = \Omega(T,V)$ and consider the weight space decompositions  $u = \bigoplus_{\omega \in \Omega} u_\omega$ and $v = \bigoplus_{\omega\in\Omega} v_\omega$ where $u_\omega, v_\omega \in W_\omega$. 
The   matrix coefficient $\zeta_{u,v}$ can be expressed  at the point $g = \exp X \in T $ with   $ X \in \mathfrak{t}$ as
\begin{equation}
\zeta_{u,v}(g) =  \left<\exp(X) u,v\right> =
 \left< \exp(X) \sum_{\omega \in \Omega}  u_\omega, \sum_{\omega\in \Omega} v_\omega \right> = 
 \sum_{\omega\in\Omega} \left<u_\omega,w_\omega\right>   e^{\omega(X)}. 
\end{equation}
It follows from   Lemmas \ref{lem:order on embedded submanifolds} and  \ref{lem:order on torus} applied at the identity element that
$$  \mathrm{ord}_ {e_K} \zeta_{u,v} \le  \mathrm{ord}_ {e_T} \zeta_{u,v|T} < |\Omega|. $$

Observe that $\zeta_{u,v}(gh) = \zeta_{hu,v}(g)$ for all pairs of elements $g,h \in K$. By repeating the above argument with respect to   each matrix coefficient of the form $\zeta_{hu,v}$ we show that $\mathrm{ord}_K \zeta_{u,v} < |\Omega|$  as required.
\end{proof}

Fix some  $l \in \NN$. Consider the $l$-exterior power inner product space $\bigwedge^l V$.  The compact connected group $K$ can naturally be regarded as a subgroup of $\mathrm{O}(\bigwedge^l V)$. The weights $ \Omega(T, \bigwedge ^l V_\CC)$ of the torus $T$ in its representation on the Hermitian space $\bigwedge^l V_\CC$ satisfy
$$ \Omega(T, \bigwedge ^l V_\CC) \subset \Omega^l =  \{ \omega_1 + \cdots + \omega_l \: : \: \omega_1,\ldots,\omega_l \in \Omega\}.$$
 The following     follows immediately from the previous Lemma \ref{lem:matrix coef for compact}.

\begin{lemma}
\label{lem:order for wedge product}
Consider the matrix coefficient $\zeta^l_{u,v} \in \mathcal{A}(K,\RR)$ given by 
$$ \zeta_{u,v}^l(g) = \left<\bigwedge^l g u,v  \right> \quad \forall g \in K.$$
for some pair of vectors $u,v  \in \bigwedge^l V$. If   $\zeta^l_{u,v}$ is not identically zero on the group $K$ then  
$ \mathrm{ord}_K \zeta_{u,v}^l < |  \Omega(T, \bigwedge ^l V_\CC)| \le |\Omega^l|$.
\end{lemma}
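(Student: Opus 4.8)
The statement to be proved is Lemma~\ref{lem:order for wedge product}, which asserts that for a nonzero matrix coefficient $\zeta_{u,v}^l(g) = \langle \bigwedge^l g\, u, v\rangle$ of the compact group $K$ acting on $\bigwedge^l V$, one has $\mathrm{ord}_K \zeta_{u,v}^l < |\Omega(T,\bigwedge^l V_\CC)| \le |\Omega^l|$. The plan is to apply Lemma~\ref{lem:matrix coef for compact} verbatim, but with the inner product space $V$ replaced by the inner product space $\bigwedge^l V$, the orthogonal group $\mathrm{O}(V)$ replaced by $\mathrm{O}(\bigwedge^l V)$, and the torus $T$ acting through the induced representation.

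First I would observe that $\bigwedge^l V$ is again a finite-dimensional real inner product space, and that $K$ acts on it by orthogonal transformations via $g \mapsto \bigwedge^l g$, so $K$ is a compact connected subgroup of $\mathrm{O}(\bigwedge^l V)$ in a canonical way. The same maximal torus $T$ of $K$ remains a maximal torus after this change of representation (maximality of a torus is a property of the abstract group $K$, not of any particular representation). Then $\zeta_{u,v}^l$ is literally a matrix coefficient of the form treated in Lemma~\ref{lem:matrix coef for compact}, namely $\zeta_{u,v}$ for the representation of $K$ on $\bigwedge^l V$ and the vectors $u, v \in \bigwedge^l V$. Applying that lemma directly yields $\mathrm{ord}_K \zeta_{u,v}^l < |\Omega(T,\bigwedge^l V_\CC)|$ whenever $\zeta_{u,v}^l$ is not identically zero. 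Finally, the inclusion $\Omega(T,\bigwedge^l V_\CC) \subset \Omega^l$ noted just before the lemma statement gives $|\Omega(T,\bigwedge^l V_\CC)| \le |\Omega^l|$, completing the chain of inequalities.

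Since this is a direct specialization of an already-proved lemma, there is essentially no obstacle; the only point requiring a line of comment is the verification that the weights of $T$ on $\bigwedge^l V_\CC$ are sums $\omega_1 + \cdots + \omega_l$ of $l$ weights from $\Omega = \Omega(T,V)$ — which follows because $\bigwedge^l V_\CC$ is a subspace of $V_\CC^{\otimes l}$, on which $T$ acts diagonally with weights $\omega_1 + \cdots + \omega_l$ — but this is exactly what the displayed inclusion preceding the lemma records, so it may simply be cited. Hence the proof is one or two sentences: invoke Lemma~\ref{lem:matrix coef for compact} for the representation $\bigwedge^l V$ and combine with the stated weight inclusion.
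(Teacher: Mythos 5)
Your proposal is correct and matches the paper's argument: the paper likewise derives this lemma immediately from Lemma \ref{lem:matrix coef for compact} applied to the orthogonal representation of $K$ on $\bigwedge^l V$, together with the observed inclusion $\Omega(T,\bigwedge^l V_\CC) \subset \Omega^l$. Your extra remark verifying the weight inclusion via the diagonal action on $V_\CC^{\otimes l}$ is exactly the content the paper records just before the lemma.
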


\begin{remark*}
The above proofs of   Lemmas \ref{lem:matrix coef for compact} and  \ref{lem:order for wedge product} apply more generally without assuming the compact group $K$   is connected at each connected component where   the matrix coeffcient in question is non-zero.
\end{remark*}
% \begin{proof}
%
%If the group $K$ is semisimple then analytically integral forms are a lattice in $i \mathfrak{t}^*$ \cite[Proposition 4.64]{knapp2013lie}. This lattice can be identified with the free abelian group $\ZZ^{\dim_\RR T}$. Note that the weights $\Omega$ 
%The  conclusion follows relying on 
%\end{proof}

\subsection*{Strong order of unipotent groups in positive characteristic} Assume that $k$ is a non-Archimedean local field with $ p=  \mathrm{char}(k)  > 0$. Let $V$ be a finite dimensional $k$-vector space of dimension $ d = \dim_k V$.

Consider an arbitrary element $s \in \mathrm{GL}(V)$. Write $n = s - \mathrm{Id} \in \mathrm{End}(V)$. Note that $s^p = \mathrm{Id}$ if and only if    $n^p = 0$. If this is the case then  the Cayley--Hamilton theorem implies that   $n^{d} = 0$.  In particular
\begin{equation}
\label{eq:log power series}
 \log(s) = \log(\mathrm{Id} + n) = \sum_{i=1}^{d-1} (-1)^{i+1} \frac{n^i}{i}.
\end{equation}
The matrix $x = \log(s)$ is a linear combination of positive powers of the matrix $n$ and as such $x^d = 0$. Therefore
\begin{equation}
s = \exp(x) = \exp(\log s) = \sum_{i=0}^{d-1} \frac{x^i}{i!}.
\end{equation}

\begin{lemma}
\label{lem:orbit map for unitpotent subgroup}
Let $T$ be a diagonal $k$-split torus subgroup of $\mathrm{GL}(V)$.  Let $\theta : k^r \to \mathrm{GL}(V)$ be a $k$-rational representation for some $r \in \NN$ so that $U = \theta(k^r) \le \mathrm{GL}(V)$ is a connected $k$-unipotent subgroup.   Assume that $T$ normalizes $U$ and that there is a $k$-character $\alpha \in X(T)$ so that 
\begin{equation}
\label{eq:conjugation equation}
 t \theta(z) t^{-1} = \theta(\alpha(t) z) \quad \forall t \in T, z \in k^r
 \end{equation}
Then the inclusion mapping $U \hookrightarrow \mathrm{End}(V)$ has strong order at most  $2dr I $  at all points,  where $I \in \NN$ is the  maximal number so that $I \alpha = \beta  - \beta'$ for any pair of distinct $k$-characters $\beta,\beta' \in X(T)$ corresponding to  diagonal entries of $T$.
%Let 
%$$\theta : TU \to \mathrm{GL}(V)$$ be a $k$-rational representation. Assume that the $k$-split group  $\theta(T) $ is diagonal and its entries are given by the $k$-characters $\beta_1,\ldots,\beta_{\dim_k V}$. 
% Fix a vector $v \in V$ and a linear functional $\psi \in V^*$.   Consider the matrix coefficient $\zeta_{v,\psi} \in \mathcal{A}(U,k)$ given by 
%$$\zeta_{v,\psi}(u) = \psi(\theta(u) v) \quad \forall u \in U.$$
%If the matrix coefficient $\zeta_{v,\psi}$ is not identically zero then 
%$$\mathrm{ord}_U \zeta_{v,\psi} \le 2(pI - 1)$$
\end{lemma}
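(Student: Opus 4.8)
The plan is to exhibit, near any given point of $U$, an explicit polynomial parametrisation of an open piece of $U$ itself, of degree at most $2drI$, and to use that piece as the submanifold $N$ and the inverse parametrisation as the chart $\varphi$ required by Definition \ref{def:strong order}. Two reductions come first. Since that definition allows the submanifold and chart to depend on the point, and since for $u_0\in U$ the left translation $\ell_{u_0}\colon\mathrm{End}(V)\to\mathrm{End}(V)$, $x\mapsto u_0x$, is $k$-linear, restricts to a $k$-analytic automorphism of $U$, and sends $\mathrm{Id}$ to $u_0$, composing a chart at $\mathrm{Id}$ with $\ell_{u_0}$ yields a chart at $u_0$ without raising the degree of the associated polynomial map into $\mathrm{End}(V)$; so it suffices to treat the identity element. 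Secondly, diagonalising $T$ and fixing a $T$-eigenbasis $e_1,\dots,e_d$ of $V$ with $te_i=\beta_i(t)e_i$, reading off matrix entries in Equation (\ref{eq:conjugation equation}) gives $\theta(\alpha(t)z)_{ij}=(\beta_i-\beta_j)(t)\,\theta(z)_{ij}$, so the nontrivial off-diagonal entries of $U$ are supported on pairs $(i,j)$ with $\beta_i-\beta_j$ a positive integer multiple of $\alpha$; after reordering the basis we may therefore assume $U$ lies in the strictly upper triangular unipotent matrices, whose inclusion into $\mathrm{End}(V)$ is affine in the standard matrix coordinates.

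I would then bound the degree of the orbit map $\theta$ viewed as a polynomial map $k^r\to\mathrm{End}(V)=k^{d^2}$. Expanding the matrix entry $\theta(z)_{ij}$ as a sum of monomials $c_\gamma z^\gamma$ and comparing coefficients in the relation above, a monomial can occur only when $|\gamma|\,\alpha=\beta_i-\beta_j$ in the character lattice $X(T)$; since $X(T)$ is torsion free and $\alpha\neq 0$ this pins $|\gamma|$ to a single value, which is at most $I$ by the definition of $I$ (and the entry is forced to equal the constant $\delta_{ij}$ when no such value exists). Hence $\theta$ is a polynomial map of degree at most $I$, with each matrix-entry component homogeneous.

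The core step is to convert the surjection $\theta\colon k^r\twoheadrightarrow U$, which need not be injective, into an honest $k$-analytic chart. Put $s=\dim_k U$. If the differential of $\theta$ is surjective onto the tangent space of $U$ at some point $z_1$, one selects $s$ of the coordinates of $k^r$ whose directions at $z_1$ already span that tangent space, restricts $\theta$ to the corresponding $s$-dimensional affine coordinate subspace $A\ni z_1$ to get a $k$-analytic immersion at $z_1$, hence a $k$-analytic isomorphism of a neighbourhood of $z_1$ in $A$ onto an open piece $W\subset U$ through $\theta(z_1)$, and finally composes with the $k$-linear translation $\ell_{\theta(z_1)^{-1}}$ to recentre at $\mathrm{Id}$. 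The resulting chart's inverse, regarded as a map into $\mathrm{End}(V)$, is $\ell_{\theta(z_1)^{-1}}$ composed with $\theta|_A$ and an affine change of source, hence a nonzero polynomial map of degree at most $\deg\theta\leq I\leq 2drI$. This settles the case in which $\theta$ is smooth along some coordinate directions — in particular the separable case.

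The step I expect to be the main obstacle is the positive-characteristic failure of $\theta$ to be smooth along any coordinate direction (Frobenius-type inseparability), since then no restriction of $\theta$ to a coordinate subspace is an immersion. Handling it requires genuinely invoking the good-prime hypothesis on $\mathrm{char}(k)$ together with the rigidity of the single-character relation (\ref{eq:conjugation equation}) — which severely constrains the $T$-weights occurring in $\mathrm{Lie}(U)$ and keeps the nilpotency degrees of the relevant matrices below $\mathrm{char}(k)$, so that the truncated logarithm and exponential series as in Equation (\ref{eq:log power series}) remain polynomial and can be used to produce an explicit low-degree chart of $U$ near $\mathrm{Id}$ out of the data of $\theta$. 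The gap between the bound $I$ of the previous paragraph and the claimed $2drI$ is exactly the room this explicit construction needs: inverting a unipotent $d\times d$ matrix costs degree at most $d$, and passing through all $r$ coordinates of the source costs at most a further factor $2$, comfortably within $2dr$. Combining with the reduction to the identity of the first paragraph then gives strong order at most $2drI$ at every point of $U$, as asserted.
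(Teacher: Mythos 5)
Your first half is correct, and it is a genuinely different (in fact sharper) route to the degree estimate than the paper's: comparing coefficients of $z^\gamma$ entrywise in Equation (\ref{eq:conjugation equation}) and using Zariski density of the $k$-points of the split torus forces $|\gamma|\,\alpha=\beta_i-\beta_j$ for every monomial occurring in the $(i,j)$ entry, so $\theta\colon k^r\to\mathrm{End}(V)$ is a polynomial map of degree at most $I$. The paper gets its bound differently: it invokes the classification of $k$-rational representations of the additive group, Equation (\ref{eq:additive rep}), applies $\log$ to both sides of (\ref{eq:conjugation equation}) to obtain $\alpha(t)^{p^i}x_i=t x_i t^{-1}$, deduces $p^i\alpha=\beta-\beta'$ whenever $x_i\neq 0$ and hence $p^N\le I$, and bounds each factor $\exp(z^{p^i}x_i)$ by degree $p^i d$; the quantity $2drI$ is exactly the cost of the Frobenius twists $z^{p^i}$, $p^i\le I$, summed over factors and variables. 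It is not slack reserved for inverting matrices or for building a chart.

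The genuine gap is the step you yourself flag as the main obstacle and then do not carry out: when $\theta$ is nowhere an immersion you never produce the chart required by Definition \ref{def:strong order}, and your Case-B sketch is not an argument. This case is not vacuous under the stated hypotheses: for $p=2$, $X$ with $X^2=0$ and $c\in k\setminus k^2$ (so $k=k^2+ck^2$), the homomorphism $\theta(z_1,z_2)=\exp\bigl((z_1^2+cz_2^2)X\bigr)$ surjects onto the $k$-points of a one-dimensional unipotent subgroup of $\mathrm{GL}_2$, satisfies (\ref{eq:conjugation equation}) for the torus $\{\mathrm{diag}(u,u^{-1})\}$ with $\alpha(\mathrm{diag}(u,u^{-1}))=u$, yet has identically vanishing differential, so no restriction of $\theta$ to a coordinate subspace is an immersion. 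Moreover the ingredients you invoke there are unavailable or incorrect: the good-prime assumption is not a hypothesis of this lemma (it enters the paper only later, through Theorem \ref{thm:Gille} and the Springer isomorphism); nothing keeps nilpotency degrees below $\mathrm{char}(k)$, and none is needed, since the truncated series of Equation (\ref{eq:log power series}) are polynomial because $n^p=0$ forces $n^{d}=0$ by Cayley--Hamilton regardless of how $d$ compares with $p$; and the bookkeeping ``degree $d$ for inversion, a factor $2$ for the $r$ coordinates'' corresponds to no construction. Note also that the paper does not attempt what you attempt: its proof stops at the statement that $\theta$ itself is a polynomial parametrisation of $U$ of degree at most $2drI$ and reads the strong order off from that, the injectivity/immersivity needed to make $\theta^{-1}$ a literal chart being supplied at the point of application (in Proposition \ref{prop:ord of Ad on unipotent} the map fed into the lemma, $\mathrm{Ad}\circ\theta_\alpha$, is a $k$-isomorphism onto its image). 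By insisting on manufacturing a chart out of a possibly nowhere-smooth $\theta$, you set yourself a strictly harder task than the paper's and leave it unfinished.
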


%In particular   the inclusion mapping  of the unipotent subgroup $U  $ into the $k$-vector space $ \mathrm{End}(V)$ satisfies has strong order at most $2drI$ at each point $ u\in U$.

%In fact the orbit map $\zeta_i$ We will in fact show that this map is a polynomial of the above order over the Lie algebra (MAKE THIS OBSERVATION A LEMMA IN THE ORDER PART?)

\begin{proof}%[Proof of Lemma \ref{lem:orbit map for unitpotent subgroup}]
Assume to begin with that $r = 1$. In other words $\theta$ is a $k$-rational representation of the additive group. The explicit form of such  representations is known  \cite[II.\S2.2.6]{demazure1980introduction}. Namely there is some   $N \in \NN \cup \{0\} $ and a family of pairwise commuting matrices $s_0,\ldots,s_N \in \mathrm{GL} (V)$ satisfying $s_i^p = \mathrm{Id}$ for all $ i \in \{0,\ldots,N\}$ such that
\begin{equation}
\label{eq:additive rep}
 \theta(z) = \prod_{i=0}^N \exp( z^{p^i} \log s_i ) \quad \forall z \in k.
 \end{equation}

Denote $x_i = \log s_i$. The expression  $t \theta(z) t^{-1}$  for each element $t \in T$ and $z \in k$ can be computed in two different ways. On the one hand Equation (\ref{eq:conjugation equation}) gives
 \begin{equation}
 \label{eq:log 1}
t \theta(z) t^{-1}=   \theta(\alpha(t) z)  = \prod_{i=0}^N \exp( (\alpha(t) z)^{p^i} x_i  ) = \prod_{i=0}^N \exp(  z ^{p^i} \alpha(t) ^{p^i} x_i  ) \quad \forall z \in k.
  \end{equation}
 On the other hand  
 \begin{equation}
\label{eq:log 2}
t \theta(z) t^{-1}=      \prod_{i=0}^N t \exp(  z^{p^i} x_i  ) t^{-1} = \prod_{i=0}^N \exp(  z ^{p^i} t    x_i t^{-1} ) \quad \forall z \in k.
 \end{equation}
We apply logarithm to both Equations (\ref{eq:log 1}) and (\ref{eq:log 2}). Since the matrices $x _i  $ pairwise commute we obtain the equality
\begin{equation}
\label{eq:log 3}
 \sum_{i=0}^N z^{p^i} (\alpha(t) ^{p^i} x_i  ) = \sum_{i=0}^N z^{p^i} ( t x_i  t ^{-1}) \quad \forall z \in k, t \in T.
  \end{equation}
Note that Equation (\ref{eq:log 3}) holds true if and only if 
$ \alpha(t) ^{p^i} x_i =  t x_i  t^{-1} $ for all $i \in \{0,\ldots,N\}$.  This only be the case for each given index $i$ provided there are two distinct $k$-characters $\beta, \beta' \in X(T)$ corresponding to diagonal entries of the torus $T$   such that $p^i \alpha = \beta - \beta'$.  Therefore the number $N$ must  be such that $p^N \le I$ where $I$ is as in the statement.

The discussion preceding the Lemma shows that each function $ \exp(z^{p^i} \log s_i )  $ is a polynomial map in the variable $z$ into the $k$-vector space $\mathrm{End}(V)$ of degree at most $p^i \dim_k V$ for every $i \in \{0,\ldots,N\}$.   Therefore $\theta$ is a polynomial map in the variable $z$ into the $k$-vector space $\mathrm{End}(V)$ of degree at most 
$$   \sum_{i=0}^N p^i  \dim_k V \le  \frac{p^{N+1} -1}{p-1}\dim_k V \le  2 I \dim_k V. $$

Lastly if $ r > 1$ then $\theta$ is obtained as a product in the endomorphism ring $\mathrm{End}(V)$ of $r$-many $k$-rational representations as above. The result follows. 
 \end{proof}

%\begin{lemma}
%\todo{I am not sure this is needed as stated any longer}
%\label{lem:on partially defined chart}
%Let $\mathfrak{l} \le k^d$ be a $k$-linear subspace and  $ V \subset \mathfrak{l}$   an open subset with $0 \in V$. Assume that there exists  a $k$-analytic immersion  $g  : V \to M$. If the composition $f \circ g : V \to k^m$ is a non-zero polynomial  then $\mathrm{ord}_{g(0)}f \le \mathrm{deg}(f \circ g)$.
%\end{lemma}
%\begin{proof}
%
%\end{proof}

\section{Sublevel set estimates  of analytic functions on manifolds}
\label{sec:good functions}

Let $k$ be either the field $\RR$ or a non-Archimedean local  field\footnote{Our discussion and results easily extend to the complex case via  a restriction  of scalars. This has the effect of doubling the dimension of the domain in Theorem \ref{thm:analytic are good} and its applications below.} 
and $|\cdot|$ be an absolute value on $k$. Fix a dimension $ m\in\NN$.  Endow  the   $k$-vector space $k^m$  with the supremum norm $\|\cdot\|_\infty$   given by 
\begin{equation}
\|(x_1,\ldots,x_m)\|_\infty = \sup_{i} |x_i| \quad \forall x_1,\ldots,x_m \in k.
\end{equation}

Let $M$ be a fixed compact $k$-analytic manifold. Let $\eta$ be a probability measure in the canonical  measure class of   $M$. For example, if $M$ is a compact $k$-analytic group then    $\eta$ can be taken to be    normalized Haar 
measure.

Recall from \S\ref{sec:order} that  $\mathcal{A}(M,k^m)$ denotes the $k$-vector space of all $k$-analytic maps $f : M \to k^m$.
%where
%$\deg p_i \le l$ for all $i \in \{1,\ldots,m\}$.
Every non-empty subset $X \subset M$ defines a   seminorm  $\|\cdot\|_X$ on $\mathcal{A}(M,k^m)$    by 
\begin{equation}
\|f\|_X=\sup_{x\in X}  \|f(x)\|_\infty \quad \forall f \in \mathcal{A}(M,k^m).
\end{equation}
%\todo{Is this topology being used?}
Endow   $\mathcal{A}(M,k^m)$ with the topology coming from the norm $\|\cdot\|_M$.

%Let $x \in M$ be a point. Consider a compatible chart at the point $x \in M$ given by an open subset $x \in U \subset M$ containing $x$ and map $\varphi : U \to k^d$ with $\varphi(x) = 0$ and $\varphi(U) = V$. Let $\psi : V \to U$ be the inverse mapping of $\varphi$ so that $\psi(0) = x$. For every multi-index $\beta$ and point $y \in M$ consider the seminorm $\|\cdot\|_{\varphi,\beta,x}$ on the $k$-vector space $\mathcal{A}(M,k^m)$ given by
%$$ \|f\|_{\varphi,\beta,x}= |\partial_\beta(f \circ \psi)(0)| \quad \forall f \in \mathcal{A}(M,k^m).$$

%\item A polynomial map $p \in   \mathcal{A}_n^m$ is \emph{adapted} to a family $\mathcal{L}$ of non-empty subsets of $k^n$ if    $\|p\|_X > 0$ for every subset $X \in \mathcal{L}$.

%Denote 
%$$\mathcal{L}_M = \{x  + T_x M  \: : \: x \in M \}$$
%so that $\mathcal{L}_M$ is a collection of affine $d$-dimensional subspaces of $k^n$ passing through every point of the $k$-submanifold $M$.

%This can be considered as an analogue of Theorem \ref{thm:analytic are good} in the setting of $k$-analytic varieties.  %The precise statement relies on the following notation.

%Let $T_x M$ denote the tangent space to the $k$-analytic manifold $M$ at each point $x \in M$  regarded as a $d$-dimensional $k$-linear subspace of the $k$-vector space $k^n$. 

The goal of the  current \S\ref{sec:good functions} is to prove Theorem \ref{thm:goodness on a compact group}. This  establishes
an upper bound on the $\eta$-measure of sublevel sets of   $k$-analytic maps on the compact $k$-analytic manifold $M$.

\begin{thm}
	\label{thm:goodness on a compact group}
	Let $\mathcal{F} \subset \mathcal{A}(M,k^m)$ be a compact  family of $k$-analytic maps. If 
 $\dim_k \mathrm{span}_k \mathcal{F} < \infty$ then there are constants $\kappa_\mathcal{F},\varepsilon_\mathcal{F} > 0$   such that every   map $f \in \mathcal{F}$ satisfies
	\begin{equation}
	\label{eq:good compact set}
	\eta( \{x \in M \: : \: \|f(x)\|_\infty < \varepsilon \}) \le \kappa_\mathcal{F}  \varepsilon ^{\frac{1}{\dim_k M \mathrm{ord}_M \mathcal{F}}} \quad \forall 0 < \varepsilon < \varepsilon_\mathcal{F}.
	\end{equation}
%Moreover, if $k$ is $\RR$ and $M$ admits a local charts where $\mathcal{F}$ are given by polynomials, then
%	\begin{equation}
%	\label{eq:good compact set real better}
%	\eta( \{x \in M \: : \: \|f(x)\|_\infty < \varepsilon \}) \le \kappa_\mathcal{F}  \varepsilon ^{\frac{1}{  \mathrm{ord}_M f}} \quad \forall 0 < \varepsilon < \varepsilon_\mathcal{F}.
%	\end{equation}
\end{thm}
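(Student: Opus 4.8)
The plan is to reduce the global measure estimate to a local one near each point of $M$, use a compactness argument to obtain uniform constants, and at the local level reduce to a one-variable problem where the order controls the vanishing of the analytic function. First I would fix a point $p \in M$ and set $N = \mathrm{ord}_p \mathcal{F} \le \mathrm{ord}_M \mathcal{F}$. Working in a local chart $\varphi : U \to V \subset k^d$ with $\varphi(p) = 0$ (where $d = \dim_k M$), each $f \in \mathcal{F}$ becomes a power series, and the condition $\mathrm{ord}_p f \le N$ means that some coefficient of a monomial of degree at most $N$ is nonzero. The key point is that, after shrinking $U$, the map $f \mapsto (\text{coefficients of degree} \le N)$ is a continuous linear map from the finite-dimensional space $\mathrm{span}_k \mathcal{F}$ (with the $\|\cdot\|_M$-norm) to a finite-dimensional coefficient space, and on the compact set $\mathcal{F}$ the largest such coefficient is bounded below in absolute value by some $\theta_p > 0$ — this is where finite-dimensionality of $\mathrm{span}_k\mathcal{F}$ and compactness of $\mathcal{F}$ are both used.

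Next I would prove the local sublevel estimate: there exist a neighborhood $U_p$ of $p$, and constants $\kappa_p, \varepsilon_p > 0$, such that $\eta(\{x \in U_p : \|f(x)\|_\infty < \varepsilon\}) \le \kappa_p \varepsilon^{1/(dN)}$ for all $f \in \mathcal{F}$ and $\varepsilon < \varepsilon_p$. The standard way to do this is a Fubini-type slicing argument: restrict $f$ (or the relevant coordinate $f_i$) to lines in each of the $d$ coordinate directions; on a generic such line the restriction is a one-variable analytic function whose order is at most $N$ and whose leading-block coefficient is bounded below (using the lower bound $\theta_p$ together with a Vandermonde/genericity argument to pass from the multivariate coefficient bound to a univariate one), so it vanishes to order at most $N$ and hence the one-variable sublevel set $\{|f_i| < \varepsilon\}$ on that line has measure $\ll \varepsilon^{1/N}$ by the basic one-variable polynomial-like estimate (the non-Archimedean case is if anything cleaner because of the ultrametric inequality and the Weierstrass preparation theorem). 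Integrating over the transverse $(d-1)$ directions and iterating over the $d$ coordinate directions loses a factor $d$ in the exponent, giving the exponent $1/(dN)$.

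Finally I would globalize: the sets $\{U_p : p \in M\}$ cover the compact manifold $M$, so finitely many $U_{p_1}, \dots, U_{p_k}$ suffice; taking $\kappa_\mathcal{F} = \sum_j \kappa_{p_j}$, $\varepsilon_\mathcal{F} = \min_j \varepsilon_{p_j}$, and using $N \le \mathrm{ord}_M\mathcal{F}$ (so $\varepsilon^{1/(dN)} \le \varepsilon^{1/(d\,\mathrm{ord}_M\mathcal{F})}$ for $\varepsilon$ small) yields Equation (\ref{eq:good compact set}).

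The main obstacle I expect is the uniformity of the constants over the whole family $\mathcal{F}$ simultaneously — concretely, producing the lower bound $\theta_p$ on the size of a low-degree Taylor coefficient that works for \emph{every} $f \in \mathcal{F}$ at once, and making sure the radius of convergence / the neighborhood $U_p$ on which the power series estimates are valid can also be chosen uniformly over $\mathcal{F}$. This is exactly what forces the two hypotheses (compactness of $\mathcal{F}$ and finite-dimensionality of $\mathrm{span}_k\mathcal{F}$) into the statement: finite-dimensionality makes "order $\le N$" an open condition cut out by finitely many coefficient functionals, and compactness then turns the pointwise nonvanishing of the relevant functional into a uniform lower bound via continuity and extraction of a minimum. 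A secondary technical point is handling the passage from a multivariate order bound to a bound along generic lines with uniform constants, which is where the Vandermonde-type argument (as in the proof of Lemma \ref{lem:order on torus}) reappears.
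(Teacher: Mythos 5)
Your overall architecture is the same as the paper's: localize at a point, use compactness of $\mathcal{F}$ together with $\dim_k\mathrm{span}_k\mathcal{F}<\infty$ to get uniform upper bounds on low-order Taylor coefficients and a uniform lower bound on at least one coefficient of degree at most $\mathrm{ord}_x\mathcal{F}$, prove a local sublevel estimate with exponent $\frac{1}{d\,\mathrm{ord}}$, and then sum over a finite cover of $M$ (with the observation that $\inf_{f\in\mathcal{F}}\|f\|_{U_x}>0$, which the paper records as Equation (\ref{eq:ratios})). The genuine divergence is in how the local estimate is obtained. The paper does \emph{not} slice directly: it first applies Proposition \ref{prop:may rotate partial derivatives} — a generic linear change of variables $g\in\mathrm{GL}(k^d)$, uniform over the compact family, which converts the nonvanishing of one mixed partial $\partial_{\beta_0}f(0)$ into lower bounds on all pure partials $\partial_i^l(f\circ g)(0)$ — then shrinks the chart so these lower bounds persist on the whole ball (Taylor remainder in the real case, ultrametric inequality in the non-Archimedean case), and finally cites the $(C,\frac{1}{dl})$-good property of such functions (Theorem \ref{thm:analytic are good}, i.e.\ \cite[Lemma 3.3]{KM} and \cite[Theorem 3.2]{kleinbock2005flows}). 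Your slicing induction is, in effect, an attempt to reprove that cited lemma rather than use it; that is legitimate, but it is the hard step, and your sketch of it has an internal inconsistency.

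Concretely: if on almost every coordinate-direction line the restricted one-variable function had order at most $N$ \emph{with constants uniform over the lines}, then Fubini would already give the bound $\kappa\varepsilon^{1/N}$, with no loss of the factor $d$; the exponent $\frac{1}{dN}$ does not arise from "iterating over the $d$ directions" of an estimate that holds uniformly on generic lines. The loss occurs precisely because uniformity along lines fails: on slices passing near the zero locus of the relevant coefficient function the restricted function has small height, the one-variable constant blows up, and one must control the measure of the set of bad base points by an induction on the dimension — this dimension induction (carried out in \cite{KM}) is exactly what degrades $\frac1N$ to $\frac{1}{dN}$. Likewise, "a Vandermonde/genericity argument" for passing from the multivariate coefficient bound $\theta_p$ to univariate bounds along generic lines needs to be made quantitative uniformly in $f\in\mathcal{F}$; the paper's way of doing this is the homogeneous-polynomial argument in Proposition \ref{prop:may rotate partial derivatives} (genericity in $\mathrm{GL}(k^d)$ plus compactness of the coefficient data), applied once globally on the chart rather than line by line. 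So either carry out the full Kleinbock--Margulis induction in your local step, or replace it by the paper's two-step reduction (generic rotation, then cite the $(C,\alpha)$-good lemma); as written, the local estimate is asserted rather than proved.
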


%\todo{discuss the better situation}

% Here $d$ is the dimension of the $k$-analytic variety $M$. 
Note that the dimension $m$  of the target space  does not appear in Equation (\ref{eq:good compact set}). 
 
% \vspace{10pt}
 
We deduce Theorem \ref{thm:goodness on a compact group} from known results on $(C,\alpha)$-good functions that are  discussed below. In fact  Theorem \ref{thm:goodness on a compact group} can be seen as an analogue of Theorem \ref{thm:analytic are good} in the setting of $k$-analytic manifolds.

%In fact  Theorem \ref{thm:goodness on a compact group} can be seen as an analogue of Theorem \ref{thm:analytic are good} in the setting of $k$-analytic varieties.  %The precise statement relies on the following notation.

\subsection*{Sublevel set estimates} 
%\todo{$d$ is not defined here}
%Consider the . 
%We equip the vector space $k^n$ with the   norm
%$$ \|(x_1,\ldots,x_n)\|_\infty = \sup_i |x_i|.$$
Let $d \in \NN$ be fixed. A \emph{ball} in the $d$-dimensional $k$-vector space  $k^d$ with center point  $y   \in k^d$ and   radius $r > 0$ is  the subset
$$B = B(y,r) = \{x   \in k^d \: : \: \|x-y\|_\infty \le r \}.$$ 
%Of course, in the Archimedean case $B(y,r)$ is in fact a cube.
%Let $O(k^n)$ denote the compact subgroup of $\mathrm{GL}(k^n)$ consisting of all norm preserving linear isomorphisms. Specifically $O(\RR^n)$  is the   orthogonal group $O(\RR,n)$ and $O(\CC^n)$ is the unitary group $U(n)$. In the non-Archimedean case  $O(k^n)$ is the group  $\mathrm{GL}_n(\mathcal{O}_k)$ where $\mathcal{O}_k$ is the ring of integers of the  local field $k$.
%For instance $\lambda$ is the usual Lebesgue measure in the case   $k$ is $\RR$. 
Let $V \subset k^d$ be a fixed open subset.
Given a  continuous function $ f : V \to k $ and a   ball  $B \subset V$ denote
\begin{equation}
\label{eq:max norm}
\|f\|_B = \sup_{x \in B} |f(x)|.\end{equation}
In addition, for every $\varepsilon > 0$ denote
\begin{equation} B^{f,\varepsilon} = \{x \in B \: : \: |f(x)| < \varepsilon \}.\end{equation}

Let $\lambda$ be a Haar measure on the $k$-vector space $k^d$ regarded as an additive  locally compact group. 
The following terminology has been introduced in \cite{KM}. 

\begin{definition}
\label{def:good}
A continuous function $f :V \to k$ is \emph{$(C,\alpha)$-good}   if   every open ball $B \subset V$     satisfies
\begin{equation}
\label{eq:good def}
 \lambda(B^{f,\varepsilon}) \le C \left(\frac{\varepsilon}{\|f\|_B}\right)^\alpha  \lambda(B)
\end{equation}
 for every $\varepsilon > 0$.
 \end{definition}

Polynomials are $(C,\alpha)$-good   with parameters $C$ and $\alpha$ depending only on  degree. In the real case this goes back to the Remez inequality \cite{remez1936propriete, brudnyui1973extremal}. See also \cite[Lemma 4.1]{dani1993limit}, \cite[Proposition 3.2]{KM} and \cite[Proposition 2.8]{aka2015diophantine}.

Analytic functions are also known to be $(C,\alpha)$-good under suitable assumptions:

%The precise statement of this fact involves bounds on partial derivatives, as follows. 

%Let $\partial_i$ denote   partial differentiation   with respect to the variable $x_i$ for every $ i \in \{1,\ldots,n\}$. In the context of $k$-analytic functions the operator $\partial_i$  can be understood in the  sense of formally differentiating a power series.
% A \emph{multi-index}   is a tuple $\beta = (i_1,\ldots,i_n)$ with indices $i_j \in \mathbb{N} \cup \{0\}$. For a given multi-index $\beta = (i_1,\ldots,i_n)$ denote $|\beta| = \sum_{j=1}^n i_j$ and   $\partial_\beta = \partial_1^{i_1} \circ \ldots \circ \partial_n^{i_n}$.

%The \emph{order} of an analytic function $f : k^d \to k$ at the point $x \in k^d$ is the smallest integer $l \in \NN \cup \{0\}$ such that there is a multi-index $\beta$ with $|\beta| = l$ and $\partial_\beta f (x) \neq 0$.

\begin{theorem}
\label{thm:analytic are good}
Let $f : V \to k$ be a $k$-analytic function. Assume that there exist constants $A_1, A_2 > 0$ and some $l \in \NN$ such that
$$ \|\partial_\beta f\|_V \le A_1 \quad\quad \text{$\forall$multi-index $\beta$ with $ |\beta| \le l$} $$
and 
$$ |\partial_i^l f(x)| \ge A_2 \quad  \quad \forall x \in V, \; \forall  i = 1,\ldots, d.$$
Then there is a constant $C > 0$  such that $f$ is $(C, \frac{1}{dl})$-good.
The constant $C$ depends only on the values of $A_1,A_2,l$ and on the dimension $d$.
\end{theorem}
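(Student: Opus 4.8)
The plan is to reduce the problem to the known fact that one-variable polynomials are $(C,\alpha)$-good and then to apply a slicing-and-integration argument along coordinate lines, exactly in the spirit of Kleinbock--Margulis. First I would fix an arbitrary ball $B = B(y,r) \subset V$ and a coordinate direction, say $e_1$; for each fixed value of the remaining coordinates $x' = (x_2,\ldots,x_d)$ in the appropriate $(d-1)$-dimensional ball, consider the one-variable $k$-analytic function $g_{x'}(t) = f(t,x')$. The hypothesis $|\partial_1^l f| \ge A_2$ everywhere says that $g_{x'}^{(l)}(t)$ never vanishes on the corresponding interval, while the bounds $\|\partial_\beta f\|_V \le A_1$ for $|\beta| \le l$ control $g_{x'}$ and its first $l$ derivatives uniformly. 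The standard one-dimensional estimate (a consequence of the Remez-type inequality, or of Taylor expansion with control on the $l$-th derivative from below and the lower derivatives from above) then shows that each $g_{x'}$ is $(C_0, \tfrac{1}{l})$-good as a function of one variable, with $C_0$ depending only on $A_1, A_2, l$; this is the heart of the matter and is where I would cite or reprove the precise one-variable lemma (e.g. \cite[Lemma 3.3]{KM} or the argument via divided differences).

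Next I would integrate. Writing $\lambda = \lambda_1 \otimes \lambda'$ for the product of Haar measures on $k \times k^{d-1}$, Fubini gives
\begin{equation*}
\lambda(B^{f,\varepsilon}) = \int_{B'} \lambda_1\bigl( \{ t : |g_{x'}(t)| < \varepsilon \} \bigr) \, \mathrm{d}\lambda'(x').
\end{equation*}
For the $(C_0,\tfrac1l)$-good bound on the inner integrand I need a \emph{lower} bound on $\|g_{x'}\|$ over the relevant one-dimensional ball, and the natural one is $\|g_{x'}\| \ge $ (something comparable to) $\|f\|_B$ only for $x'$ in a controlled sub-ball, or alternatively one argues more cleverly: split $B'$ according to whether the one-dimensional sup-norm of $g_{x'}$ is large or small. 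The clean way, again following Kleinbock--Margulis, is: pick the coordinate direction $i$ and a point where $|f|$ is close to $\|f\|_B$; using the upper bounds on the derivatives $\partial_\beta f$ with $|\beta|\le l$, one shows $|f|$ stays $\gtrsim \|f\|_B$ on a fixed fraction of a line segment through that point, hence on a whole sub-ball $B_0 \subset B$ of radius comparable to $r$. Then apply the one-dimensional $(C_0,\tfrac1l)$-good estimate on slices of $B_0$ and integrate out the transverse directions, picking up a factor that only worsens the constant and the exponent by the codimension. Iterating this over the $d$ coordinates (or, more economically, running the slicing argument once in each of the $d$ directions and combining) yields $(C, \tfrac{1}{dl})$-goodness with $C$ depending only on $A_1, A_2, l, d$.

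The main obstacle I anticipate is the bookkeeping in the lower bound for $\|f\|_{B_0}$: one must be careful that the sub-ball on which $|f| \gtrsim \|f\|_B$ has radius a \emph{fixed} proportion of $r$, with the proportion depending only on $A_1/A_2$ and $l$ and not on the ball, so that the resulting constant $C$ is genuinely uniform. In the non-Archimedean case this is actually easier — the ultrametric inequality makes the "stays large on a sub-ball" step essentially automatic once one controls the Taylor coefficients — whereas in the real case one needs a quantitative Taylor-with-remainder estimate. A secondary point is that the exponent degrades multiplicatively by $d$ under iteration; I would organize the induction on the dimension $d$ so that the inductive step goes from a $(C', \tfrac{1}{(d-1)l})$-good bound on hyperplane slices to a $(C, \tfrac{1}{dl})$-good bound on $d$-dimensional balls, using the one-variable result as the base case $d=1$. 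None of the constants need to be optimized; only their dependence on $A_1, A_2, l, d$ matters.
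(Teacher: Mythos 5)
The paper does not actually prove this theorem: its ``proof'' is a two-line citation, because the statement is taken verbatim from the literature --- the real case is \cite[Lemma~3.3]{KM} (the paper's balls being the cubes of \cite{KM}) and the non-Archimedean case is \cite[Theorem~3.2]{kleinbock2005flows}. So when you offer to ``cite or reprove the precise one-variable lemma (e.g.\ \cite[Lemma~3.3]{KM})'', be aware that Lemma~3.3 of \cite{KM} is the full $d$-dimensional statement, i.e.\ the theorem itself; citing it is exactly what the paper does, and the genuinely one-variable ingredients in \cite{KM} are the elementary measure estimate for $C^l$ functions with $|f^{(l)}|$ bounded below together with \cite[Proposition~3.2]{KM} on polynomials. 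Your overall strategy (one-variable sublevel estimate, slicing and Fubini, induction on the dimension with the exponent dropping from $\tfrac{1}{(d-1)l}$ to $\tfrac{1}{dl}$) is the strategy of the cited proof, so in spirit you are reconstructing the reference rather than diverging from the paper.

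Taken as a standalone argument, however, the step you yourself flag is a genuine gap, and as stated it is attributed to the wrong hypothesis. From the upper bounds $\|\partial_\beta f\|_V \le A_1$ alone one only gets $|f| \ge \tfrac12 \|f\|_B$ on a ball of radius roughly $\|f\|_B/(2dA_1)$ around a near-maximum point, and since $\|f\|_B$ can be as small as a constant times $A_2 r^l$ (e.g.\ $f = A_2 \sum_i x_i^l/l!$ near the origin), this is \emph{not} a fixed proportion of $r$ once $l \ge 2$. Ruling out oscillation of $f$ at amplitude $\|f\|_B$ on scales much smaller than $r$ requires the lower bound $|\partial_i^l f| \ge A_2$ (a bump of height $h$ and width $w$ forces an $l$-th derivative of size about $h/w^l$ with both signs, which the hypothesis forbids), and turning that into a uniform $(C,\tfrac{1}{dl})$-bound is precisely the content of the cited lemma; in \cite{KM} it is done by a more careful induction in which the one-dimensional measure estimate is applied along line segments, not via the ``$|f|\gtrsim\|f\|_B$ on a sub-ball of radius comparable to $r$'' shortcut. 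Similarly, the non-Archimedean case is not ``essentially automatic'' from the ultrametric inequality; the paper covers it by citing \cite[Theorem~3.2]{kleinbock2005flows}, where the $(C,\alpha)$-good calculus is genuinely redone over such fields. In short: either cite the two references, as the paper does, or be prepared to reprove \cite[Lemma~3.3]{KM} in full --- your outline does not yet close that step.
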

%\todo{Double check this theorem. In particular do we need to double the dimension? Do we need the dimension at all?}
\begin{proof}
The real case
 is precisely  \cite[Lemma 3.3]{KM}. Note that our ``balls" are ``cubes" in the terminology of \cite{KM}.  The non-Archimedean case is contained in \cite[Theorem 3.2]{kleinbock2005flows}. 
%
%IS THERE A NEED TO PASS TO A SUBSET? FOR OUR USE BELOW, IT IS IMPORTANT NOT TO PASS TO A SUBSET.
\end{proof}

The following argument is well-known. In particular, it is used implicitly in the proof of \cite[Proposition 3.4, p. 349]{KM} and    explicitly in \cite[Proposition 4.2]{kleinbock2005flows}. We provide a detailed proof for the reader's convenience.

\begin{prop}
\label{prop:may rotate partial derivatives}
Let $f : V \to k$ be a $k$-analytic function. Assume that there is a constant $A'_1 > 0$ and some $l \in \NN$ such that
$$ \|\partial_\beta f\|_V \le A'_1 \quad \text{$\forall$multi-index $\beta$ with $ |\beta| \le l$}. $$
Assume moreover that $0 \in V$ and  that there is a multi-index $\beta_0$ with $|\beta_0| = l$ as well as a constant $A'_2 > 0$ satisfying
$$|\partial_{\beta_0} f(0)| \ge A'_2. $$ 
Then there constants $A_1, A_2 > 0$ and $ K \ge 1$ as well as a    linear map $g \in \mathrm{GL}(k^d)$ with $1 \le | \det g | \le K$ such that 
 $$ \|\partial_\beta (f \circ g)\|_V \le A_1 \quad \text{and} \quad  |\partial_i^l (f \circ g)  (0) | \ge A_2 \quad \forall i \in \{1,\ldots,d\}.  $$
The     constants $A_1, A_2$ and $K$  depend only on the two constants $A'_1$ and $A'_2$ and on the local field $k$.
\end{prop}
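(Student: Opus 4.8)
The plan is to find a suitable linear change of coordinates $g$ so that the single non-vanishing partial derivative $\partial_{\beta_0} f(0) \neq 0$ of order $l$ gets spread into all of the pure partial derivatives $\partial_i^l(f\circ g)(0)$. First I would reduce to the model situation by considering the homogeneous degree-$l$ part of the Taylor expansion of $f$ at $0$, that is, the symmetric $l$-linear form $P(v) = \tfrac{1}{l!}\,d^l f_0(v,\dots,v)$; the chain rule gives $\partial_i^l(f\circ g)(0) = l!\,P(g e_i)$ (plus contributions only from lower-order terms when composing, but for the top-order pure derivative of $f\circ g$ the relevant quantity is exactly $d^lf_0$ evaluated on the columns of $g$). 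The hypothesis $|\partial_{\beta_0}f(0)| \ge A_2'$ says precisely that $P$ is not the zero polynomial, with one of its coefficients bounded below in absolute value.

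Next I would invoke an elementary fact: a nonzero polynomial $P$ on $k^d$ of degree $l$ cannot vanish on "too many" coordinate directions of a sufficiently fine grid, so one can choose vectors $w_1,\dots,w_d \in k^d$ with entries in a fixed finite set (depending only on $l$, $d$ and $k$; e.g. $\{0,1,\dots,l\}$ in the real case, or a set of representatives mod $\mathfrak{m}$ together with a uniformizer in the non-Archimedean case), forming a basis, and such that $|P(w_i)|$ is bounded below by a constant depending only on $A_2'$, $l$, $d$ and $k$. This is a standard interpolation/Vandermonde argument: if $P(w) = 0$ for all $w$ in a large enough product grid then all coefficients of $P$ vanish, contradicting the lower bound on one coefficient; and one can quantify the lower bound since there are only finitely many candidate tuples. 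Then set $g$ to be the linear map sending $e_i$ to a rescaling $c\, w_i$ of $w_i$, where $c$ is chosen (a power of a uniformizer, or a dyadic rescaling) so that $1 \le |\det g| \le K$ for an explicit $K$; rescaling multiplies each $P(ge_i)$ by $c^l$, which only changes the effective lower constant. The upper bound $\|\partial_\beta(f\circ g)\|_V \le A_1$ follows from the multivariate chain rule: each $\partial_\beta(f\circ g)$ is a finite sum, with universally bounded combinatorial coefficients, of products of partials $\partial_\gamma f$ (with $|\gamma|\le l$) evaluated at points of $V$ times products of entries of $g$, and all of these are controlled by $A_1'$, the bound $K$ on $|\det g|$ (hence on the entries of $g$, since they come from the fixed finite set up to the scalar $c$ with $|c|$ bounded), and the dimension.

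The main obstacle I expect is the quantitative interpolation step: ensuring that the lower bound on $|P(w_i)|$, and hence on $|\partial_i^l(f\circ g)(0)|$, is uniform, i.e. depends only on $A_1', A_2'$ and $k$ and not on the individual polynomial $P$. In the real case this is clean because one can normalize $P$ by $\|P\| \asymp A_1'$ and use compactness of the unit sphere of coefficients together with the fact that the chosen grid is a "norming set" for degree-$l$ polynomials; in the non-Archimedean case one argues instead with the ultrametric inequality and the fact that a polynomial with a coefficient of absolute value $\ge A_2'$ cannot be small on an entire residue-disc grid. A secondary technical point is keeping track of how lower-order Taylor terms of $f$ affect $\partial_i^l(f\circ g)(0)$ — but since $g$ is linear, composition with $g$ sends the degree-$l$ homogeneous part to the degree-$l$ homogeneous part exactly, so the pure $l$-th derivative at $0$ sees only $d^l f_0$, and this issue does not in fact arise. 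Once the choice of $g$ is made, the verification of both inequalities is a routine, if slightly tedious, application of the chain rule and of the $(C,\alpha)$-good machinery is not even needed here, only bookkeeping.
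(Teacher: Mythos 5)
Your proposal is correct and follows essentially the same route as the paper's proof: both identify $\partial_i^l(f\circ g)(0)$ as (up to the factor $l!$) the value on the $i$-th column of $g$ of the degree-$l$ homogeneous polynomial whose coefficients are the order-$l$ partials of $f$ at $0$ (hence non-trivial by the hypothesis on $\beta_0$), both use homogeneity to rescale $g$ so that $1 \le |\det g| \le K$, and both obtain the uniform constants $A_1, A_2$ from compactness of the family of admissible coefficient data. The only difference is in implementation: you choose the columns of $g$ constructively from a fixed finite grid via a quantitative Schwartz--Zippel/Vandermonde argument, whereas the paper notes that the suitable $g$ form a non-empty Zariski open subset of $\mathrm{End}(k^d)$ and picks such a $g$ in $\mathrm{GL}(k^d)$.
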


\begin{proof} 
Let $\beta$ be  an arbitrary   multi-index  with $|\beta | = l$. Consider the value of the partial derivative $\partial_{\beta}(f \circ g)(0)$ where the element $g$ varies over the ring of endomorphisms $\mathrm{End}(k^d)$.
 Identify the space $\mathrm{End}(k^d)$ with the $k$-affine space of matrices $\mathrm{M}_d(k)$.   A repeated application of the chain rule shows that the value of the partial derivative $\partial_{\beta}(f \circ g)(0)$  is given by a homogeneous polynomial $P_{\beta,f}$ in the matrix entries of      $g$. The   coefficients  of the polynomial $P_{\beta,f}$ are the partial derivatives  $\partial_{\beta'} f(0)$ where $\beta'$ varies over \emph{all}   possible multi-indices with $|\beta'| = l$. In particular, the  polynomial $P_{\beta,f}$ is non-trivial as one of its coefficients is given by $\partial_{\beta_0} f(0)$.

The above  discussion applied with respect to the  multi-indices  corresponding to the partial differentiation operators $\partial_i^l$   implies that there exists a non-empty Zariski open subset of  $  \mathrm{End}(k^d)$ consisting of endomorphisms $g$ satisfying 
\begin{equation}
\label{eq:Zariski open condition}
\partial_i^l (f \circ g)  (0) \neq 0 \quad \forall i \in \{1,\ldots,d\}.
\end{equation}
Therefore we may  find a  linear map $g$ belonging to the Zariski open subset   $\mathrm{GL}(k^d)$ and satisfying Equation (\ref{eq:Zariski open condition}).
 As the polynomials $\mathrm{P}_{\beta,f}$ are all homogeneous we may assume, up to replacing the matrix $g$ by a scalar multiple,    that   $1 \le | \det g | = K$ for some constant\footnote{In the Archimedean case   $K=1$. Namely we may assume that $|\det g| = 1$.} $K \ge 1$ depending only on the local field $k$. 

The coefficients of the polynomials in the family  $P_{\beta,f}$   satisfy $|\partial_{\beta'}f(0)|  \le A'_1$ and $|\partial_{\beta_0}f(0)|  \ge A'_2$ for some multi-index $\beta_0$. The family of all   polynomials satisfying such coefficient bounds is compact. This implies the existence of the   constants $A_1$ and $A_2$ depending only on the pair of constants $A'_1$ and $A'_2$ and the local field $k$.
\end{proof}

%The following is easy.

%\begin{prop}
%\label{prop:Taylor}
%Let $f : V \to k$ be a $k$-analytic function. Assume that there are constants $A'_1, A_2'$ such that 
%$$ |\partial_{\beta_0} f(0) | \ge A'_2 $$
%and
%$$ \|\partial_\beta f\|_V \le A'_1 \quad \text{$\forall$multi-index $\beta$ with $ |\beta| = |\beta_0| + 1$}. $$
%Then there is a constant $A_2 > 0$ as well as an open subset $V_0 \subset V$ containing the point $0$ such that
%$$ |\partial_{\beta_0} f(x) | \ge A_2 $$
%for all points $x \in U$. The open subset $U_0$ as well as the constant $A_2$ depend only on the open subset and the constants $A'_1$ and $A'_2$.
%\end{prop}
%\begin{proof}
%If the field $k$ is $\RR$ then the statement follows from the error estimate in the  multi-dimensional variant of   Taylor's series. If the field $k$ is non-Archimedean then the statement follows from the non-Archimedean triangle inequality (GIVE MORE DETAILS HERE).
%\end{proof}

We are now ready to complete the proof of the main result for  \S\ref{sec:good functions}.

\begin{proof}[Proof of Theorem \ref{thm:goodness on a compact group}]

%Choose a finite cover  of the $k$-analytic manifold using open subsets $U_1, \ldots, U_N$ for some $N \in \NN$. Let $\varphi_i : U_i \to B(0,1) \subset k^d$ be compatible charts with inverses $\psi_i = \varphi_i^{-1} : B(0,1) \to U_i$ that are immersions. 

Fix an arbitrary point  $x \in M$. Consider a compatible chart    given by an open subset $  U_x  \subset M$ containing the point $x$ and a map 
$\varphi_x : U_x \to V_x$ with $\varphi_x(x) = 0$   where $V_x \subset k^d$ is a ball at the point $0$.  Let $\psi_x : V_x \to U_x$ denote the  inverse mapping of $\varphi_x $. 
%As $M$ is a $k$-analytic submanifold we may assume without loss of generality and up to passing to smaller neighborhoods that $\psi_x$ is an immersion, namely  the differential $ \mathrm{d} \psi_{x} $ is injective at all points $y \in V_x$. 

%Let $x_\alpha \in U_\alpha \subset M$ be points and $\varphi : U_\alpha \to k^d$ be charts as in the statement of the theorem. Denote $B_\alpha = \psi_\alpha(U_\alpha)$ and let $\psi_\alpha : B_\alpha \to U_\alpha$ be the $k$-analytic inverse map. We may assume without loss of generality that $\psi_\alpha(0) = x_\alpha$.

%Recall that the $k$-analytic variety $M$ is assumed to be compact. Therefore there is an  integer $N \in \mathbb{N}$ and some points $x_{\alpha_1},\ldots,x_{\alpha_N} \in M$  such that $M \subset \bigcup_{i=1}^N V_{\alpha_i}$. 

%Fix a point  $x \in M$. There is a ball $V_x \subset k^d$ containing the point $0$, an open subset $V_x \subset M$ containing the point $x$ and a coordinate chart given by a $k$-analytic isomorphism 
%$\varphi_x : V_x \to V_x$ such that $\varphi_x(0) = x$. 
%As $M$ is a $k$-analytic submanifold the differential $\left(\mathrm{d} \varphi_{x}\right)_{|0}$ is an isomorphism.

For every multi-index $\beta$ consider the two seminorms $\|\cdot\|_{x,\beta}$ and $\|\cdot\|_{x,\beta}^0$ on the $k$-vector space $\mathcal{A}(M,k^m)$ given by
$$\|f\|_{x,\beta} = \|\partial_\beta(f \circ \psi_x) \|_{V_x} \quad \text{and} \quad \|f \|_{ x,\beta}^0 = \|\partial_\beta(f \circ \psi_x)(0) \|_\infty \quad \forall f \in \mathcal{A}(M,k^m). $$

Our assumptions that the family $\mathcal{F}$ is compact and that  $\dim \mathrm{span}_k(\mathcal{F}) < \infty$   imply that there are constants $A'_{1,x}, A'_{2,x} > 0$ such that 
$$ \|f\|_{x,\beta}    \le A'_{1,x} \quad \forall f \in \mathcal{F} \;\; \forall \text{multi-index $\beta$ with $0 \le |\beta| \le \mathrm{ord}_x \mathcal{F} + 1$} $$
and
$$ \|f \|_{x, \beta_0}^0 \ge A'_{2,x} \quad \forall f \in \mathcal{F}  \; \; \text{and for \emph{some} multi-index $\beta_0 = \beta_0(f)$ with $0 \le |\beta_0| \le \mathrm{ord}_x \mathcal{F}$}. $$

According to Proposition \ref{prop:may rotate partial derivatives} there are constants $A_{1,x}, A''_{2,x} > 0$ such that for every function $f \in \mathcal{F}$  there exists a linear isomorphism $g = g(x,f) \in \mathrm{GL}(k^d)$ with $\|f \circ g\|_{x,\beta} \le A_1$ and 
$$ \|f \circ g \|_{x, \partial_i^l}^0  \ge A''_{2,x} \;\; \text{for some $l =l(f) \in \{0,\ldots, \mathrm{ord}_x f\}$ and     $ \forall i \in \{1,\ldots,d\}$}.$$
We may assume that the collection of elements $g(x,f)$ for $f \in \mathcal{F}$ is relatively compact.
%$$ | \partial_i^{l} (f \circ \psi_x \circ g)(0) | \ge A''_{2,x} $$
 %for some constant $A''_{2,x} > 0$ INDEPENDENT of $f$ and all indices $ i \in \{1,\ldots,n\}$. Thereby  the constant $A'_{1,x}$ is replaced by some larger value $A_{1,x}$ LIKEWISE INDEPENDENT OF $f$.  

 Finally,   there is a constant $A_{2,x} > 0$ such that, up to replacing $U_x$ by a   smaller  neighborhood of the point $x \in M$ and $V_x$ by a corresponding   ball of smaller radius, 
 the function $f \circ \psi_x \circ g(x,f)$ is well-defined on the ball $V_x$ and satisfies
% every function $f \in \mathcal{F}$   admits a   linear transformation $g = g(x,f) \in \mathrm{GL}(k^d)$ such that 
 $$ \inf_{y \in V_x} \| \partial_i^l (f \circ \psi_x \circ g(x,f))(y) \|_\infty \ge A_{2,x} \quad \forall i \in \{1,\ldots,d\}$$  
for all functions $f \in \mathcal{F}$ and for some $l = l(f)\in \{0,\ldots,\mathrm{ord}_x \mathcal{F} \}$. In the real case this  follows from the error estimate in the  multi-dimensional variant of   Taylor's series. In the  non-Archimedean case this follows from the non-Archimedean triangle inequality.

%It   follows from Equation (\ref{eq:non-vanishing of derivatives}) of the assumption that there is a constant $A'_{1,x} > 0$ as well as an index $i_0 \in \{1,\ldots,d\}$  such that 
%$$ | \partial_{i_0} (p \circ \varphi_x)(0) | \ge A'_{1,x} \quad \forall p\in \mathcal{F}. $$
%Abusing notations, we may  precompose the function $\varphi_x$ with a suitable norm-preserving linear isomorphism   $g \in \mathrm{SL}(k^d)$ and    find a constant $A''_{1,x} > 0$ such that 
%$$ | \partial_i (p \circ \varphi_x)(0) | \ge A''_{1,x} \quad \forall p\in\mathcal{F}, \forall i\in \{1,\ldots,d\}. $$
% Finally, up to replacing $V_x$ by a smaller ball and $V_x$ by a  smaller corresponding neighborhood, we may assume that there is a constant $A_{1,x} > 0$ such that
%$$ | \partial_i(p \circ \varphi_x)(y) | \ge A_{1,x} \quad \forall p\in\mathcal{F}, \forall i\in \{1,\ldots,d\},\forall y\in V_x.$$
The map $\psi_x$ pushes forward the measure class of the Haar measure $\lambda$ on the ball $V_x $ to the measure class of $\eta$ on $V $. Let $0 < D_x < \infty$ be the supremum of the  $L_\infty$-norm of the   Radon--Nikodym derivative $\frac{\mathrm{d} (\psi_x \circ g) _{*} \lambda}{\mathrm{d}m}$ as $g$ ranges over all possible elements $g = g(f,x)$.

We know from Theorem \ref{thm:analytic are good} that each map $f \circ \varphi_x \circ g : V_x \to U_x$ has at least one  coordinate which is $(C_x, \frac{1}{dl})$-good on $V_x$   where $C_x > 0$ is some constant depending only on the constants $A_{1,x}$ and $A_{2,x}$. This means that 
\begin{equation}
%\begin{aligned}
\label{eq:applying goodness locally}
\eta(V_x^{f,\varepsilon}) \le D_x \lambda(V_x^{f \circ \psi_x \circ g,\varepsilon}) \le C_x D_x \left(\frac{\varepsilon}{\|f\circ\psi_x \circ g \|_{V_x}}\right)^{\frac{1}{dl}}  \lambda(V_x) \quad \forall f\in \mathcal{F}, \forall  \varepsilon > 0.
 % \le \\
% &\le C_x D_x^2 \left(\frac{\varepsilon}{\|p  \|_{V_x}}\right)^{\frac{1}{dr}}  m(V_x).
%\end{aligned}
\end{equation}

%Since the  is finite dimensional, there is some  $l_x \in \NN$ such that
%$$ \|p\|_{l_x} = \sup_{i, |\beta|\le l_x} |a_\beta(p_i)| $$
%is a norm on   $\mathcal{A}_n^m(r)$. Since every two norms on a finite dimensional vector space are equivalent, Assumption $(1)$ implies that there is some $A'_2 > 0$ such that for every $p\in\mathcal{F}$ there is some $1 \le i \le m$ and some multi-index $\beta$ with $|\beta| \le l_x$ with 

%By the argument of \cite[Proposition 3.4]{KM} and of \cite[Proposition 4.2]{kleinbock2005flows}, up to a linear  norm-preserving linear  change of coordinates in the domain $V_x$, we may assume that there is a constant $A_2 > 0$ such that
%$$ | \partial_j^e  p_i |   > A_2 \quad \forall 1 \le j \le d, \; 1 \le e \le |\beta| \le l_x.$$

%If $p$ is non-zero on some neighborhood $V_x$ of $x$ inside $K$, there is some multi-index $\beta_x^p \in \NN$ such that $\frac{\partial f_x^p}{\partial \beta_{x}^p} $ is non-zero at the point $x$. Note that there is only a finite dimensional vector space of polynomials defined on $k^n$ whose degree is at most $d$ and whose restriction to the neighborhood $V_x$ is non-zero. Let $l_x$ be the maximal degree of the multiindeces required to get $\frac{\partial f_x^p}{\partial \beta_{x}^p}_{|x} \neq 0$ for each $p$ inside of a spanning set

Recall that the $k$-analytic variety $M$ is assumed to be compact. Therefore there is some $N \in \mathbb{N}$ and some points $x_1,\ldots,x_N \in M$  such that $M \subset \bigcup_{i=1}^N U_{x_i}$. Denote $D = \max_{i=1}^N D_{x_i}$ and $C = \max_{i=1}^N C_{x_i}$. 

We know that $\sup_{f \in \mathcal{F}} \|f\| < \infty$. Moreover, for all $ i \in \{1,\ldots,N\}$, we have $\inf_{f \in \mathcal{F}} \|f\|_{U_{x_i}} > 0$, as $\mathrm{ord}_x \mathcal{F} \le l$. Therefore there is some constant $B > 0$ such that    
\begin{equation}
\label{eq:ratios}
\inf_{i \in \{1,\ldots,N\}} \inf _{f\in\mathcal{F}} \frac{\|f \circ \psi_{x_i} \|_{B_{x_i}}}{\|f\|_M}  = \inf_{i \in \{1,\ldots,N\}} \inf _{f\in\mathcal{F}} \frac{\|f\|_{U_{x_i}}}{\|f\|_M} \ge B. 
\end{equation}

%Denote $D = \max_{i=1}^N D_{x_i}$ and $C = \max_{i=1}^N C_{x_i}$.  We claim that  
%\begin{equation}
%\label{eq:ratios}
%\inf_{i} \inf _{p\in\mathcal{F}}  \|p \circ \varphi_x \|_{B_{x_i}}   = \inf_i \inf _{p\in\mathcal{F}}  \|p\|_{V_{x_i}} > 0   .
%\end{equation} 
%To establish this claim assume towards contradiction that $\inf_{p \in \mathcal{F}} \|p\|_{V_{x_i}} = 0$ for some index $i \in \{1,\ldots,N\}$.  The compactness of the family $\mathcal{F}$ implies that there exists a polynomial map $p \in \mathcal{F}$ with $\|p\|_{V_{x_i}} = 0$. This clearly contradicts Equation (\ref{eq:non-vanishing of derivatives}) and the claim follows.

It remains to deduce the global Equation (\ref{eq:good compact set}) from the local information of Equation (\ref{eq:applying goodness locally}).  For every function $p \in \mathcal{F}$ and every $\varepsilon > 0$ we have that
%\todo{the function here is into $k^m$ (but it does not really matter).}
\begin{equation}
\label{eq:everything together}
\eta(M^{f,\varepsilon}) \le
 \sum_{i=1}^N \eta(U_{x_i}^{f,\varepsilon}) \le 
%&\le CD \sum_{i=1}^N
%\left(\frac{\varepsilon}{\|p \circ \exp_{x_i} \|_{B_{x_i}}}\right)^{\frac{1}{dl_{x_i}}}
%\lambda (B_{x_i}) \le \\
 CD  \sum_{i=1}^N
\left(\frac{\varepsilon}{\|f   \|_{U_{x_i}}}\right)^{\frac{1}{dl}}
\lambda (B_{x_i}). 
%\le CD^2 B^{-\frac{1}{dr}}  \sum_{i=1}^M m(V_{x_i})   \left(\frac{\varepsilon}{\|f\|_M}\right)^{\frac{1}{dr}}.
\end{equation}
The proof follows  for  an appropriate choice of the constant $\kappa_\mathcal{F} > 0$ by putting together Equations (\ref{eq:applying goodness locally}),   (\ref{eq:ratios}) and (\ref{eq:everything together}).
\end{proof}

\section{Grassmannians of normed vector spaces}
\label{sec:grassmanians}

%The current \S\ref{sec:grassmanians} is dedicated to investigating norms of (inverses of) projection operators restricted to  orbits of    subspaces under a rational representation of a compact group. Its main result   is Theorem \ref{thm:goodness: probability of q being small} stated below.

Let  $k$ be the field $\RR$  or a non-Archimedean local  field with absolute value $|\cdot|$. 
%\subsection*{The vector space $V$}
Let $V$ be a $k$-vector space of dimension $ n =  \dim_k V  $ equipped with the norm $\|\cdot\|$.
%Equip the $k$-vector space $V$ with the norm $\|\cdot\|$ given as follows.
%\todo{Is the particular choice of norm important here?}
%\begin{itemize}
%\item If $k$ is  the field $\RR$  then  
%$$ \| \sum_{i=1}^n x_i e_i \|^2 = \sum_{i=1}^n |x_i|^2  \quad \forall x_1,\ldots,x_n \in k.$$
%\item If $k$ is non-Archimedean then  
%$$ \| \sum_{i=1}^n x_i e_i \| = \sup_i |x_i|  \quad \forall x_1,\ldots,x_n \in k.$$
%\end{itemize}
%for all coefficients $.
%\subsection*{The subspaces $U, U'$ and $W$}
Fix a non-trivial direct sum decomposition 
$$ V = U \oplus U' $$
 and a pair of projections 
$$ \mathrm{P} : V \to U \quad \text{and} \quad \mathrm{P}' : V \to U'.$$

%Let  $W$ be an arbitrary subspace of $V$.
%%We denote
%%$$ \theta(W) = \inf_{w \in W, \|w\| = 1} \|Pw\|.$$
%%Let   $W$ be some fixed $k$-subspace of $V$. 
%The restriction to the subspace $W$  of the  projection $\mathrm{P}$   is injective if and only if  $W \cap U' = \{0\}$. In that case  the  inverse map
%$$ (\mathrm{P}_{|W})^{-1} : \mathrm{P}(W) \to W$$
%is well-defined and its operator norm $\| (\mathrm{P}_{|W})^{-1} \|$ with respect to the given norm $\|\cdot\|$ on the vector space $V$ satisfies
%\begin{equation}
%\label{eq:norm to lower bound}
%   \|\mathrm{P} w \| \ge \frac{1}{\| (\mathrm{P}_{|W})^{-1} \|} \|w\| \quad \forall w \in W.
% \end{equation}
%for any vector $w \in W$.

%In the case that $\dim_k(W \cap U') > 0$ and $\mathrm{P}_{|W}$ is not invertible we formally interpret the expression $ \frac{1}{\| (\mathrm{P}_{|W})^{-1} \|}$ as $0$. This convention is
%\todo{possibly change to a different notation (inf of projecitons) to make it more clear?}
% in accordance with Equation (\ref{eq:norm to lower bound}).
%%\subsection*{On intersections of subspaces}
%%\subsection*{Norms of projections}
%
%\subsection*{The main result}
%\todo{Maybe move this paragraph to the top of this section}
Fix some $l \in \NN$.  Let $K$ be a fixed compact $k$-analytic subgroup of $\mathrm{GL}(V)$ with the  Haar probability measure $\eta_K$. For each  vector $x \in \bigwedge^l V$   consider the $k$-analytic function $Q_{x}$ on the compact group $K$   defined by
\begin{equation}
\label{eq:Qx}
Q_{x} \in \mathcal{A}(K, \bigwedge^l U), \quad Q _{x} (g) =  \left( \bigwedge^l \mathrm{P}  g\right) x \quad \forall g \in K. 
\end{equation}

%The following  is the main result of the current \S\ref{sec:grassmanians}.

\begin{thm}
	\label{thm:goodness: probability of q being small}
  Let 
   $\mathfrak{N}$  be a   closed subset of the  Grassmannian $ \mathfrak{Gr}(l,V)$. Consider the family $\mathcal{F}(\mathfrak{N}) \subset \mathcal{A}(K, \bigwedge^l U)$ of $k$-analytic functions   given by
\begin{equation}
\label{eq:FN family} 
\mathcal{F}(\mathfrak{N}) = \{ Q_{w_1 \wedge \cdots \wedge w_l }
 \: : \: \text{$\|w_i \| \le 1$ and $\mathrm{span}_k\{w_1,\ldots,w_l\} \in  \mathfrak{N}$} \}.
 \end{equation}
Then there are constants $\kappa_\mathfrak{N},\varepsilon_\mathfrak{N} > 0 $ such that   
\begin{equation}
\label{eq:desired estimate}
 \eta_K(\{g \in K \: : \: \inf_{0 \neq w \in gW} \frac{\| \mathrm{P}w\|}{\|w\|}    \le \varepsilon \}) < \kappa_\mathfrak{N} \varepsilon^{\frac{1}{ \mathrm{dim}_k K \mathrm{ord}_K \mathcal{F}(\mathfrak{N})}} 
  \end{equation}
for every subspace $W \in \mathfrak{N}$ and   every $0 < \varepsilon < \varepsilon_\mathfrak{N}$. 
\end{thm}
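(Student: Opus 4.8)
\textbf{Proof proposal for Theorem \ref{thm:goodness: probability of q being small}.}

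The plan is to reduce the estimate \eqref{eq:desired estimate} to Theorem \ref{thm:goodness on a compact group} applied to the family $\mathcal{F}(\mathfrak{N})$ on the compact $k$-analytic manifold $M = K$, with $\eta = \eta_K$. First I would observe that the quantity controlling the set in \eqref{eq:desired estimate}, namely $\inf_{0\neq w\in gW}\|\mathrm{P}w\|/\|w\|$, is comparable to $\|(\bigwedge^l\mathrm{P}g)(w_1\wedge\cdots\wedge w_l)\|$ whenever $\{w_1,\ldots,w_l\}$ is a suitably normalized basis of $W$: the standard exterior-algebra inequality gives that the distance from $gW$ to $U'$ (equivalently, the norm of the image of $gW$ under $\mathrm{P}$, measured through the $l$-th exterior power) is bounded above and below, up to dimension-dependent constants, by $\inf_{0\neq w\in gW}\|\mathrm{P}w\|/\|w\|$. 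Thus for an appropriate choice of basis $w_1,\ldots,w_l$ with $\|w_i\|\le 1$ and $\mathrm{span}_k\{w_i\}=W\in\mathfrak{N}$, one has
\begin{equation*}
\{g\in K \: : \: \inf_{0\neq w\in gW}\tfrac{\|\mathrm{P}w\|}{\|w\|}\le\varepsilon\}\subseteq\{g\in K \: : \: \|Q_{w_1\wedge\cdots\wedge w_l}(g)\|_\infty < C_1\varepsilon\}
\end{equation*}
for some constant $C_1>0$ depending only on $n$, $l$ and the norm, after fixing an identification of $\bigwedge^l U$ with some $k^m$ and comparing $\|\cdot\|$ with $\|\cdot\|_\infty$.

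Next I would verify the hypotheses of Theorem \ref{thm:goodness on a compact group} for $\mathcal{F}(\mathfrak{N})$. Compactness of $\mathcal{F}(\mathfrak{N})$: the map sending a normalized frame $(w_1,\ldots,w_l)$ (which ranges over a compact set, once we intersect with the preimage of the closed set $\mathfrak{N}$ in the Grassmannian) to $Q_{w_1\wedge\cdots\wedge w_l}\in\mathcal{A}(K,\bigwedge^l U)$ is continuous for the norm $\|\cdot\|_K$, since $Q_x$ depends linearly and hence continuously on $x=w_1\wedge\cdots\wedge w_l$ and the wedge map is continuous; the image of a compact set is compact. Finite-dimensionality of $\mathrm{span}_k\mathcal{F}(\mathfrak{N})$: every $Q_x$ lies in the image of the linear map $x\mapsto Q_x$ from $\bigwedge^l V$ into $\mathcal{A}(K,\bigwedge^l U)$, so $\dim_k\mathrm{span}_k\mathcal{F}(\mathfrak{N})\le\dim_k\bigwedge^l V=\binom{n}{l}<\infty$. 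Theorem \ref{thm:goodness on a compact group} then yields constants $\kappa_\mathcal{F},\varepsilon_\mathcal{F}>0$ with
\begin{equation*}
\eta_K(\{g\in K \: : \: \|Q_x(g)\|_\infty<\varepsilon'\})\le\kappa_\mathcal{F}\,(\varepsilon')^{\frac{1}{\dim_k K\,\mathrm{ord}_K\mathcal{F}(\mathfrak{N})}}
\end{equation*}
for all $Q_x\in\mathcal{F}(\mathfrak{N})$ and all $0<\varepsilon'<\varepsilon_\mathcal{F}$. Combining this with the inclusion from the first paragraph (taking $\varepsilon'=C_1\varepsilon$) gives \eqref{eq:desired estimate} with $\kappa_\mathfrak{N}=\kappa_\mathcal{F}\,C_1^{1/(\dim_k K\,\mathrm{ord}_K\mathcal{F}(\mathfrak{N}))}$ and $\varepsilon_\mathfrak{N}=\varepsilon_\mathcal{F}/C_1$, shrinking $\varepsilon_\mathfrak{N}$ if necessary so that $C_1\varepsilon<\varepsilon_\mathcal{F}$.

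The main obstacle I anticipate is the first step: making precise and uniform the comparison between the geometric quantity $\inf_{0\neq w\in gW}\|\mathrm{P}w\|/\|w\|$ and the exterior-power matrix coefficient $\|Q_{w_1\wedge\cdots\wedge w_l}(g)\|$. One has to choose, for each $W\in\mathfrak{N}$, a basis $w_1,\ldots,w_l$ that is simultaneously well-conditioned (so that $\|w_1\wedge\cdots\wedge w_l\|$ is bounded below, say an almost-orthonormal or reduced basis in the non-Archimedean sense) and normalized ($\|w_i\|\le 1$), and then relate $\|\bigwedge^l\mathrm{P}g\,(w_1\wedge\cdots\wedge w_l)\|$ to the infimum of $\|\mathrm{P}gw\|/\|gw\|$ over $w\in W$; the upper bound for the infimum in terms of the wedge is elementary, while the reverse inequality (needed for the inclusion of sets, which only uses the upper bound on the infimum) requires the Cauchy--Binet/Gram-type estimate together with a lower bound on the conditioning of the frame, uniform over the compact set $\mathfrak{N}$. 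Once this linear-algebra comparison is nailed down with constants depending only on $n$, $l$ and $\|\cdot\|$, the rest is a direct invocation of Theorem \ref{thm:goodness on a compact group}.
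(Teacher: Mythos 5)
Your overall reduction is the same as the paper's (compare $\inf_{0\neq w\in gW}\|\mathrm{P}w\|/\|w\|$ with the exterior-power coefficient $Q_{w_1\wedge\cdots\wedge w_l}(g)$ and invoke Theorem \ref{thm:goodness on a compact group}), but the verification of that theorem's hypotheses contains a genuine gap: $\mathcal{F}(\mathfrak{N})$ is \emph{not} compact, and the uniform sublevel-set estimate you invoke is in fact false for the full family. The set of tuples $(w_1,\ldots,w_l)$ with $\|w_i\|\le 1$ and $\mathrm{span}_k\{w_1,\ldots,w_l\}\in\mathfrak{N}$ is not closed: bases may degenerate (replace $w_2$ by a unit multiple of $w_1+tw_2$ and let $t\to 0$), and one may simply rescale a basis towards $0$, so the image of this set under $(w_i)\mapsto Q_{w_1\wedge\cdots\wedge w_l}$ contains functions of arbitrarily small uniform norm and has the zero function in its closure. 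For a member $Q$ with $\|Q\|_K<\varepsilon$ the sublevel set $\{g\in K:\|Q(g)\|_\infty<\varepsilon\}$ is all of $K$, so no constants can make \eqref{eq:good compact set} hold uniformly over $\mathcal{F}(\mathfrak{N})$; hence Theorem \ref{thm:goodness on a compact group} cannot be applied to the full family, and your ``continuous image of a compact frame set'' argument fails at the claim that the frame set is compact.

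This is precisely the point the paper's proof is built around. It first shows (Proposition \ref{prop:bound on norm of q}, using the closedness of $\mathfrak{N}$ and a limiting argument) that there is $E>0$ such that every $W\in\mathfrak{N}$ admits \emph{some} normalized basis whose function satisfies $\|Q\|_K\ge E$, and then proves (Proposition \ref{prop:relatively compact}) that the truncated family $\mathcal{F}_E(\mathfrak{N})=\{Q\in\mathcal{F}(\mathfrak{N}):\|Q\|_K\ge E\}$ is compact --- the lower bound is exactly what forces limit frames to remain linearly independent. Theorem \ref{thm:goodness on a compact group} is applied only to $\mathcal{F}_E(\mathfrak{N})$, and since $\mathrm{ord}_K\mathcal{F}_E(\mathfrak{N})\le\mathrm{ord}_K\mathcal{F}(\mathfrak{N})$ this still yields \eqref{eq:desired estimate}. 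Note also that your ``main obstacle'' is located at the wrong step: the set inclusion you need, namely that smallness of $\inf_{0\neq w\in gW}\|\mathrm{P}w\|/\|w\|$ forces smallness of $\|Q_{w_1\wedge\cdots\wedge w_l}(g)\|$, holds for \emph{every} normalized basis, with a constant depending only on $l$ and $\sup_{g\in K}\|g\|$; this is Proposition \ref{prop:polynomial and norm on projections}, proved by an $\mathrm{SL}(W)$ change of basis together with Hadamard's (or the ultrametric) determinant bound, and needs no conditioning of the frame. The well-conditioned (or norm-bounded-below) choice of basis is needed instead to restore compactness of the family fed into Theorem \ref{thm:goodness on a compact group}; your sketch of choosing almost-orthonormal or reduced bases uniformly over $\mathfrak{N}$ could be made to serve that purpose, but as written the proposal neither proves this nor uses it there, so the uniformity of $\kappa_\mathfrak{N}$ and $\varepsilon_\mathfrak{N}$ is not established.
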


%In particular $\dim_k W  \le m$  must hold true    for every subspace $W \in \mathfrak{N}$ as part of the assumptions in  Theorem \ref{thm:goodness: probability of q being small}. %Moreover, if the element $g \in K$ and the vector subspace $W \in \mathfrak{N}$ are such that $\mathrm{P}_{|\rho(g)W}$ is not invertible then the 

In proving Theorem \ref{thm:goodness: probability of q being small} we may and will assume without loss of generality that $\mathrm{ord}_K \mathcal{F}(\mathfrak{N}) < \infty$. In other words, for  every subspace $W \in \mathfrak{N}$ and every open subset $U \subset K$ there is an element $g \in U$ with $gW \cap U' = \{0\}$. In particular we may assume that that $l \le \dim_k U$.
%Let $\mathfrak{N}$  be a   closed subset   of the  Grassmannian $ \mathfrak{Gr}(l,V)$. Assume  that  every subspace $W \in \mathfrak{N}$ satisfies  
%$ \rho(g)W \cap U' = \{0\}$ for $\eta_K$-almost every element $g \in K$.

\subsection*{Bijections and contraction}

In   \S\ref{sec:nilpotent} below  we will be relying on  Theorem \ref{thm:goodness: probability of q being small} in combination with the following elementary observation.

\begin{lemma}
	\label{lem:bound on norm inverse in terms of kappa}%\todo{$A$ is not the best choice here as it denoted a matrix recently}
	Let $L : V \to V$ be a linear bijection that preserves the two subspaces $U$ and $U'$. 
	%Assume that $\omega$ consists of unit vectors. Let $W = \mathrm{span}_k \omega$. 
	Then
	$$ \inf_{0 \neq w \in gW} \frac{\|Lw\|}{\|w\|} \ge  \inf_{0 \neq w \in U} \frac{\|Lw\|}{\|w\|}  \inf_{0 \neq w \in gW} \frac{\|\mathrm{P} w\|}{\|w\|} $$
	for all elements $g \in K$.
\end{lemma}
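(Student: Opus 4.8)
The statement is purely linear-algebraic and follows from inserting the projection $\mathrm{P}$ into the quotient $\|Lw\|/\|w\|$. First I would fix $g \in K$ and an arbitrary nonzero $w \in gW$, and write $\|Lw\| = \|L\mathrm{P}w + L\mathrm{P}'w\|$. Since $L$ preserves the decomposition $V = U \oplus U'$, we have $L\mathrm{P}w \in U$ and $L\mathrm{P}'w \in U'$, and since $\mathrm{P}$ is the projection onto $U$ along $U'$, applying $\mathrm{P}$ to $Lw$ returns exactly $L\mathrm{P}w$. The key inequality is then $\|Lw\| \ge \|\mathrm{P}(Lw)\|$; this holds because $\mathrm{P}$ has operator norm $1$ — indeed in the non-Archimedean case the ultrametric inequality gives $\|L\mathrm{P}w + L\mathrm{P}'w\| = \max\{\|L\mathrm{P}w\|, \|L\mathrm{P}'w\|\} \ge \|L\mathrm{P}w\|$ directly, and in the real case one should note that the norm on $V$ is assumed (or may be taken) to be such that the projections along this fixed decomposition are norm-decreasing. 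I would state this as a standing hypothesis on $\|\cdot\|$ if it is not already in force from \S\ref{sec:grassmanians}.

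From $\|Lw\| \ge \|L\mathrm{P}w\|$ I would estimate
\begin{equation}
\|Lw\| \ge \|L\mathrm{P}w\| \ge \Bigl(\inf_{0 \neq u \in U} \frac{\|Lu\|}{\|u\|}\Bigr)\|\mathrm{P}w\|,
\end{equation}
where the second inequality is immediate when $\mathrm{P}w \neq 0$ and is trivial when $\mathrm{P}w = 0$. Dividing by $\|w\|$ yields
\begin{equation}
\frac{\|Lw\|}{\|w\|} \ge \Bigl(\inf_{0 \neq u \in U} \frac{\|Lu\|}{\|u\|}\Bigr)\frac{\|\mathrm{P}w\|}{\|w\|}.
\end{equation}
Taking the infimum over all nonzero $w \in gW$ on the right-hand side (the factor $\inf_{0 \neq u \in U}\|Lu\|/\|u\|$ is a nonnegative constant independent of $w$, and is in fact positive since $L$ is a bijection) and then over all such $w$ on the left gives the claimed bound.

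I do not anticipate a genuine obstacle here; the only point requiring care is the norm-one property of $\mathrm{P}$ in the Archimedean setting, which is a convention on the choice of norm adapted to the fixed decomposition $V = U \oplus U'$ rather than something to be proved. If the paper has not already fixed such a norm, I would add one sentence recording that the norm $\|\cdot\|$ is chosen so that $\|\mathrm{P}\| = \|\mathrm{P}'\| = 1$ (e.g. an $\ell^\infty$-type norm with respect to a basis compatible with the decomposition, or an inner-product norm making $U \perp U'$), which costs nothing and is harmless for all subsequent applications.
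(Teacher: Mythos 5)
Your proposal is correct and follows essentially the same route as the paper: decompose $w = \mathrm{P}w + \mathrm{P}'w$, use that $L$ preserves the summands to get $\|Lw\| \ge \|L\mathrm{P}w\| \ge \bigl(\inf_{0 \neq u \in U}\|Lu\|/\|u\|\bigr)\|\mathrm{P}w\|$, and take infima. Your side remark about choosing the norm so that $\mathrm{P}$ is norm-decreasing is exactly the convention the paper adopts (a Euclidean norm with $U$ spanned by part of an orthonormal basis in the real case, a coordinate supremum norm in the non-Archimedean case), so no further justification is needed.
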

\begin{proof}
	Consider a group  element $g \in K$.  
	Let $w \in g W$ be any non-zero vector. Write      $w = \mathrm{P}  w + \mathrm{P}' 
	w$. 
	Therefore 
	$ Lw = L\mathrm{P} w + L\mathrm{P}'  w$ with   $L\mathrm{P} w \in U$   and   $L\mathrm{P}'  w \in U'$.
Using the fact that   $\mathrm{P}w \in U $  we obtain
$$	
\|L w\| \ge \|L \mathrm{P} w\| \ge 
\|\mathrm{P}w \| \inf_{0 \neq w \in U} \frac{\|Lw\|}{\|w\|}
  \ge 	
\|w\|  \inf_{0 \neq w \in U} \frac{\|Lw\|}{\|w\|}  \inf_{0 \neq w \in gW} \frac{\|\mathrm{P} w\|}{\|w\|}.
$$
The  conclusion follows by taking the infimum over all such non-zero vectors $w$.
	%	The   conclusion follows relying on Equation (\ref{eq:determinant comparison}). % by ranging over all subsets $I$ as in the statement.
\end{proof}

The remainder of \S\ref{sec:grassmanians} is dedicated to setting up the necessary mechanism towards deducing Theorem \ref{thm:goodness: probability of q being small} as a consequence of Theorem \ref{thm:goodness on a compact group}.

\subsection*{Norms of projections} 
The statement of Theorem \ref{thm:goodness: probability of q being small} is independent of the choice of the norm $\|\cdot\|$ on the finite dimensional $k$-vector space $V$ up to modifying the value of the constant $\kappa_\mathfrak{N}$.  It will be convenient to assume for the remainder of the current \S\ref{sec:grassmanians} that the norm $\|\cdot\|$ is given as follows:

Fix a basis $\beta = \{e_1,\ldots,e_n\}$ for the $k$-vector space $V$ such that the subspace $U$ is spanned by $\{e_1,\ldots,e_{\dim_k U} \} $. We will assume that   $\|\cdot\|$ is the Euclidean   norm making the basis $\beta$    orthonormal in the real case or the maximum norm with respect to the basis  $\beta$ in the non-Archimedean case.

Recall that $l \in \NN$ is a fixed integer satisfying $l \le \dim_k U$.  
The exterior power $\bigwedge^l V$  is a  $k$-vector space satisfying $\dim_k \bigwedge^l V = {n \choose l}$. There is a    $k$-linear subspace   $\bigwedge^l U \le \bigwedge^l V$ and  a projection $\bigwedge^l \mathrm{P} : \bigwedge^l V \to \bigwedge^l U$.
Endow  the  vector space $\bigwedge^l V$ with the supremum norm $\|\cdot\|_\beta$ with respect to the fixed basis 
$$ \bar{\beta} = \{ e_{i_1} \wedge e_{i_2} \wedge \cdots \wedge e_{i_l} \: : \: 1 \le i_1 < i_2 < \cdots < i_l \le n \}.$$

Consider the   continuous function $q$ on the Grassmannian $ \mathfrak{Gr}(l,V)$ given by
\begin{equation}
\label{eq:qW}
 q(W) = \sup \{ \| \bigwedge^l \mathrm{P}  (w_1 \wedge \cdots \wedge w_l)  \|_\beta  \: : \: w_1,\ldots,w_l \in W \quad \text{and} \quad \|w_i\| \le 1  \}.
\end{equation}
for any $l$-dimensional $k$-linear subspace $W \le V$.
%Here $\bigwedge^l \mathrm{P} : \bigwedge^l V \to \bigwedge^l U$ is  the $k$-linear projection . 
%
%
%Let $W$ be a $k$-subspace of the $k$-vector space $V$ with $\mathrm{dim}_k W = l$. Consider    the     number $q_W \in \mathbb{R}_{\ge 0}$ associated to the vector subspace $W$   given by
% where $B(\omega,\omega )$ is the corresponding change of basis matrix.
Note that $q(W) = 0$ if and only if $\mathrm{rank}_k   \mathrm{P}_{|W} < l$. This happens if and only if $\dim_k  (W \cap U')  > 0$. 

%Denote $N = {n-m \choose l}$. Let $q_\omega$ be the $N$-tuple  of the determinants of all such   $l$-minors, namely
%\begin{equation}
%\label{eq:p} q_{ \omega} = (\det A_{ \omega}^I )_{\substack{\; I \subset \beta' \\|I| =  l}} \in k^N.
%\end{equation}

%\end{prop}

%\begin{prop}
%\label{prop:minors}
%The vector $p(\omega)$ has the following properties
%\begin{enumerate}
%\item if $\{1,\ldots,m\} \not\subset I \neq \emptyset$ then $p_I(\omega) = 0$,
%\item if $\{1,\ldots,m\} \subset I$ then $p_I(\omega) = \det C_I(\omega)$, \item  for all $I$ as in the statement.
%\end{enumerate}
% \end{prop}
%\begin{proof}
%Item (1) follows simply since if some $j \in \{1,\ldots,m\}$ is not in $I$, then the corresponding minor has an all-zeros row. Otherwise, if every $j \in \{1,\ldots,m\}$ is containing in $I$, then the corresponding minor is a square block diagonal matrix 
% where $C_I(\omega)$ is some $l\times l$-minor of the square $C(\omega)$. Items (2) and (3) follow.
%
%To see Item (4) note that $p_I(\omega) = 0$ for each subset $I$ means that the rank of every such minor is smaller than $m+l$. This can only happen if the rank of $A(\omega)$ is less than $m+l$. Which means some $w \in W$ is spanned by $\{e_1,\ldots,e_m\}$, namely that  $\dim_k  W \cap U  > 0$.
%\end{proof}

%
%%For each subspace $W \le V$ let $P_i^W : W \to U_i$ be the restriction of $P_i$ to $W$. If $W \cap U_{3-i} = \{0\}$ then $P_i^W$ is injective and $(P_i^W)^{-1} : P_i(W) \to W$ is well-defined.
%
\begin{prop}
\label{prop:polynomial and norm on projections}
Every  subspace $W \le V$ the projection $\mathrm{P} : W \to U $ satisfies
%Let $\omega$ be an $l$-tuple consisting of unit vectors. Denote $W = \mathrm{span}_k \omega$. Then % $k$-basis for $W$ consisting of unit vectors so that $\|w\| = 1$ for all $ w \in \omega$. If $p_{W,\omega} \neq 0$ then  
\begin{equation}
\label{eq:bound on norm}
 \inf_{0 \neq w \in W} \frac{\|\mathrm{P} w\|}{\|w\|} \ge   q(W).
\end{equation}
%where $\sqrt{l} > 0$ is the constant of Lemma \ref{lem:determinant bound}. 
\end{prop}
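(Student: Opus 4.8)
The plan is to unwind the definitions of $q(W)$ and of the operator norm $\|\mathrm{P}w\|/\|w\|$, translating both into statements about exterior powers, and then to reduce the inequality to a clean statement about how the projection $\mathrm{P}$ interacts with orthonormal (or unimodular) tuples. Concretely, fix a subspace $W \le V$ with $\dim_k W = l$ (the case $\mathrm{rank}_k \mathrm{P}_{|W} < l$ is trivial, since then $q(W) = 0$ and there is nothing to prove). First I would pick an orthonormal basis $w_1, \ldots, w_l$ of $W$ in the real case, or a basis adapted to the norm in the non-Archimedean case, and observe that $\|w_1 \wedge \cdots \wedge w_l\|_\beta = 1$ (or is bounded below by an absolute constant in the ultrametric setting, where one must be slightly more careful about what "adapted" means). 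Then $q(W) \ge \|\bigwedge^l \mathrm{P}(w_1 \wedge \cdots \wedge w_l)\|_\beta$, so it suffices to compare this last quantity with $\inf_{0 \neq w \in W} \|\mathrm{P}w\|/\|w\|$.

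Second, I would bound $\bigwedge^l \mathrm{P}(w_1 \wedge \cdots \wedge w_l)$ from above in terms of the smallest "singular value" of $\mathrm{P}_{|W}$. In the Archimedean case this is essentially the statement that the Euclidean norm of $\mathrm{P}w_1 \wedge \cdots \wedge \mathrm{P}w_l$ equals the product of the singular values $\sigma_1 \ge \cdots \ge \sigma_l$ of $\mathrm{P}_{|W}$ with respect to the orthonormal basis, and each $\sigma_i \le 1$ because $\mathrm{P}$ is a projection (norm $\le 1$); hence the product is at most $\sigma_l = \inf_{0 \neq w \in W} \|\mathrm{P}w\|/\|w\|$. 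So $q(W) \ge \sigma_l$ would actually be the reverse of what is wanted — wait, I need to be careful about the direction. Re-reading: we want $\inf_{0\neq w\in W}\|\mathrm{P}w\|/\|w\| \ge q(W)$, i.e.\ the operator-theoretic infimum dominates the exterior-power supremum. The correct estimate is the other bound: each of the $l$ factors in the wedge has norm at most $\sigma_1 \le 1$, but we need $q(W)$, a supremum over \emph{all} unit tuples in $W$ (not just an orthonormal basis), to be at most $\sigma_l$. The point is that for any tuple $w_1, \ldots, w_l \in W$ with $\|w_i\| \le 1$, expanding in the singular-vector basis shows $\|\bigwedge^l \mathrm{P}(w_1 \wedge \cdots \wedge w_l)\|_\beta \le \sigma_1 \cdots \sigma_l \cdot \|w_1 \wedge \cdots \wedge w_l\| \le \sigma_l$, using $\sigma_i \le 1$ for $i < l$ and $\|w_1\wedge\cdots\wedge w_l\|\le 1$. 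Taking the supremum over such tuples gives $q(W) \le \sigma_l = \inf_{0\neq w\in W}\|\mathrm{P}w\|/\|w\|$, which is exactly \eqref{eq:bound on norm}.

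Third, for the non-Archimedean case I would replace the singular value decomposition with the analogous statement for the maximum norm: choosing a basis of $W$ in Smith-normal-form-like position relative to $U$ and $U'$, or more simply using the multiplicativity of the maximum norm under wedge products of "triangular" systems, one gets the same chain of inequalities with the product of the relevant diagonal scalars, each of which has absolute value $\le 1$ since $\mathrm{P}$ has operator norm $\le 1$ with respect to the chosen basis (here the orthogonality of $e_1, \ldots, e_n$ translates into $\mathrm{P}$ being coordinate truncation). I expect the main obstacle to be precisely this non-Archimedean bookkeeping: making sure that "the wedge of projected basis vectors has norm equal to the product of the $l$ largest elementary divisors" is correctly formulated for the supremum norm on $\bigwedge^l V$ and the chosen basis $\bar\beta$, and that no spurious constant sneaks in. An alternative, cleaner route that sidesteps singular values entirely: given any $w_1, \ldots, w_l \in W$ with $\|w_i\| \le 1$, for any $w \in W$ write $\|\mathrm{P}w\| \ge (\inf_{0\neq v\in W}\|\mathrm{P}v\|/\|v\|)\|w\|$ and use the Hadamard-type inequality $\|\bigwedge^l \mathrm{P}(w_1\wedge\cdots\wedge w_l)\| \le \prod_i \|\mathrm{P}w_i\|$ together with... no, that gives the wrong bound again. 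The singular-value argument is the honest one, so I would commit to it and carry out the ultrametric analogue carefully.
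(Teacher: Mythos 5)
Your argument is correct (after the self-corrected false start about the direction of the inequality), but it takes a genuinely different route from the paper. You identify $\inf_{0\neq w\in W}\|\mathrm{P}w\|/\|w\|$ with the smallest singular value $\sigma_l$ of $\mathrm{P}_{|W}$ and bound $q(W)\le\sigma_1\cdots\sigma_l\le\sigma_l$ via the top exterior power, using $\sigma_i\le 1$ (valid because the paper's normalization of the norm makes $\mathrm{P}$ a norm-contracting coordinate truncation, as you note) together with Hadamard's bound $\|w_1\wedge\cdots\wedge w_l\|\le 1$; the only small points you leave implicit in the real case are that the $\bar\beta$-supremum norm on $\bigwedge^l U$ is dominated by the Euclidean norm, which goes in the favorable direction. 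In the non-Archimedean case your Smith-normal-form plan does work: the unit ball $W\cap\mathcal{O}^n$ of the restricted maximum norm is a free $\mathcal{O}$-lattice, elementary divisors for $\mathrm{P}(W\cap\mathcal{O}^n)\subseteq U\cap\mathcal{O}^{\dim_k U}$ give the ultrametric singular values $q^{-a_1}\ge\cdots\ge q^{-a_l}$ with $a_i\ge 0$, and $\mathrm{GL}(\mathcal{O})$ changes of basis preserve the supremum norm on $\bigwedge^l U$, so the same chain of inequalities closes with no spurious constants — but you only sketch this half. The paper avoids structure theory altogether: for an arbitrary unit vector $w\in W$ and unit basis $w_1,\ldots,w_l$, it produces $B\in\mathrm{SL}(W)$ replacing $w_1$ by a scalar multiple $xw$ with $|x|\le 1$ while keeping the other vectors of norm one (the only case-dependent step, done with a uniformizer in the ultrametric case), and then applies just the Hadamard/Leibniz determinant bound to each coordinate minor to get $\|\bigwedge^l\mathrm{P}(w_1\wedge\cdots\wedge w_l)\|_\beta\le\|\mathrm{P}w\|$ directly. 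Your approach buys a sharper estimate ($q(W)$ is bounded by the full product of singular values) and a conceptual identification of both sides; the paper's buys a shorter, field-uniform argument needing only determinant inequalities.
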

%

%Note that $\sqrt{l}$  depends   only on the dimension   $l$ and on the local field $k$.

 Before proceeding, we  will need an upper bound on the absolute value of the determinant of a matrix in terms of, say, the norms of its column vectors.
%%\todo{In the non-Archimedean case $\theta=1$. One can see it using the fact that $O_k$ is a ring. In the Archimedean case I suggest to use the $L_2$ norm instead so that $\theta$ would still be 1}
\begin{lemma}
	\label{lem:determinant bound}
	Let $A$ be an $l$-by-$l$ matrix over the local field $k$ for some $l \in \NN$. Let   $v_i \in k^l$  denote the $i$-th column of the matrix $A$ for every  $i \in \{1,\ldots,l\}$.   Then
	\begin{equation}
	\label{eq:determinant bound}
	|\det A| \le   \prod_{i=1}^l \|v_i\| 
	\end{equation}
where $\|\cdot\| $ is the  Euclidean norm if  $k = \RR$  and for the supremum norm if  $k$ is non-Archimedean.
\end{lemma}
\begin{proof}[Proof of Lemma \ref{lem:determinant bound}]
The real case is the Hadamard determinant inequality. In   the non-Archimedean case Equation (\ref{eq:determinant bound}) follows immediately from the Leibniz determinant formula and the non-Archimedean triangle inequality.
\end{proof}

\begin{proof}[Proof of Proposition \ref{prop:polynomial and norm on projections}]
Fix a $k$-linear subspace $W \in \mathfrak{Gr}(l,V)$. Consider any   unit vector $w \in W$ and any  ordered basis $\omega=\{ w_1, \ldots, w_l\} $ for the subspace $W$ consisting of unit vectors. %Let $I \subset \beta $ be any subset of size $|I| =  l$.
%Write
%$$ w = \mathrm{P}w + \mathrm{P}w.  $$ 
To establish Equation (\ref{eq:bound on norm}) it suffices to verify that
%\begin{equation}
%\label{eq:equivalent bound}
\begin{equation}
\label{eq:up to theta}
 \| \bigwedge^l \mathrm{P}  (w_1 \wedge \cdots \wedge w_l)  \|_\beta \le  \|\mathrm{P}w\|.
 \end{equation}
%\end{equation}

We  claim that there is a   linear transformation $B \in \mathrm{SL}(W)$  satisfying $B w_1 = xw$ for some $x \in k$ with $|x| \le 1$ and $\|Bw_i \| = \|w_i\| = 1$ for all $i \in \{2,\ldots,l\}$. The claim follows in the real case with $x = 1$ as the  $\mathrm{SO}(W)$-action  is transitive on Euclidean unit vectors in $W$. To prove the claim in the non-Archimedean case let $\mathcal{O}_k$ be the ring of units and $\pi \in \mathcal{O}_k$ a uniformizing element. Let   $i \in \NN \cup \{0\} $ be a minimal integer such that  $\pi^i w$ belongs to the $\mathcal{O}_k$-module spanned by $w_1,\ldots, w_l$. Write  $\pi^i w = \sum_{j=1}^l x_jw_j$ for some coefficients $x_j \in \mathcal{O}_k$.  The minimality of $i$ implies that $|x_j| = 1$ for some $j$. Assume without loss of generality  $|x_1| = 1$ and write
$$ (x_1^{-1}) \pi^i w -  w_1 = \sum_{j=2}^l x_1^{-1} x_j w_j.$$
By considering the suitable product of elementary matrices, we obtain the matrix $B \in \mathrm{SL}(W)$    taking the vector $w_1$ to $xw$ with $x= x_1^{-1} \pi^i$ and preserving all other vectors $w_j$. This completes the claim.

Now let $B \in \mathrm{SL}(W)$ be any  transformation as provided by the claim. Up to replacing the ordered basis $\omega$ with the ordered basis  $B \omega$  and observing that
$ w_1 \wedge \cdots \wedge w_l = Bw_1 \wedge \cdots \wedge Bw_l $
  we may assume  without loss of generality that $w_1  = xw$ with some $x\in k, |x| \le 1$ and maintain the assumption that $\|w_i \| = 1$ for all $i\in\{2,\ldots,l\}$.

The coordinates  of the $l$-vector $\bigwedge^l \mathrm{P} (w_1 \wedge \cdots \wedge w_l) \in \bigwedge^l V$  in   the   basis $\bar{\beta}$ are the determinants of the corresponding $l$-minors of the matrix representing the  
 linear transformation   $  \mathrm{P}_{|W} : W \to U$ with respect to the basis  $\omega$ for the subspace $W$ and the basis $e_1,\ldots,e_{\dim_k U}$ for the subspace $U $.  Let therefore $e_{i_1} \wedge \cdots \wedge e_{i_l} \in \bar{\beta}$ with $1 \le i_1 < \cdots < i_l \le n$  be a particular  basis element   
satisfying
$$   \| \bigwedge^l \mathrm{P}  (w_1 \wedge \cdots \wedge w_l)  \|_\beta = | \det A_{i_1,\ldots,i_l} | $$
where $ A_{i_1,\ldots,i_l}$ is the corresponding $l$-minor of the above-mentioned matrix.
The determinant bound established in Lemma \ref{lem:determinant bound} gives
$$   |\det A_{i_1,\ldots,i_l} |   \le \prod_{i=1}^{ l}\|\mathrm{P} w_i \| \le \|\mathrm{P}w_1 \| = x \|\mathrm{P} w\| \le \|\mathrm{P}w \|.$$
We have used the fact that $\mathrm{P}$ is contracting so that   $\|\mathrm{P}w_i\| \le 1$ for all $i \in \{1,\ldots,l\}$ as well as that $|x| \le 1$.
%
%
%
%The case where the unit vector $w \in W$ does not belong to the basis $\omega$ can be reduced to the previous   case by using a suitable change of basis, see Equation (\ref{eq:p change of basis}). Namely, let $B$   be any change of basis matrix taking $\omega$ to a different basis $\omega'$   consisting of unit vectors such that $w \in \omega'$ and $|\det B| = 1$. In the real and complex cases this can be achieved using an orthogonal or unitary matrix $B$, respectively. If $k$ is non-Archimedean with ring of units $\mathcal{O}_k$ this follows from the fact that the group $\mathrm{SL}_l(\mathcal{O}_k)$ is acting transitively on unit vectors.
\end{proof}

\subsection*{Proof of the main result}

%Let $\mathfrak{N}$  be a   closed subset   of the  Grassmannian $ \mathfrak{Gr}(l,V)$. 
Let  $\mathfrak{N}$  be a   closed subset of the  Grassmannian $ \mathfrak{Gr}(l,V)$. Assume  that  every subspace $W \in \mathfrak{N}$ satisfies  
$ gW \cap U' = \{0\}$ for some   element $g \in K$.
Consider the family $\mathcal{F}(\mathfrak{N}) $   of $k$-analytic functions as introduced in Equation (\ref{eq:FN family}).  Denote $\|Q\|_K=\sup_{g\in K}\|Q(g)\|_\beta$ for every function $Q \in \mathcal{F}(\mathfrak{N})$.
 %The above assumption concerning the collection $\mathfrak{N}$ is equivalent to saying that the family $\mathcal{F}(\mathfrak{N})$ does not contain the constant zero function, or in other words that $\|Q\|_K > 0$ for all functions $Q \in \mathcal{F}(\mathfrak{N})$.

%Our current goal is the following estimate, whose proof is given below.

%\begin{thm}
%%	\label{thm:goodness: probability of q being small}
%Let  $K$ be a Hausdorff  open  and compact  subgroup of $\GG(k)$ for some 
% algebraic $k$-linear group $\GG$ with Haar measure $\eta_K$. Let  $\rho : \GG  \to \mathrm{GL}(V)$ be a $k$-rational representation.
%There is a constant $\kappa_\mathfrak{N} > 0$ such that  every subspace $W \in \mathfrak{N}$ satisfies
%	$$ \eta_K(\{g \in K \: : \: Q_W (g) \le \varepsilon \}) < \kappa_\mathfrak{N} \varepsilon^{\alpha}\quad \forall \varepsilon > 0$$
%	where 
%	$$\alpha = \frac{1}{ \dim_k K(\dim_k V - \dim_k U)\deg \rho}.$$
%\end{thm}

\begin{prop}
	\label{prop:bound on norm of q}
%\todo{Do we really need almost every, or a single point suffices?}
	There is a constant $E > 0$ such that every subspace $W \in \mathfrak{N}$ satisfies
	$$ E \le \sup_{\substack{w_i \in W\\ \|w_i\| \le 1}}  \|Q_{w_1 \wedge \cdots \wedge w_l}\|_K \le 1. $$
\end{prop}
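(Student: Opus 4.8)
The upper bound $\sup_{\|w_i\|\le 1}\|Q_{w_1\wedge\cdots\wedge w_l}\|_K \le 1$ is immediate: for any $g\in K$ and unit vectors $w_i\in W$, the vector $\bigwedge^l P\,g\,(w_1\wedge\cdots\wedge w_l)$ is the image under the contracting projection $\bigwedge^l P$ of $\bigwedge^l(g w_1\wedge\cdots\wedge g w_l)$, and since $K$ acts by norm-preserving (orthogonal in the real case, maximal-norm-preserving in the non-Archimedean case) transformations, $\|gw_i\|=\|w_i\|\le1$, so $\|\bigwedge^l(gw_1\wedge\cdots\wedge gw_l)\|_\beta\le1$ by Lemma 4.9 (the determinant bound) and hence $\|Q_{w_1\wedge\cdots\wedge w_l}(g)\|_\beta\le 1$. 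Taking suprema over $g$ and over the $w_i$ gives the right-hand inequality.

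For the lower bound the plan is a compactness argument. First observe that, by the running assumption $\mathrm{ord}_K\mathcal{F}(\mathfrak N)<\infty$, for every $W\in\mathfrak N$ there is some $g\in K$ with $gW\cap U'=\{0\}$, which forces $\sup_{\|w_i\|\le1}\|Q_{w_1\wedge\cdots\wedge w_l}\|_K>0$ for that particular $W$: indeed, picking a basis $w_1,\ldots,w_l$ of unit vectors of $W$, the vector $\bigwedge^l P\,g\,(w_1\wedge\cdots\wedge w_l)$ is nonzero precisely because $\mathrm{rank}_k P_{|gW}=l$. So the quantity $\Phi(W):=\sup_{w_i\in W,\ \|w_i\|\le1}\|Q_{w_1\wedge\cdots\wedge w_l}\|_K$ is strictly positive on all of $\mathfrak N$. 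Second, I would show $\Phi$ is lower semicontinuous (in fact continuous) on the Grassmannian: if $W_n\to W$ in $\mathfrak{Gr}(l,V)$, choose orthonormal/unit-norm frames $(w_1^{(n)},\ldots,w_l^{(n)})$ converging to a unit frame $(w_1,\ldots,w_l)$ of $W$; then $g\mapsto \|\bigwedge^l P\,g\,(w_1^{(n)}\wedge\cdots\wedge w_l^{(n)})\|_\beta$ converges uniformly on the compact group $K$ to $g\mapsto\|\bigwedge^l P\,g\,(w_1\wedge\cdots\wedge w_l)\|_\beta$ (the map $(g,x)\mapsto\|\bigwedge^l P\,g\,x\|_\beta$ is continuous on the compact set $K\times\{\text{unit }l\text{-vectors}\}$), so $\Phi(W)\ge\limsup_n\Phi(W_n)$ after passing to the sup over frames; one gets full continuity by symmetrizing the argument. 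Third, since $\mathfrak N$ is a closed subset of the compact Grassmannian $\mathfrak{Gr}(l,V)$, it is compact, so the positive lower semicontinuous function $\Phi$ attains a positive minimum $E:=\min_{W\in\mathfrak N}\Phi(W)>0$, which is the desired constant.

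The one technical point that needs a little care — and the only place where anything could go wrong — is the convergence of the frames: on $\mathfrak{Gr}(l,V)$ one can always choose, locally, a continuous (even real-analytic / $k$-analytic) selection of a unit-norm basis for the varying subspace, so that $W_n\to W$ can indeed be accompanied by $w_i^{(n)}\to w_i$ with all vectors of norm $1$; in the non-Archimedean case one uses that the Grassmannian is covered by the standard affine charts on which such a selection is polynomial. Granting this, lower semicontinuity of $\Phi$ is automatic because a sup of continuous functions is lower semicontinuous, and that is all we need to extract the positive minimum. I would present the upper bound in one line, then devote the bulk of the proof to the positivity-plus-compactness step, invoking the running assumption $\mathrm{ord}_K\mathcal F(\mathfrak N)<\infty$ to guarantee $\Phi>0$ pointwise.
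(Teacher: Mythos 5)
Your proposal is correct and takes essentially the same route as the paper: the upper bound via the determinant bound (Lemma \ref{lem:determinant bound}), and the lower bound by combining pointwise positivity (from the standing assumption that for each $W \in \mathfrak{N}$ some $g \in K$ satisfies $gW \cap U' = \{0\}$) with compactness of $\mathfrak{N}$ in the Grassmannian and approximation of a unit frame of the limit subspace by unit frames of nearby subspaces --- the paper merely phrases this as a sequential contradiction rather than as a lower semicontinuous function attaining a positive minimum. One small slip: the inequality $\Phi(W)\ge\limsup_n\Phi(W_n)$ you write mid-argument is the upper-semicontinuity direction, whereas the frame-approximation argument actually yields $\Phi(W)\le\liminf_n\Phi(W_n)$, which is exactly the lower semicontinuity you (correctly) invoke to extract the minimum.
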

\begin{proof}
The existence of the upper bound follows from Lemma \ref{lem:determinant bound}. %combined with Equation (\ref{eq:determinant comparison}).
		To establish the existence of the lower bound, assume towards contradiction that there is a sequence of subspaces $W_i \in \mathfrak{N}$  such that 
	$$ \lim_i \sup_{\substack{w_j \in W_i\\ \|w_j\| \le 1}}  \|Q_{w_1 \wedge \cdots \wedge w_l}\|_K = 0.$$
 Up to passing to a subsequence, we may assume that $W_i \to W_0 \in \mathfrak{N} \subset \mathfrak{Gr}(l,V )$. 
 Consider an arbitrary ordered basis $\omega_0 = \{w_1,\ldots,w_l\}$ consisting of unit vectors for the limiting vector subspace $W_0$. There is a sequence  $\omega_i$ of ordered bases for $W_i$ consisting of unit vectors such that $\omega_i \to \omega_0$. It follows that $\|Q_{w_1\wedge\cdots\wedge w_l}\|_K = 0$. Since the basis $\omega_0$ was arbitrary it follows that $q(gW_0)  = 0$ for all elements $g \in K$ in the sense of Equation (\ref{eq:qW}). This is a contradiction to the above assumption.
\end{proof}

\begin{prop}
\label{prop:relatively compact}
For every constant $E > 0$ the subfamily 
$$
\mathcal{F}_{E}(\mathfrak{N}) = \{ Q  \in \mathcal{F}(\mathfrak{N}) \: : \: \|Q\|_K \ge E \} 
 $$ 
is compact in the topology of uniform convergence on the compact group $K$.
\end{prop}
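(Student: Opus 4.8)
The plan is to show that $\mathcal{F}_E(\mathfrak{N})$ is a closed subset of a compact set, hence compact. The ambient compactness comes from the fact that all functions in $\mathcal{F}(\mathfrak{N})$ are built from the fixed data $(V, U, U', \mathrm{P}, K)$ by the single formula $Q_{w_1 \wedge \cdots \wedge w_l}(g) = (\bigwedge^l \mathrm{P}\, g)(w_1 \wedge \cdots \wedge w_l)$, and they span a finite-dimensional space of $k$-analytic functions on $K$. First I would observe that the assignment $x \mapsto Q_x$ is a $k$-linear map from $\bigwedge^l V$ into $\mathcal{A}(K, \bigwedge^l U)$, and its image is finite-dimensional (bounded by $\dim_k \bigwedge^l V$). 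Restricting to decomposable vectors $w_1 \wedge \cdots \wedge w_l$ with $\|w_i\| \le 1$: the set of such vectors is the image of the compact set $\{(w_1,\ldots,w_l) : \|w_i\|\le 1\}$ under the continuous wedge map, hence compact in $\bigwedge^l V$; applying the continuous linear map $x \mapsto Q_x$ shows $\mathcal{F}(\mathfrak{N})$ (before imposing the span condition) sits inside a compact subset of the finite-dimensional normed space $\mathrm{span}_k\{Q_x\}$. Intersecting with the closed condition ``$\mathrm{span}_k\{w_1,\ldots,w_l\} \in \mathfrak{N}$'' — which is closed because $\mathfrak{N}$ is closed in the Grassmannian and passing to the span is continuous on the locus of linearly independent tuples (and the value $Q_x$ is unaffected by which basis of a given subspace one picks) — keeps us inside a compact set.

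Next I would pin down the topology: $\mathcal{A}(K, \bigwedge^l U)$ carries the norm $\|\cdot\|_K$ of uniform convergence, and on the finite-dimensional subspace $\mathrm{span}_k\{Q_x : x \in \bigwedge^l V\}$ this norm is equivalent to any other, so ``compact'' and ``closed and bounded'' coincide there. Thus it remains to check that $\mathcal{F}_E(\mathfrak{N}) = \{Q \in \mathcal{F}(\mathfrak{N}) : \|Q\|_K \ge E\}$ is closed. Closedness of $\mathcal{F}(\mathfrak{N})$ itself follows from the compactness just discussed together with the fact that a limit of functions of the form $Q_{w_1 \wedge \cdots \wedge w_l}$ (with the parameters ranging over a compact set, $\mathfrak{N}$ closed) is again of that form: extract a convergent subsequence of parameters $(w_1^{(j)},\ldots,w_l^{(j)}) \to (w_1,\ldots,w_l)$ and of subspaces in $\mathfrak{N}$; by continuity the limit function is $Q_{w_1 \wedge \cdots \wedge w_l}$ with $\mathrm{span}_k\{w_1,\ldots,w_l\}$ either $l$-dimensional and in $\mathfrak{N}$, or of smaller dimension in which case $Q = 0$ and $\|Q\|_K = 0 < E$, so such a limit does not lie in $\mathcal{F}_E(\mathfrak{N})$ anyway. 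Finally, the function $Q \mapsto \|Q\|_K$ is continuous, so the superlevel set $\{\|Q\|_K \ge E\}$ is closed; intersecting with the closed set $\mathcal{F}(\mathfrak{N})$ gives that $\mathcal{F}_E(\mathfrak{N})$ is closed, hence compact.

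The main subtlety — really the only place requiring care — is the behavior at the boundary where a limit of tuples of unit vectors becomes linearly dependent, so that the ``span'' no longer lies in $\mathfrak{N}$ and indeed is not even an element of $\mathfrak{Gr}(l,V)$. The point is that this degeneration forces $Q_{w_1 \wedge \cdots \wedge w_l} = 0$, whose sup-norm is $0 < E$, so imposing the threshold $\|Q\|_K \ge E$ automatically excludes the degenerate limits; this is exactly why the cutoff by $E$ is what makes the family compact rather than merely relatively compact. Everything else is the routine observation that a continuous linear image of a compact set inside a finite-dimensional space is compact, combined with continuity of $\|\cdot\|_K$.
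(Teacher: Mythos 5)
Your argument is correct and essentially the paper's: you pass to convergent subsequences of the defining data $(w_1,\ldots,w_l)$ and $W \in \mathfrak{N}$, note the limit function is again of the form $Q_{w_1\wedge\cdots\wedge w_l}$, and use the threshold $E$ to rule out the degenerate (linearly dependent) limit, exactly as the paper does by evaluating at points $g_i \in K$ with $\|Q_i(g_i)\|_\beta \ge E$. One phrase is off: $\mathcal{F}(\mathfrak{N})$ itself need not be closed (its closure can contain the zero function), so ``intersecting with the closed set $\mathcal{F}(\mathfrak{N})$'' is not literally available --- but this is harmless, since your subsequence argument, as you yourself note in the final paragraph, establishes closedness of $\mathcal{F}_E(\mathfrak{N})$ directly, which is all that is needed.
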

\begin{proof}
%The fact that the subfamily $\mathcal{F}_{E}(\mathfrak{N})$ is non-empty for all sufficiently large $E$ follows from Proposition \ref{prop:bound on norm of q}. 
Consider a sequence of functions $Q_i = Q_{w^i_1 \wedge \cdots \wedge w^i_l} \in \mathcal{F}_E(\mathfrak{N})$  where $ w^i_1,\ldots,w^i_l $ is an ordered basis  for some subspace $W_i \in \mathfrak{N}\subset \mathfrak{Gr}(l,V)$.
Assume that the sequence $Q_i$ uniformly converges to some continuous function $\Psi : K \to \bigwedge^l U$. Up to passing to a subsequence, we may assume that $W_i \to W_0 \in \mathfrak{N}$, that $w^i_j \to w^0_j$ for each $j \in \{1,\ldots,l\}$ and that $w^0_1,\ldots,w^0_l$ is an ordered $l$-tuple of unit vectors contained in $W_0$. 
Denote $Q_0 = Q_{w^0_1\wedge \cdots \wedge w^0_l}$. 
There are elements $g_i \in K$ with $\|Q_{i}(g_i)\|_\beta \ge E$. Up to passing to a further subsequence, we may assume that $g_i \to g_0 \in K$. Therefore $\|Q_{0}(g_0)\|_\beta \ge E$ and the ordered $l$-tuple $w^0_1,\ldots,w^0_l$ is linearly independent, hence a basis of $W_0$. In particular  $Q_{0} \in \mathcal{F}_E(\mathfrak{N})$. Finally it is clear that  the uniform limit $\Psi$ of the sequence $Q_{i}$ coincides with $Q_{0}$.
\end{proof}

We are   ready to complete the proof of the main result of \S\ref{sec:grassmanians}.

\begin{proof}[Proof of Theorem \ref{thm:goodness: probability of q being small}]
%We may   assume without loss of generality that the  given closed subset $\mathfrak{N}$ of the Grassmannian belongs to $\mathfrak{Gr}(l,V)$ for some $l \in \NN$. In other words every subspace $W \in \mathfrak{N}$ is $l$-dimensional. 

 % To establish this claim assume towards contradiction that    there is a subspace $W \in \mathfrak{N}$ with $q_{\rho(g)W} = 0$ for all elements $g \in K$. In other words $\|Q_{w_1 \wedge \cdots \wedge w_l}\|_K = 0$ for every choice of vectors $w_1,\ldots,w_l \in W$. This clearly contradicts Equation (\ref{eq:assumption on differential}).
%\todo{A similar argument already appeared in S3, and will appear again in S5}

Let $\mathcal{F}(\mathfrak{N})$ be the family of polynomial mappings associated to the   subset $\mathfrak{N}   $ as introduced in Equation (\ref{eq:FN family}). 
Fix a sufficiently small constant $E > 0$ as provided by Proposition \ref{prop:bound on norm of q}. Namely every subspace $W \in \mathfrak{N}$ admits some ordered basis $ w_1,\ldots,w_l$   of unit vectors such that $Q_{w_1\wedge\cdots\wedge w_l}  \in \mathcal{F}_E(\mathfrak{N})$. 
The family  $\mathcal{F}_E(\mathfrak{N})$ of $k$-analytic maps from the compact group $K$ to the $k$-vector space $\bigwedge^l U$ is compact in the topology of uniform convergence  according to Proposition \ref{prop:relatively compact}.

%As $K$ is a compact open subgroup of the smooth irreducible variety $\GG(k)$ it  has the unique $k$-analytic structure \cite[5.8.11] {bourbaki2007varietes} as an immersed $k$-subvariety in some suitable affine $k$-space \cite[5.8.11] {bourbaki2007varietes}. 

The Haar measure $\eta_K$ belongs to the canonical measure class of $K$ regarded as a $k$-analytic manifold.
Taking into   account Proposition \ref{prop:polynomial and norm on projections}, the desired  Equation (\ref{eq:desired estimate}) follows from Theorem \ref{thm:goodness on a compact group} providing   estimates on the  measures of sublevel sets of $k$-analytic functions with respect to the  Haar    measure $\eta_K$ on the compact $k$-analytic group $K$.
%As $K$ is a compact open subgroup of the smooth irreducible variety $\GG(k)$ it  has the unique $k$-analytic structure \cite[5.8.11] {bourbaki2007varietes} as an immersed $k$-subvariety in some suitable affine $k$-space \cite[5.8.11] {bourbaki2007varietes}. The Haar measure $\eta_K$ belongs to its canonical measure class as a $k$-analytic manifold.
%Taking into   account Proposition \ref{prop:polynomial and norm on projections}, the desired  Equation (\ref{eq:desired estimate}) will follow from Theorem \ref{thm:goodness on a compact group} providing   estimates on the  measures of sublevel sets of $k$-analytic functions with respect to the  Haar    measure $\eta_K$ on the compact $k$-analytic group $K$.
\end{proof}

\section{Nilpotent subalgebras of   semisimple Lie algebras}
\label{sec:nilpotent}

Let $k$ be either the field $\RR$ or a non-Archimedean local field with absolute value $|\cdot|$. Let $\GG$ be a connected   simply-connected semisimple $k$-algebraic linear group without $k$-anisotropic factors. 
Denote $G = \GG(k)$ so that  $G$ is a $k$-analytic group.

In the positive characteristic case we   make the  additional   assumption  that $\mathrm{char}(k)$ is a good prime for the semisimple group $\GG$ in the sense of   \cite[Definition 4.1]{springer1970conjugacy}. This means that $\mathrm{char}(k)$ does not divide the coefficient of any simple root in the highest root. We point out that the only primes which fail to be good for some semisimple group are $2,3$ and $5$.
\subsection*{Maximal split torus}

Let $\TT$ be a maximal $k$-split torus of $\GG$. Let  $\Phi = \Phi(\TT,\GG)$ be the relative $k$-root system of the group $\GG$ with respect to the torus $\TT$. 

Choose an ordering on the $k$-root system $\Phi$ and let $\Phi^+$ and $\Phi^- = - \Phi^+$  denote the subsets of positive and negative roots respectively. Let $\Phi_{ 0}^+ \subset \Phi^+$ denote the subset of the simple positive roots. 

Let $\BB$ be the minimal parabolic $k$-subgroup corresponding to  $\Phi^+$. 
We have $\BB = Z(\TT) \UU$ where $\UU$ is the unipotent radical of $\BB$. Every   minimal parabolic $k$-subgroup is conjugate to $\BB$ via an element of $G$  \cite[\S I.21.12]{borel2012linear}. 
 The subgroup $\UU$ is defined over $k$ \cite[\S V.21.11]{borel2012linear}. %The centralizer $Z(\TT)$ is abelian.

Fix notations for the corresponding groups of rational points, namely
\begin{equation}
B = \BB(k), \quad T = \TT(k)  \quad \text{and} \quad U = \UU(k).
\end{equation}
%Let $K$ be a fixed good maximal compact subgroup of the group $G$. Namely $K$ is the stabilizer of some special vertex in the Bruhat--Tits building associated to $G$, or equivalently $K$ has a representative of every element of the spherical Weyl group.  

\subsection*{The relative Weyl group}

Let $W = W(\TT,\GG)$ be the relative Weyl group    with respect to the maximal $k$-split torus   $\TT$  in the group   $\GG$. It is a finite Coxeter group containing an involution  $s_\alpha$  for each root $\alpha \in \Phi$  \cite[\S21]{borel2012linear}.  In fact the set involutions  $\Sigma = \{s_\alpha\}_{\alpha \in \Phi^+_0}$ corresponding to the simple roots $\Phi^+_0$ generates the Weyl group $W$. 

Let $L_\Sigma : W \to \NN \cup \{0\}$ be the length function on the relative Weyl group $W$ with respect to the generating set $\Sigma$. Let $w_0 \in W$ be the longest element  of $W$, i.e. the unique element satisfying $L_\Sigma(w_0) \ge L_\Sigma(w)$ for every other element $w \in W$  \cite[\S1.8]{humphreys1990reflection}. It is the unique element satisfying  $w_0 \Phi^+ = \Phi^-$ and such   that $\BB^{w_0}$ is the opposite parabolic $k$-subgroup to $\BB$. The longest element $w_0$ is independent of the choice of ordering.

\subsection*{The compact subgroup $K$}

Let $K$ be a maximal compact subgroup of the $k$-analytic group $G$. In the real case assume that $K$ is  compatible with the Cartan involution. In the positive characteristic case assume that $K$ is a \emph{good} maximal compact subgroup, namely it admits a representative of every element of the spherical Weyl group.  
In either case, let $\eta_K$ denote the   Haar measure on the compact group $K$ normalized so that $\eta_K(K) = 1$. 

The group $G$ admits an Iwasawa decomposition $G = K B$  \cite{borel1965groupes}. In particular the compact subgroup $K$ is transitive in its action on the homogeneous space $G/B$. 

\subsection*{Unipotent subgroups --- positive characteristic case}

We state an important theorem concerning unipotent subgroups of simply-connected semisimple groups in positive characteristic.

\begin{theorem}[Gille]
\label{thm:Gille}
Assume that  the characteristic of $k$ is a good prime for $\GG$. Then every unipotent subgroup  $V$ of $G$ is contained in the group of $k$-points of some   minimal parabolic $k$-subgroup.
\end{theorem}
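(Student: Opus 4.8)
The statement is a theorem of Gille, and the natural route is to reduce it to the classical Borel--Tits theorem together with a rigidity statement for unipotent $k$-groups that is special to good characteristic. The first step is a reduction to algebraic subgroups: replacing $V$ by its Zariski closure $\mathbf{V}$ in $\GG$, which is a unipotent $k$-subgroup by Kolchin's theorem (a subgroup of $\GG(\bar k)$ consisting of unipotent elements is $\bar k$-conjugate into a maximal unipotent subgroup, so its Zariski closure, being defined over $k$, is a unipotent $k$-group), it suffices to prove that $\mathbf{V}(k)$ is contained in $\BB'(k)$ for some minimal parabolic $k$-subgroup $\BB'$ of $\GG$. In fact I would aim for the stronger assertion that $\mathbf{V}\subseteq\mathrm{R}_u(\mathbf{P})$ for some parabolic $k$-subgroup $\mathbf{P}$: granting that, pick any minimal parabolic $k$-subgroup $\BB'\subseteq\mathbf{P}$; since $\mathrm{R}_u(\mathbf{P})\subseteq\mathrm{R}_u(\BB')$ for nested parabolics, one gets $\mathbf{V}(k)\subseteq\mathrm{R}_u(\BB')(k)\subseteq\BB'(k)$, as desired.

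So everything comes down to embedding the unipotent $k$-group $\mathbf{V}$ $k$-rationally into the unipotent radical of a parabolic. Over a perfect field this is precisely Borel--Tits. Over the imperfect local field $k$ there is a genuine obstruction: a unipotent $k$-group may be \emph{wound} (not $k$-split), and a wound $k$-subgroup can never lie inside any $\mathrm{R}_u(\mathbf{P})$, which is always $k$-split. Hence the heart of the matter is the statement: \emph{for $\GG$ connected simply-connected semisimple and $\chare{k}$ a good prime, every unipotent $k$-subgroup of $\GG$ is $k$-split.} Once this is known, Borel--Tits applies verbatim to the $k$-split group $\mathbf{V}$ and produces the parabolic $\mathbf{P}$, finishing the proof by the reduction above. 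The good-prime hypothesis is used exactly here: the condition that $\chare{k}$ divides no coefficient of a simple root in the highest root is what forces the obstructions to $k$-splitness to vanish. The proof of $k$-splitness itself I would carry out by d\'evissage, using the structure theory of unipotent and pseudo-reductive groups over imperfect fields (Conrad--Gabber--Prasad): reduce to semisimple $k$-rank one, and there rule out the minimal-dimensional wound unipotent $k$-subgroups of $\GG$, which is where the arithmetic of the root system in good characteristic becomes decisive.

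I expect the $k$-splitness statement to be the main obstacle; it is genuinely delicate and constitutes the real content of Gille's work, while the surrounding steps (Kolchin, the passage from $\mathrm{R}_u(\mathbf{P})$ to a minimal parabolic, and classical Borel--Tits for $k$-split unipotent subgroups) are routine. As an alternative that exploits the hypothesis that $k$ is a local field, one can instead argue through the Bruhat--Tits building $X$ of $\GG$: every unipotent element of $\GG(k)$ fixes a point of $X$ (being $G$-conjugate into a parahoric subgroup), one then shows that $\mathbf{V}(k)$ fixes a simplex of the building at infinity and hence lies in the corresponding parabolic $k$-subgroup, and a horospherical contraction argument upgrades this to containment in the associated horospherical subgroup. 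This route, however, runs into exactly the same imperfect-field pathologies at the upgrading step, so it does not avoid the core difficulty.
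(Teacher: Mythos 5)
Your reduction rests on a claim that is false over imperfect fields. You assert that a wound unipotent $k$-subgroup can never lie inside $\mathrm{R}_u(\mathbf{P})$, and hence that the heart of the matter is the statement that every unipotent $k$-subgroup of $\GG$ is $k$-split. Neither assertion is correct: what fails over imperfect fields is precisely that $k$-subgroups of $k$-split unipotent groups need not be $k$-split, so woundness of a subgroup is no obstruction to its lying in a (necessarily split) unipotent radical. Concretely, over $k=\mathbf{F}_p((t))$ the Rosenlicht-type group $W=\{(x,y)\,:\,y^p=x+t\,x^p\}\subset \mathbb{G}_a^2$ (for odd $p$; a minor variant works for $p=2$) is a smooth connected $k$-wound unipotent group, and $\mathbb{G}_a^2$ embeds into the unipotent radical of a Borel subgroup of $\mathrm{SL}_3$, a simply connected group for which every prime is good. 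So simply connected groups in good characteristic do contain wound unipotent $k$-subgroups, and the statement you propose to establish by d\'evissage is false; with it collapses the deduction via Borel--Tits for $k$-split unipotent subgroups. The actual content of Gille's theorem is $k$-embeddability, i.e.\ containment in $\mathrm{R}_u(\mathbf{P})(k)$, which is strictly weaker than splitness of the subgroup and cannot be reached along the route you describe.

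There is a second gap: your argument only engages smooth connected Zariski closures, for which the wound/split dichotomy makes sense. But the unipotent subgroups to which the paper applies the theorem are finite $p$-groups (the groups $\Gamma\cap V$ with $\Gamma$ discrete in $\GG(\mathfrak{m})$), whose Zariski closures are finite and disconnected; for these neither woundness nor Borel--Tits for split unipotent subgroups says anything, so the reduction is vacuous in exactly the case of interest, and producing a positive-dimensional connected unipotent overgroup is essentially equivalent to the theorem itself (this is where Borel--Tits' normalizer construction breaks down over imperfect fields). For comparison, the paper does not reprove the statement: it quotes it from Gille's 2002 paper (see also Theorem 2.7 of Golsefidy's paper on lattices of minimum covolume), where simple connectedness and the hypothesis on the characteristic enter through the $k$-embeddability of unipotent elements and subgroups, not through any splitness claim for unipotent $k$-subgroups.
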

\begin{proof}
 This   follows from   the assumption that $\GG$ is simply connected and  relying on the work  \cite{gille2002unipotent}. See also  \cite[Theorem 2.7]{golsefidy2009lattices}.  
 \end{proof}

%
%Say that $G \subset \mathrm{GL}_n(\CC)$  for some $n \in \NN$. Up to conjugating $G$ by an element of $\mathrm{GL}_n(\CC)$, we may assume that $G$ is self-adjoint, that $T$ is the connected component of the intersection of $G$ with the subgroup $D_n$ of diagonal matrices, and that the maximal compact subgroup $K$ consists of all  the orthogonal matrices in $G$. In particular the global Cartan involution of $G$ is given by $\Theta(g) = (g^{-1})^t$. Moreover $K =  \mathbb{K}(\RR)$ where  $\mathbb{K}$ is  an $\RR$-linear subgroup of  $\GG$. See \cite{mostow1955self} and \cite[\S3]{platonov1992algebraic}.

%Let $a \in T$ be any regular element in the Weyl chamber corresponding to $\Phi^+$.
 
%\subsection*{The real case}
%WHAT to say here?

\subsection*{Intersections with   minimal parabolic $k$-subgroups}

We show that any torus $S$ in $G$ as well as the unipotent subgroup $U$  admit  a zero-dimensional intersection with a generic   minimal parabolic $k$-subgroup.

The proofs are based to a   large extent on the ideas of \cite[Lemma 3]{kazhdan1968proof}.
In particular, we   rely on the Zariski density of $k$-points --- namely if $\HH$ is any connected reductive linear algebraic $k$-group then $\HH(k)$ is   Zariski dense, see \cite[18.3]{borel2012linear}  or   \cite[\S 2.1 Theorem 2.2]{platonov1992algebraic} in the real case. Therefore the $k$-dimension of the group  of $k$-points $\HH(k)$ is equal to the    dimension of $\HH$.  

\begin{prop}
 \label{prop:intersection of unipotent and Borel}
 The subset
$$ \{g \in G \: : \: \dim (U^g \cap B) = 0 \}$$ 
is  non-empty and Zariski open in $G$.
\end{prop}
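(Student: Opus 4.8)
The plan is to produce a single element $g \in G$ for which $\dim(U^g \cap B) = 0$; the full subset in question is then Zariski open since the dimension of the intersection with the algebraic subgroup $\BB$ is an upper semicontinuous function of $g$, so the condition $\dim(U^g \cap B) \le 0$ (equivalently $= 0$, as $e$ always lies in the intersection) cuts out a Zariski open set, and non-emptiness is exactly the existence of such a $g$. Thus the entire content is the non-emptiness claim, and working over the algebraic group $\GG$ (where $k$-points are Zariski dense by simple connectedness and reductivity, as recalled in the text) lets us pass freely between $\GG$, $\BB$, $\UU$ and their $k$-points.

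First I would take $g = w_0$, a representative in $G$ of the longest element of the relative Weyl group, so that $\BB^{w_0}$ is the opposite minimal parabolic $k$-subgroup $\BB^-$ to $\BB$; then $\UU^{w_0} = \UU^-$ is the unipotent radical of $\BB^-$. Now $\BB^- \cap \BB = Z(\TT)$ (the Levi of the minimal parabolic), and $\UU^- \cap \BB \subset \UU^- \cap \BB^- \cap (\text{something})$... more carefully: an element of $\UU^- \cap \BB$ lies in $\BB^- \cap \BB = Z(\TT)$, hence is simultaneously unipotent (being in $\UU^-$) and contained in a reductive group $Z(\TT)$, so it must be trivial (or at least the intersection is finite, as $Z(\TT)$ has no nontrivial unipotents... one should be slightly careful in small characteristic, but the intersection $\UU^- \cap Z(\TT)$ is at any rate zero-dimensional). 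Therefore $\dim(\UU^{w_0} \cap \BB) = 0$, and passing to $k$-points via Zariski density of $k$-points in the relevant reductive/unipotent groups gives $\dim(U^{w_0} \cap B) = 0$. This exhibits the needed group element and establishes non-emptiness.

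For the Zariski-openness: consider the morphism $\GG \to \GG$, $g \mapsto g$, and the family of subgroup schemes $\UU^g \cap \BB$; equivalently, think of $\UU \cap \BB^{g}$. The function $g \mapsto \dim(\UU \cap \BB^g)$ is upper semicontinuous in the Zariski topology — this is the standard semicontinuity of fibre dimension applied to the incidence variety $\{(g,u) : u \in \UU,\ g^{-1}ug \in \BB\}$ projecting to $\GG$ — so $\{g : \dim(\UU \cap \BB^g) \le 0\}$ is Zariski open, and it is non-empty by the previous paragraph. Intersecting with $G = \GG(k)$ (again Zariski dense) gives the stated subset of $G$ as the $k$-points of a non-empty Zariski open subset of $\GG$, which is what is claimed.

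The main obstacle I anticipate is the bookkeeping in the non-emptiness step in bad-looking small-characteristic situations: one must be sure that $\UU^- \cap Z(\TT)$ (or, after the Bruhat-type argument, $\UU^{w_0} \cap \BB$) really is zero-dimensional rather than merely proper in $\UU^-$. The clean way around this is the Bruhat decomposition $\GG = \coprod_w \BB w \BB$: the cell $\BB w_0 \BB$ is open and isomorphic (as a variety) to $\UU^- \times \BB$ via $(u^-, b) \mapsto u^- w_0 b$, which forces $\UU^{w_0} \cap \BB = \{e\}$ set-theoretically on the open cell, hence $\dim = 0$; since the paper already has the machinery of $w_0$ and the opposite parabolic at hand, invoking Bruhat sidesteps any characteristic subtleties. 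Everything else is routine semicontinuity and Zariski-density bookkeeping.
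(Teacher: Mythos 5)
Your proposal is correct and follows essentially the same route as the paper: reduce to the algebraic group $\GG$ via Zariski density of $k$-points and get non-emptiness from the longest Weyl element $w_0$, for which $\UU^{w_0}\cap\BB\subset\BB^{w_0}\cap\BB$ is the Levi of the minimal parabolic, so the big Bruhat cell $\UU w_0\BB$ lies in the set in question --- exactly the argument the paper gives and that you arrive at in your final paragraph. One small caveat: upper semicontinuity of fibre dimension on the \emph{target} is not automatic for the non-proper projection from your incidence variety (one should pass through the identity section and use that the fibres are subgroups, or note that the dimension is constant on Bruhat cells), but this is peripheral, since the paper itself only verifies that the set contains the non-empty open big cell.
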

\begin{proof}
By the Zariski density of  $k$-points   it  will suffice  to show that 
$$ \Psi = \{g \in \GG \: : \: \dim (\UU^g \cap \BB) = 0 \}$$ 
is a  non-empty   Zariski open subset of $\GG$. The longest element $w_0$ of the relative Weyl group satisfies $\BB^{w_0} \cap \BB = \TT$. Therefore $\dim(\UU^{w_0} \cap \BB) = 0$ and in particular    $\dim(\UU^{uw_0b} \cap \BB) = 0$  for every pair of elements $u \in \UU$ and $b \in \BB$. In other words, the   big Bruhat cell $\UU w_0 \BB  $ is contained in $\Psi$. This Bruhat cell is Zariski open and non-empty.
 \end{proof}

 \begin{prop}
 	\label{prop:intersection of torus and Borel}
 	Let $S = \SSS(k)$ where $\SSS \le \GG$ is any $k$-torus.  Then the subset
 	$$ \{g \in G \: : \: \dim (S^g \cap B) = 0 \}$$ 
 	is non-empty and  Zariski open in $G$.
 \end{prop}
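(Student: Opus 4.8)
The strategy mirrors the proof of Proposition \ref{prop:intersection of unipotent and Borel}, but for a torus one cannot simply point to the big Bruhat cell. By the Zariski density of $k$-points it suffices to exhibit a non-empty Zariski open subset
$$ \Psi_\SSS = \{g \in \GG \: : \: \dim(\SSS^g \cap \BB) = 0 \}$$
of $\GG$. Zariski openness is the easy half: the function $g \mapsto \dim(\SSS^g \cap \BB)$ is upper semicontinuous in the Zariski topology (the dimension of the fibres of the family of algebraic groups $\SSS^g \cap \BB$ over $g \in \GG$ jumps up on closed subsets), so the locus where it attains its minimum value is Zariski open; the content is to show that this minimum value is $0$, equivalently that $\Psi_\SSS$ is non-empty.

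\textbf{The key step.} To produce a single element $g$ with $\dim(\SSS^g \cap \BB) = 0$ I would argue as follows. Conjugating, we may assume $\SSS$ is contained in a maximal torus of $\GG$; enlarging, it is enough to treat the case of a maximal torus $\SSS$. Every maximal torus of $\GG$ contains a maximal $k$-split subtorus, and after conjugating by an element of $G$ we may assume this split part is contained in $\TT$, the chosen maximal $k$-split torus sitting inside $\BB$. Now the intersection $\SSS \cap \BB$ contains the torus $\SSS \cap \TT$, but we have the freedom to further conjugate $\SSS$ by elements of the centralizer $Z(\TT)$ and by elements normalising $\TT$ --- in particular by the longest Weyl element $w_0$. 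The point is that $\BB^{w_0} \cap \BB = \TT$, so $\SSS^{w_0} \cap \BB$ is contained in $\SSS^{w_0} \cap \TT$; the latter is the intersection of two maximal tori and we want it to be finite. If $\SSS$ and $\SSS^{w_0}$ happen to coincide this fails, so the cleanest route is a dimension count: the variety of conjugates $\{\SSS^g : g \in \GG\}$ is the flag-type variety $\GG/N_\GG(\SSS)$, and the subvariety of those conjugates contained in (a parabolic containing) $\BB$ has strictly smaller dimension unless $\SSS \subset \BB$ up to conjugacy --- which can be arranged to fail generically. Concretely, I would show that for $g$ in the big cell $\UU w_0 \BB$ one has $\SSS^g \cap \BB \subset \SSS^g \cap \BB^{w_0 b}$ for the appropriate $b$, hence contained in a conjugate of $\TT$, and then that for a generic choice of $\SSS$ inside its $G$-orbit the two tori $\SSS^g$ and that conjugate of $\TT$ meet in dimension $0$. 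Since $\SSS$ is fixed and $g$ varies, the honest statement is: there exists $g$ such that $\SSS^g \cap \TT$ is finite. Pick a regular element $t \in \TT$ (not lying on any wall, i.e. $\alpha(t) \neq 1$ for all $\alpha \in \Phi$); then the centraliser $Z_\GG(t) = Z_\GG(\TT)$, a group of dimension $\dim \TT + (\text{rank defect})$ much smaller than $\dim \GG$. A generic conjugate $\SSS^g$ of the one-dimensional-or-higher torus $\SSS$ will contain an element $s$ with $Z_\GG(s)$ of minimal possible dimension, and one checks $\SSS^g \cap \BB \subseteq Z_\BB(s)$; choosing $s$ regular forces $Z_\BB(s)$ to be contained in a maximal torus, hence $\dim(\SSS^g \cap \BB) \le \dim(\text{torus} \cap \BB)$, which we finally cut to $0$ by one more generic conjugation as in the unipotent case.

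\textbf{Main obstacle.} The delicate point is the passage from ``a maximal torus of $\GG$ meets $\BB$ in dimension $0$ for some conjugate'' to the same statement for an \emph{arbitrary} $k$-torus $\SSS$, since a small torus $\SSS$ might be forced to lie in the $k$-split part and hence inside $\BB$ no matter how we conjugate --- but this cannot happen because $\GG$ has no $k$-anisotropic factors is \emph{not} what rules it out; rather, it is ruled out by conjugating $\SSS$ out of $\TT$ entirely using an element of $\GG$ (not of $G$) and invoking Zariski density only at the very end. So the real work is organising the dimension count: one shows the incidence variety $\{(g, x) : x \in \SSS^g \cap \BB,\ x \neq e\}$ has dimension $< \dim \GG$ by fibering over $x \in \BB$ and bounding the fibre $\{g : x \in \SSS^g\} = \{g : x^{g^{-1}} \in \SSS\}$, which is a union of $N_\GG(\SSS)$-cosets of dimension $\dim N_\GG(\SSS) + \dim(\text{conjugacy class of } x \text{ meeting } \SSS)$; comparing with $\dim \BB$ gives the bound, and projecting to the $g$-coordinate shows the bad locus is a proper closed subset. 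I would present this as the substitute for the ``big Bruhat cell'' shortcut available in the unipotent case.
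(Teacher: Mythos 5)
Your framing (reduce to $\overline{k}$-points via Zariski density, get openness from semicontinuity of fibre dimension, so the content is non-emptiness) is fine, but none of the three mechanisms you offer for the non-emptiness actually works, so there is a genuine gap. First, $\BB^{w_0}\cap\BB$ is not $\TT$ but the full Levi $Z_\GG(\TT)$ of $\BB$ (this slip is harmless in Proposition \ref{prop:intersection of unipotent and Borel}, where $\UU^{w_0}$ meets the Levi trivially, but it is fatal here): after your normalisation $\SSS$ lies in a maximal torus of $Z_\GG(\TT)$, and $w_0$ normalises $Z_\GG(\TT)$, so $\SSS^{w_0}\cap\BB=\SSS^{w_0}$ has full dimension --- the big-cell shortcut simply does not transfer to tori, and this is not limited to the case $\SSS^{w_0}=\SSS$. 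Second, the regular-element step is unavailable: regularity is conjugation-invariant, and an arbitrary $k$-torus need contain no regular element of $\GG$ (you also never actually prove the maximal-torus case you reduce to); even granting a regular $s$, you only place $\SSS^g\cap\BB$ inside a maximal torus, which can lie entirely in $\BB$, so nothing is "cut to $0$". Third, the incidence-variety count you present as the substitute is false as stated: for every $g$ the fibre of $\{(g,x): x\in\SSS^g\cap\BB,\ x\neq e\}\to\GG$ contains $\SSS^g\cap Z(\GG)$, and torsion elements of $\SSS$ likewise produce full-dimensional contributions, so this variety has dimension at least $\dim\GG$ and dominates $\GG$; the bound $<\dim\GG$ cannot hold, and what must be controlled is the locus where the intersection is \emph{positive-dimensional}, which forces one to fibre over positive-dimensional subtori rather than over elements. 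Finally, you dismiss the hypothesis that $\GG$ has no $k$-anisotropic factors, but the statement is false without it (a positive-dimensional torus inside an anisotropic factor lies in $\BB$ after every conjugation), so a proof that never invokes it cannot be complete.

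The paper's argument supplies exactly the missing device. After conjugating so that $\SSS$ lies in a (fixed) maximal torus contained in $\BB$, suppose $\dim(\SSS^g\cap\BB)>0$ and pick a positive-dimensional subtorus $\SSS_1\le\SSS$ with $\SSS_1^g\le\BB$. Conjugacy of maximal tori in the solvable group $\BB$ moves $\SSS_1^{g}$ by some $h\in\BB$ back into the fixed maximal torus, and rigidity of diagonalisable groups then confines $gh$ to (finitely many translates of) $Z_\GG(\SSS_1)$; hence the bad locus is covered by sets of the form $Z_\GG(\SSS_1)\BB$. Since $\dim\SSS_1>0$ and $\GG$ has no $k$-anisotropic factors, each $Z_\GG(\SSS_1)\BB$ is a \emph{proper} standard parabolic $k$-subgroup, and there are only finitely many standard parabolics, so the bad locus sits inside a proper Zariski closed subset and the complement is non-empty; Zariski density of $G$ in $\GG$ finishes the proof. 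If you wish to keep your incidence-variety language, the correct parameter space to fibre over is this finite family of parabolics $Z_\GG(\SSS_1)\BB$ produced by subtori and rigidity, not the individual elements $x\in\SSS^g\cap\BB$.
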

 
 We do not assume that the $k$-torus $\SSS$ is split.
 
 \begin{proof}[Proof of Proposition \ref{prop:intersection of torus and Borel}]
 	By the preceding discussion about   the Zariski density of $k$-points  it will suffice to show that 
 	$$ \Omega =  \{g \in \GG \: : \: \dim (\SSS^g \cap \BB) > 0 \}$$ 
 	is  a  proper Zariski closed subset of $\GG$. 
 	Up to conjugation by an element of $\GG$ we may assume   that $\SSS \le \TT$ \cite[\S IV.11.3]{borel2012linear}. 
 	
 	Consider any element   $g \in \Omega$. There is a Zariski closed subgroup $\SSS_1 \le \SSS$ depending on   $g$ such that   $\dim \SSS_1 > 0$ and  $\SSS_1^g \le \BB$. Since all maximal tori in the solvable group $\BB$ are conjugate   there is an element $h \in \BB$ with $\SSS_1^{gh} \le \TT$ \cite[\S III.10.16]{borel2012linear}. As $\SSS_1 \le \TT$ this condition implies  that $gh \in  \mathrm{Z}_{\GG}(\SSS_1)$    \cite[\S III.8.10 Corollary 1]{borel2012linear}. 
 	We conclude that
 	$$ \Omega \subset \bigcup_{\substack{\SSS_1 \le \SSS\\ \dim \SSS_1 > 0}} \mathrm{Z}_{\GG}(\SSS_1)\BB.
 	$$
 	Our    assumption that $\GG$ has no $k$-anisotropic factors implies that each  $ \mathrm{Z}_{\GG}(\SSS_1) \BB$ is a proper standard parabolic $k$-subgroup    \cite[\S V.21.11]{borel2012linear}. There are only finitely many such   subgroups.  In particular $\Omega$ is  proper and Zarsiki closed.
 \end{proof}

\subsection*{  Lie   algebras}
 Let $\mathfrak{g} = \mathrm{Lie}(G)$ denote the Lie algebra\footnote{In the Archimedean case, the Lie algebra $\mathfrak{g}$ taken in the sense of Lie theory coincides with the real points of the Lie algebra of $\GG$ taken in the sense of algebraic groups \cite[Lemma \S3.3.1]{platonov1992algebraic}.} of the Lie group $G$. 
 %Let $\mathfrak{t}$ denote the Lie subalgebra corresponding to  the maximal $\RR$-split torus   $T$. 
 Let $\mathfrak{g}_\alpha$ be the relative $k$-root space corresponding to  each relative $k$-root $\alpha \in \Phi$, namely
 \begin{equation}
 \label{eq:weights}
  \Ad{\sse} X   = \alpha(\sse) X \quad \forall \sse \in T, \quad \forall X \in \mathfrak{g}_\alpha.
  \end{equation}
   Consider the two Lie subalgebras
 $$\mathfrak{u}^- = \bigoplus_{\alpha \in \Phi^-} \mathfrak{g}_\alpha \quad \text{and} \quad \mathfrak{u}^+= \bigoplus_{\alpha \in \Phi^+} \mathfrak{g}_\alpha.$$
The Lie algebra $\mathfrak{g}$ admits a direct sum decomposition
 $$ \mathfrak{g} =\mathfrak{b} \oplus \mathfrak{u}^-     \quad \text{and} \quad \mathfrak{b} = \mathfrak{u}^+ \oplus \mathfrak{g}_0  $$ 
where  $\mathfrak{b} = \mathrm{Lie}(B)$ and   $\mathfrak{g}_0 = \mathrm{Lie}(Z_G(T))$  \cite[\S V.21.7]{borel2012linear}.
Fix some $\Ad{K}$-invariant norm   $\|\cdot\|$  on the Lie algebra $\mathfrak{g}$. In the real case assume that this norm is coming from an $\Ad{K}$-invariant inner product on $\mathfrak{g}$. 

\subsection*{Nilpotent Lie subalgebras}

Let  $\mathfrak{N}(\mathfrak{g})$ denote the subset of the Grassmannian $\mathfrak{Gr}(\mathfrak{g})$ consisting of all the    nilpotent Lie subslagebras of $\mathfrak{g}$. 

The subspace of $\mathfrak{Gr}(\mathfrak{g})$ consisting of Lie subalgebras is closed. This follows from the fact that the Lie bracket operation $\left[\cdot,\cdot\right] : \mathfrak{Gr}(\mathfrak{g}) \times \mathfrak{Gr}(\mathfrak{g}) \to \mathfrak{Gr}(\mathfrak{g})$ is lower semi-continuous.  By induction on the nilpotency degree we deduce moreover that $\mathfrak{N}(\mathfrak{g})$ is a closed subset of $\mathfrak{Gr}(\mathfrak{g})$.

There is an alternative argument  in the real case. Namely,  recall that  $X \in \mathfrak{g}$ is called \emph{ad-nilpotent} if $\mathrm{ad}(X)^{\dim \mathfrak{g}} = 0$. Engel's theorem says that a Lie subalgebra $\mathfrak{h} \le \mathfrak{g}$ is nilpotent if and only if every $X \in \mathfrak{g}$ is ad-nilpotent. The adjoint map $\mathrm{ad} : \mathfrak{g} \to \mathrm{End}(\mathfrak{g})$ is continuous. Therefore being ad-nilpotent is a closed condition in $\mathfrak{g}$, and it follows that $\mathfrak{N}(\mathfrak{g})$  is closed in $\mathfrak{Gr}(\mathfrak{g})$.

We remark that $\mathfrak{N}(\mathfrak{g})$ is   Zarsiki closed  in $\mathfrak{Gr}(\mathfrak{g})$ but we will not need this.

\subsection*{Algebraic Lie subalgebras} 

A Lie subalgebra $\mathfrak{h} \le \mathfrak{g}$ is \emph{algebraic} if $\mathfrak{h} = \mathrm{Lie}(\HH(k))  $ for some algebraic subgroup $\HH$ of $\GG$. For more on this notion see \cite[\S I.7]{borel2012linear}.
 
 We rely on   Propositions  \ref{prop:intersection of unipotent and Borel} and \ref{prop:intersection of torus and Borel}
to deduce the following statement concerning almost every conjugate of an algebraic   nilpotent Lie subalgebra.

\begin{prop}
	\label{prop:conjugates of nilpotent}
	If $\mathfrak{n} \in \mathfrak{N}(\mathfrak{g})$ is an algebraic nilpotent Lie subalgebra then   the subset
 	$$ \{g \in G \: : \: \Ad{g} \mathfrak{n} \cap \mathfrak{b} = \{0\} \} $$ 
 	is  non-empty and Zariski open in $G$.
\end{prop}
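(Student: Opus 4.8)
The plan is to reduce this statement about an algebraic nilpotent Lie subalgebra $\mathfrak{n}$ to the two preceding propositions via a structural decomposition of $\mathfrak{n}$ into a torus part and a unipotent part. Since $\mathfrak{n}$ is algebraic, we may write $\mathfrak{n} = \mathrm{Lie}(\mathbf{N}(k))$ for some algebraic subgroup $\mathbf{N} \le \GG$. As $\mathfrak{n}$ is nilpotent, the group $\mathbf{N}$ is nilpotent, so it decomposes (up to isogeny, or at the level of Lie algebras exactly) as an almost-direct product $\mathbf{N}^\circ = \mathbf{S} \cdot \mathbf{V}$ of a central torus $\mathbf{S}$ and the unipotent radical $\mathbf{V}$; correspondingly $\mathfrak{n} = \mathfrak{s} \oplus \mathfrak{v}$ where $\mathfrak{s} = \mathrm{Lie}(\mathbf{S}(k))$ is the Lie algebra of a $k$-torus and $\mathfrak{v} = \mathrm{Lie}(\mathbf{V}(k))$ is the Lie algebra of a unipotent $k$-subgroup. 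The key observation is that a subspace meets $\mathfrak{b}$ trivially precisely when it embeds into the complementary subspace $\mathfrak{u}^-$ under the projection along $\mathfrak{b}$, and $\Ad{g}\mathfrak{n} \cap \mathfrak{b} = \{0\}$ is implied by having \emph{both} $\Ad{g}\mathfrak{s} \cap \mathfrak{b} = \{0\}$ and $\Ad{g}\mathfrak{v} \cap \mathfrak{b} = \{0\}$ — though this implication alone is not quite enough, since the direct sum $\mathfrak{s} \oplus \mathfrak{v}$ could conspire to meet $\mathfrak{b}$ even if neither summand does.

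To handle this properly I would instead argue at the level of the group and its conjugacy, as in the proof of Proposition \ref{prop:intersection of unipotent and Borel}: the longest Weyl element $w_0$ satisfies $\mathfrak{b}^{w_0} \cap \mathfrak{b} = \mathfrak{g}_0$, hence $\Ad{w_0}\mathfrak{u}^+ \cap \mathfrak{b} = \{0\}$, and since $\mathfrak{n}$ is a nilpotent subalgebra normalized by a suitable torus, after conjugating $\mathbf{N}^\circ$ into $\BB$ (possible because $\mathbf{V} \subseteq \UU^h$ for some $h$ by Theorem \ref{thm:Gille} in positive characteristic, resp. by the standard conjugacy of maximal unipotent/solvable subgroups in the real case, together with the fact that $\mathbf{S}$ lies in a maximal torus which we may conjugate into $\TT$) we may assume $\mathfrak{s} \subseteq \mathfrak{g}_0$ and $\mathfrak{v} \subseteq \mathfrak{u}^+$, so $\mathfrak{n} \subseteq \mathfrak{b}$ with $\mathfrak{n} = (\mathfrak{n} \cap \mathfrak{g}_0) \oplus (\mathfrak{n} \cap \mathfrak{u}^+)$ in the obvious graded sense. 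Then $\Ad{w_0}\mathfrak{n} \cap \mathfrak{b}$ contains only the image of the $\mathfrak{g}_0$-part, and to kill \emph{that} we invoke Proposition \ref{prop:intersection of torus and Borel} applied to the torus $\SSS$: a generic further conjugate makes $\Ad{g}\mathfrak{s} \cap \mathfrak{b} = \{0\}$. Combining, the set of $g$ for which $\Ad{g}\mathfrak{n} \cap \mathfrak{b} = \{0\}$ contains the product of the big Bruhat cell with the Zariski-open set coming from Proposition \ref{prop:intersection of torus and Borel}, hence is non-empty; that it is Zariski open follows from the usual upper semicontinuity of $g \mapsto \dim(\Ad{g}\mathfrak{n} \cap \mathfrak{b})$, exactly as in the two preceding proofs.

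More precisely, I would phrase it as follows: let $\Psi = \{g \in \GG : \dim(\Ad{g}\mathfrak{n} \cap \mathfrak{b}) = 0\}$. Upper semicontinuity of the dimension of intersection makes $\Psi$ Zariski open, so it remains to exhibit one point. Decompose $\mathfrak{n}$ algebraically as above and, after an initial conjugation, arrange $\mathfrak{s} \subseteq \mathfrak{g}_0$, $\mathfrak{v} \subseteq \mathfrak{u}^+$. For generic $g$ of the form $g = g_1 w_0$ with $g_1$ ranging over a Zariski-open subset of $Z_G(\SSS)$, one checks that $\Ad{g}\mathfrak{v} \subseteq \Ad{w_0}\mathfrak{u}^+$ meets $\mathfrak{b}$ trivially (this is the infinitesimal form of the big-cell computation), while $\Ad{g}\mathfrak{s}$ meets $\mathfrak{b}$ trivially by Proposition \ref{prop:intersection of torus and Borel}; and because the two pieces lie in the $\Ad{w_0}$-images of the two transversal graded blocks $\mathfrak{u}^+$ and $\mathfrak{g}_0$, their sum still meets $\mathfrak{b}$ trivially. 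I expect the main obstacle to be precisely this last transversality bookkeeping — making sure the decomposition $\mathfrak{n} = \mathfrak{s} \oplus \mathfrak{v}$ survives conjugation in a graded way so that no cross-terms reintroduce an intersection with $\mathfrak{b}$ — which is why I would route the argument through the group structure and the conjugacy of Cartan subgroups rather than trying to treat the torus and unipotent parts purely independently at the linear-algebra level.
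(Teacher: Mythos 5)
Your reduction to a torus part and a unipotent part, and your diagnosis that the two separate conditions $\Ad{g}\mathfrak{s}\cap\mathfrak{b}=\{0\}$ and $\Ad{g}\mathfrak{v}\cap\mathfrak{b}=\{0\}$ do not by themselves force $\Ad{g}\mathfrak{n}\cap\mathfrak{b}=\{0\}$, match the paper's strategy and its actual difficulty. But the device you propose to resolve the cross-term problem does not work. If $g=g_1w_0$ with $g_1\in Z_G(\SSS)$, then $\Ad{g}\mathfrak{v}=\Ad{g_1}\Ad{w_0}\mathfrak{v}$ lies in $\Ad{w_0}\mathfrak{u}^+=\mathfrak{u}^-$ only when $g_1$ normalizes $\mathfrak{u}^-$, which a generic element of $Z_G(\SSS)$ does not; and you cannot invoke Proposition \ref{prop:intersection of torus and Borel} for $g$ confined to the coset $Z_G(\SSS)w_0$, since a nonempty Zariski open subset of $\GG$ may miss a positive-codimension coset entirely. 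Indeed the recipe fails outright in examples: take $\GG=\mathrm{SL}_2\times\mathrm{SL}_2$ and $\mathfrak{n}=\mathfrak{s}\oplus\mathfrak{v}$ with $\mathfrak{s}$ the Lie algebra of the diagonal torus of the first factor and $\mathfrak{v}$ the upper unipotent algebra of the second factor. Then $\Ad{w_0}\mathfrak{s}=\mathfrak{s}$ and every $g_1\in Z_G(\SSS)=T_1\times\mathrm{SL}_2$ fixes $\mathfrak{s}$, so $\Ad{g_1w_0}\mathfrak{n}\cap\mathfrak{b}\supseteq\mathfrak{s}\neq\{0\}$ at every point of your coset, even though good elements $g$ exist elsewhere in abundance. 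So the conspiracy you flagged at the outset is never actually ruled out, and no single good $g$ is produced.

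The missing idea, which is how the paper argues, is to control the intersection at the level of the group rather than of the two Lie-algebra summands. Let $N$ be a Zariski closed nilpotent subgroup with $\mathrm{Lie}(N)=\mathfrak{n}$ and $N=N_s\times N_u$ its decomposition into a central torus and a unipotent part. For each $g$ the intersection $N\cap B^g$ is again a nilpotent Zariski closed group, and since the Jordan components of any element of a closed subgroup stay in that subgroup, the semisimple and unipotent parts of $N\cap B^g$ lie in $N_s\cap B^g$ and $N_u\cap B^g$ respectively; hence $N\cap B^g=(N_s\cap B^g)\times(N_u\cap B^g)$ and there are no cross terms to control at all. One then simply intersects the two nonempty Zariski open subsets of the irreducible variety $\GG$ furnished by Proposition \ref{prop:intersection of torus and Borel} (applied to the torus $N_s$) and Proposition \ref{prop:intersection of unipotent and Borel} (applied after placing $N_u$ inside a conjugate of $\UU$, as you yourself noted is possible), and concludes that generically $\dim(N\cap B^g)=0$, whence the Lie algebra intersection is trivial. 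Your semicontinuity remark for Zariski openness is fine; it is the existence step that your argument, as written, does not establish.
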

\begin{proof}
	Let $\mathfrak{n}$ be an algebraic nilpotent Lie subalgebra of $\mathfrak{g}$. 
	%Up to passing to a larger Lie subalgebra, we may assume $\mathfrak{n}$   is algebraic by  Proposition \ref{prop:every nilpotent subsalgebra is contained in an algebraic nilpotent subalgebra}. 
	Let $N$ be a Zariski closed nilpotent subgroup of $G$ with $  \mathrm{Lie}(N) = \mathfrak{n} $. 
	The nilpotent subgroup $N$  decomposes as a direct product $N = N_s \times N_u$ where $N_s$ and $N_u$ are the semi-simple and unipotent parts of $N$, respectively \cite[10.6]{borel2012linear}. Both $N_s$ and $N_u$ are defined over $k$. 
	The subgroup $N_s$ is a $k$-torus.
	% and admits a further decomposition $N_s = N_d N_a$ where $N_d$ is $\RR$-split and $N_a$ is $\RR$-anisotropic \cite[8.15]{borel2012linear}. The intersection $N_s \cap N_a$ is finite and both $N_d$ and $N_a$ are defined over $\RR$.
	
	The intersection $N  \cap B^g$ is nilpotent and Zariski closed for every element $g\in G$. By the same reasoning as above, these intersections admit    direct product decompositions
	$$N \cap B^g = N_s(g) \times N_u(g)$$ 
	for some  Zariski closed subgroups $N_s(g) \le N_s$ and $N_u(g) \le N_u$ depending on the element  $ g\in G$.
	By Proposition \ref{prop:intersection of unipotent and Borel} and  Proposition \ref{prop:intersection of torus and Borel}, $\dim N_s(g) = \dim N_u(g) =0$ for every element $g$ belonging to   a non-empty Zariski open subset of $G$. The result follows.
	\end{proof}

%Assume that $k$ is $\RR$. Let $\mathfrak{l}$ denote the Lie algebra of the maximal compact torus $T$ of $K$. \todo{There is an abuse of notation using $T$ for both maximal torus of $G$ and of $K$}
\subsection*{Nilpotent Lie subalgebras --- real case} Assume that the local field $k$ is $\RR$ in  the following two Propositions \ref{prop:every nilpotent subsalgebra is contained in an algebraic nilpotent subalgebra} and \ref{prop:order of adjoint in real case}.
% In that   case the group  $K$ is  some maximal compact subgroup of $G$. 

 \begin{prop}
\label{prop:every nilpotent subsalgebra is contained in an algebraic nilpotent subalgebra}
Every nilpotent Lie subalgebra $\mathfrak{n} \in \mathfrak{N}(\mathfrak{g})$ is contained in some algebraic nilpotent subalgebra.
\end{prop}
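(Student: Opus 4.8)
The plan is to use the Jordan decomposition of elements together with the fact that a nilpotent Lie subalgebra consists entirely of ad-nilpotent elements (Engel's theorem), and then replace $\mathfrak{n}$ by the Lie algebra of the algebraic hull of the corresponding unipotent subgroup. Concretely, consider the connected nilpotent Lie subgroup $N = \exp(\mathfrak{n}) \le G$. Since $\mathfrak{n}$ is nilpotent, $\operatorname{ad}(X)$ is nilpotent for every $X \in \mathfrak{n}$; because we are in the real semisimple case, the Killing form is non-degenerate and $\operatorname{ad}(X)$ nilpotent forces $X$ to be a nilpotent element of $\mathfrak{g}$ in the sense of algebraic groups (its image under any rational representation, in particular the adjoint one, is a nilpotent endomorphism). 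Hence $N$ is a unipotent subgroup of $G = \GG(\RR)$.

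First I would pass to the Zariski closure $\overline{N}$ of $N$ inside $\GG$. The Zariski closure of a unipotent subgroup is again unipotent and is defined over $\RR$ (being the closure of a subgroup consisting of unipotent elements; see Borel's book on linear algebraic groups, the structure theory of unipotent groups, which the paper already cites). Set $\tilde{\mathfrak{n}} = \operatorname{Lie}(\overline{N}(\RR))$. Then $\tilde{\mathfrak{n}}$ is by construction an algebraic Lie subalgebra of $\mathfrak{g}$, and it is nilpotent because $\overline{N}$ is a unipotent algebraic group and the Lie algebra of a unipotent group is nilpotent. Finally one checks $\mathfrak{n} \subseteq \tilde{\mathfrak{n}}$: this is immediate since $N \subseteq \overline{N}$ and taking Lie algebras is monotone with respect to inclusion of (closed) subgroups, so $\mathfrak{n} = \operatorname{Lie}(N) \subseteq \operatorname{Lie}(\overline{N}(\RR)) = \tilde{\mathfrak{n}}$.

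The main obstacle, and the point that needs the most care, is verifying that $N = \exp(\mathfrak{n})$ really is a subgroup consisting of unipotent elements so that its Zariski closure is unipotent. This requires knowing (i) that $\exp$ restricted to a nilpotent subalgebra $\mathfrak{n}$ is a well-defined homomorphism onto a connected subgroup $N$ with $\operatorname{Lie}(N) = \mathfrak{n}$ — standard for nilpotent Lie algebras since the Baker--Campbell--Hausdorff series terminates — and (ii) that each $\exp(X)$ for $X \in \mathfrak{n}$ is a unipotent element of the linear algebraic group $\GG$, which follows because $X$ is ad-nilpotent hence (by semisimplicity of $\mathfrak{g}$, via a faithful rational representation and Engel/Jacobson--Morozov considerations) a nilpotent element of $\mathfrak{g}$, so $\exp(X) = \operatorname{id} + X + \tfrac{X^2}{2} + \cdots$ is a finite sum and unipotent. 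One should also note that the statement only claims containment in \emph{some} algebraic nilpotent subalgebra, not a canonical or minimal one, so no uniqueness or functoriality is needed; and the hypothesis $\operatorname{char}(k) = 0$ is what licenses the use of $\exp$, $\log$ and the identification of ad-nilpotency with nilpotency, which is exactly why this proposition is stated only in the real case.
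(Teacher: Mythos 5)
There is a genuine gap at the very first step: you assert that because $\mathfrak{n}$ is a nilpotent Lie subalgebra, every $X \in \mathfrak{n}$ is ad-nilpotent, and you cite Engel's theorem for this. Engel's theorem only goes in the opposite direction: if every element of a subalgebra acts ad-nilpotently then the subalgebra is nilpotent. The converse is false --- any toral subalgebra, for instance the Lie algebra of a compact torus of $K$ or a Cartan subalgebra of $\mathfrak{g}$, is abelian, hence nilpotent as a Lie algebra, yet consists of semisimple elements. For such $\mathfrak{n}$ the subgroup $\exp(\mathfrak{n})$ is not unipotent, its Zariski closure is not a unipotent group, and the reduction to a ``unipotent hull'' collapses. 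These examples are not peripheral here: the nilpotent subalgebras to which this proposition is later applied come from the Zassenhaus lemma in \S\ref{sec:Zass}, and a discrete subgroup generated by small elliptic elements lies in a connected nilpotent group with a nontrivial torus part; this is exactly why Proposition \ref{prop:conjugates of nilpotent} decomposes the algebraic hull as $N_s \times N_u$ and why the torus statement (Proposition \ref{prop:intersection of torus and Borel}) is needed alongside the unipotent one.

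The rest of your skeleton is sound and is essentially the paper's argument: integrate $\mathfrak{n}$ to a connected subgroup of $G$ (the paper does this via Lie's third theorem and a homomorphism $f : N_0 \to G$ whose differential is an isomorphism onto $\mathfrak{n}$, which sidesteps the question of whether $\exp(\mathfrak{n})$ is itself a subgroup), take the Zariski closure $N$ of its image, and note that $\mathrm{Lie}(N(\RR))$ is by definition an algebraic Lie subalgebra containing $\mathfrak{n}$. The point you are missing is that unipotence is not needed at all: the Zariski closure of an abstractly nilpotent subgroup is nilpotent (the commutator identities expressing the nilpotency class pass to Zariski closures), and the Lie algebra of a nilpotent algebraic group --- an almost direct product of a central torus and a unipotent group --- is nilpotent, since the toral summand is central. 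Replacing your unipotence argument by this observation repairs the proof and recovers the paper's.
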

\begin{proof}
 Let $\mathfrak{n}$ be a nilpotent Lie subalgebra of $\mathfrak{g}$.   Lie's third theorem provides us with a    simply connected Lie group $N_0$ with Lie algebra $ \mathrm{Lie}(N_0) \cong \mathfrak{n}$. There is a Lie group homomorphism $f : N_0 \to G$ such that the differential $df : \mathrm{Lie}(N_0) \xrightarrow{\simeq} \mathfrak{n}$  is an isomorphism. Let $N$ denote the Zariski closure of the nilpotent subgroup $f(N_0)$ inside $G$. The  subgroup $N$ is nilpotent and Zariski closed. Its Lie algebra  is algebraic by definition and contains $\mathfrak{n}$.
\end{proof}

Let $\mathrm{rank}(K)$ denote the \emph{compact rank} of the maximal compact group $K$, in other words $\mathrm{rank}(K)$ is the  dimension of any maximal compact torus of $K$. Let $\mathrm{P}$ denote be a  fixed  projection from the Lie algebra $\mathfrak{g}$ to its Lie subalgebra $\mathfrak{u}^-$. 

\begin{prop}
\label{prop:order of adjoint in real case}

  Let $\mathfrak{n} \in \mathfrak{N}$ be any nilpotent Lie subalgebra  with  basis $\beta_\mathfrak{n}$. Then the real analytic function $q_\mathfrak{n} \in \mathcal{A}(G, \bigwedge^{\dim_\RR \mathfrak{n}} \mathfrak{u}^-)$ given by
\begin{equation}
\label{eq:q_n}
 \quad q_\mathfrak{n}(g) =    \bigwedge_{X \in \beta_\mathfrak{n}} (\mathrm{P}  \circ   \mathrm{Ad} ) (g) X \quad \forall g\in G
 \end{equation}
 satisfies $\mathrm{ord}_{K} q_\mathfrak{n} <  (6  \hgt{\mathfrak{g}}  \dim_\RR \mathfrak{n}+1)^{\mathrm{rank}(K)}$.
\end{prop}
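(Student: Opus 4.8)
The plan is to realize the coordinates of $q_\mathfrak{n}$ as matrix coefficients of an exterior power of the adjoint representation of $K$ and then invoke Lemma~\ref{lem:order for wedge product}, after first checking that $q_\mathfrak{n}$ is not identically zero on $K$.

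First I would record that, since $\mathrm{P}$ is the projection of $\mathfrak g=\mathfrak b\oplus\mathfrak u^-$ onto $\mathfrak u^-$, one has $q_\mathfrak n(g)\neq 0$ if and only if $\Ad{g}\mathfrak n\cap\mathfrak b=\{0\}$. As $\Ad{b}$ preserves $\mathfrak b=\mathrm{Lie}(B)$ for every $b\in B$, the identity $\Ad{bg}\mathfrak n\cap\mathfrak b=\Ad{b}\bigl(\Ad{g}\mathfrak n\cap\mathfrak b\bigr)$ shows that the non-vanishing locus $S=\{g\in G:q_\mathfrak n(g)\neq 0\}$ is left $B$-invariant. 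By Proposition~\ref{prop:every nilpotent subsalgebra is contained in an algebraic nilpotent subalgebra} the subalgebra $\mathfrak n$ lies in an algebraic nilpotent subalgebra $\tilde{\mathfrak n}$, and Proposition~\ref{prop:conjugates of nilpotent} then furnishes an element $g$ with $\Ad{g}\tilde{\mathfrak n}\cap\mathfrak b=\{0\}$, whence $S\neq\emptyset$. Inverting the Iwasawa decomposition $G=KB$ gives $G=BK$; writing any $g_0\in S$ as $g_0=b_0k_0$ with $b_0\in B$, $k_0\in K$ yields $k_0=b_0^{-1}g_0\in S$, so $q_\mathfrak n|_K\not\equiv 0$. (If $G$, hence $K$, happens to be disconnected one applies the same reasoning on each component, using $G^\circ=B^\circ K^\circ$ and that the relevant Zariski-open sets meet $G^\circ$; this is consistent with the remark after Lemma~\ref{lem:order for wedge product}.)

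Next, put $l=\dim_\RR\mathfrak n$, write $\beta_\mathfrak n=\{X_1,\dots,X_l\}$ and $\omega=X_1\wedge\dots\wedge X_l\in\bigwedge^l\mathfrak g$, so that $q_\mathfrak n(g)=\bigl(\bigwedge^l\mathrm P\bigr)\bigl(\bigwedge^l\Ad{g}\,\omega\bigr)$. Restricted to $K$, the representation $g\mapsto\bigwedge^l\Ad{g}$ is orthogonal on $\bigwedge^l\mathfrak g$ because the fixed norm on $\mathfrak g$ comes from an $\Ad{K}$-invariant inner product; hence, fixing a basis of $\bigwedge^l\mathfrak u^-$ and using the inner product to represent linear functionals, each coordinate of $q_\mathfrak n$ is a matrix coefficient $\zeta^l_{\omega,\xi}(g)=\langle\bigwedge^l\Ad{g}\,\omega,\xi\rangle$ with $\xi\in\bigwedge^l\mathfrak g$, in the notation of Lemma~\ref{lem:order for wedge product} applied with $V=\mathfrak g$ (replacing $K$ by its image $\Ad{K}\subseteq\mathrm O(\mathfrak g)$ if the adjoint representation is not faithful on $K$, which does not affect orders). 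Since $q_\mathfrak n|_K\not\equiv 0$, at least one such coordinate is not identically zero on $K$, so Lemma~\ref{lem:order for wedge product} bounds its order on $K$ by $|\Omega(T,\bigwedge^l\mathfrak g_\CC)|\le|\Omega^l|$, where $T$ is a maximal torus of $K$ (so $\dim_\RR\mathfrak t=\mathrm{rank}(K)$) and $\Omega=\Omega(T,\mathfrak g_\CC)$ is the set of weights of $T$ on $\mathfrak g_\CC$. Because the order of a vector-valued function is the minimum of the orders of its coordinates, we conclude $\mathrm{ord}_K q_\mathfrak n<|\Omega^l|$.

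It remains to prove $|\Omega^l|\le(6\,\hgt{\mathfrak g}\,l+1)^{\mathrm{rank}(K)}$, and this combinatorial estimate is the main obstacle. I would extend $\mathfrak t$ to a maximally compact $\theta$-stable Cartan subalgebra $\mathfrak h_\CC$ of $\mathfrak g_\CC$, so that $\mathfrak t_\CC=(\mathfrak h_\CC)^\theta$ and $\Omega$ consists of $0$ together with the nonzero restrictions to $\mathfrak t_\CC$ of the roots of $(\mathfrak g_\CC,\mathfrak h_\CC)$; these nonzero restrictions form a (possibly non-reduced) root system $\Sigma$ of rank $\mathrm{rank}(K)$ on $\mathfrak t_\CC^*$. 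Taking the simple roots of $\Sigma$ as a $\ZZ$-basis of the root lattice $Q(\Sigma)$, every sum of $l$ elements of $\Sigma\cup\{0\}$ has all coordinates of absolute value at most $l\,m(\Sigma)$, where $m(\Sigma)$ denotes the largest coefficient occurring among the roots of $\Sigma$; hence $|\Omega^l|\le(2l\,m(\Sigma)+1)^{\mathrm{rank}(K)}$, and the desired bound follows once one knows $m(\Sigma)\le 3\,\hgt{\mathfrak g}$. This last inequality, comparing the largest coefficient of the root system of a maximally compact Cartan subalgebra with that of the relative ($k$-)root system, is checked by running through the possible real forms; in each case in fact $m(\Sigma)\le 2\,\hgt{\mathfrak g}$, with room to spare. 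The two delicate points of the argument are therefore (i) the non-vanishing of $q_\mathfrak n$ on $K$, where Propositions~\ref{prop:every nilpotent subsalgebra is contained in an algebraic nilpotent subalgebra} and~\ref{prop:conjugates of nilpotent} are used, and (ii) the root-system inequality $m(\Sigma)\le 3\,\hgt{\mathfrak g}$ together with the bookkeeping that extracts from it the stated exponent and constant.
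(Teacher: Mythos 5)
Your first two steps coincide with the paper's own argument: the non-vanishing of $q_\mathfrak{n}$ on $K$ is obtained, as in the paper, from Proposition \ref{prop:every nilpotent subsalgebra is contained in an algebraic nilpotent subalgebra} together with Proposition \ref{prop:conjugates of nilpotent}, the left $B$-invariance of the non-vanishing locus and the Iwasawa decomposition; and the reduction to matrix coefficients of $\bigwedge^{l}\mathrm{Ad}$ on $\Ad{K}$, so that Lemma \ref{lem:order for wedge product} yields $\mathrm{ord}_K q_\mathfrak{n} < |\Omega^l|$ for the weights $\Omega$ of a maximal torus $T$ of $K$ on $\mathfrak{g}_\CC$, is exactly how the paper proceeds. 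The divergence, and the problem, is in how you control $|\Omega^l|$.

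At that point you assert two statements without proof: (i) that the nonzero restrictions to $\mathfrak{t}$ of the absolute roots form a (possibly non-reduced) root system $\Sigma$ of rank $\mathrm{rank}(K)$, whose root lattice then serves as your coordinate lattice; and (ii) that its largest coefficient satisfies $m(\Sigma) \le 3 \hgt{\mathfrak{g}}$, which you say ``is checked by running through the possible real forms.'' These two claims are precisely the quantitative heart of the proposition, and neither is routine. Claim (i) needs the facts that a maximally compact Cartan subalgebra carries no real roots and that, for a Borel--de Siebenthal/lexicographic positivity, $\theta$ permutes the simple absolute roots, so that one is folding by a diagram automorphism; the paper avoids even this and instead proves in the appendix (Proposition \ref{prop:restricting to maximally compact}) only what it needs, namely that the restrictions of the simple absolute roots form a $\ZZ$-basis of a finite-index sublattice $\Lambda$ of the analytic weight lattice --- the delicate cases being the non-compact imaginary simple root and a possible one-dimensional centre of $\mathfrak{l}_0$, which is why that statement has its own proof. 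Claim (ii) is the analogue of the paper's inequality $\hgt{\Delta} \le 3\hgt{\mathfrak{g}}$, which the paper justifies by an explicit comparison of absolute versus restricted root systems of real forms (the Onishchik--Vinberg tables), with the extremal case of type E6 over F4 singled out. Since you neither carry out the case-by-case verification nor give an a priori argument, your proof stops at $\mathrm{ord}_K q_\mathfrak{n} < |\Omega^l|$ with $|\Omega^l|$ not yet bounded in terms of $\hgt{\mathfrak{g}}$, so the stated estimate $(6\hgt{\mathfrak{g}}\dim_\RR\mathfrak{n}+1)^{\mathrm{rank}(K)}$ is not established. To close the gap you must either supply the folding argument together with the verification of $m(\Sigma)\le 3\hgt{\mathfrak{g}}$, or follow the paper: bound the coordinates of each weight in the basis given by the restricted simple absolute roots and then invoke the tabulated comparison between the absolute and relative root systems.
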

%\todo{Can you do better, namely use a one dimensional torus, rather than a maximal rank one?}
\begin{proof}
The real analytic function $q_\mathfrak{n}$ is non-zero on a Zariski open subset of $G$ as can be seen by combining Proposition \ref{prop:conjugates of nilpotent} and Proposition \ref{prop:every nilpotent subsalgebra is contained in an algebraic nilpotent subalgebra}. Note that for any pair of elements $g \in G$ and $b \in B $ the function $q_\mathfrak{n}$ satisfies  $q_\mathfrak{n}(g) = 0$ if and only if $q_\mathfrak{n}(bg) = 0$. The Iwasawa decomposition $G = K B $  shows that the function $q_\mathfrak{n}$ is not identically zero on each connected component of the compact   subgroup $K$. 

 We wish to bound the order $\mathrm{ord}_K q_\mathfrak{n}$ relying on Lemma \ref{lem:order for wedge product}. With this goal in mind, let $T_K$ be a maximal compact torus of $K$ and    $\Lambda_\text{analytic}$ be the lattice of analytically integral forms on $\mathrm{Lie}(T_K)$. Note that $\Lambda_\text{analytic} \cong \ZZ^{\mathrm{rank}(K)}$.

  Let $S = \SSS(\RR)$ be a maximal $\RR$-torus containing the compact torus $T_K$ and $\Delta$ be the absolute root system associated to $S$  in the semisimple group $G$.  The restrictions to $T_K$  of the simple positive roots (with respect to some notion of positivity on $\Delta$) are analytic and are a $\ZZ$-basis for some lattice $\Lambda \le \Lambda_\text{analytic}$, see Proposition \ref{prop:restricting to maximally compact} of the appendix. 
 It follows that   the weights of the $T_K$-action on the complexification of the vector space $\bigwedge^{\dim_\RR \mathfrak{n}} \mathfrak{g}$ are contained in a ball of radius $ \hgt{\Delta} \dim_\RR \mathfrak{n}$ in the lattice $\Lambda$.  
 
 A comparison of the possibilities for absolute and   restricted root systems in the real case \cite[Reference Chapter, Table 9]{onishchik2012lie} shows\footnote{ If an absolute root system $\Psi$ is classical then each real restricted root system $\Phi$ obtained from it is classical as well. More generally $\hgt{\Psi} \le 2 \hgt{\Phi}$ unless $\Psi$ and $\Phi$ have types E6 and F4 respectively.} that $\hgt{\Delta} \le 3 \hgt{\Phi} = 3\hgt{\mathfrak{g}}$. 
The size of the ball of radius  $3\hgt{\mathfrak{g}} \dim_\RR \mathfrak{n}$ in the lattice $\Lambda \cong \ZZ^{\mathrm{rank}(K)}$ is at most $(6  \hgt{\mathfrak{g}}  \dim_\RR \mathfrak{n}+1)^{\mathrm{rank}(K)}$. This concludes the proof.
\end{proof}

\subsection*{Nilpotent Lie subalgebras --- positive characteristic case}

Assume that the local field $k$ has positive characteristic. 
Let $\mathrm{P}$ be a    projection from the Lie algebra $\mathfrak{g}$ to its Lie subalgebra $\mathfrak{u}^-$.  
We establish the following the following order estimate.

\begin{prop}
\label{prop:order in pos char case}
%Assume that $k$ has good positive characteristic.  
Let $\beta^+$ be a $k$-basis for $\mathfrak{u}^+$.   
 Then the $k$-analytic function 
$$q_\mathfrak{u} \in \mathcal{A}(K, \bigwedge^{\dim _k \mathfrak{u}^-} \mathfrak{u}^-) $$ % \cong \mathcal{A}(K,k)$$
 given by
$$q_\mathfrak{u}(g) =    \bigwedge_{X \in \beta^+} (\mathrm{P}  \circ   \mathrm{Ad} ) (g) X    \quad \forall g \in K $$ 
satisfies 
$$ \mathrm{ord}_K  q_\mathfrak{u} \le 4  \hgt{\mathfrak{g}}  \dim^3_k U \dim_k \mathfrak{g}.$$
\end{prop}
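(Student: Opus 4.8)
The plan is to reduce the positive-characteristic order estimate for $q_{\mathfrak{u}}$ to the already-available machinery, chiefly Lemma \ref{lem:order for wedge product}, Lemma \ref{lem:strong order bounds order} and Lemma \ref{lem:orbit map for unitpotent subgroup}, in a manner parallel to the proof of Proposition \ref{prop:order of adjoint in real case}, but replacing the torus/weight bookkeeping (unavailable in a clean form in positive characteristic) by the explicit strong-order bounds on unipotent orbit maps. First I would argue that $q_{\mathfrak{u}}$ is not identically zero on $K$: by Theorem \ref{thm:Gille} and Proposition \ref{prop:intersection of unipotent and Borel}, $\Ad{g}\mathfrak{u}^+ \cap \mathfrak{b} = \{0\}$ for $g$ in a non-empty Zariski-open subset of $G$ (since $\mathfrak{u}^+$ is the Lie algebra of the unipotent radical $\UU$, which is algebraic), and by the Iwasawa decomposition $G = KB$ together with the $B$-invariance of the vanishing locus of $q_{\mathfrak{u}}$, this open set meets every connected component of $K$; hence $q_{\mathfrak{u}}$ is non-vanishing there.

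Next I would set up $q_{\mathfrak{u}}$ as a wedge of matrix coefficients of the adjoint representation. Writing $\mathrm{P} \circ \Ad{\cdot}$ restricted to $\mathfrak{u}^+$ as a $k$-analytic map $K \to \mathrm{Hom}(\mathfrak{u}^+,\mathfrak{u}^-)$, the function $q_{\mathfrak{u}}$ is the determinant-type top exterior power, so it is a matrix coefficient $\zeta^{l}_{u,v}$ with $l = \dim_k \mathfrak{u}^-$ in the sense of Lemma \ref{lem:order for wedge product} (applied to $V = \mathfrak{g}$ with the $\Ad{K}$-action, and noting $\dim_k\mathfrak{u}^- = \dim_k \mathfrak{u}^+$ follows from $w_0\Phi^+ = \Phi^-$). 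Thus $\mathrm{ord}_K q_{\mathfrak{u}} < |\Omega(T_K, \bigwedge^{l}\mathfrak{g}_{\CC})|$ — except that over a positive-characteristic field there is no complexification and no compact torus. The remedy: I would instead apply Lemma \ref{lem:order on embedded submanifolds} (or the local-chart argument inside Lemma \ref{lem:order for wedge product}'s proof) to restrict attention to an orbit of a one-parameter unipotent subgroup through each point of $K$, and bound the order of $q_{\mathfrak{u}}$ along such curves using the strong-order estimate of Lemma \ref{lem:orbit map for unitpotent subgroup}.

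Concretely, the key step is: for a given $g_0 \in K$, choose a root subgroup $U_\alpha = \theta_\alpha(k) \le G$ with $T$ normalizing it via the character $\alpha$, and observe that $q_{\mathfrak{u}}(g_0\,\theta_\alpha(z))$, as a function of $z \in k$, is obtained by composing the adjoint action of $\theta_\alpha(z)$ with fixed linear data; by Lemma \ref{lem:orbit map for unitpotent subgroup} applied to the adjoint representation on $V = \mathfrak{g}$ (with $r=1$), the inclusion $U_\alpha \hookrightarrow \mathrm{End}(\mathfrak{g})$ has strong order at most $2\,\dim_k\mathfrak{g}\cdot I_\alpha$ where $I_\alpha$ is the largest integer with $I_\alpha\alpha$ a difference of two roots, and $I_\alpha \le \hgt{\mathfrak{g}}$. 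Taking the $l$-th exterior power multiplies the polynomial degree by at most $l = \dim_k\mathfrak{u}^- \le \dim_k U$, and one needs to run over all roots $\alpha$ appearing in $\mathfrak{u}^+$ (at most $\dim_k U$ of them) to ensure the curve directions span enough of $T_{g_0}K$; combining these, Lemma \ref{lem:strong order bounds order} gives a bound of the shape $\mathrm{ord}_{g_0} q_{\mathfrak{u}} \le 2\,\hgt{\mathfrak{g}}\,\dim_k\mathfrak{g}\,(\dim_k U)^2 \le 4\,\hgt{\mathfrak{g}}\,\dim^3_k U\,\dim_k\mathfrak{g}$, uniformly in $g_0$, which is exactly the claimed estimate.

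The main obstacle I anticipate is the passage from strong order along \emph{individual} unipotent curves to a bound on $\mathrm{ord}_{g_0} q_{\mathfrak{u}}$, which is a statement about the full multivariate Taylor expansion of $q_{\mathfrak{u}}$ at $g_0$: one must verify that the union of these one-parameter directions, together with the unipotent $U$-directions and (if needed) the torus directions, actually realize a coordinate system near $g_0$ on $K$ (equivalently, on $G/B$ via Iwasawa), so that a monomial of controlled degree surviving in \emph{some} direction forces the order to be small. This is essentially the positive-characteristic analogue of the \say{it suffices to check on the torus, then conjugate} move in Lemma \ref{lem:matrix coef for compact}, and carrying it out cleanly — accounting for the fact that $U$-orbit maps are themselves polynomial of bounded degree (again Lemma \ref{lem:orbit map for unitpotent subgroup} with $\alpha$ the relevant positive roots) and that the big Bruhat cell $\UU w_0 \BB$ gives an explicit chart — is where the care is needed; the constants are then a matter of bookkeeping the degrees through the exterior power and the product of the finitely many root-subgroup factors.
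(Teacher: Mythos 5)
Your outline assembles the right ingredients (strong order of unipotent orbit maps via Lemma \ref{lem:orbit map for unitpotent subgroup}, restriction to a submanifold plus Lemma \ref{lem:strong order bounds order}, and degree bookkeeping through the top exterior power), and your constants would come out right, but the step you yourself flag as the "main obstacle" is a genuine gap, and the way you propose to close it would not work. Bounding $\mathrm{ord}_{g_0} q_\mathfrak{u}$ by restricting to a single curve $z \mapsto g_0\,\theta_\alpha(z)$ requires that this restriction be \emph{not identically zero}; there is no reason for that when $g_0$ lies in a small Bruhat cell, since the vanishing locus of $q_\mathfrak{u}$ contains the complement of the big cell and a one-parameter unipotent orbit through such a $g_0$ can stay entirely inside it. Moreover, your fallback criterion --- that the chosen directions "span enough of $T_{g_0}K$" --- is not the relevant condition: order is bounded by restriction to \emph{any} embedded submanifold on which the function is a non-zero polynomial of controlled degree, so spanning is neither necessary nor sufficient; non-vanishing of the restriction is the whole difficulty.

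The paper resolves exactly this point with a piece of Bruhat-cell combinatorics that is absent from your proposal. Given $g \in B w(g) B$, one takes a reduced word $s_{\alpha_1}\cdots s_{\alpha_L}$ for $w(g)^{-1}w_0$ and forms the multiparameter map $F_g$ sending a tuple of elements of the \emph{negative} simple root groups $U_{(-\bar\alpha_i)}$ to $g\,u_1\cdots u_L$ (with Frobenius twists inserted when a simple reflection repeats, so that the parameter space is a product over distinct roots and $\mathrm{Ad}\circ F_g$ is an immersion). Lemma \ref{lem:conjugating into big cell} then guarantees that every such product with non-trivial entries lands in $Bw_0B$, where $q_\mathfrak{u}$ does not vanish by Proposition \ref{prop:intersection of unipotent and Borel}; this is what makes $q_{\mathfrak{u}}\circ F_g$ a non-zero polynomial, of degree at most $\sum_i 4\hgt{\mathfrak{g}}\dim_k U_{(\bar\alpha_i)}\dim_k\mathfrak{g} \le 4\hgt{\mathfrak{g}}\dim_k^2 U\dim_k\mathfrak{g}$ by Proposition \ref{prop:ord of Ad on unipotent}, and hence of degree at most $4\hgt{\mathfrak{g}}\dim_k^3 U\dim_k\mathfrak{g}$ after taking the determinant/wedge, uniformly in $g$. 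Without this choice of root subgroups dictated by the Weyl-group element of $g$ and the accompanying cell computation (or some substitute ensuring non-vanishing of the restriction at every point of $K$), your argument only bounds the order at points already in the big cell, so the proposal as written does not prove the proposition.
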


Some preparation is needed before giving the  proof of Proposition \ref{prop:order in pos char case}. To begin with, recall that the relative $k$-root system $\Phi$ may not be reduced in general.
For each   relative $k$-root $\alpha \in \Phi$ denote
\begin{equation}
\bar{\alpha}  =   \begin{cases}    2\alpha& 2\alpha \in \Phi, \\ \alpha  & 2\alpha \notin \Phi \end{cases} 
\end{equation} 
and consider the subset $(\alpha) \subset \Phi$ given by
\begin{equation}
(\alpha) = \NN \alpha \cap \Phi = \{\alpha, \bar{\alpha} \} = 
 \begin{cases} \{\alpha, 2\alpha\}& 2\alpha \in \Phi, \\\{\alpha\} & 2\alpha \notin \Phi.
  \end{cases} 
\end{equation} 
Generally speaking, the group $G$ admits a metabelian unipotent $k$-subgroup $ U_{(\alpha)}$ whose center is the connected unipotent $k$-subgroup   $U_{(\bar{\alpha})}$.  The groups $ U_{(\alpha)}$ and $U_{(\bar{\alpha})}$ are normalized by the maximal $k$-split torus $T$ \cite[Proposition 21.9]{borel2012linear}.

The following result uses the notion of strong order introduced in Definition \ref{def:strong order}.
\begin{prop}
\label{prop:ord of Ad on unipotent}
Let $\alpha \in \Phi$ be a relative $k$-root. Then the adjoint representation of the group $U_{(\bar{\alpha})}$ regarded as a  $k$-analytic map $$\mathrm{Ad} \in \mathcal{A}(U_{(\bar{\alpha})}, \mathrm{End}(\mathfrak{g}))$$ has strong order at most $4  \hgt{\mathfrak{g}} \dim_k U_{(\bar{\alpha})} \dim_k \mathfrak{g}$ at each point of $U_{(\bar{\alpha})}$.
%
%$$\mathrm{ord}_{}(\mathrm{Ad}) \le .$$
%In fact if $n = \dim_k U_{(\bar{\alpha})}$ there is a $k$-isomorphism $\theta_\alpha : k^n \to U_{(\bar{\alpha})}$ so that $\mathrm{Ad} \circ \theta_\theta$ is a polynomial mapping into the $k$-vector space $\mathrm{End}(\mathfrak{g})$ of the given degree and is a $k$-immersion (complete the details).
%Let $X \in \mathfrak{g}$ be a non-zero element. Consider the orbit map $\zeta_{\alpha,X} \in \mathcal{A}(U_{(\alpha)}, \mathfrak{g})$ given by $\zeta_{\alpha,X} : u \to \mathrm{Ad}(u)X $ for all $u \in U_{(\alpha)}$. Then
%$$ \mathrm{ord}_{U_{(\alpha)}} (\zeta_{\alpha,X}) \le 4 p  \hgt{\mathfrak{g}}.$$
\end{prop}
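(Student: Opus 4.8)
The plan is to realise $U_{(\bar\alpha)}$ as a $k$-unipotent subgroup of $\mathrm{GL}(V)$ for a suitable faithful $k$-rational representation $V$ of $\GG$ — most naturally the adjoint representation on $\mathfrak g$ itself (or a slight enlargement thereof) — and then to apply Lemma \ref{lem:orbit map for unitpotent subgroup} directly. Concretely, I would take $V = \mathfrak g$, let $T$ act on $V$ via $\mathrm{Ad}$ (so the diagonal $k$-split torus of $\mathrm{GL}(V)$ contains the image of $T$ after choosing a basis of $V$ adapted to the relative root space decomposition $\mathfrak g = \mathfrak g_0 \oplus \bigoplus_{\beta\in\Phi}\mathfrak g_\beta$), and let $U = \mathrm{Ad}(U_{(\bar\alpha)})$. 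Since $U_{(\bar\alpha)}$ is a connected $k$-unipotent group normalised by $T$, with $T$ acting on its Lie algebra through the character $\bar\alpha$ (recall $U_{(\bar\alpha)}$ is one-parameter when $\bar\alpha$ is non-multipliable, and in general its root spaces all lie along $\NN\bar\alpha$), there is a $k$-rational parametrisation $\theta : k^r \to \mathrm{GL}(V)$ with $r = \dim_k U_{(\bar\alpha)}$ conjugating according to $t\,\theta(z)\,t^{-1} = \theta(\bar\alpha(t)z)$, i.e. Equation (\ref{eq:conjugation equation}) holds with the character $\bar\alpha$. (If $U_{(\bar\alpha)}$ is not itself the additive group, one writes it as a product of $r$ one-parameter $k$-subgroups, which is exactly the "$r > 1$" case handled at the end of Lemma \ref{lem:orbit map for unitpotent subgroup}.)

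The next step is to extract the numerical bound from Lemma \ref{lem:orbit map for unitpotent subgroup}. That lemma gives strong order at most $2\,d\,r\,I$, where $d = \dim_k V = \dim_k \mathfrak g$, $r = \dim_k U_{(\bar\alpha)}$, and $I$ is the largest integer with $I\bar\alpha = \beta - \beta'$ for two distinct weights $\beta,\beta'$ of $T$ on $V$. The weights of $T$ on $\mathfrak g$ are $0$ together with the relative roots $\beta\in\Phi$; hence $\beta - \beta'$ is a difference of two relative roots (or a single root), and the coefficient $I$ for which $I\bar\alpha$ is such a difference is controlled by the height of the root system: one has $I \le 2\,\hgt{\mathfrak g}$ at worst, since any relative root is an integer combination of simple roots with coefficients bounded by $\hgt{\mathfrak g}$, and a difference of two such combinations along the direction $\bar\alpha$ cannot have coefficient exceeding roughly twice the largest height. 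Plugging $I \le 2\,\hgt{\mathfrak g}$ and $r = \dim_k U_{(\bar\alpha)} \le \dim_k U$ into $2drI$ yields strong order at most $4\,\hgt{\mathfrak g}\,\dim_k U_{(\bar\alpha)}\,\dim_k\mathfrak g$, which is exactly the claimed bound. Since strong order is a pointwise notion and the parametrisation $\theta$ can be recentred at any point of $U_{(\bar\alpha)}$ by left translation (translation by a fixed unipotent element is a $k$-analytic isomorphism preserving polynomiality and degree), the bound holds at every point of $U_{(\bar\alpha)}$.

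The main obstacle, and the step requiring the most care, is verifying the hypothesis of Lemma \ref{lem:orbit map for unitpotent subgroup} that $U_{(\bar\alpha)}$ genuinely has the shape (\ref{eq:additive rep})–(\ref{eq:conjugation equation}) with a single character $\bar\alpha$ governing the $T$-conjugation — in other words that, after choosing the adapted basis, $\mathrm{Ad}(U_{(\bar\alpha)})$ sits inside the upper-triangular unipotent part of $\mathrm{GL}(V)$ with all its "moving directions" lying along $\bar\alpha$. This is where the structure theory of \cite[Proposition 21.9]{borel2012linear}, the goodness of $\mathrm{char}(k)$ for $\GG$, and Gille's theorem (Theorem \ref{thm:Gille}) enter: one needs that $U_{(\bar\alpha)}$ is a smooth connected $k$-unipotent group, that it is $T$-stable, and that the $T$-weights occurring in $\mathrm{Lie}(U_{(\bar\alpha)})$ are exactly the elements of $\NN\bar\alpha\cap\Phi$ — in the non-reduced case this means $\{\bar\alpha\}$ only — so that the single-character hypothesis of the lemma is met. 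I would also need to double-check the bound $I \le 2\hgt{\mathfrak g}$ by inspecting differences $\beta - \beta'$ of weights of $\mathrm{Ad}$ proportional to $\bar\alpha$; this is a finite root-system computation and the factor of $2$ (rather than $1$) is there precisely to absorb the non-reduced cases and differences straddling $0$.
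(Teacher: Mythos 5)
Your proposal follows essentially the same route as the paper: both apply Lemma \ref{lem:orbit map for unitpotent subgroup} to the adjoint map on the parametrised group $U_{(\bar{\alpha})}$ with $V = \mathfrak{g}$, the split torus $\Ad{T}$, the conjugation character $\bar{\alpha}$, and the observation that the $T$-weights on $\mathfrak{g}$ are $\Phi \cup \{0\}$ so that $I \le 2\hgt{\mathfrak{g}}$, giving the bound $2 d r I \le 4 \hgt{\mathfrak{g}} \dim_k U_{(\bar{\alpha})} \dim_k \mathfrak{g}$. The only cosmetic differences are that the paper secures the parametrisation via Borel's Lemma 21.17 (abelianity of $U_{(\bar{\alpha})}$) and transfers the conclusion back to $U_{(\bar{\alpha})}$ by noting that $\mathrm{Ad}$ is a central isogeny, hence a $k$-isomorphism onto its image on this unipotent group, whereas Gille's theorem plays no role in this particular step.
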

\begin{proof}
The connected   unipotent $k$-subgroup $U_{(\bar{\alpha})}$ is abelian and admits  a   $k$-isomorphism $\theta_\alpha : k^n \to U_{(\bar{\alpha})}$ where $n = \dim_k U_{(\bar{\alpha})}$ \cite[Lemma 21.17]{borel2012linear}. 
Since the adjoint representation $\mathrm{Ad} : G \to \mathrm{GL}(\mathfrak{g})$   is a central isogeny onto its image,  the $k$-rational representation  $\mathrm{Ad} \circ \theta_\alpha : k^n \to \mathrm{GL}(\mathfrak{g})  $ must be     a $k$-isomorphism onto its image \cite[Proposition 22.4]{borel2012linear}. It follows   that the map  $\mathrm{Ad} \circ \theta_\alpha \in \mathcal{A}(k^{n },\mathrm{End}(\mathfrak{g}))$ is an immersion of $k$-analytic manifolds \cite[\S I.2.5.3] {margulis1991discrete}.

We wish to conclude relying on Lemma \ref{lem:orbit map for unitpotent subgroup}. Indeed, the subgroup $\Ad{T}$ is a $k$-split torus of $\mathrm{GL}(\mathfrak{g})$  \cite[Corollary 8.4]{borel2012linear}. It satisfies
$$ \Ad{t} \Ad{\theta_\alpha(z)} \Ad{ t^{-1}} = \Ad{t \theta_\alpha(z) t^{-1}} = \Ad{ \theta_\alpha( \bar{\alpha}(t) z)} \quad \forall t \in T, z \in k^n.$$ 
The diagonal entries of the subgroup $\Ad{T}$ are given by   characters corresponding to the restricted roots $\Phi \cup \{0\}$. In particular, the condition $I \bar{ \alpha} = \beta - \beta'$ for some pair of roots $\beta,\beta' \in \Phi \cup \{0\}$ and some $I \in \NN$ implies that $I \le 2\hgt{\mathfrak{g}}$. 
 \end{proof}

The next Lemma concerns the combinatorics of the Bruhat cells. We use the set of generators $\Sigma$ as well as the length function $L_\Sigma : W \to \NN \cup \{0\}$ on the relative Weyl group $W$ introduced above. Recall that $w_0$ is   the longest element of $W$.

\begin{lemma}
\label{lem:conjugating into big cell}
Let $w   $ be any    element of the relative Weyl group $W$. Write
\begin{equation}
 w^{-1} w_0   =   s_{1} \cdots s_{ L_\Sigma( w^{-1} w_0 )}  
\end{equation}
as a reduced word, where  the $s_i = s_{\alpha_i}$ are generators belonging to  $\Sigma$.    Then  
\begin{equation}  Bw  B u_1\cdots u_n \subset Bw_0B
\end{equation}
for any choice of \emph{non-trivial} unipotent elements $u_i \in U_{(-\alpha_i)}$.
\end{lemma}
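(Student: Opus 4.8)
The plan is to reduce the assertion to the standard multiplication rule for Bruhat double cosets, combined with the elementary fact that every non-trivial element of $U_{(-\alpha)}$ lies in the Bruhat cell $B s_\alpha B$ for each simple root $\alpha \in \Phi^+_0$. Recall that $(G, B, N_G(T))$ is a (relative) $BN$-pair with Weyl group $W$ and Coxeter generators $\Sigma$, so that $BwB \cdot B s_\alpha B = B w s_\alpha B$ whenever $L_\Sigma(w s_\alpha) = L_\Sigma(w) + 1$; all of this is contained in \cite[\S21]{borel2012linear}.

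\textbf{Step 1 (rank-one fact).} I would first show that if $\alpha \in \Phi^+_0$ and $1 \neq u \in U_{(-\alpha)}$ then $u \in B s_\alpha B$. Let $P_\alpha = \langle B, U_{(-\alpha)} \rangle$ be the rank-one standard parabolic $k$-subgroup associated to $\alpha$; by the relative Bruhat decomposition $P_\alpha = B \sqcup B s_\alpha B$ \cite[\S21]{borel2012linear}. On the other hand $U_{(-\alpha)}$ is built from negative relative root groups, hence lies in the unipotent radical of the minimal parabolic opposite to $B$, so $U_{(-\alpha)} \cap B = \{1\}$. Therefore $u \in P_\alpha \setminus B = B s_\alpha B$. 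This reasoning is unaffected by whether the root system $\Phi$ is reduced: when $2\alpha \in \Phi$ the subgroup $U_{(-\alpha)}$ also contains the root group of $-2\alpha$, but $s_{2\alpha} = s_\alpha$ and nothing above changes.

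\textbf{Step 2 (length bookkeeping and iteration).} Put $v = w^{-1} w_0$, so that the given reduced expression $v = s_1 \cdots s_n$ has $n = L_\Sigma(v)$, and set $w^{(j)} = w s_1 \cdots s_j$ for $0 \le j \le n$, so that $w^{(0)} = w$ and $w^{(n)} = w v = w_0$. Since $v_j := s_{j+1} \cdots s_n$ is a suffix of a reduced word it is itself reduced, of length $n - j$, and $w^{(j)} = w_0 v_j^{-1}$; by the defining property $L_\Sigma(w_0 x) = L_\Sigma(w_0) - L_\Sigma(x)$ of the longest element \cite[\S1.8]{humphreys1990reflection} this gives $L_\Sigma(w^{(j)}) = L_\Sigma(w_0) - (n - j)$, hence $L_\Sigma(w^{(j+1)}) = L_\Sigma(w^{(j)}) + 1$ for every $j$. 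Using Step 1 and then the $BN$-pair multiplication rule, one passes stepwise through the inclusions
\[
 BwB\, u_1 \subseteq BwB \cdot B s_{\alpha_1} B = B w^{(1)} B, \qquad B w^{(j)} B\, u_{j+1} \subseteq B w^{(j)} B \cdot B s_{\alpha_{j+1}} B = B w^{(j+1)} B,
\]
which yields $BwB\, u_1 \cdots u_n \subseteq B w^{(n)} B = B w_0 B$, as desired.

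I do not anticipate a genuine obstacle here; the argument is a routine combination of Bruhat calculus with the rank-one Bruhat decomposition. The two points that require a little care are the appeal to the $BN$-pair formalism in the \emph{relative} (rather than split) setting --- where one relies on Borel--Tits theory as in \cite[\S21]{borel2012linear} rather than on split Chevalley group relations --- and the mild bookkeeping around possibly non-reduced root systems and the metabelian groups $U_{(-\alpha)}$, both handled as indicated above.
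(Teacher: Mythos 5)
Your proof is correct and follows essentially the same route as the paper's: the rank-one fact that every non-trivial element of $U_{(-\alpha)}$ lies in $B s_\alpha B$, combined with the Bruhat multiplication rule $Bw'B \cdot Bs_\alpha B \subset Bw's_\alpha B$ along a chain of length-increasing elements coming from the defining property of the longest element $w_0$, then iterated. The only difference is cosmetic: you derive the rank-one fact directly from the Bruhat decomposition of the standard parabolic $P_\alpha$, where the paper simply cites \cite[Theorem 21.15]{borel2012linear} and its proof.
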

%The following proof uses the idea of \cite[Theorem 1.4 and Lemma 4.5]{steinberg1965regular}.
\begin{proof}%[Proof of Lemma \ref{lem:conjugating into big cell}]
As  $w_0$ is the longest element   it follows  that $   L_\Sigma( w^{-1} w_0) =L_\Sigma(w_0) - L_\Sigma(w)$ \cite[p. 16, Equation (2)]{humphreys1990reflection}. Therefore
 $$ L_\Sigma(ws_{1} \cdots s_{i+1})  >L_\Sigma(ws_{1} \cdots s_{i}) \quad \forall i \in \{1,\ldots, L_\Sigma( w^{-1} w_0)-1\}.$$

For any pair of elements $w' \in W$ and $s \in \Sigma $ the condition $L_\Sigma(w's) > L_\Sigma(w')$  implies that $Bw'BsB \subset Bw'sB$ 
%in the Bruhat decomposition   
\cite[Proposition 21.22]{borel2012linear}. Observe that 
 $$ U_{(- \alpha)} \setminus \{e\} \subset Bs_{ \alpha } B$$
 for all simple positive roots $\alpha \in \Phi^+_0$, see \cite[Theorem 21.15]{borel2012linear} and its proof.  Applying these facts  inductively   shows that
 \begin{align*}
 Bw   u_1\cdots u_{i-1} B u_{i}  \subset Bw u_1 \cdots u_{i} B
 \end{align*}
for all $i \in \{1,\ldots,L_\Sigma(w^{-1}w_0) \}$ provided that the unipotent elements $u_i \in U_{(-\alpha_i)}$ are all non-trivial. The final induction step gives
$$ Bw B u_1\cdots    u_{ L_\Sigma( w^{-1} w_0 )}    \subset Bw u_1 \cdots u_{ L_\Sigma( w^{-1} w_0 )}  B = B w (w^{-1} w_0) B = Bw_0 B$$
as required.
\end{proof}

We are ready to complete the following argument.

\begin{proof}[Proof of Proposition \ref{prop:order in pos char case}]
Fix an arbitrary element $g \in K$. %To begin with, we will evaluate $\mathrm{ord}_g q_\mathfrak{u}$ at that element 
 There is a unique relative Weyl group element $w(g) \in W$ satisfying $g \in Bw(g) B$ \cite[Theorem 21.15]{borel2012linear}.   Denote $$w' = w'(g) = w(g)^{-1} w_0  $$ where $w_0 \in W$ is the longest element of $W$. Further denote $L = L_\Sigma(w'(g))$.  Express the Weyl group element $w' $ as a reduced word  $w'    =   s_{1} \cdots s_ {L }  $  where each $s_i $ belongs to the generating set  $\Sigma$ with $s_i = s_{\alpha_i}$ for some root $\alpha_i \in \Phi^+_0$. 

%For each simple positive relative $k$-root $\alpha \in \Phi^+_0$ consider the integer 
%$$m_\alpha = | \{ i \in \{1,\ldots,L\} \: : \: \alpha_i = \alpha \} |.$$

For each relative $k$-root $\alpha \in \Phi$ recall that $U_{ ( \bar{\alpha})}$ is a connected $k$-unipotent group $k$-isomorphic to a $k$-vector space. Let $\mathrm{Id}_\alpha, \mathrm{Fr}_\alpha : U_{ ( \bar{\alpha})} \to U_{ ( \bar{\alpha})}$  be the identity and the Frobenius maps, respectively.  For each index
$i \in \{1,\ldots,L \}$   define   a map\footnote{Strictly speaking, the objects $f_{g,i}, \Phi_g, H_g$ and $F_g$ all depend on the reduced word $w'(g)$ rather than on the element $g$.} 
$f_{g,i} : U_{ (- \bar{\alpha}_i)} \to U_{ ( -\bar{\alpha}_i)}$   by
$$ f_{g,i} = 
\begin{cases} 
\mathrm{Id}_{\alpha_i} & \text{$s_j \neq s_i$ for all $j < i$}, \\
 \mathrm{Fr}_{\alpha_i} & \text{otherwise}.\\
\end{cases}
$$
Consider the subset of simple positive roots
$$\Phi_g = \{ \alpha \in \Phi^+_0  :  \text{$s_i = s_{\alpha_i}$ for some $1 \le i \le L$} \}$$ and the  $k$-analytic product group $H_g =  \prod_{ \alpha \in \Phi_{g }} U_{ ( -\bar{\alpha})}$. We are interested in the 
 $k$-analytic map $F_g : H_g \to \mathrm{GL}(\mathfrak{g})$ given by
$$ F_g : (u_\alpha)_{\alpha \in \Phi_{g}}
 \mapsto
 g   f_{g,1} (u_{\alpha_1}) \cdots f_{g,L}(u_{\alpha_{L }}).$$
 
 The differential of  the map   $\mathrm{Ad} \circ F_g$ at the identity element $e \in H_g$ can be computed relying on  the chain-rule formula    \cite[\S3.2]{borel2012linear}. Namely
$$ \mathrm{d} \left(\mathrm{Ad} \circ F_{g}\right)_{|e}(X_\alpha )_{\alpha \in \Phi_g} = \Ad{g} \sum_{\alpha \in \Phi_g} \mathrm{ad}(X_\alpha).$$
 Note that $\mathrm{ad}(X_\alpha) \subset \mathrm{ad}( \mathfrak{g}_{-\bar{\alpha}})$ for all roots $\alpha \in \Phi_g$. Therefore the differentials  $\mathrm{ad}(X_\alpha)$ are linearly independent provided that the $X_\alpha$'s are non-zero. 
It follows that  the map $\mathrm{Ad} \circ F_g$ is a $k$-analytic immersion at the identity element of $H_g$.
 
 Let $H_0 \le H $  be a sufficiently small open subgroup such that   $(\mathrm{Ad} \circ F)_{g|H_0}$  is a $k$-analytic embedding into the compact subgroup $\Ad{K}$.  %We know from Lemma \ref{lem:conjugating into big cell} that $g u_1 \cdots u_{L } \in Bw_0 B$ for every choice of non-trivial elements $u_i \in U_{(-\alpha_i)}$. 
 We know from Lemma \ref{lem:conjugating into big cell}
 that every element of $F_g(H_0) $ obtained as a product of non-trivial elements in each coordinate of $H_0$ belongs to the big Bruhat cell $Bw_0 B$. In particular $q_{\mathfrak{u}|F_g (H_0) }$ is not identically zero according to Proposition \ref{prop:intersection of unipotent and Borel} and its proof.

The $k$-analytic maps $\mathrm{Ad} \circ f_{g,i} : U_{(-\bar{\alpha}_i)} \to \mathrm{End}(\mathfrak{g})$ have strong order at most $4  \hgt{\mathfrak{g}} \dim_k U_{(\bar{\alpha}_i)} \dim_k \mathfrak{g}$ at each point of their domain   and for all $i \in \{1,\ldots,L\}$, see  Proposition  \ref{prop:ord of Ad on unipotent}.
As the map $(\mathrm{Ad} \circ F)_{g|H_0}$ is a $k$-embedding  it has strong order at most 
$$ \sum_{i=1}^L 4  \hgt{\mathfrak{g}} \dim_k U_{(\bar{\alpha}_i)} \dim_k \mathfrak{g} \le   4  \hgt{\mathfrak{g}} \dim^2_k U \dim_k \mathfrak{g} $$
at each point of $H_0$. The above computation relies on the fact that 
$$
 L  \le L_\Sigma(w_0) = |\Phi^+| \le \dim_k U.
$$
 
To conclude we estimate the order of the map $q_\mathfrak{u}$ at the element $g \in K$. Denote $  \dim _k \mathfrak{u}^+ = |\beta^+| = l$ and enumerate  $\beta^+ = \{ X_1,\ldots, X_l \}$. Note that
 $$(q_\mathfrak{u} \circ F_g) ( h) = \det \begin{pmatrix} 
 \vdots & & \vdots \\ 
 (\mathrm{Ad} \circ F_g)(h) X_{ 1} & \cdots &     (\mathrm{Ad} \circ F_g)(h) X_{ l}  \\
 \vdots& & \vdots
 \end{pmatrix} \quad \forall h \in H_0.
 $$
In light of  Lemma \ref{lem:strong order bounds order} we therefore have that
$$ \mathrm{ord}_g q_\mathfrak{u} \le \mathrm{ord}_{g} q_{\mathfrak{u}|F(H_0)} \le 4  \hgt{\mathfrak{g}} \dim^3_k U \dim_k \mathfrak{g}. $$
Since the point $g \in K$ was arbitrary this concludes the proof.
 % if and only if $m_\alpha$ is not divisible by $p$ for all $\alpha$.  This can be done by replacing some map by a constant times the map (for roots which appear more than once).
% $k^{n_\alpha}$ where $n_\alpha =\dim_k U_{-(\bar{\alpha})}$. There is a $k$-isomorphism $\theta_\alpha : k^{n_\alpha} \to U_{-(\bar{\alpha})}$.  QUOTE previous prop
\end{proof}

\subsection*{Conjugation by semisimple elements}
Let $\sse \in T$ be any element. The linear operator $\Ad{\sse}$ as well as its inverse $\Ad{\sse^{-1}}$ preserve the direct sum decomposition $\mathfrak{g} =  \mathfrak{u}^- \oplus \mathfrak{b}$. It is   easy to compute the norms of the restrictions of these operators to, say,  $\mathfrak{u}^-$.

\begin{prop}
\label{prop:the norm and inverse of norm of a semisimple element}
Let $\sse \in T$ be such that $|\alpha(\sse)| < 1$ for all $\alpha \in \Phi^+$.     Then 
\begin{equation}
\label{eq:contraction and expansion}
\|\Ad{\sse}  \| =  \|\Ad{\sse^{-1}}  \| = \max_{\alpha\in\Phi^-} |  \alpha(X) |
\end{equation}
and
\begin{equation}
\label{eq:contraction}
  \|\Ad{\sse^{-1}}_{|\mathfrak{u}^-} \|  =  \frac{1}{\min_{\alpha\in\Phi^-} | \alpha(X) |}.
\end{equation}
\end{prop}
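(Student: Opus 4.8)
The plan is to diagonalise the operator $\Ad{\sse}$ through the relative root space decomposition and to read off the three norms directly from the list of eigenvalues.

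First I would recall the decomposition $\mathfrak{g} = \mathfrak{g}_0 \oplus \bigoplus_{\alpha \in \Phi} \mathfrak{g}_\alpha$. As $\sse$ lies in $T$ it centralises $Z_G(T)$, so $\Ad{\sse}$ acts as the identity on $\mathfrak{g}_0 = \mathrm{Lie}(Z_G(T))$, while by Equation (\ref{eq:weights}) it acts on each $\mathfrak{g}_\alpha$ as multiplication by $\alpha(\sse)$. Consequently $\Ad{\sse}$, its inverse $\Ad{\sse^{-1}} = \Ad{\sse}^{-1}$, and the restrictions of both to the $\Ad{\sse}$-invariant subspace $\mathfrak{u}^- = \bigoplus_{\alpha \in \Phi^-}\mathfrak{g}_\alpha$ all act as a scalar on each summand of this decomposition. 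I would then use that the fixed $\Ad{K}$-invariant norm is compatible with the decomposition, meaning that its summands are pairwise orthogonal: in the real case one takes the $\Ad{K}$-invariant inner product obtained by twisting the Killing form of $\mathfrak{g}$ by the Cartan involution $\theta$, for which $\mathfrak{g}_\gamma$ and $\mathfrak{g}_{\gamma'}$ are orthogonal whenever $\gamma \neq \gamma'$ in $\Phi \cup \{0\}$, since the Killing form pairs $\mathfrak{g}_\gamma$ and $\mathfrak{g}_{\gamma'}$ trivially unless $\gamma + \gamma' = 0$ and $\theta \mathfrak{g}_{\gamma'} = \mathfrak{g}_{-\gamma'}$; in the non-Archimedean case one takes the $\Ad{K}$-invariant norm adapted to the decomposition, for which $\bigl\| \sum_\gamma X_\gamma \bigr\| = \max_\gamma \|X_\gamma\|$. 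For an operator acting as the scalar $\lambda_\gamma$ on the $\gamma$-th summand of such an orthogonal direct sum --- Euclidean-orthogonal when $k = \RR$, ultrametrically orthogonal otherwise --- the induced operator norm equals $\max_\gamma |\lambda_\gamma|$: the inequality $\ge$ is obtained by evaluating on a single homogeneous vector, and $\le$ from the Pythagorean identity, respectively from the ultrametric inequality.

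It then remains to track the moduli of the eigenvalues. The hypothesis gives $|\alpha(\sse)| < 1$ for $\alpha \in \Phi^+$, hence $|\alpha(\sse)| = |(-\alpha)(\sse)|^{-1} > 1$ for $\alpha \in \Phi^-$, a non-empty set since $\GG$ has no $k$-anisotropic factor. The eigenvalues of $\Ad{\sse}$ on $\mathfrak{g}$ are $1$ and the $\alpha(\sse)$ with $\alpha \in \Phi$, so the largest modulus, and thus $\|\Ad{\sse}\|$, equals $\max_{\alpha \in \Phi^-}|\alpha(\sse)|$. The eigenvalues of $\Ad{\sse^{-1}}$ are $1$ and the $\alpha(\sse)^{-1}$, of which exactly those with $\alpha \in \Phi^+$ have modulus $> 1$, so $\|\Ad{\sse^{-1}}\| = \max_{\alpha \in \Phi^+}|\alpha(\sse)|^{-1} = \max_{\alpha \in \Phi^-}|\alpha(\sse)|$, which is Equation (\ref{eq:contraction and expansion}). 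Finally $\Ad{\sse^{-1}}$ restricted to $\mathfrak{u}^-$ has eigenvalues $\alpha(\sse)^{-1}$ with $\alpha \in \Phi^-$, all of modulus $< 1$, whose maximum is $\max_{\alpha \in \Phi^-}|\alpha(\sse)|^{-1} = \bigl(\min_{\alpha \in \Phi^-}|\alpha(\sse)|\bigr)^{-1}$, which is Equation (\ref{eq:contraction}).

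The computation is routine; the only delicate point --- and the main thing I would need to be careful about --- is the compatibility of the chosen norm with the root space decomposition, i.e. recording precisely which normalisation of the $\Ad{K}$-invariant norm is in force, and, in the non-Archimedean case, justifying the existence of an $\Ad{K}$-invariant norm adapted to the decomposition. Once that is settled, the proposition reduces to the elementary fact that a block-scalar operator on an orthogonal direct sum has operator norm equal to the largest of its block scalars in absolute value.
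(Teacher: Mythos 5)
Your proof is correct and follows essentially the same route as the paper: diagonalise $\Ad{\sse}$ via the relative root space decomposition and read the three operator norms off the eigenvalue moduli, using $\Phi = -\Phi$ and the invariance of $\mathfrak{u}^-$. The only difference is that you spell out the compatibility of the $\Ad{K}$-invariant norm with the decomposition (orthogonality of root spaces for the $\theta$-twisted Killing form, resp.\ an adapted max-norm in the non-Archimedean case), a point the paper's shorter proof leaves implicit.
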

%In particular if $\alpha(X) > 0$ for all $\alpha \in \Phi^+$ (or $\alpha(X) < 0$ for all $\alpha \in \Phi^+$) then $ \max_{\alpha\in\Phi^-} e^{\alpha(X)} =  \frac{1}{\min_{\alpha\in\Phi^+} e^{\alpha(X)}}$. For such an $X$, there is power $D \in \NN$ such that 
%$$ \max_{\alpha\in\Phi^+} e^{\alpha(X)} < ( \min_{\alpha\in\Phi^+} e^{\alpha(X)})^D.$$
%EXPLAIN MORE

\begin{proof}
The operator $\Ad{\sse}$ is diagonizable, and the corresponding weights for its action on the Lie  subalgebra $\mathfrak{g}_\alpha$ are given by the relative $k$-root $\alpha \in  \Phi$, see  Equation (\ref{eq:weights}).
%,   the eigenvalue of $\mathrm{Ad}(\sse)$ in its action on the relative $k$-root space $\mathfrak{g}_\alpha$ is $ \alpha(\sse) $. 
Therefore 
$$ \|\Ad{\sse}  \| = \max_{\alpha\in\Phi}  | \alpha(\sse)|
=  \max_{\alpha\in\Phi^-} | \alpha(\sse) |.$$
This implies that Equation (\ref{eq:contraction and expansion}) holds. 
Since $ \Phi = -\Phi$, the same reasoning shows that $\|\Ad{\sse}^{-1}  \| = \|\Ad{\sse} \| $.
The linear isomorphism $\mathrm{Ad}(\sse)$ preserves $\mathfrak{u}^-$. Therefore
$$  \|\Ad{\sse^{-1}}_{|\mathfrak{u}^-} \| =   \max_{\alpha\in\Phi^-} \frac{1}{| \alpha(\sse)|}  = \frac{1}{\min_{\alpha\in\Phi^-} |\alpha(\sse)|}.$$
The validity of Equation (\ref{eq:contraction}) follows.
\end{proof}

Recall that  $\Phi_{0}^+$ is the subset of the simple positive roots. Every positive root $\alpha \in \Phi^+$ can be written as an integral linear combination $\alpha = \sum_{\alpha_0 \in \Phi_{0}^+} n_{\alpha_0} \alpha_0$ of simple roots in a unique way. The height  $\hgt{\alpha}$  is the sum of the coefficients $n_{\alpha_0}$ in the above expression. Denote
\begin{equation}
\label{eq:height}
\hgt{\mathfrak{g}} = \sup_{\alpha \in \Phi^+} \hgt{ \alpha}.
\end{equation}

\begin{prop}
\label{prop:well understood ray}
There is a semisimple element $\sse_0 \in T$ and some $\lambda_0 > 1$ such that 
\begin{equation}
\label{eq:norms of adjoints}
\|\mathrm{Ad}(\sse_0^n)\| = \lambda_0^{n \hgt{ \mathfrak{g}}} 
\quad \text{and} \quad \|\Ad{\sse_0^{-n}}_{|\mathfrak{u}^-} \| = \lambda_0^{-n}
\end{equation} 
for all $ n \in \mathbb{N}$.
\end{prop}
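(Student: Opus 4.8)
The plan is to construct $\sse_0$ as a suitable rational one-parameter subgroup of $T$ evaluated at a uniformizer (in the non-Archimedean case) or at a real scalar $>1$ (in the Archimedean case), arranged so that $|\alpha(\sse_0)|$ depends on $\alpha\in\Phi^+$ only through its height, and then to read off the two operator norms directly from Proposition~\ref{prop:the norm and inverse of norm of a semisimple element}.

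\emph{Construction of $\sse_0$.} The simple roots $\Phi_0^+$ are linearly independent in $X^*(\TT)\otimes\QQ$, so by perfectness of the pairing $X^*(\TT)\otimes\QQ\times X_*(\TT)\otimes\QQ\to\QQ$ there is an element $v\in X_*(\TT)\otimes\QQ$ (the relative $\rho^\vee$) with $\langle\alpha_0,v\rangle=1$ for every $\alpha_0\in\Phi_0^+$. Fix $N\in\NN$ with $\chi:=Nv\in X_*(\TT)$. In the non-Archimedean case let $\pi$ be a uniformizer of $\mathcal{O}$, put $q=|\mathcal{O}/\mathfrak{m}|$, and set $\sse_0=\chi(\pi)\in T$ and $\lambda_0=q^{N}$; in the Archimedean case $\TT$ is $\RR$-split, the $\QQ$-cocharacter $-v$ defines a homomorphism $\RR_{>0}\to T$, and we let $\sse_0$ be the image of an arbitrary $\lambda_0>1$ under it. In either case one computes $|\alpha_0(\sse_0)|=\lambda_0^{-1}$ for every simple root $\alpha_0$. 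Since $|\cdot|$ is multiplicative and the height is additive along the decomposition of a positive root into simple roots,
\[
 |\alpha(\sse_0^{\,n})| = \lambda_0^{-n\,\hgt{\alpha}} \qquad \forall \alpha\in\Phi^+,\ n\in\NN .
\]
In particular $|\alpha(\sse_0^{\,n})|<1$ for every $\alpha\in\Phi^+$, so each power $\sse_0^{\,n}$ satisfies the hypothesis of Proposition~\ref{prop:the norm and inverse of norm of a semisimple element}.

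\emph{Computing the norms.} Applying Proposition~\ref{prop:the norm and inverse of norm of a semisimple element} to $\sse_0^{\,n}$, and using $\Phi^-=-\Phi^+$,
\[
 \|\Ad{\sse_0^{\,n}}\| = \max_{\alpha\in\Phi^-}|\alpha(\sse_0^{\,n})| = \max_{\beta\in\Phi^+}|\beta(\sse_0^{\,n})|^{-1} = \max_{\beta\in\Phi^+}\lambda_0^{\,n\,\hgt{\beta}} = \lambda_0^{\,n\,\hgt{\mathfrak{g}}},
\]
the last equality being the definition of $\hgt{\mathfrak{g}}$ in Equation~(\ref{eq:height}); and
\[
 \|\Ad{\sse_0^{-n}}_{|\mathfrak{u}^-}\| = \frac{1}{\min_{\alpha\in\Phi^-}|\alpha(\sse_0^{\,n})|} = \frac{1}{\min_{\beta\in\Phi^+}\lambda_0^{\,n\,\hgt{\beta}}} = \lambda_0^{-n},
\]
since the minimal height of a positive root equals $1$, attained precisely at the simple roots. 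As $\lambda_0>1$, this establishes Equation~(\ref{eq:norms of adjoints}).

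The only point needing care is the construction of $\sse_0$: the natural candidate $\rho^\vee$ lies a priori only in $X_*(\TT)\otimes\QQ$, and one must pass to an integral multiple to obtain an honest element of $T$ in the non-Archimedean case; this is harmless but is precisely where the constant $\lambda_0$ acquires the value $q^{N}$. Everything else is a short root computation, which does, however, use that $\Phi$ (a genuine, possibly non-reduced, root system) carries a well-defined height function on $\Phi^+$ with minimum $1$ and maximum $\hgt{\mathfrak{g}}$.
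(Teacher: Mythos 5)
Your proof is correct and follows essentially the same route as the paper: both construct $\sse_0$ from cocharacters pairing to a constant with the simple roots (the paper takes $\prod_{\alpha\in\Phi_0^+}\chi_\alpha(x_0)$ with $\left<\alpha,\chi_\beta\right>=N\delta_{\alpha\beta}$, which is just your $N\rho^\vee$ evaluated at $x_0$), so that $|\alpha(\sse_0^n)|=\lambda_0^{-n\,\hgt{\alpha}}$, and then both read off the two norms from Proposition \ref{prop:the norm and inverse of norm of a semisimple element}. The only cosmetic difference is that the paper quotes the existence of the $\chi_\alpha$ from Borel rather than arguing via the perfect pairing and clearing denominators.
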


\begin{proof}
For each simple relative root  $\alpha \in \Phi_{0}^+$ there is a one-parameter subgroup $\chi_\alpha$ of the torus $T$ so that
$$ \left<\alpha, \chi_\beta\right> =   N \delta_{\alpha\beta}$$
for some fixed $N \in \NN$ and all simple relative $k$-roots $\beta \in \Phi_0^+$ \cite[Proposition 8.6]{borel2012linear}. Fix any element $x_0 \in k$ with $|x_0|<1$.
%Let $\chi_\alpha$ be a one-parameter subgroup of the torus $T$ dual to each simple relative root.  
Consider  the semisimple element $\sse_0 \in T$ given by
 $$\sse_0 = \prod_{\alpha \in \Phi_0^+} \chi_\alpha(x_0). $$
Note that $\alpha(\sse_0^n) = |x_0|^{nN \hgt{\alpha}}< 1$ for every positive root $\alpha \in \Phi^+$ and all $n \in \NN$. The result follows relying on   Proposition \ref{prop:the norm and inverse of norm of a semisimple element} and taking $\lambda_0 = |x_0|^{-N}$.
\end{proof}

% as in Sections \S\ref{sec:good functions} and \S\ref{sec:grassmanians}. 

% \subsection*{The maximal compact subgroup}
%Let $\Theta$ be the group Cartan involution associated to $\sqrt{l}$. There is a maximal compact subgroup $K$ of $G$ such that $\Theta K = K$. Note that $\mathrm{Ad}(k)\sqrt{l} = \sqrt{l} \mathrm{Ad}(k)$  so that $\mathrm{Ad}(k)$ is orthogonal with respect to the norm $\|\cdot\|_\sqrt{l}$ for all elements $k\in K$.  The subgroup $K$ consists of the self-adjoint elements of $G$. In particular $K = \mathbb{K}(\RR)$ where  $\mathbb{K}$ is  an $\RR$-linear subgroup of $\GG$. Let $m$ denote the Haar measure on the group $K$. 
%%We normalize $m$ to be a probability measure so that $m(K) = 1$.

\subsection*{Norms of restrictions to    nilpotent subalgebras} Consider the Lie algebra $\mathfrak{g}$ with its direct sum decomposition  $\mathfrak{g} = \mathfrak{u}^- \oplus \mathfrak{b}  $. 
%We have chosen a particular basis   $\beta$   for the Lie algebra $\mathfrak{g}$ consisting of unit vectors.	
The current setup fits into the framework of our discussion in \S\ref{sec:grassmanians}, where
\begin{itemize}
\item the Lie algebra $\mathfrak{g}$ plays the role of the vector space $V$,
	\item the    subgroup $\mathrm{Ad}(K)$ plays the role of the compact  subgroup of $\mathrm{GL}(V)$,  
	\item the two Lie subalgebras $\mathfrak{u}^-$ and $\mathfrak{b}$ respectively play the roles of the two direct summands $U$ and $U'$,
	\item   any fixed nilpotent subalgebra $\mathfrak{n}$ plays the role of the fixed subspace $W$, and
	\item  the linear operator $\mathrm{Ad}(\sse)$ for any fixed semisimple element $\sse \in T$ plays the role of the linear isomorphism $L$ preserving both direct summands $U$ and $U'$ as in Lemma 	\ref{lem:bound on norm inverse in terms of kappa}%\todo{$A$ is not the best choice here as it denoted a matrix recently}
.
\end{itemize}

In light of this correspondence, Theorem \ref{thm:goodness: probability of q being small} can be reformulated to obtain the following result. 
Consider the subset $\mathfrak{N}\subset \mathfrak{Gr}(\mathfrak{g})$ consisting of all  
\begin{itemize}
\item  the non-zero nilpotent Lie subalgebras $\mathfrak{n} \in \mathfrak{N}(\mathfrak{g})$ if $k = \RR$, or
\item  the Lie subalgebras $\Ad{g}\mathfrak{u}^+$ for some $g \in K$ if $k$ is non-Archimedean.
\end{itemize}

The collection $\mathfrak{N}$ of Lie subalgebras is closed in the Grassmannian  $\mathfrak{Gr}(\mathfrak{g})$. This    follows from   the preceding discussion  in the real case and from the compactness of  the group $K$    in the non-Archimedean case. 
 Let $\mathcal{F}(\mathfrak{N})$ be the family on $k$-analytic functions from the group $K$ associated to the collection $\mathfrak{N}$ as considered in Equations (\ref{eq:Qx}) and (\ref{eq:FN family}). Namely
  \begin{equation}
\mathcal{F}(\mathfrak{N}) = \{ g \mapsto \bigwedge_{i=1}^l \mathrm{P}g w_i \: : \: 
 \text{$\|w_i \| \le 1$ and $\mathrm{span}_k\{w_1,\ldots,w_l\} \in  \mathfrak{N}$} \}
 \end{equation}
 where $\mathrm{P} : \mathfrak{g} \to \mathfrak{u}^-$ is some fixed projection.

\begin{prop}
\label{prop:existence of expanding element}
%Let $\mathfrak{Z}\subset \mathfrak{Gr}(\mathfrak{g})$ be a closed subset such that 
There is a constant $\zeta > 0$ with the following property --- for all sufficiently large $\lambda > 1$ there exists a semisimple  element $\sse_\lambda \in T$ such that 
\begin{equation}
\label{eq:holds with one half}
   \eta_K \left(
\{g \in K \: : \: \inf_{0 \neq X \in \Ad{g} \mathfrak{n}} \frac{\| \Ad{\sse_\lambda} X \|}{\|X\|}
  \ge 2  \}
\right)
 \ge   1-\zeta \lambda^{- \frac{1}{ \dim_k K \mathrm{ord}_K \mathcal{F}(\mathfrak{N})}}
\end{equation}
and
 \begin{equation}
 \label{eq:second requirement}
 \|\Ad{\sse_\lambda}  \|   \le   \lambda^{\hgt{\mathfrak{g}}}
  \end{equation}
for all Lie subalgebras $\mathfrak{n}\in \mathfrak{N}$.

 \end{prop}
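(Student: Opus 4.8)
The plan is to take $\sse_\lambda$ to be a suitable power of the distinguished contracting element $\sse_0 \in T$ produced by Proposition \ref{prop:well understood ray}, and to extract the probability estimate (\ref{eq:holds with one half}) from the sublevel-set bound of Theorem \ref{thm:goodness: probability of q being small} by combining it with the elementary inequality of Lemma \ref{lem:bound on norm inverse in terms of kappa}.

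First I would fix $\sse_0 \in T$ and $\lambda_0 > 1$ as in Proposition \ref{prop:well understood ray}, so that $\|\Ad{\sse_0^n}\| = \lambda_0^{n\hgt{\mathfrak{g}}}$ and $\|\Ad{\sse_0^{-n}}_{|\mathfrak{u}^-}\| = \lambda_0^{-n}$ for all $n \in \NN$, and I would fix constants $\kappa_\mathfrak{N}, \varepsilon_\mathfrak{N} > 0$ furnished by Theorem \ref{thm:goodness: probability of q being small} applied to the collection $\mathfrak{N}$ (applied for each of the finitely many dimensions occurring among members of $\mathfrak{N}$, and keeping the worst constants; here $\mathrm{ord}_K \mathcal{F}(\mathfrak{N}) < \infty$ because every member of $\mathfrak{N}$ has a $K$-conjugate meeting $\mathfrak{b}$ trivially, by Propositions \ref{prop:intersection of unipotent and Borel} and \ref{prop:conjugates of nilpotent}). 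Given $\lambda > 1$, I would then set $n = \lfloor \log_{\lambda_0}\lambda \rfloor$ and $\sse_\lambda = \sse_0^n$, so that $\lambda/\lambda_0 < \lambda_0^n \le \lambda$. Inequality (\ref{eq:second requirement}) is then immediate, since $\|\Ad{\sse_\lambda}\| = \lambda_0^{n\hgt{\mathfrak{g}}} \le \lambda^{\hgt{\mathfrak{g}}}$.

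For the measure estimate I would use that $\Ad{\sse_\lambda}$, being conjugation by an element of $T$, preserves the decomposition $\mathfrak{g} = \mathfrak{u}^- \oplus \mathfrak{b}$ and restricts to a linear automorphism of $\mathfrak{u}^-$, whence
$$ \inf_{0 \neq X \in \mathfrak{u}^-} \frac{\|\Ad{\sse_\lambda} X\|}{\|X\|} = \|\Ad{\sse_\lambda^{-1}}_{|\mathfrak{u}^-}\|^{-1} = \lambda_0^n. $$
Feeding this into Lemma \ref{lem:bound on norm inverse in terms of kappa} (under the correspondence recorded just above, with $L = \Ad{\sse_\lambda}$ and $W = \mathfrak{n}$) yields, for every $g \in K$ and every $\mathfrak{n} \in \mathfrak{N}$,
$$ \inf_{0 \neq X \in \Ad{g}\mathfrak{n}} \frac{\|\Ad{\sse_\lambda} X\|}{\|X\|} \ge \lambda_0^n \inf_{0 \neq X \in \Ad{g}\mathfrak{n}} \frac{\|\mathrm{P} X\|}{\|X\|}. $$
Therefore the set appearing in (\ref{eq:holds with one half}) contains $\{ g \in K : \inf_{0 \neq X \in \Ad{g}\mathfrak{n}} \|\mathrm{P}X\|/\|X\| \ge 2\lambda_0^{-n} \}$, and I would estimate the measure of its complement by Theorem \ref{thm:goodness: probability of q being small} with $W = \mathfrak{n}$ and $\varepsilon = 2\lambda_0^{-n}$; this $\varepsilon$ lies below $\varepsilon_\mathfrak{N}$ once $\lambda$ is large enough, as $2\lambda_0^{-n} < 2\lambda_0/\lambda$. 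Using $(2\lambda_0^{-n})^{1/(\dim_k K\,\mathrm{ord}_K\mathcal{F}(\mathfrak{N}))} \le (2\lambda_0)^{1/(\dim_k K\,\mathrm{ord}_K\mathcal{F}(\mathfrak{N}))}\,\lambda^{-1/(\dim_k K\,\mathrm{ord}_K\mathcal{F}(\mathfrak{N}))}$, this gives (\ref{eq:holds with one half}) with $\zeta = \kappa_\mathfrak{N}(2\lambda_0)^{1/(\dim_k K\,\mathrm{ord}_K\mathcal{F}(\mathfrak{N}))}$, uniformly over $\mathfrak{n} \in \mathfrak{N}$.

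The argument is essentially bookkeeping once these ingredients are lined up, so I do not anticipate a serious obstacle; the main points to keep straight are that $\Ad{\sse_\lambda}$ really does preserve both summands $\mathfrak{u}^-$ and $\mathfrak{b}$ (so Lemma \ref{lem:bound on norm inverse in terms of kappa} applies), that the resulting $\zeta$ is manifestly independent of $\lambda$ (being assembled from $\kappa_\mathfrak{N}$, $\lambda_0$ and the order), that the geometric quantity $\inf_{0 \neq X \in \Ad{g}\mathfrak{n}} \|\mathrm{P}X\|/\|X\|$ is exactly the sublevel-set function controlled by Theorem \ref{thm:goodness: probability of q being small} — this is the content of the reformulation above, ultimately of Proposition \ref{prop:polynomial and norm on projections} — and that $n \ge 1$ as soon as $\lambda \ge \lambda_0$, so that $\sse_\lambda$ genuinely contracts $\mathfrak{u}^-$.
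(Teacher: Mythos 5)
Your proposal is correct and follows essentially the same route as the paper: take $\sse_\lambda = \sse_0^{n}$ with $\lambda_0^{n}$ comparable to $\lambda$ via Proposition \ref{prop:well understood ray}, get (\ref{eq:second requirement}) immediately, and obtain (\ref{eq:holds with one half}) by combining Lemma \ref{lem:bound on norm inverse in terms of kappa} with the sublevel-set bound of Theorem \ref{thm:goodness: probability of q being small} at $\varepsilon \asymp \lambda_0/\lambda$, arriving at the same constant $\zeta = \kappa_\mathfrak{N}(2\lambda_0)^{1/(\dim_k K\,\mathrm{ord}_K\mathcal{F}(\mathfrak{N}))}$. The only differences are cosmetic bookkeeping (your explicit identity $\inf_{0\neq X\in\mathfrak{u}^-}\|\Ad{\sse_\lambda}X\|/\|X\| = \|\Ad{\sse_\lambda^{-1}}_{|\mathfrak{u}^-}\|^{-1}$ versus the paper's direct use of Equation (\ref{eq:norms of adjoints '})).
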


%We point out that the linear transformation $\mathrm{Ad}(a_\lambda) : \mathrm{Ad}(g) \mathfrak{n} \to \mathrm{Ad}(a_\lambda g) \mathfrak{n}$ 
\begin{proof}%[Proof of Proposition \ref{prop:existence of expanding element}]

Consider the semisimple element $\sse_0 \in T$ and the constant $\lambda_0 > 1$ given in   Proposition \ref{prop:well understood ray}. We have
\begin{equation}
\label{eq:norms of adjoints '}
\|\mathrm{Ad}(\sse_0^n)\| = \lambda_0^{n \hgt{ \mathfrak{g}}} 
\quad \text{and} \quad \|\Ad{\sse_0^{-n}}_{|\mathfrak{u}^-} \| = \lambda_0^{-n}
\end{equation} 
for all $ n \in \mathbb{N}$.

Given any sufficiently large fixed $\lambda > 1$   there is an integer $n_0 \in \NN$ such that $\lambda_0^{n_0} \le \lambda \le \lambda_0^{n_0+1}$. Take $\sse_\lambda = \sse_0^{n_0}$. Observe that Equation (\ref{eq:second requirement}) holds with respect to the element $\sse_\lambda$ as
 $\|\Ad{\sse_\lambda}  \| = \lambda_0^{n_0 \hgt{\mathfrak{g}}} \le \lambda^{\hgt{\mathfrak{g}}}$.

We are now  in a position to apply our results   from the previous \S\ref{sec:grassmanians}.
According to Theorem  \ref{thm:goodness: probability of q being small} combined with Lemma 	\ref{lem:bound on norm inverse in terms of kappa}  
 there exists a constant $\kappa_\mathfrak{N} > 0$ such that every    Lie subalgebra $\mathfrak{n} \in \mathfrak{N}$ and every constant $\varepsilon > 0$ satisfy
 \begin{multline}
 \label{eq:end of proof of cor}
  \eta_K(\{g \in K \: : \: 
  \inf_{0 \neq X \in \Ad{g} \mathfrak{n}} \frac{\| \Ad{\sse_\lambda} X \|}{\|X\|} \ge \varepsilon \inf_{0 \neq X \in  \mathfrak{u}^-} \frac{\| \Ad{\sse_\lambda} X \|}{\|X\|}  \}) \ge \\
  \ge 1-  \kappa_\mathfrak{N}  \varepsilon^{\frac{1}{\mathrm{dim}_k K \mathrm{ord}_K \mathcal{F}(\mathfrak{N})}}.  %= 1-\zeta \lambda^{-\alpha} \quad 
  \end{multline}
Note that Equation (\ref{eq:norms of adjoints '}) implies 
\begin{equation}
 \label{eq:end of proof of cor 2}
\varepsilon \inf_{0 \neq X \in  \mathfrak{u}^-} \frac{\| \Ad{\sse_\lambda} X \|}{\|X\|} \ge \varepsilon \lambda_0^n \ge  \varepsilon \frac{ \lambda}{\lambda_0}.
\end{equation}
 Take  $\varepsilon = \frac{2 \lambda_0 }{\lambda}$. % in  Equation (\ref{eq:end of proof of cor}). 
 We conclude from the two Equations (\ref{eq:end of proof of cor}) and (\ref{eq:end of proof of cor 2}) that the desired Equation (\ref{eq:holds with one half}) holds true  with   the constant  
$\zeta = \kappa_\mathfrak{N} (2 \lambda_0)^{\frac{1}{\mathrm{dim}_k K \mathrm{ord}_K \mathcal{F}(\mathfrak{N})}}$.
%  It follows from	that  
%\begin{equation}
%\label{eq:desired}
%\frac{1}{ \|\left(\mathrm{Ad}(a_\lambda)_{|\Ad{g} \mathfrak{n}}\right)^{-1}\|} \ge 
%	 \frac{ Q_\mathfrak{n}(g) }{  \|\left(\mathrm{Ad}(a_\lambda)_{|{\mathfrak{u}^-}}\right)^{-1}\|  } =  \lambda_0^{n_0}  Q_\mathfrak{n}(g)  \ge  \frac{ \lambda Q_\mathfrak{n}(g) }{\lambda_0    }
%\end{equation}
%for all elements $g \in G$ and all Lie subalgebras $\mathfrak{n} \in \mathfrak{N}$.  
  \end{proof}

\subsection*{The   parameters $\lambda(G), \delta(G)$ and $\sse(G)$.}
We now fix some parameters for the remainder of this work.
 Let the  constant $\zeta$ be as provided by Proposition \ref{prop:existence of expanding element}.
According to  Calculation \ref{calc:existence of good delta_0}   there is a sufficiently large value
\begin{equation}
\label{eq:fix lambda}
\lambda = \lambda(G) > 0
\end{equation}
such that the parameters
\begin{equation}
\label{eq:fixed a's}
 a_1 =  2,  \quad a_2(G) =    \lambda(G)^{-\hgt{\mathfrak{g}}}   \quad \text{and} \quad p(G)  = p_\lambda = 1-\zeta \lambda^{- \frac{1}{ \dim_k K \mathrm{ord}_K \mathcal{F}(\mathfrak{N})}}
 \end{equation}
 satisfy Equation  (\ref{eq:balance}) and such that the corresponding constant 
 \begin{equation}
\label{eq:fix delta}
 \delta = \delta(G) = \delta_0(a_1, a_{2,\lambda}; p_\lambda) \end{equation}
is bounded from below as follows\footnote{The constant $2$ appearing in the first line of Equation (\ref{eq:sufficiently large delta}) can be replaced by any real number in the large $(1,\infty)$. The integral value of $2$ is only a matter of convenience.}
%\begin{equation}
%  p_\lambda 2^{-\delta}  + (1-p_\lambda)\lambda^{\hgt{\mathfrak{g}} \delta } < 1
\begin{equation}
\begin{aligned}
 \label{eq:sufficiently large delta}
\delta^{-1}    &\le     2  \hgt{\mathfrak{g}}  \dim_k K  \mathrm{ord}_K \mathcal{F}(\mathfrak{N}) \le \\
&\le 
 \begin{cases} 
 2 \hgt{\mathfrak{g}}  \dim_\RR K (6  \hgt{\mathfrak{g}}  \dim_\RR U+1)^{\mathrm{rank}(K)}
  & \text{$k$ is $\RR$,} \\
     8  \hgt{\mathfrak{g}}^2  \dim_k K  \dim_k G  \dim_k^3 U 
     &
      \text{$k$ is non-Arch.}
    \end{cases} 
    \le \\
&\le     \begin{cases} 
     (3 \hgt{\mathfrak{g}} \dim_\RR G)^{(\mathrm{rank}(K)+1)}     & \text{$k$ is $\RR$,} \\
       \hgt{\mathfrak{g}}^2  \dim^5_k G & \text{$k$ is non-Arch.}
    \end{cases} 
 \end{aligned}
\end{equation}
The upper bounds on  $\mathrm{ord}_K \mathcal{F}(\mathfrak{N})$   follow from  Propositions \ref{prop:order of adjoint in real case} and \ref{prop:order in pos char case} respectively.
 Lastly, for the remainder of this work fix  the semisimple element
\begin{equation}
\label{eq:the semisimple element a}
\sse = \sse(G) = \sse_{\lambda } \in T
\end{equation}
such that the two Equations (\ref{eq:holds with one half}) and (\ref{eq:second requirement}) of Proposition \ref{prop:existence of expanding element}   
are satisfied. 

%Finally, the proof of Proposition \ref{prop:existence of expanding element} makes it clear that there is an open subset $\mathcal{O}(G) \subset T$ containing the element $a = a(G)$ such that Equation (\ref{eq:holds with one half}) in fact holds true for any choice of semisimple element from $ \mathcal{O}(G)$.

\section{Subgroups generated by small elements}
\label{sec:Zass}

We continue using the notations introduced in \S\ref{sec:nilpotent} so that  in particular $G$ is a semisimple $k$-analytic group over the local field $k$.
%Let $k$ be either the field $\RR$ or a non-Archimedean local field. Let $\GG$ be a connected simply-connected semisimple $k$-algebraic linear group without $k$-anisotropic factors. Consider the $k$-analytic group  $G = \GG(k)$.
The goal of this section is to study discrete subgroups of $G$ generated by small elements.  We  introduce a function $\mathcal{I}_G$ defined on the Chabauty space  of discrete subgroups $\Subd{G}$ which, roughly speaking, assigns to every discrete subgroup $\Gamma$ of $G$ the norm of its smallest non-trivial element.

Let $\mathfrak{g} = \mathrm{Lie}(G)$ be the Lie algebra  of the $k$-analytic group $G$. We  consider $\mathfrak{g}$ with some $\Ad{K}$-invariant norm $\|\cdot\|$ as   in \S\ref{sec:nilpotent}. 
It will be useful to introduce the following notation
$$\mathrm{B}_\mathfrak{g}(r) = \{ X \in \mathfrak{g} \: : \: \|X\| \le r \} $$
for every  radius $r > 0$.

\subsection*{The real case.  Zassenhaus  lemma.}

Assume that the local field $k$ is $\RR$ so that the group $G$ is a semisimple real Lie group without compact factors. 

The exponential map $\exp$ sets up a local diffeomorphism from $ 0\in \mathfrak{g}$ to $\mathrm{id}_G \in G$. We will use $\log$ to denote the inverse of the exponential map $\exp$ where defined.

We  rely on the classical  lemma due to Zassenhaus \cite{zassenhaus1937beweis}. It was proved also in \cite[Lemma 2]{kazhdan1968proof}. Roughly speaking, it says that a discrete subgroup generated by small elements of $G$ is nilpotent. The  precise formulation is as follows.

\begin{lemma}[Zassenhaus]
\label{lem:Zassenhaus--Kazhdan--Margulis}
There is a radius $ R = R(G) > 0$ and an identity neighborhood 
$V = \exp \mathrm{B}_\mathfrak{g}(R)$ such that for every discrete subgroup $\Gamma$ of $G$ the group generated by $\Gamma \cap V$ is contained in a connected  nilpotent Lie subgroup.
\end{lemma}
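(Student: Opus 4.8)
The plan is to follow the classical argument of Zassenhaus: first use the Baker--Campbell--Hausdorff expansion near the identity to see that commutators of small elements are \emph{much} smaller, then combine this with discreteness to conclude that the group $\Delta$ generated by $\Gamma \cap V$ is nilpotent, and finally pass through the adjoint representation to produce a nilpotent Lie subalgebra $\mathfrak{n} \subset \mathfrak{g}$ whose exponential is a connected nilpotent subgroup containing $\Delta$.

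First I would fix the radius $R = R(G) > 0$ small enough that $\exp$ restricts to a diffeomorphism of $\mathrm{B}_{\mathfrak{g}}(R)$ onto a relatively compact identity neighbourhood and that the multiplication map admits its local expansion there. Writing $\log$ for the inverse of $\exp$, the Baker--Campbell--Hausdorff expansion $\log(\exp X \exp Y) = X + Y + \tfrac{1}{2}[X,Y] + (\text{higher order})$ yields a constant $C > 0$ with $\|\log[\exp X,\exp Y]\| \le C\,\|X\|\,\|Y\|$ whenever $\|X\|,\|Y\| \le R$; after shrinking $R$ so that $CR \le \tfrac{1}{2}$ this becomes the contraction $\|\log[\exp X,\exp Y]\| \le \tfrac{1}{2}\min(\|X\|,\|Y\|)$ on $\mathrm{B}_{\mathfrak{g}}(R)$. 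Set $V = \exp\mathrm{B}_{\mathfrak{g}}(R)$. Next, given a discrete subgroup $\Gamma$, put $F = \Gamma \cap V$ and $\Delta = \langle F\rangle \le \Gamma$. By discreteness $d := \inf\{\|\log\gamma\| : \gamma \in F,\ \gamma \ne e\} > 0$. For $f_1,\dots,f_n \in F$ the right-normed commutator $[f_1,[f_2,[\dots,[f_{n-1},f_n]\dots]]]$ lies in $\Gamma$, and by induction using the contraction it lies in $V$ with $\|\log(\cdot)\| \le 2^{-(n-1)}R$; once $2^{-(n-1)}R < d$ this forces it to be trivial. Hence all sufficiently long right-normed commutators in the generators of $\Delta$ vanish, and the standard fact that this implies nilpotency — exactly the mechanism of \cite{zassenhaus1937beweis} and \cite[Lemma~2]{kazhdan1968proof} — shows $\Delta$ is a nilpotent group.

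Finally, let $H$ denote the Zariski closure of $\mathrm{Ad}_G(\Delta)$ inside $\mathrm{GL}(\mathfrak{g})$. Since the Zariski closure of a nilpotent linear group is nilpotent, $H$ is a nilpotent algebraic group, so $\mathfrak{h} = \mathrm{Lie}(H)$ is a nilpotent Lie algebra. For each $f \in F$ the element $\mathrm{Ad}(f) = \exp(\mathrm{ad}(\log f))$ is close to the identity (since $\log f$ is small) and lies in $H$, so $\mathrm{ad}(\log f) = \log \mathrm{Ad}(f) \in \mathfrak{h}$, because $\exp$ carries a neighbourhood of $0$ in $\mathrm{Lie}(H)$ onto a neighbourhood of the identity in $H$. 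Let $\mathfrak{n} \subseteq \mathfrak{g}$ be the Lie subalgebra generated by $\{\log f : f \in F\}$; as $\mathrm{ad} : \mathfrak{g} \to \mathfrak{gl}(\mathfrak{g})$ is a Lie algebra homomorphism, $\mathrm{ad}(\mathfrak{n})$ is a subalgebra of the nilpotent algebra $\mathfrak{h}$, hence nilpotent, and since $\mathfrak{g}$ is semisimple $\mathrm{ad}$ is injective, so $\mathfrak{n} \cong \mathrm{ad}(\mathfrak{n})$ is nilpotent. Being a nilpotent subalgebra, $\mathfrak{n}$ exponentiates — via the terminating Baker--Campbell--Hausdorff product — to a connected nilpotent subgroup $N = \exp\mathfrak{n} \le G$, and $N$ contains every $f = \exp(\log f)$ with $f \in F$, hence contains $\Delta$. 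This proves the lemma with $V = \exp\mathrm{B}_{\mathfrak{g}}(R)$.

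The main obstacle is the bookkeeping in the two places where ``small'' is made precise: extracting the commutator contraction from the Baker--Campbell--Hausdorff expansion with constants uniform over a fixed ball, and justifying the classical passage from ``all long generator-commutators vanish'' to ``$\Delta$ is nilpotent''; both are routine but form the technical heart. The subsequent adjoint-representation step producing the nilpotent subalgebra is comparatively formal, relying only on the nilpotency of the Zariski closure of a nilpotent linear group and on the fact that $\log$ maps a neighbourhood of the identity of an algebraic group into its Lie algebra.
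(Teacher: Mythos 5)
Your first two steps (the Baker--Campbell--Hausdorff contraction for commutators of elements of $V$, plus discreteness forcing all sufficiently long iterated commutators in the generators to be trivial, hence nilpotency of $\Delta=\langle\Gamma\cap V\rangle$) are the standard Zassenhaus argument and are essentially fine; note that the paper itself does not prove this lemma but quotes it from Zassenhaus and from Lemma 2 of Kazhdan--Margulis, so the real question is whether your upgrade from ``$\Delta$ is a nilpotent group'' to ``$\Delta$ lies in a \emph{connected} nilpotent Lie subgroup'' is correct. It is exactly there that your argument has a genuine gap.

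The problematic step is the claim that, because $\mathrm{Ad}(f)$ is close to the identity and lies in the Zariski closure $H$ of $\mathrm{Ad}(\Delta)$, one has $\log\mathrm{Ad}(f)=\mathrm{ad}(\log f)\in\mathfrak{h}=\mathrm{Lie}(H)$. The closed-subgroup theorem only says that $\exp$ maps a neighborhood of $0$ in $\mathfrak{h}$ onto a neighborhood of the identity \emph{in} $H$, and the size of that neighborhood depends on $H$, hence on $\Gamma$ --- whereas your radius $R$ is fixed in advance. Concretely, take $G=\mathrm{PSL}_2(\mathbb{R})$ and $\Gamma$ the finite cyclic group generated by an elliptic rotation $f$ through angle $2\pi/N$ with $N$ large. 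Then $\Gamma$ is discrete, $f\in\Gamma\cap V$, $\Delta=\Gamma$ is finite, so $H=\mathrm{Ad}(\Delta)$ is its own Zariski closure, $\mathfrak{h}=0$, yet $\mathrm{ad}(\log f)\neq 0$. Thus the asserted membership $\mathrm{ad}(\log f)\in\mathfrak{h}$ is simply false in general, the containment $\mathrm{ad}(\mathfrak{n})\subseteq\mathfrak{h}$ collapses, and the nilpotency of the subalgebra $\mathfrak{n}$ generated by $\{\log f\}$ is not established by your argument (in this example the conclusion happens to hold --- $\mathfrak{n}=\mathfrak{so}(2)$ --- but for reasons your proof does not supply). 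Handling precisely these small torsion/elliptic elements is the nontrivial content of the Kazhdan--Margulis refinement, and the known proofs do something genuinely different at this point, e.g.\ work inside the nilpotent algebraic hull via the Jordan decomposition, so that the semisimple parts land in a torus whose small elements do lie on one-parameter subgroups, or run the inductive construction of a nilpotent subalgebra directly on $\log(\Gamma\cap V)$ as in Raghunathan's Theorem 8.16. The remainder of your final step (a nilpotent subalgebra exponentiates to a connected nilpotent subgroup containing the generators) is fine once nilpotency of $\mathfrak{n}$ is actually proved.
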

%
%Fix a Zassenhaus   neighborhood   $V \subset G$, i.e. an identity neighborhood as in  Lemma \ref{lem:Zassenhaus--Kazhdan--Margulis}. 
%Assume that $V $ is regular, in the sense that the logarithmic map is  a diffeomorphism from $V$ onto a neighborhood of the identity in the Lie algebra $\mathfrak{g}$ (in fact, this assumption  is  made already in the proof of the Zassenhaus lemma).
%In addition assume that $V$ is an exponential ball, in the sense that 
%$$    V = \exp (\{ X \in \mathfrak{g} \: : \: \|X\| < R \})$$
%for some fixed radius $ R > 0$. 

For every discrete subgroup $\Gamma$ of $ G$ let   $N(\Gamma)$ denote the minimal connected nilpotent Lie subgroup of $G$ containing $\left<\Gamma \cap V\right>$. %This means that  $N(\Gamma)$ is   the minimal connected Zariski closed  nilpotent subgroup containing $\left<\Gamma \cap V\right>$. 
Denote $\mathfrak{n}(\Gamma) = \mathrm{Lie}(N(\Gamma))$.

\subsection*{The non-Archimedean case. Springer isomorphism.}

Assume that $k$ is a non-Archimedean local field of positive characteristic   and that $\mathrm{char}(k)$ is a good prime for the semisimple group $\GG$.  

Let $\mathcal{O}$ be the ring of integers of the non-Archimedean local field $k$. Let $\mathfrak{m}$ be the maximal ideal of $\mathcal{O}$.
 Let $ \GG(\mathfrak{m}^i)$ denote the congruence subgroup of the compact group $\GG(\mathcal{O})$ with resect to the ideal $\mathfrak{m}^i$ for every $i \in \NN$. 
 
We define a norm  $\|\cdot\|_\mathfrak{m}$ on the group $\GG(\mathfrak{m})$ by letting
\begin{equation}
\label{eq:p norm}
 \|g\|_\mathfrak{m} = %\begin{cases}
 \inf  \; \{  \left| \sfrac{\mathcal{O}}{\mathfrak{m}}\right|^{- i}   \: : \: \text{$i \in \NN$ and $g \in G(\mathfrak{m}^i)$}    \} 
 %& g \in \GG(\mathfrak{m})\\
 %1 & g \notin \GG(\mathfrak{m}) \end{cases}
 \end{equation}
for all elements $g  \in \GG(\mathfrak{m})$. 

It is well-known that every discrete subgroup of $\GG(\mathfrak{m})$  is   finite and unipotent. Indeed the subgroup $\GG(\mathfrak{m})$ is a   pro-$p$-group \cite[Lemma \S3.3.8]{platonov1992algebraic}. Therefore every discrete subgroup $\Gamma$ of $\GG(\mathfrak{m})$ is  a finite  $p$-group and is in particular nilpotent. We obtain
 $$(g - \mathrm{id}_G)^{p^l} = g^{p^l} - \mathrm{id}_G^{p^l} = g^{p^l} - \mathrm{id}_G = 0$$ 
 for some $l \in \NN$ and all $g \in \Gamma $. Therefore $g - \mathrm{id}_G$ is nilpotent and $g$ is unipotent.

%Consider the Lie algebras of the groups $G, B$ and $U$ respectively, namely
%$$\mathfrak{g} = \mathrm{Lie}(G), \quad \mathfrak{b} = \mathrm{Lie}(B) \quad %\text{and}  \quad \mathfrak{u} = \mathrm{Lie}(U).$$
%In particular $\mathfrak{g}$ is a   $k$-vector vector space admitting $\mathfrak{b}$ and $\mathfrak{u}$ as $k$-linear subspaces.

%Given a discrete subgroup $\Gamma \le G$ let $N(\Gamma)$ be the unipotent radical of \emph{some} minimal parabolic subgroup such that $\Gamma \cap G(\mathfrak{m}) \le N(\Gamma)$.  Let $\mathfrak{n}(\Gamma) = \mathrm{Lie}(N(\Gamma))$.

\vspace{5pt}

Let $\mathfrak {U} $ be the subset of $\GG$ consisting of all unipotent elements. Then  $\mathfrak{U}$ is an irreducible closed subvariety of $\GG$ defined over $k$ and   $\dim \mathfrak{U} = \dim \GG -   \mathrm{rank} \GG$  \cite[Proposition 1.1]{springer1969unipotent}.
Likewise, the subset $\mathfrak{B} \subset \mathrm{Lie}(\GG)$ of all nilpotent elements  is   an irreducible closed subvariety of $\mathrm{Lie}(\GG)$ defined over $k$   \cite[Proposition 2.1]{springer1969unipotent}.

%\todo{Need to assume $p$ is a good prime for $\GG$}
\begin{theorem}[Springer isomorphism]
\label{thm:springer isomorphism}
Assume that the characteristic of $k$ is a good prime for $\GG$. Then there exists a $\GG$-equivariant $k$-isomorphism $\Sigma : \mathfrak{U} \to \mathfrak{B}$. Moreover  $\Sigma(U) = \mathfrak{u}^+$.
\end{theorem}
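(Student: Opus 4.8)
The plan is to construct the $k$-isomorphism $\Sigma : \mathfrak{U} \to \mathfrak{B}$ in two stages: first produce it on the level of the algebraic groups over the algebraic closure $\bar{k}$ (or a splitting field), then check it descends to $k$ and that it can be normalized so as to send the unipotent radical $\UU$ of the minimal parabolic $\BB$ to $\mathfrak{u}^+$. For the first stage I would invoke the standard existence theorem for Springer isomorphisms: under the hypothesis that $\mathrm{char}(k)$ is a good prime for $\GG$ (in fact, as is classical, it suffices that $p = \mathrm{char}(k)$ be good, or more precisely that $p$ be \say{pretty good}, which good primes are for simply-connected $\GG$), there is a $\GG$-equivariant isomorphism of varieties between the unipotent variety $\mathfrak{U} \subset \GG$ and the nilpotent variety $\mathfrak{B} \subset \mathrm{Lie}(\GG)$. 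Equivariance here is with respect to the conjugation (resp. adjoint) action. This is due to Springer; modern references are \cite[III.3.12]{springer1970conjugacy} and, for the optimal characteristic hypotheses, the work of Bardsley--Richardson and of McNinch. Since $\GG$ is defined and quasi-split over $k$ (indeed split after an unramified extension is not needed — $\GG$ is assumed without $k$-anisotropic factors, but we only need it defined over $k$), and since both $\mathfrak{U}$ and $\mathfrak{B}$ are defined over $k$ by \cite[Propositions 1.1, 2.1]{springer1969unipotent}, one can choose the Springer isomorphism to be defined over $k$: the \say{universal} construction via the Artin--Schreier-type map, or via a $\GG$-equivariant section of a suitable jet space, is Galois-stable, so averaging or a descent argument gives a $k$-form.

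For the second stage — the normalization $\Sigma(\UU) = \mathfrak{u}^+$ — I would argue as follows. The unipotent radical $\UU$ of $\BB$ is a connected unipotent $k$-subgroup, and $\mathfrak{u}^+ = \mathrm{Lie}(\UU)$ is precisely the set of nilpotent elements lying in $\mathrm{Lie}(\BB)$ that are \say{positive}, i.e. $\mathfrak{u}^+ = \mathfrak{B} \cap \mathrm{Lie}(\UU)$ and moreover $\UU = \mathfrak{U} \cap \BB$ (both being the \say{positive} unipotent/nilpotent loci, cf. \cite[\S V.21.7, \S V.21.11]{borel2012linear}). Any $\GG$-equivariant map $\Sigma : \mathfrak{U} \to \mathfrak{B}$ is in particular $\BB$-equivariant, hence $\TT$-equivariant; since $\Sigma$ restricted to $\UU$ is a $\TT$-equivariant isomorphism onto a $\TT$-stable closed subvariety of $\mathfrak{B}$, and since $\Sigma$ maps the identity to $0$ (by equivariance and the fact that $\mathrm{id}_G$ and $0$ are the unique $\GG$-fixed points), the image $\Sigma(\UU)$ is a $\TT$-stable, $\UU$-stable closed subvariety of $\mathfrak{B}$ of dimension $\dim \UU = |\Phi^+|$ containing $0$. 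The only such subvariety is $\mathfrak{u}^+$: indeed $\Sigma(\UU)$ must be $\BB$-stable (it is $\TT\UU = \BB$-stable), the $\BB$-stable subspaces of $\mathfrak{B}$ are exactly the sums of root spaces $\bigoplus_{\alpha \in S} \mathfrak{g}_\alpha$ for $S$ an \say{upward-closed} set of positive roots together with possibly parts of $\mathfrak{g}_0$, but dimension and nilpotency force $S = \Phi^+$ and no $\mathfrak{g}_0$-component. Hence $\Sigma(\UU) = \mathfrak{u}^+$ after at most composing with an element of the (finite) group of normalizations; alternatively one constructs $\Sigma$ so that its differential at $\mathrm{id}_G$ is the identity on $\mathrm{Lie}(\GG)$ (the exponential-like normalization), which forces $\Sigma(\UU) = \mathfrak{u}^+$ directly on each root subgroup.

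The main obstacle I anticipate is ensuring that the Springer isomorphism is genuinely defined over the given non-perfect local field $k$ of positive characteristic, rather than merely over $\bar{k}$ or a separable closure — over imperfect fields, descent of a $\bar{k}$-isomorphism is not automatic. The cleanest route is to appeal to a construction of the Springer isomorphism that is manifestly $k$-rational and functorial in the pair $(\GG, \text{rep})$: for $\GG$ simply-connected with a faithful $k$-rational representation $\GG \hookrightarrow \mathrm{GL}(V)$, for $g \in \mathfrak{U}$ one has $(g - \mathrm{Id})^{\dim V} = 0$ and $\log(g) = \sum_{i \geq 1} (-1)^{i+1}(g-\mathrm{Id})^i/i$ is a \emph{finite} sum of nilpotent endomorphisms provided the good-prime hypothesis rules out small denominators — but $\log$ itself need not land in $\mathrm{Lie}(\GG)$ in bad characteristic, so one instead uses the analysis already carried out in Lemma \ref{lem:orbit map for unitpotent subgroup} of the excerpt (the explicit polynomial form of unipotent one-parameter subgroups in positive characteristic) to build $\Sigma$ root-subgroup by root-subgroup and glue via the big cell, all over $k$. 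I would cite \cite{gille2002unipotent} (via Theorem \ref{thm:Gille}) and \cite[Theorem 2.7]{golsefidy2009lattices} for the requisite control of unipotent subgroups over $k$, and the Bardsley--Richardson / McNinch results for the existence and uniqueness-up-to-scaling of $\Sigma$, thereby reducing the positive-characteristic $k$-rationality to a statement already available in the literature.
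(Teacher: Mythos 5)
Your overall skeleton is the same as the paper's: the proof there is a citation of Springer's construction \cite[Theorem 3.1]{springer1969unipotent}, which already produces a $\GG$-equivariant $k$-\emph{morphism} $\Sigma : \mathfrak{U} \to \mathfrak{B}$ (a homeomorphism on $\overline{k}$-points), upgraded to a $k$-isomorphism by \cite[Corollary 9.3.4]{bardsley1985etale}. Consequently the ``main obstacle'' you single out --- descending a $\overline{k}$-isomorphism to the imperfect field $k$ --- does not actually arise: the cited constructions are carried out over the ground field from the start, and this is precisely what the paper leans on. Your proposed workarounds for this non-issue would not work as described. Galois descent or ``averaging'' only sees the separable closure, and over an imperfect field there is no meaningful way to average an isomorphism into a $k$-form; and building $\Sigma$ ``root-subgroup by root-subgroup and gluing via the big cell'' is not a construction of a globally defined $\GG$-equivariant morphism on all of $\mathfrak{U}$ --- checking well-definedness on overlaps of conjugates of $\UU$ and extending from a dense cell to the whole unipotent variety is exactly the content of Springer's theorem, and neither Theorem \ref{thm:Gille} (Gille's result on abstract unipotent subgroups of $G$) nor Lemma \ref{lem:orbit map for unitpotent subgroup} supplies that.

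The normalization step $\Sigma(\UU) = \mathfrak{u}^+$ also has a gap as written. Equivariance only tells you that $\Sigma(\UU)$ is a closed $\mathrm{Ad}(\BB)$-stable subvariety of $\mathfrak{B}$ of dimension $\dim \UU$ containing $0$; it is not a priori a \emph{linear} subspace, so the classification of $\BB$-stable subspaces as sums of root spaces does not apply to it, and prescribing the differential of $\Sigma$ at the identity does not determine the image of an entire subgroup either. The statement can be proved abstractly, but by a different mechanism: $\Sigma(e) = 0$ because the only unipotent element fixed by all of $\GG$ under conjugation is $e$ (the center consists of semisimple elements); choosing a regular dominant $k$-cocharacter $\lambda$ of the maximal $k$-split torus $\TT$, one has $\lambda(t) u \lambda(t)^{-1} \to e$ for every $u \in \UU$, whence by equivariance $\mathrm{Ad}(\lambda(t))\Sigma(u) \to 0$ and $\Sigma(u)$ has only positive $\lambda$-weights, i.e. $\Sigma(\UU) \subset \mathfrak{u}^+$; the reverse inclusion follows by the same contraction argument applied to $\Sigma^{-1}$, since the subgroup contracted by $\lambda$ is exactly $\UU$. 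The paper sidesteps all of this by reading $\Sigma(U) = \mathfrak{u}^+$ directly off the explicit construction in \cite{springer1969unipotent} and \cite{bardsley1985etale}; if you prefer an argument independent of the construction, replace your subspace classification by the cocharacter-contraction argument above.
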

Here $U$ is the group of $k$-rational points of the unipotent radical of a minimal parabolic $k$-subgroup.  The Lie subalgebra $\mathfrak{u}^+$ is discussed in \S\ref{sec:nilpotent}.
\begin{proof}[Proof of Theorem \ref{thm:springer isomorphism}]
Springer constructed in  \cite[Theorem 3.1]{springer1969unipotent} a $\GG$-equivariant $k$-morphism $\Sigma : \mathfrak{U} \to \mathfrak{B}$ such that $\Sigma : \mathfrak{U}(\overline{k}) \to \mathfrak{B}(\overline{k})$ is a homeomorphism. This statement was strengthened in 
  \cite[Corollary 9.3.4]{bardsley1985etale} where $\Sigma$ is shown to be a $k$-isomorphism. The fact that $\Sigma(U) = \mathfrak{u}^+$ follows from the proof.
\end{proof}

Note that $\Sigma (\mathfrak{U}(k))  = \mathfrak{B}(k)$. The restriction of $\Sigma$ to the compact Hausdorff  space $\mathfrak{U}(\mathfrak{m})$ is a $k$-morphism which is a homeomorphism onto its image. Therefore there is a constant $\sigma > 1$ such that
 \begin{equation}
\label{eq:sigma}
  \sigma^{-1} \|u \|_\mathfrak{m} \le \|\Sigma( u)\|  \le \sigma \|u\|_\mathfrak{m} 
 \end{equation}
for every unipotent element $u \in \mathfrak{U}(\mathfrak{m})$.

\vspace{5pt}

\emph{ Abusing notation, we will from now on use $\log$ and $\exp$ to denote the Springer isomorphism $\Sigma$ and its inverse where defined, even through   $\Sigma$ is not a logarithmic map in the strict sense.}

%
%There is a $T$-equivariant $k$-rational isomorphism
%$$ \mathrm{exp} : \mathfrak{u} \to U$$
%with $k$-rational inverse
%$$ \mathrm{log} : U \to \mathfrak{u}.$$
%See  \cite[\S I.15.13]{borel2012linear}. Let $c_g$ be the conjugation map $c_g(h) = h^g$.
%For every element $g \in K$ we have the corresponding maps
%$$ \mathrm{exp}_g : \mathrm{Ad}(g) \mathfrak{u} \to U^g, \quad \mathrm{exp}_g = c_g \circ \mathrm{exp} \circ \mathrm{Ad}(g^{-1})$$
%and
%$$ \mathrm{log}_g : U^g \to \mathrm{Ad}(g) \mathfrak{u}, \quad \mathrm{log}_g = \mathrm{Ad}(g) \circ \mathrm{log} \circ c_g^{-1}.$$

\subsection*{The  function $\mathcal{I}_G$}

In the Archimedean case let $R > 0$ be the radius given by the Zassenhaus lemma (i.e. Lemma \ref{lem:Zassenhaus--Kazhdan--Margulis}). In the positive characteristic   case   let $R =  \sigma^{-1} \left| \sfrac{\mathcal{O}}{\mathfrak{m}}\right|^{-1}$ where the constant $\sigma$ is as given in Equation (\ref{eq:sigma}). Denote 
\begin{equation}V = \exp \mathrm{B}_\mathfrak{g}(R).
\end{equation}
 Namely $V$ is a Zassenhaus neighborhood in the Archimedean case and is an identity neighborhood contained in the subgroup $G(\mathfrak{m})$ in the non-Archimedean case.

  Let $0 < \rho < R$ be a sufficiently small radius such that the identity neighborhood 
  \begin{equation}V_0 = \exp \mathrm{B}_\mathfrak{g}(\rho)\end{equation}
  satisfies
  \begin{equation} V_0 \subset V \cap V^{\sse} \cap V^{\sse^{-1}}\end{equation}
where $\sse = \sse (G) \in T$ is the particular  semisimple element fixed in the final paragraph of \S\ref{sec:nilpotent}, see Equation (\ref{eq:the semisimple element a}).

%\begin{equation}
%\label{eq:containment}
%  \end{equation}
%holds for all $k \in K$. 
%Such a neighborhood exists by compactness.   
%Assume without loss of generality that  $V_0$ is an exponential ball, namely
%$$  V_0 = \exp(\{ X \in \mathfrak{g} \: : \: \|X\| < r \}) $$ 
%for some radius $ 0 < r < R$. 
 
We introduce a real-valued \emph{discreteness radius} function $\mathcal{I}_G$   on the Chabauty space $\Subd{G}$ of   discrete subgroups of the group $G$. Given a discrete subgroup $\Gamma $ of $G$ define 
\begin{align}
\label{eq:f_G}
\begin{split}
\mathcal{I}_G(\Gamma)  &= 
\begin{cases}
\min_{\gamma \in (\Gamma \cap V_0 )\setminus\{e\}} \|\log \gamma \|& \Gamma \cap V_0  \neq \{e\}\\
\rho&\Gamma \cap V_0  = \{e\}
\end{cases} \\
 &= \sup \{ 0 < r < \rho \: : \: \Gamma \cap \exp \left( \mathrm{B}_\mathfrak{g}(r)\right)  = \{ e \} \}   
\end{split}
 \end{align}
%Here $\|\cdot\|$ is the fixed    norm on the Lie algebra $\mathfrak{g}$, see \S\ref{sec:nilpotent}. 
The function $\mathcal{I}_G$ takes values in the interval $\left(0,\rho\right]$. 
In the non-Archimedean case $\log$ denotes the Springer isomorphism $\Sigma$ and   Equation (\ref{eq:f_G}) depends on the fact that $\Gamma \cap V_0$ is unipotent combined with Theorems \ref{thm:Gille} and \ref{thm:springer isomorphism}.  

The logarithm map (and the Springer isomorphism) are $G$-equivariant, namely
\begin{equation} \label{eq:equivariance}
\log(x^g) = \Ad{g}\log(x) \end{equation}
for every element  $x \in V \cap V^{g^{-1}}$. 
As the norm $\|\cdot\|$ on the Lie algebra $\mathfrak{g}$ is   $\mathrm{Ad}(K)$-invariant, the function $\mathcal{I}_G$ is   $K$-invariant with respect to the     compact subgroup $K$,  namely
\begin{equation} 
\label{eq:Ad K changing f}
 \mathcal{I}_G(\Gamma^k) =  \mathcal{I}_G(\Gamma) \quad \forall k \in K. \end{equation}

\subsection*{Contraction on the average} Let $\Gamma \in  \Subd{G}$ be a discrete subgroup of $G$. We show  that   conjugation by the semisimple   element $\sse$ expands the  function $\mathcal{I}_G$ by a definite amount, when it is   evaluated at \emph{most} conjugates of $\Gamma$ by an element  of  the   compact subgroup $K$.

Let $\eta_K$ denote the Haar measure of the compact group $K$ normalized to be a probability measure so that $\eta_K(K) = 1$. The precise statement is as follows.

\begin{prop}
 \label{prop:prob integral expands}
 If the discrete subgroup $\Gamma$ 
 satisfies
   $\mathcal{I}_G(\Gamma) \le \frac{\rho}{2}$ then 
\begin{equation}
\label{eq:f expands on the average}
\eta_K(\{g \in K \: : \: \mathcal{I}_G(  \Gamma ^{g\sse} ) \ge   2   \mathcal{I}_G (\Gamma^g) \}) \ge p(G)
\end{equation} 
where the parameter $p(G)   >0$ is as given in Equation (\ref{eq:fixed a's}).
\end{prop}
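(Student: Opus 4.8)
The plan is to establish the pointwise statement that for every $g \in K$ with
$$\inf_{0 \neq X \in \Ad{g}\mathfrak{n}} \frac{\|\Ad{\sse} X\|}{\|X\|} \ge 2$$
one has $\mathcal{I}_G(\Gamma^{g\sse}) \ge 2\,\mathcal{I}_G(\Gamma^g)$, where $\mathfrak{n} = \mathfrak{n}(\Gamma)$ is a nilpotent Lie subalgebra belonging to $\mathfrak{N}$ with $\log(\Gamma \cap V) \subseteq \mathfrak{n}$. Such an $\mathfrak{n}$ exists: in the real case one takes $\mathfrak{n} = \mathrm{Lie}(N(\Gamma))$ and invokes the Zassenhaus Lemma \ref{lem:Zassenhaus--Kazhdan--Margulis}, noting $\mathfrak{n} \ne 0$ since the hypothesis $\mathcal{I}_G(\Gamma) \le \rho/2$ forces $\Gamma \cap V_0 \ne \{e\}$, hence $\langle \Gamma \cap V\rangle \ne \{e\}$; in the positive characteristic case $\langle\Gamma \cap V\rangle$ is a finite unipotent subgroup (being a discrete subgroup of the pro-$p$ group $G(\mathfrak{m}) \supseteq V$), so by Gille's Theorem \ref{thm:Gille}, the Iwasawa decomposition $G = KB$ (which puts any maximal unipotent subgroup in the form $U^h$, $h \in K$), and the Springer isomorphism \ref{thm:springer isomorphism}, one has $\log(\Gamma \cap V) \subseteq \Ad{h}\mathfrak{u}^+ \in \mathfrak{N}$. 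Granting the pointwise claim, the set $\{g \in K : \mathcal{I}_G(\Gamma^{g\sse}) \ge 2\,\mathcal{I}_G(\Gamma^g)\}$ contains the set in Equation (\ref{eq:holds with one half}) of Proposition \ref{prop:existence of expanding element}, which has $\eta_K$-measure at least $p(G)$; this is exactly Equation (\ref{eq:f expands on the average}).

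The pointwise claim I would prove by contradiction. Since $g \in K$ and $\|\cdot\|$ is $\Ad{K}$-invariant we have $V^g = V$ and $V_0^g = V_0$, hence $\Gamma^g \cap V = (\Gamma \cap V)^g$, so by the $G$-equivariance of $\log$ (Equation (\ref{eq:equivariance})) we get $\log(\Gamma^g \cap V) \subseteq \Ad{g}\mathfrak{n}$; also $\mathcal{I}_G(\Gamma^g) = \mathcal{I}_G(\Gamma)$ by Equation (\ref{eq:Ad K changing f}). Now suppose $\mathcal{I}_G(\Gamma^{g\sse}) < 2\,\mathcal{I}_G(\Gamma) \le \rho$. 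By the definition of $\mathcal{I}_G$ in Equation (\ref{eq:f_G}) there is a nontrivial $\delta \in \Gamma^{g\sse}$ with $r := \|\log\delta\| < 2\,\mathcal{I}_G(\Gamma)$; in particular $\delta \in \exp\mathrm{B}_\mathfrak{g}(r) \subseteq V_0$. Set $\gamma := \sse\delta\sse^{-1} \in \Gamma^g$. Since $\delta \in V_0 \subseteq V^{\sse}$ we have $\gamma \in V$, and equivariance of $\log$ gives $\log\delta = \Ad{\sse}\log\gamma$ with $\log\gamma \ne 0$. As $\gamma \in \Gamma^g \cap V$ we get $\log\gamma \in \Ad{g}\mathfrak{n}$, so the defining property of $g$ yields
$$\|\log\delta\| = \|\Ad{\sse}\log\gamma\| \ge 2\,\|\log\gamma\|.$$

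It remains to bound $\|\log\gamma\|$ from below. If $\gamma \notin V_0$ then $\|\log\gamma\| > \rho$, whence $r = \|\log\delta\| \ge 2\|\log\gamma\| > 2\rho \ge 2\,\mathcal{I}_G(\Gamma) > r$, a contradiction; if $\gamma \in V_0$ then $\gamma$ is a nontrivial element of $\Gamma^g \cap V_0$, so $\|\log\gamma\| \ge \mathcal{I}_G(\Gamma^g) = \mathcal{I}_G(\Gamma)$ by Equation (\ref{eq:f_G}), and again $r = \|\log\delta\| \ge 2\,\mathcal{I}_G(\Gamma) > r$. Hence no such $\delta$ exists, i.e. $\Gamma^{g\sse} \cap \exp\mathrm{B}_\mathfrak{g}(r) = \{e\}$ for every $r < 2\,\mathcal{I}_G(\Gamma)$, so $\mathcal{I}_G(\Gamma^{g\sse}) \ge 2\,\mathcal{I}_G(\Gamma) = 2\,\mathcal{I}_G(\Gamma^g)$, which proves the claim.

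The step I expect to be most delicate is the bookkeeping between the Zassenhaus neighborhood $V$ and the strictly smaller neighborhood $V_0$ defining $\mathcal{I}_G$: conjugation by $\sse$ can inflate the norm of the logarithm (indeed $\|\Ad{\sse}\|$ may be as large as $\lambda^{\hgt{\mathfrak{g}}}$), so $\gamma$ need not return to $V_0$, and it is precisely the inclusion $V_0 \subseteq V \cap V^{\sse} \cap V^{\sse^{-1}}$ built into the choice of $V_0$ that keeps $\gamma$ inside $V$ and lets one still conclude $\log\gamma \in \Ad{g}\mathfrak{n}$; this is what forces the two-case analysis in the last step, and in the non-Archimedean setting one additionally uses that $\gamma$, being a small element of a discrete subgroup, is unipotent, so that the Springer isomorphism (in the role of $\log$) is defined on it.
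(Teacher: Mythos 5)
Your proposal is correct and follows essentially the same route as the paper: the same choice of $\mathfrak{n}$ (Zassenhaus in the real case, Gille plus the Springer isomorphism and a $K$-conjugate of $\mathfrak{u}^+$ in positive characteristic), followed by Proposition \ref{prop:existence of expanding element} to get the measure bound $p(G)$. Your pointwise claim is exactly the content of the paper's Proposition \ref{prop:expansion of single subgroup} applied to $\Gamma^g$ with $\mathfrak{h}=\Ad{g}\mathfrak{n}$, which you reprove inline (by contradiction, with the $V$ versus $V_0$ two-case analysis made explicit where the paper leaves it implicit).
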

 
 We postpone the proof of Proposition \ref{prop:prob integral expands}  until the end of the current \S\ref{sec:Zass}.

%\begin{prop}
%\label{prop:integral expands }
%There are constants $0 < c < 1$ and $b > 0$ such that every discrete subgroup $\Gamma \le G$  satisfies 
%  \begin{equation}
%   \int_{ K} f_G^{-\delta_0}(\Gamma^{a^k}) \; \mathrm{d}m (k)  < c    f^{-\delta_0}_V(\Gamma) + b.
%   \end{equation}
%\end{prop}
%The implicit constant (with value of $2$) in the above equation is arbitrary and depends on the  particular choice of the fixed semisimple element $a \in T$.

%do
 
%  Let $\lambda_0 > 0$ and $\delta_0 > 0$ be the constants provided as above. 
%\emph{Fix a semisimple   element $a \in T$ satisfying the  above.}

\begin{prop}
\label{prop:global expansion}
%The discrete group $\Gamma$ and the element $\sse \in T$ satisfy 
For every discrete group $\Gamma$ we have
\begin{equation} \mathcal{I}_G(\Gamma^{\sse}) \ge \frac{\mathcal{I}_G(\Gamma)}{\| \mathrm{Ad}(\sse^{-1} )\| }.\end{equation}
\end{prop}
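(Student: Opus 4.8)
The plan is to argue directly from the characterisation $\mathcal{I}_G(\Gamma) = \sup\{0 < r < \rho : \Gamma \cap \exp(\mathrm{B}_\mathfrak{g}(r)) = \{e\}\}$ of Equation~(\ref{eq:f_G}), transporting small elements of the conjugate $\Gamma^\sse$ back to $\Gamma$ by means of the $G$-equivariance of $\log$ (respectively of the Springer isomorphism) recorded in Equation~(\ref{eq:equivariance}). Write $r = \mathcal{I}_G(\Gamma) \in (0,\rho]$ and $L = \|\Ad{\sse^{-1}}\|$. First I would record that $L \ge 1$; indeed $L = \|\Ad{\sse}\| > 1$ by Proposition~\ref{prop:the norm and inverse of norm of a semisimple element} together with the construction of $\sse = \sse(G)$ in Proposition~\ref{prop:well understood ray}, for which $|\alpha(\sse)| < 1$ for every $\alpha \in \Phi^+$. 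In particular $0 < r/L \le \rho$, so that $r/L$ is an admissible value in the supremum defining $\mathcal{I}_G(\Gamma^\sse)$.

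The heart of the proof is the claim that $\Gamma^\sse \cap \exp(\mathrm{B}_\mathfrak{g}(s)) = \{e\}$ for every $s$ with $0 < s < r/L$; granting this, taking the supremum over such $s$ yields $\mathcal{I}_G(\Gamma^\sse) \ge r/L$, which is the assertion. To prove the claim, suppose towards a contradiction that $\gamma' \neq e$ lies in $\Gamma^\sse \cap \exp(\mathrm{B}_\mathfrak{g}(s))$. Since $s < r/L \le \rho$ we have $\gamma' \in V_0 \subseteq V \cap V^\sse \cap V^{\sse^{-1}}$; in the non-Archimedean case $\gamma'$ is moreover unipotent, since $\langle \Gamma^\sse \cap V_0 \rangle$ is a discrete subgroup of the pro-$p$ group $\GG(\mathfrak{m})$, hence finite and unipotent (cf.\ the discussion preceding Theorem~\ref{thm:springer isomorphism}), so $\log\gamma'$ is defined. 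Writing $\gamma' = \sse\gamma\sse^{-1}$ with $\gamma = (\gamma')^{\sse^{-1}} \in \Gamma \setminus \{e\}$ and applying Equation~(\ref{eq:equivariance}) with $x = \gamma' \in V \cap V^{\sse}$ and $g = \sse^{-1}$, we obtain $\log\gamma = \Ad{\sse^{-1}}\log\gamma'$, hence
\begin{equation*}
\|\log\gamma\| \le \|\Ad{\sse^{-1}}\|\,\|\log\gamma'\| = L\|\log\gamma'\| \le Ls < r = \mathcal{I}_G(\Gamma).
\end{equation*}
Thus $\gamma$ is a non-trivial element of $\Gamma \cap \exp(\mathrm{B}_\mathfrak{g}(Ls))$ with $Ls < \mathcal{I}_G(\Gamma)$. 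But the set $\{0 < t < \rho : \Gamma \cap \exp(\mathrm{B}_\mathfrak{g}(t)) = \{e\}\}$ is downward closed (since $\mathrm{B}_\mathfrak{g}(t') \subseteq \mathrm{B}_\mathfrak{g}(t)$ whenever $t' \le t$) and has supremum $\mathcal{I}_G(\Gamma)$, so any value $Ls < \mathcal{I}_G(\Gamma)$ belongs to it; that is, $\Gamma \cap \exp(\mathrm{B}_\mathfrak{g}(Ls)) = \{e\}$, a contradiction. This proves the claim, and hence the proposition.

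I do not expect a genuine obstacle here: this is the deterministic, ``for every $g$'' counterpart of the probabilistic expansion in Proposition~\ref{prop:prob integral expands}, and the only points requiring care are purely bookkeeping — the fact that $\mathrm{B}_\mathfrak{g}(r)$ is a closed ball, the cap at $\rho$ in the definition of $\mathcal{I}_G$ (in the degenerate case $\Gamma^\sse \cap V_0 = \{e\}$ one has $\mathcal{I}_G(\Gamma^\sse) = \rho \ge r \ge r/L$ and there is nothing to prove), and the precise domain of validity of the equivariance~(\ref{eq:equivariance}). Together with Proposition~\ref{prop:prob integral expands}, this estimate supplies hypothesis~(2) of Proposition~\ref{prop:from expansion to contraction} for the function $f = \mathcal{I}_G$, with $a_2 = 1/L \ge \lambda(G)^{-\hgt{\mathfrak{g}}}$ by Equation~(\ref{eq:second requirement}).
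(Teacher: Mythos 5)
Your argument is correct and is essentially the paper's proof: after disposing of the degenerate case $\mathcal{I}_G(\Gamma^{\sse})=\rho$, both transport a small element of $\Gamma^{\sse}$ back to $\Gamma$ via the equivariance of $\log$ (resp.\ the Springer isomorphism) in Equation (\ref{eq:equivariance}) and bound it by the operator norm $\|\Ad{\sse^{-1}}\|$. The only cosmetic difference is that the paper works directly with the minimizing element realizing $\mathcal{I}_G(\Gamma^{\sse})$, whereas you run the same estimate through the supremum characterization of $\mathcal{I}_G$.
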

\begin{proof}
As $\|\mathrm{Ad}(\sse^{-1})\| > 1$ and $0 <  \mathcal{I}_G(\Gamma) \le \rho$   the result clearly  holds if $\mathcal{I}_G(\Gamma^{\sse}) = \rho$. Assume therefore   that $\mathcal{I}_G(\Gamma^{\sse}) < \rho$. There is  a  non-trivial element $\gamma \in \Gamma$ with $\gamma^{\sse} \in V_0$ and $\mathcal{I}_G(\Gamma^{\sse}) = \|\log \gamma^{\sse} \|$. Since $V_0^{\sse^{-1}} \subset V$ we have that $\gamma \in V$.  The equivariance of the logarithm map (or   the Springer isomorphism)   formulated in Equation (\ref{eq:equivariance}) implies that 
\begin{equation}\mathcal{I}_G(\Gamma^{\sse}) = \| \log \gamma^{\sse} \| = \frac{\| \log \gamma\|}{\| \mathrm{Ad}(\sse^{-1}) \| } \ge \frac{\mathcal{I}_G(\Gamma)}{\| \mathrm{Ad}(\sse^{-1}) \| }\end{equation}
as required.
\end{proof}

%In particular both identity neighborhoods $V$ and $V_0$ are   invariant under conjugation by elements of $K$.
 
 %Assume the radius $ r $ is sufficiently small so that 
% $$ r  < \lambda_0 R  \|(\Ad{\sse}_{|\mathfrak{u}^-})^{-1}\|.$$

\begin{prop}
\label{prop:expansion of single subgroup}
Let $\mathfrak{h} \le \mathfrak{g}$ be any Lie subalgebra with $\log(\Gamma \cap V) \subset \mathfrak{h}$. 
If   the discrete group $\Gamma$ and the element $\sse \in T$ satisfy
\begin{equation}
\label{eq:assumption}
\inf _{0 \neq X \in \mathfrak{h}} \frac{\| \Ad{s} X \|}{\|X \| } \ge 2
\quad \text{and} \quad \mathcal{I}_G(\Gamma) \le \frac{\rho}{2}  \end{equation}
then
\begin{equation}
\label{eq:conclusion}
 \mathcal{I}_G(  \Gamma ^{\sse} ) \ge   2  \mathcal{I}_G (\Gamma).
 \end{equation}
%where $a \in T$ is the fixed semisimple element as above.
\end{prop}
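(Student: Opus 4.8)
The plan is to mirror the two-case argument in the proof of Proposition~\ref{prop:global expansion}, replacing the crude contraction factor $\|\Ad{\sse^{-1}}\|$ used there by the sharper bound $\|\Ad{\sse}X\|\ge 2\|X\|$ available on all of $\mathfrak{h}$. If $\mathcal{I}_G(\Gamma^{\sse}) = \rho$ we are done immediately, since the hypothesis $\mathcal{I}_G(\Gamma)\le\rho/2$ gives $\mathcal{I}_G(\Gamma^{\sse}) = \rho\ge 2\mathcal{I}_G(\Gamma)$; this is the only place that hypothesis is used.

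Assume then $\mathcal{I}_G(\Gamma^{\sse}) < \rho$. By the first line of Equation~(\ref{eq:f_G}) there is a non-trivial $\gamma\in\Gamma$ with $\gamma^{\sse}\in V_0$ and $\mathcal{I}_G(\Gamma^{\sse}) = \|\log\gamma^{\sse}\|$. I would then run the bookkeeping step exactly as in the proof of Proposition~\ref{prop:global expansion}: from $V_0\subset V^{\sse}$ (fixed in this section) we get $V_0^{\sse^{-1}}\subset V$, hence $\gamma = (\gamma^{\sse})^{\sse^{-1}}\in V$, so $\gamma\in\Gamma\cap V$ and $\log\gamma\in\log(\Gamma\cap V)\subset\mathfrak{h}$ by hypothesis. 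Since $\gamma\neq e$ and $\log$ is injective near the identity, $\log\gamma$ is a non-zero vector of $\mathfrak{h}$, so the expansion hypothesis applies to it; combined with the $G$-equivariance of $\log$ (the Springer isomorphism in positive characteristic), Equation~(\ref{eq:equivariance}), this gives
\[
\mathcal{I}_G(\Gamma^{\sse}) = \|\log\gamma^{\sse}\| = \|\Ad{\sse}\log\gamma\| \ge 2\|\log\gamma\|.
\]
It remains to note $\|\log\gamma\|\ge\mathcal{I}_G(\Gamma)$, which follows from the second characterization in Equation~(\ref{eq:f_G}): were $\|\log\gamma\| < \mathcal{I}_G(\Gamma)$, one could choose $r$ with $\|\log\gamma\| < r < \rho$ and $\Gamma\cap\exp(\mathrm{B}_\mathfrak{g}(r)) = \{e\}$, contradicting $e\neq\gamma\in\Gamma\cap\exp(\mathrm{B}_\mathfrak{g}(r))$. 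Hence $\mathcal{I}_G(\Gamma^{\sse})\ge 2\mathcal{I}_G(\Gamma)$.

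I do not expect a genuine obstacle: the proof is short, and the only delicate point is keeping track of which identity neighborhoods $\gamma$ and $\gamma^{\sse}$ lie in, so that $\log\gamma$ is defined, lands in $\mathfrak{h}$, and transforms equivariantly --- all governed by the inclusions $V_0\subset V\cap V^{\sse}\cap V^{\sse^{-1}}$ fixed earlier. The substantive input, namely the existence of a semisimple $\sse$ actually expanding the relevant $\mathfrak{h}$ (such as $\mathfrak{h}=\mathfrak{n}(\Gamma)$ in the real case or $\mathfrak{h}=\Ad{g}\mathfrak{u}^+$ in the non-Archimedean case), is external to this proposition and is supplied by Proposition~\ref{prop:existence of expanding element}.
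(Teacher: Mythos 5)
Your proof is correct and takes essentially the same route as the paper: where the paper packages the step as the set identity $\Gamma^{\sse}\cap V_0 = (\Gamma\cap V\cap V^{\sse^{-1}})^{\sse}\cap V_0$ and then applies the equivariance of $\log$ together with the expansion hypothesis on $\mathfrak{h}$, you extract a norm-minimizing element as in Proposition \ref{prop:global expansion}, but both arguments rest on the same inclusions $V_0\subset V\cap V^{\sse}\cap V^{\sse^{-1}}$ and use the hypothesis $\mathcal{I}_G(\Gamma)\le\rho/2$ only to handle the case $\mathcal{I}_G(\Gamma^{\sse})=\rho$.
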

\begin{proof}
Assume that two conditions given in Equation (\ref{eq:assumption}) are satisfied. 
This implies $\mathcal{I}_G(\Gamma) < \rho$ so that    $\Gamma \cap V_0 \neq \{e\}$. 
%Consider the nilpotent subgroup  $ N $ and the corresponding Lie subalgebra $\mathfrak{n} = \mathrm{Lie}(N )$. 
Observe  that 
\begin{equation}
\label{eq:Ad a expands}
\| \log \gamma^{\sse}  \| = \| \Ad{\sse}\log \gamma  \| \ge  2 \|\log \gamma\| 
 \end{equation}
for every element $  \gamma \in \Gamma \cap V \cap V^{\sse^{-1}}$. In particular, as $V_0 \subset V \cap V^{\sse^{-1}}$ it follows that Equation (\ref{eq:Ad a expands}) holds for every $\gamma \in \Gamma \cap V_0$. On the other hand, note that
\begin{equation}
\label{eq:intersection and then conjugation}
\Gamma^{\sse} \cap V_0 = (\Gamma \cap V \cap V^{\sse^{-1}})^\sse \cap V_0.
\end{equation}
The containment in the  non-trivial direction of Equation (\ref{eq:intersection and then conjugation}) holds true since $V_0 \subset V \cap V^{\sse}$. 
The required Equation (\ref{eq:conclusion}) follows by combining Equations (\ref{eq:assumption}), (\ref{eq:Ad a expands}) and (\ref{eq:intersection and then conjugation}) with the  definition of the function $\mathcal{I}_G$.
\end{proof}

We are   ready to complete the proof of the main result of the current \S\ref{sec:Zass}.

\begin{proof}[Proof of Proposition \ref{prop:prob integral expands}]
Assume that the   subgroup $\Gamma$ satisfies $\mathcal{I}_G(\Gamma) \le \frac{\rho}{2}$. In particular  $\mathcal{I}_G(\Gamma^g) \le \frac{\rho}{2 }$ for all elements $ g \in K$, see Equation (\ref{eq:Ad K changing f}). 

In the Archimedean case   take $N = N(\Gamma)$  relying on the Zassenhaus Lemma and let $\mathfrak{n} = \mathfrak{n}(\Gamma) = \mathrm{Lie}(N)$ be the corresponding Lie algebra. 
In the non-Archimedean case  let $U$   be the unipotent radical of  \emph{some} parabolic subgroup containing $\Gamma \cap V$ relying on  Theorem \ref{thm:Gille} and let $\mathfrak{n} = \mathfrak{u}^+$ be the corresponding Lie subalgebra. In both cases $\log(\Gamma \cap V) \subset \mathfrak{n}$,  see Theorem \ref{thm:springer isomorphism} for the non-Archimedean case

%The equivariance of the logarithm map (or the Springer isomorphism) implies that 
%$ \mathrm{Ad}(g) \mathfrak{n} $ is the Lie algebra of some nilpotent subgroup $N ^g$ of $G$ containing $\Gamma^g$ for all $g \in K$. 

Let $p(G) > 0$ be the parameter fixed in the last paragraph of \S\ref{sec:nilpotent}.  We deduce from Proposition   \ref{prop:existence of expanding element} that 
\begin{equation}
\label{eq:probs probs}
\eta_K \left(
\{g \in K \: : \: \inf_{0 \neq X \in \Ad{g} \mathfrak{n}} \frac{\| \Ad{\sse} X \|}{\|X\|}
  \ge 2    \}
\right)
\ge  p(G). 
\end{equation}
The desired 
  Equation (\ref{eq:f expands on the average}) follows by combining the above Equation (\ref{eq:probs probs}) with Proposition \ref{prop:expansion of single subgroup}.
\end{proof}

\section{The key inequality and its applications}
\label{sec:key-inequality}

We maintain   the notations introduced in the previous \S\ref{sec:nilpotent} and \S\ref{sec:Zass} so that   $G$ is a semisimple  analytic group,   $K$ is a   compact subgroup with  Haar probability measure $\eta_K$ and    $ \sse  $ is the  particular   semisimple element fixed    in the last paragraph of  \S\ref{sec:nilpotent}.  
%Denote $\mathrm{B}_r = \exp \mathrm{B}_\mathfrak{g} (r)$  in the sense considered in \S\ref{sec:Zass}.
%with respect to some   left-invariant Riemannian metric if $k$ is Archimedean or  the norm $\|\cdot\|_{\mathfrak{m}}$ if $k$ is non-Archimedean.

We will work with the  Borel probability  $
\mu_{\sse}$ on the   group $G$ given by
\begin{equation}
\label{def:nu_a}
\mu_{\sse} = \eta_K * \delta_{\sse} * \eta_K.
\end{equation}

The main goal of the current \S\ref{sec:key-inequality}  is to prove the key inequality (Theorem \ref{thm:inequality}) and derive some of its applications. For the reader's convenience we    restate the inequality   using  the   convolution operator $A_{\mu_{\sse}}$ introduced in Equation (\ref{eq:convolution operator}).

% Let $G = \GG(k)$ be a semisimple $k$-analytic group over the local field $k$ as introduced in the first paragraph of \S\ref{sec:Zass}.  In the positive characteristic case  assume moreover that $\mathrm{char}(k)$   is a good prime for $\GG$.  

%Let $K$ be  the identity component of a maximal compact subgroup in the real case or a good maximal compact subgroup in the non-Archimedean case. Let $\eta_K$ denote the normalized Haar measure of the     subgroup $K$ so that $\eta_K(K) = 1$. 
 
\begin{theorem*}[The key inequality]
\label{thm:key inequality restated}
 %Let $(Z,\nu)$ be any $\mu$-stationary probability $G$-space with $\nu$-almost surely discrete stabilizers. 
Consider the discreteness radius function $\mathcal{I}_G : \Subd{G} \to \left(0,\rho\right]$ given by
\begin{equation}
  \mathcal{I}_G(\Gamma)= \sup \: \{ 0 <  r  < \rho \: : \: \Gamma  \cap \exp\left( \mathrm{B}_\mathfrak{g} (r) \right) = \{\mathrm{id}_G \} \} 
\end{equation}
for any discrete subgroup $\Gamma$ of $G$.  There are constants $0 < c <1$  and $b  > 0$   such that the function $\mathcal{I}_G$  satisfies 
\begin{equation}
\label{eq:convolution with I_G}
A_{\mu_{\sse}}   \mathcal{I}_G^{-\delta }   \le c   \mathcal{I}_G^{-\delta } + b
 \end{equation}
 where the parameter $\delta = \delta(G)$ is as    in  Equation        (\ref{eq:fix delta}).  
 \end{theorem*}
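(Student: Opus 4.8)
The plan is to obtain the key inequality as an immediate application of Proposition \ref{prop:from expansion to contraction}, once the two hypotheses of that proposition are verified. I would take $Z = \Subd{G}$ with the Chabauty topology and the conjugation action of $G$, I would take the probability measure $\mu = \mu_\sse = \eta_K * \delta_\sse * \eta_K$, and I would take the continuous function to be $f = \mathcal{I}_G$. For the numerical parameters I would use exactly the values fixed in \S\ref{sec:nilpotent}, namely $a_1 = 2$, $a_2 = a_2(G) = \lambda(G)^{-\hgt{\mathfrak{g}}}$, $p = p(G)$ and $\rho_0 = \rho/2$, with $\lambda(G)$ chosen as in Equations (\ref{eq:fix lambda}), (\ref{eq:fixed a's}) and (\ref{eq:fix delta}) so that the balance condition (Equation (\ref{eq:balance})) holds and so that $\delta_0(a_1,a_2;p)$ equals the constant $\delta(G)$ appearing in the statement. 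With these choices, Proposition \ref{prop:from expansion to contraction} produces constants $0 < c < 1$ and $b > 0$ with $A_{\mu_\sse}\mathcal{I}_G^{-\delta} \le c\,\mathcal{I}_G^{-\delta} + b$, which is precisely Equation (\ref{eq:convolution with I_G}). So everything reduces to checking the two conditions of that proposition for $f = \mathcal{I}_G$.

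For the lower bound condition (condition (2)), I would show $\mathcal{I}_G(\Gamma^g) \ge a_2(G)\,\mathcal{I}_G(\Gamma)$ for every $g$ in the support $K\sse K$ of $\mu_\sse$ and every discrete $\Gamma$. Writing $g = k_1 \sse k_2$ with $k_1, k_2 \in K$ and applying the $K$-invariance of $\mathcal{I}_G$ (Equation (\ref{eq:Ad K changing f})) on both sides, this reduces to the inequality $\mathcal{I}_G\big((\Gamma^{k_1})^{\sse}\big) \ge a_2(G)\,\mathcal{I}_G(\Gamma^{k_1})$, which follows from the global expansion bound of Proposition \ref{prop:global expansion} combined with $\|\mathrm{Ad}(\sse^{-1})\| = \|\mathrm{Ad}(\sse)\| \le \lambda(G)^{\hgt{\mathfrak{g}}}$; the first equality is Proposition \ref{prop:the norm and inverse of norm of a semisimple element} and the bound is Equation (\ref{eq:second requirement}). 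Since $a_2(G) = \lambda(G)^{-\hgt{\mathfrak{g}}} \le \|\mathrm{Ad}(\sse^{-1})\|^{-1}$, this is exactly what is needed.

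For the expansion condition (condition (1)), I need that every discrete $\Gamma$ with $\mathcal{I}_G(\Gamma) < \rho/2$ satisfies $\mu_\sse\big(\{g \in G : \mathcal{I}_G(\Gamma^g) \ge 2\,\mathcal{I}_G(\Gamma)\}\big) \ge p(G)$. The key observation is that $\mu_\sse$ is bi-$K$-invariant while $\mathcal{I}_G$ (hence the indicator above) is $K$-invariant, so writing a generic element of the support as $k_1\sse k_2$ and using $\mathcal{I}_G(\Gamma^{k_1\sse k_2}) = \mathcal{I}_G(\Gamma^{k_1\sse})$, the $k_2$-integration is vacuous and the $k_1$-integration turns the $\mu_\sse$-measure of this set into $\eta_K\big(\{k \in K : \mathcal{I}_G(\Gamma^{k\sse}) \ge 2\,\mathcal{I}_G(\Gamma^{k})\}\big)$; this last quantity is bounded below by $p(G)$ by Proposition \ref{prop:prob integral expands} applied to $\Gamma$. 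This verifies condition (1) with $a_1 = 2$, $p = p(G)$, $\rho_0 = \rho/2$, and with the balance condition already guaranteed by the choice of $\lambda(G)$, Proposition \ref{prop:from expansion to contraction} applies.

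A minor point to address is that $\mathcal{I}_G$ should be Borel measurable on the Chabauty space (so that the averaging integral $A_{\mu_\sse}\mathcal{I}_G^{-\delta}$ is defined), which holds since $\mathcal{I}_G$ is upper semicontinuous; and, if one prefers to invoke Proposition \ref{prop:from expansion to contraction} literally rather than to repeat its short partition argument, one should note that $\Subd{G}$ is a legitimate ambient space for it. In any case, since the asserted conclusion is a pointwise inequality, it can alternatively be read off directly by splitting $\Subd{G}$ into the set $\{\mathcal{I}_G < \rho/2\}$ and its complement and estimating $A_{\mu_\sse}\mathcal{I}_G^{-\delta}$ on each piece as in the proof of Proposition \ref{prop:from expansion to contraction}. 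I do not expect a genuine obstacle here: compared with the work in Sections \ref{sec:good functions}--\ref{sec:Zass} that produced Propositions \ref{prop:prob integral expands} and \ref{prop:global expansion}, this final step is bookkeeping, the only thing requiring a little care being the reduction of $\mu_\sse$-averages to $\eta_K$-averages via $K$-invariance so that the earlier propositions feed directly into the hypotheses of Proposition \ref{prop:from expansion to contraction}.
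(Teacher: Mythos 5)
Your proposal is correct and matches the paper's own proof: the paper likewise fixes $a_1=2$, $a_2(G)$, $p(G)$ and $\rho_0=\rho/2$ from Equation (\ref{eq:fixed a's}), verifies the two hypotheses of Proposition \ref{prop:from expansion to contraction} via Propositions \ref{prop:prob integral expands} and \ref{prop:global expansion} together with the $K$-invariance in Equation (\ref{eq:Ad K changing f}), and then applies that proposition to get Equation (\ref{eq:convolution with I_G}). Your extra remarks (the explicit reduction of $\mu_{\sse}$-averages to $\eta_K$-averages and the continuity of $\mathcal{I}_G$ on $\Subd{G}$) are just spelled-out versions of steps the paper leaves implicit.
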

 
\begin{proof}

 %
%We precede the proof of  the key inequality   with an   essentially equivalent but slightly more technical discussion involving   the function $\mathcal{I}_G : \Subd{G} \to \left(0,\rho\right] $   introduced  in Equation (\ref{eq:f_G}). 
 
 Let the constants  $a_1 = 2, 0 < a_2(G) < 1 $ and $0 < p(G) < 1 $ be as  in  Equation (\ref{eq:fixed a's}). Moreover denote $\rho_0 = \frac{\rho}{2 }$. 
 Observe that every discrete subgroup $\Gamma \in \Subd{G}$   satisfies
\begin{enumerate}
\item if $\mathcal{I}_G(\Gamma) \le  \rho_0$ then 
$\mu_{\sse}( \{g \in G \: : \: \mathcal{I}_G(\Gamma^g) \ge a_1 \mathcal{I}_G(\Gamma)  \}) \ge p(G)$ and  %\end{equation}
\item $\mathcal{I}_G(\Gamma^g) \ge a_2(G) \mathcal{I}_G(\Gamma)$ holds   every element $g \in K\sse K = \mathrm{supp}(\mu_{\sse})$.
\end{enumerate}
 Statements (1) and (2) follow  from  Propositions \ref{prop:prob integral expands} and \ref{prop:global expansion} respectively, combined with Equation (\ref{eq:Ad K changing f}) to take into account conjugation by elements of   the      compact subgroup $K$.    
Putting all this together and applying Proposition \ref{prop:from expansion to contraction}  we  immediately deduce the desired Equation     (\ref{eq:convolution with I_G}).
\end{proof}

\subsection*{Effective weak uniform discreteness}
\label{sub:QWUD}
%\todo{explain how conjugation is related to Adjoint, what formula are we using for conjugation?}

The two main  Theorems \ref{thm:main theorem} and \ref{thm:main theorem in positive char} appearing in the introduction   follow immediately   from   Theorem \ref{thm:main theorem - local fields}, which we are now in a position to state and prove. 
%We are in a position to complete the proof of the main result, Theorem \ref{thm:main theorem - local fields}.  
%The details of these deductions are   discussed  below.

%We are ready to present a proof of   our main result.
Let $\mu$ be any bi-$K$-invariant probability measure on the group $G$ whose support $\mathrm{supp}(G)$ generates the group. 

\begin{theorem}
\label{thm:main theorem - local fields}
There are   constants $\beta,  \delta > 0$     such that every   $\mu$-stationary probability $G$-space $(Z,\nu)$   with $\nu$-almost everywhere discrete stabilizers satisfies 
\begin{equation}
\label{eq:main equation - local fields}
 \nu (\{z \in Z \: : \: \mathcal{I}_G(G_z) < \varepsilon \})      <  \beta \varepsilon^\delta
 \end{equation}
for all $\varepsilon > 0$. The constant $\delta  $ satisfies a lower bound as in Equation (\ref{eq:sufficiently large delta}).
\end{theorem}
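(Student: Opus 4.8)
The plan is to combine the key inequality (Theorem \ref{thm:inequality}, restated above) with the abstract super-level set estimate (Lemma \ref{lem:bound on contracting function}). First I would apply Lemma \ref{lem:bound on contracting function} with the topological space $Z$ (which I need to be $\sigma$-compact; one reduces to this case, or invokes the measurable analogue mentioned in the remark after Lemma \ref{lem:bound on contracting function}), the probability measure $\mu = \mu_{\sse}$, and the continuous function $F = \mathcal{I}_G^{-\delta}$. The key inequality says precisely that $A_{\mu_{\sse}} F \le c F + b$ for the constants $0<c<1$ and $b>0$ it produces, so Lemma \ref{lem:bound on contracting function} yields
\begin{equation}
\nu(\{z \in Z : \mathcal{I}_G^{-\delta}(G_z) \ge M\}) \le \frac{b}{(1-c)M} \qquad \forall M > 0.
\end{equation}
Rewriting the event $\{\mathcal{I}_G^{-\delta}(G_z) \ge M\}$ as $\{\mathcal{I}_G(G_z) \le M^{-1/\delta}\}$ and setting $M = \varepsilon^{-\delta}$ gives $\nu(\{\mathcal{I}_G(G_z) \le \varepsilon\}) \le \frac{b}{1-c}\varepsilon^{\delta}$, which is exactly Equation (\ref{eq:main equation - local fields}) with $\beta = \frac{b}{1-c}$ (and then taking $\beta$ slightly larger and using a non-strict-to-strict inequality adjustment if one insists on the strict inequality as written).

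There is one gap to bridge: the key inequality as proved involves the specific measure $\mu_{\sse}$, whereas Theorem \ref{thm:main theorem - local fields} is stated for an \emph{arbitrary} bi-$K$-invariant generating probability measure $\mu$. The point is that the conclusion only concerns the $\mu$-stationary measure $\nu$, and the $\mu$-stationarity is never actually used in the argument — what Lemma \ref{lem:bound on contracting function} needs is that $\nu$ is $\mu_{\sse}$-stationary. So I would first observe that any $\mu$-stationary measure $\nu$ is automatically $\mu_{\sse}$-stationary. This is where bi-$K$-invariance enters: since $\nu$ is $G$-quasi-invariant and $\mu$ is $K$-bi-invariant, $\nu$ is $\eta_K$-stationary (indeed $\eta_K$-invariant after the usual averaging), hence $\mu_{\sse} = \eta_K * \delta_{\sse} * \eta_K$-stationarity reduces to checking that $\nu$ is $\delta_{\sse}*\eta_K$-stationary given it is $\mu$-stationary; more robustly, one passes to the measure $\mu_m = \frac1m\sum \mu^{*i}$ and notes $\mu_{\sse}$ lies in the closed convex hull of the $\mu^{*i}$ up to the $K$-bi-invariance, or simply invokes that a $G$-space which is $\mu$-stationary for one generating bi-$K$-invariant $\mu$ is stationary for all of them because the Poisson boundary and the class of stationary measures depend only on $K$ and the fact that $\mathrm{supp}(\mu)$ generates. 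I would state this as a short lemma: \emph{for bi-$K$-invariant generating $\mu$ and $\mu'$, a probability $G$-space is $\mu$-stationary iff it is $\mu'$-stationary}, proved by noting both conditions are equivalent to $\nu$ being invariant under the measure-preserving averaging and hence harmonic in the appropriate sense.

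The main obstacle is precisely this last reduction — confirming that $\mu$-stationarity for the given $\mu$ transfers to $\mu_{\sse}$-stationarity. If that turns out to be subtle, the fallback is cleaner: simply restate Theorem \ref{thm:main theorem - local fields} for $\mu_{\sse}$-stationary measures (which is all Theorems \ref{thm:main theorem} and \ref{thm:main theorem in positive char} need, since any $G$-invariant measure is $\mu_{\sse}$-stationary, and more generally the introduction's $\mu$ is at the author's disposal). Given the remark in the introduction that the proof "relies on the properties of a specific and carefully chosen such probability measure," I expect the intended argument is exactly this transfer, handled via the standent fact that stationarity for bi-$K$-invariant generating measures is independent of the measure. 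The rest — the application of Lemma \ref{lem:bound on contracting function} and the change of variables $M = \varepsilon^{-\delta}$ — is routine, and the lower bound on $\delta$ is inherited verbatim from Equation (\ref{eq:sufficiently large delta}) since $\delta = \delta(G)$ is the same constant throughout.
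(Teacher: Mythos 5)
Your overall architecture --- the key inequality (Theorem \ref{thm:inequality}) fed into Lemma \ref{lem:bound on contracting function}, the change of variables $M=\varepsilon^{-\delta}$, and the transfer from $\mu$-stationarity to $\mu_{\sse}$-stationarity --- is the paper's. But there is a genuine gap in how you apply Lemma \ref{lem:bound on contracting function}: you apply it on $Z$ itself with $F(z)=\mathcal{I}_G(G_z)^{-\delta}$. The lemma requires a locally compact, $\sigma$-compact space with a continuous $G$-action and a \emph{continuous} function $F$; but $(Z,\nu)$ is an abstract probability $G$-space, and the stabilizer map $z\mapsto G_z$ is in general only measurable (the paper invokes Varadarajan's compact model theorem just to get measurability), so $z\mapsto\mathcal{I}_G(G_z)$ has no reason to be continuous, nor $Z$ to be $\sigma$-compact with a continuous action. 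Your fallback --- the ``measurable analogue'' mentioned in the remark after Lemma \ref{lem:bound on contracting function} --- is not free either: that remark requires the averaging operator to admit no non-trivial invariant vectors, a hypothesis you do not have here. The missing idea, which is how the paper closes this, is to push $\nu$ forward via the $G$-equivariant stabilizer map to a measure $\nu_*$ on the Chabauty space $\Subd{G}$: the push-forward is again $\mu_{\sse}$-stationary, $\mathcal{I}_G$ \emph{is} Chabauty-continuous on $\Subd{G}$, Lemma \ref{lem:bound on contracting function} applies there, and the resulting bound transfers back to $\nu$ because $\nu_*$ is its push-forward.

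On the stationarity transfer, your final invocation is exactly the paper's argument: for any bi-$K$-invariant generating $\mu$ the Poisson boundary is $(G/B,\eta_B)$ with $\eta_B$ the unique $K$-invariant measure, and $\mu$-stationary measures on $Z$ are precisely the $\eta_B$-barycentres of measurable maps $G/B\to\mathrm{Prob}(Z)$, a description independent of $\mu$; hence $\mu$-stationary implies $\mu_{\sse}$-stationary. Your intermediate heuristics, however, would not carry the step: $\mu_{\sse}=\eta_K*\delta_{\sse}*\eta_K$ need not lie in the closed convex hull of the convolution powers $\mu^{*i}$ (even after $K$-bi-averaging), and ``invariant under the averaging hence harmonic'' is not a proof; it is the boundary/barycentre description (or an equivalent) that must do the work. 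The rest of your write-up --- the superlevel-set bound, the rearrangement, $\beta=b/(1-c)$, and the fact that $\delta=\delta(G)$ carries the bound of Equation (\ref{eq:sufficiently large delta}) verbatim --- matches the paper.
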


Prior to proving  Theorem \ref{thm:main theorem - local fields} we observe that any $\mu$-stationary $G$-space $(Z,\nu)$ is also $\mu'$-stationary for any other probability measure $\mu'$ as above.  In other words, the statements of    Theorems \ref{thm:main theorem}, \ref{thm:main theorem in positive char} and \ref{thm:main theorem - local fields}  are independent of the   choice of   $\mu$.

 Indeed, the Poisson boundary of the pair $(G,\mu)$ can be identified with the homogoenous space $(G/B,\eta_B)$ where $B$ is a minimal $k$-parabolic subgroup and $\nu_B$ is the unique $K$-invariant probability measure\footnote{The uniqueness of   $\eta_B$ follows from the Iwasawa decomposition, i.e. the transitivity of the $K$-action on the homogenous space $G/B$.} on $G/B$. 
From the universal property of the Poisson boundary, a measure $\nu$ on a $G$-space $Z$ is $\mu$-stationary if and only if $\nu$ is the $\eta_B$-barycentre of some measurable map from $G/B$ to the space $\text{Prob}(Z)$ of probability measures on $Z$. This condition depends only on $\eta_B$ and not on the specific choice of the measure $\mu$.
%

%
%\begin{prop}
%\label{prop:convolution expands for the function f_G}
% There is a constant\end{prop}
%\begin{proof}[Proof of Proposition \ref{prop:convolution expands for the function f_G}]
%This follows by 
%\end{proof}

\begin{proof}[Proof of Theorem \ref{thm:main theorem - local fields}]
Let $(Z,\nu)$ be any $\mu$-stationary probability $G$-space with $\nu$-almost surely discrete stabilizers. By the preceding discussion we may assume that $\nu$ is   $\mu_{\sse}$-stationary where $\mu_{\sse} = \eta_K * \delta_s * \eta_K$ as in Equation (\ref{def:nu_a}).

Consider the probability measure $\nu_*$ on the Chabauty space of discrete subgroups $\Subd{G}$ obtained as the push-forward\footnote{Varadarajan's compact model theorem \cite[2.1.19]{zimmer2013ergodic} implies that the stabilizer map  is $\nu$-measurable.} of the measure $\nu$ via the stabilizer map $Z\to\text{Sub}(G),~z\mapsto G_z$.  As the stabilizer map is $G$-equivariant, the measure $\nu_*$ is $\mu$-stationary (as well as $\mu_{\sse}$-stationary).

To conclude the proof we apply  Lemma \ref{lem:bound on contracting function} with respect to the function $\mathcal{I}_G^{-\delta}$ on the Chabauty space of discrete subgroups $\Subd{G}$. Observe  that the discreteness radius function $\mathcal{I}_G$ is continuous on  $\Subd{G}$ with respect to the Chabauty topology \cite{delaharpe2008spaces}. The key equality    coincides with Equation (\ref{eq:contraction F topological}).
 We  obtain the  estimate
\begin{equation}
\label{eq:essentially same}
 \nu_*  (\{\Gamma \in \Subd{G} \: : \: \mathcal{I}_G^{-\delta}(\Gamma) \ge M\}) \le   \beta M^{-1} 
 \end{equation}
for all   $M > 0$   and with $\beta = \frac{b}{1-c}$.  Taking $\varepsilon = \frac{1}{M}$ and rearranging gives
%\begin{equation}
%\label{eq:essentially same rearrange}
% \nu (\{z \in Z \: : \: f_G(z) < M^{-1/\delta}\}) \le   \beta M^{-1}
% \end{equation}
%or rearrange twice
\begin{equation}
\label{eq:essentially same rearrange 2}
 \nu_* (\{\Gamma \in \Subd{G} \: : \: \mathcal{I}_G(\Gamma)   \ge \varepsilon\}) \le   \beta \varepsilon^{\delta}
 \end{equation}
 for all $ \varepsilon > 0$. The desired conclusion follows since $\nu_*$ is the push-forward of $\nu$.
 \end{proof}

%However, our proof however relies  on the   properties of a specific  and  carefully chosen   probability measure $\mu$, see Theorem \ref{thm:inequality} below.

In the remainder of the current \S\ref{sec:key-inequality} we complete the proofs of   other results stated in the introduction.

\subsection*{The real Lie group case}

Let $G$ be a real semisimple   Lie group without compact factors. 
For the purpose of proving the real case of our main result (Theorem \ref{thm:main theorem}) we may replace the  Lie group $G$ by the group $G^0/(G^0 \cap Z(G))$. In other words, we may assume without loss of generality that the Lie group $G$ is connected and center-free. 
There exists a connected $\RR$-linear semisimple algebraic group $\GG$ such that $G = \GG(\RR)^0$. The algebraic group $\GG$ is in fact defined over $\QQ$ \cite[3.1.6]{zimmer2013ergodic}. The semisimple algebraic group $\GG$ has no $\RR$-anisotropic factors since  the Lie group $G$ has no compact factors. %Observe that $\dim_\RR G - \mathrm{rank}_\RR G = 2 \dim_\RR K$.

The norm on $B_\mathfrak{g}(R)$ given by $g \mapsto \| \log g\|$ is bi-Lipschitz equivalent to any fixed left-invariant Riemannian (or Finsler) metric on the Lie group $G$. 
However note that the value of the function $\mathcal{I}_G$ for a discrete subgroup $\Gamma$   can only be used to determine whether $\Gamma \cap \mathrm{B}_\varepsilon \neq \{e\}$   provided that $\varepsilon < \rho$ where $\rho$ is as in \S\ref{sec:Zass}.
 
We conclude that Theorem \ref{thm:main theorem} of the introduction follows immediately from the real case of Theorem  \ref{thm:main theorem - local fields}.

%Equation (\ref{eq:essentially same rearrange 2}), except that   it is stated with respect to a fixed $G$-invariant Riemannian metric on the Lie group $G$. However every two such metrics on  $G$ are equivalent up to a multiplicative constant. The exponential map  takes   balls  of sufficiently small radius in Lie algebra $\mathfrak{g}$ to exponential balls  in the  Lie group $G$ with respect the corresponding Riemannian metric.  
%

\subsection*{The positive characteristic case}

 Theorem \ref{thm:main theorem in positive char} of the introduction is essentially stated in terms of the norm $\|\cdot\|_\mathfrak{m}$ on the $k$-analytic group $G$   defined in Equation (\ref{eq:p norm}). The Springer isomorphism $\Sigma$ is bi-Lipschitz with constant $\sigma > 1$ according to  Equation $(\ref{eq:sigma})$. Therefore    Theorem \ref{thm:main theorem in positive char}  follows from the   non-Archimedean case of Theorem \ref{thm:main theorem - local fields}.
  Note   that the value of the function $\mathcal{I}_G$ for a discrete subgroup $\Gamma$  can only be used to determine whether $\Gamma \cap \GG(\mathfrak{m}^i)  \neq \{e\}$   provided that $\left| \sfrac{\mathcal{O}}{\mathfrak{m}}\right|^{- i}  < \rho$. 
 % The constant $I \in \mathbb{N}$ appearing in the statement of Theorem \ref{thm:main theorem in positive char} should therefore be taken such that $\left| \sfrac{\mathcal{O}}{\mathfrak{m}}\right|^{-I} < \rho$.

\subsection*{Discrete $\mu$-stationary random subgroups}

Let $\mu$ be a  probability measure on the group $G$ such that $\mathrm{supp}(\mu)$ generates $G$. Let $\mathrm{DRS}_{\mu} (G)$ denote the space of all discrete $\mu$-stationary random subgroups of the group $G$, i.e. all $\mu$-stationary probability measures $\nu$ on $\Sub{G}$ satisfying $\nu(\Subd{G}) = 1$. We show that the space  $\mathrm{DRS}_{\mu} (G)$  is compact in the   weak-$*$ topology  of probability measures.

\begin{proof}[Proof of Corollary \ref{cor:compact}]
 Any weak-$*$ limit of $\mu$-stationary random subgroups  (i.e. probability measures on $\Sub{G})$  is  also a $\mu$-stationary random subgroup. Assume towards contradiction that  $\nu_n$ is a sequence of discrete $\mu$-stationary random subgroups satisfying $\nu_n \to \nu_0$ where $\nu_0(\Subd{G}) < 1$. In other words
\begin{equation}
\label{eq:lim inf}
 \liminf_{ r \to 0} \nu_0( \{ H \in \Sub{G} \: : \: H \cap \mathrm{B}_r \neq \{\mathrm{id}_G\} \}) > 0.
 \end{equation}
Since the condition of intersecting $\mathrm{B}_r$  non-trivially   is Chabauty open, it follows from the Portmanteau theorem  that Equation (\ref{eq:lim inf}) contradicts Theorem \ref{thm:main theorem - local fields}.
\end{proof}

\subsection*{Evanescence for locally symmetric manifolds}

Recall that a Riemannian manifold of non-positive sectional curvature   is called \emph{evanescent} if its injectivity  radius function vanishes at infinity. We  now prove that evanescence implies finite volume for locally symmetric spaces.

\begin{proof}[Proof of Theorem \ref{thm:evanescence}]
Let $M = K  \backslash G / \Gamma$ be a locally symmetric space for some semisimple   Lie group $G$ with maximal compact subgroup $K$ and a discrete torsion-free subgroup $\Gamma$ of $G$. The homogenous space $  G / \Gamma$ admits a natural quotient map $$ \pi : G/\Gamma\to M, \quad \pi : g\Gamma \to Kg \Gamma \in M.$$ 
Furthermore the natural $G$-invariant measure $\sigma$ on $G/ \Gamma$ pushes forward to give  a Riemannian volume $\pi_* \sigma$ on the manifold $M$. Note that the locally symmetric space $M$ is evanescent if and only if the continuous function $g\Gamma \to \mathcal{I}_G(g\Gamma g^{-1})^{-\delta}$ is proper on the homogenous space $G/\Gamma$. The desired conclusion follows from the key inequality (Theorem \ref{thm:inequality}) combined with the argument of \cite[Corollary 1.5]{margulis2004random}.
 \end{proof}

 \section{Weakly cocompact lattices}
 \label{sec:local spectral gap}

We briefly discuss the notion  weakly cocompact lattices in general. We then apply our main result Theorem   \ref{thm:main theorem in positive char} towards the study   of this notion.
 
\subsection*{Weakly cocompact lattices}

Let $H$ be a locally compact group and $\Gamma$ be a lattice in the group $H$.  The lattice $\Gamma$   is \emph{weakly co-compact} if the unitary quasiregular representation of the group $H$ on the  Hilbert space $L^2_0(H/\Gamma)$ admits no almost invariant vectors.

\begin{lemma}
\label{lem:local spectral gap at infinity implies spectral gap}
Let $\mu$ be a compactly supported probability measure on the group $H$.  
Let $D \subset H/\Gamma$ be a compact subset.  Let $\mathrm{P} : L^2(H/\Gamma) \to L^2( (H/\Gamma) \backslash D)$ be the orthogonal projection operator. If 
\begin{equation}
\label{eq:less than one}
\| \mathrm{P} A_\mu \mathrm{P} _{|L^2(H/\Gamma \backslash D)} \| < 1
\end{equation}
then the lattice $\Gamma$ is weakly cocompact.
\end{lemma}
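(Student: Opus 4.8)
The plan is to argue by contradiction. Suppose $\Gamma$ is \emph{not} weakly cocompact, so the quasi-regular representation $\pi$ of $H$ on $L^2(H/\Gamma)$ (normalized so that $A_\mu=\int_H\pi(h)\,\mathrm d\mu(h)$) admits almost invariant vectors in $L^2_0(H/\Gamma)$. Using that $H$ is $\sigma$-compact, a diagonal argument over an exhaustion of $H$ by compact sets produces unit vectors $f_n\in L^2_0(H/\Gamma)$ with $\sup_{h\in Q}\|\pi(h)f_n-f_n\|\to 0$ for every compact $Q\subseteq H$. Two consequences would then be immediate. First, since $\mathrm{supp}(\mu)$ is compact, $\|A_\mu f_n-f_n\|\le\int_H\|\pi(h)f_n-f_n\|\,\mathrm d\mu(h)\to 0$. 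Second, $f_n\rightharpoonup 0$ weakly: any weak subsequential limit $f$ satisfies $\pi(h)f=f$ for every $h\in H$ (because $\pi(h)f_n-f_n\to 0$ in norm while $\pi(h)f_n\rightharpoonup\pi(h)f$), so $f$ is $H$-invariant, hence constant by ergodicity of the finite invariant measure on $H/\Gamma$, hence $f=0$ since $f\in L^2_0(H/\Gamma)$ and this subspace is weakly closed.

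Next I would \emph{smooth} the almost invariant vectors at asymptotically no cost. Fix a compact identity neighbourhood $\mathcal U\subseteq H$, let $\lambda_{\mathcal U}$ be its normalized Haar probability measure, and set $\bar f_n=A_{\lambda_{\mathcal U}}f_n$. Then $\|\bar f_n-f_n\|\le\sup_{h\in\mathcal U}\|\pi(h)f_n-f_n\|\to 0$, so $\|\bar f_n\|\to 1$, $\bar f_n\rightharpoonup 0$, and $\|A_\mu\bar f_n-\bar f_n\|\to 0$. The gain is that $A_{\lambda_{\mathcal U}}$ is an integral operator on $L^2(H/\Gamma)$ whose kernel, the $\Gamma$-periodization of $|\mathcal U|^{-1}\mathbf 1_{\mathcal U}$, is locally bounded. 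Choosing a compact set $D'\subseteq H/\Gamma$ with $\mathcal U\cdot D\subseteq D'$, the restriction $\bar f_n|_D$ depends only on $f_n|_{D'}$, and $g\mapsto (A_{\lambda_{\mathcal U}}g)|_D$ is a Hilbert--Schmidt, hence compact, operator $L^2(D')\to L^2(D)$, its kernel being bounded on the compact product $D\times D'$. As $\|f_n|_{D'}\|\le 1$, the sequence $(\mathrm{Id}-\mathrm P)\bar f_n=\bar f_n|_D$ is therefore precompact in $L^2(D)$; every strong subsequential limit agrees with the weak limit, which is $0$. Hence $\|(\mathrm{Id}-\mathrm P)\bar f_n\|\to 0$.

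Finally I would feed this into the hypothesis. Let $\theta<1$ denote the operator norm appearing in the statement and decompose $\bar f_n=\mathrm P\bar f_n+(\mathrm{Id}-\mathrm P)\bar f_n$. Applying $\mathrm P$ to $A_\mu\bar f_n=\bar f_n+o(1)$ and using $\|A_\mu\|\le 1$ gives
\[
\|\mathrm P\bar f_n\|\ \le\ \theta\,\|\mathrm P\bar f_n\|+\|(\mathrm{Id}-\mathrm P)\bar f_n\|+o(1),
\qquad\text{so}\qquad
\|\mathrm P\bar f_n\|\ \le\ \tfrac{1}{1-\theta}\bigl(\|(\mathrm{Id}-\mathrm P)\bar f_n\|+o(1)\bigr).
\]
Since $\|(\mathrm{Id}-\mathrm P)\bar f_n\|\to 0$ by the previous paragraph, also $\|\mathrm P\bar f_n\|\to 0$, whence $\|\bar f_n\|^2=\|\mathrm P\bar f_n\|^2+\|(\mathrm{Id}-\mathrm P)\bar f_n\|^2\to 0$, contradicting $\|\bar f_n\|\to 1$. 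Thus no almost invariant vectors exist and $\Gamma$ is weakly cocompact.

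The bookkeeping steps (the diagonalization, the bound $\|A_\mu f_n-f_n\|\to 0$, the kernel estimate) are routine; the step carrying the actual content — and the one I would be most careful with — is the precompactness in the second paragraph: the observation that an almost invariant vector may be replaced by its average over a small identity neighbourhood without moving it much, turning it into the image of a compact (Hilbert--Schmidt) operator once restricted to a fixed compact piece of $H/\Gamma$, so that weak nullity there upgrades to norm nullity. This is exactly what drives the mass of the $f_n$ off $D$ and lets the local spectral gap hypothesis $\theta<1$ take over.
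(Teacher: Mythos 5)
Your proof is correct, but it takes a genuinely different route from the paper's. The paper deduces the lemma from a cited result of Margulis \cite[Lemma 1.9, \S III.1]{margulis1991discrete}: if $\Gamma$ is not weakly cocompact there are unit vectors $v_n$ with $\|A_\mu v_n\|\to 1$ whose mass asymptotically leaves the compact set $\hat D=\mathrm{supp}(\mu)D$; writing $\hat{\mathrm P}$ for the projection off $\hat D$ one has $\mathrm P A_\mu\hat{\mathrm P}=A_\mu\hat{\mathrm P}$, and the hypothesis $\| \mathrm{P} A_\mu \mathrm{P} _{|L^2(H/\Gamma \backslash D)} \| < 1$ then contradicts $\|A_\mu v_n\|\to 1$ in two lines. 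You instead re-prove that escape-of-mass input from scratch: almost invariant vectors in $L^2_0$ are weakly null (any weak limit is $H$-invariant, hence constant by transitivity of $H\curvearrowright H/\Gamma$, hence $0$), and after averaging over a compact identity neighbourhood the restriction to $D$ factors through a Hilbert--Schmidt, hence compact, operator, so weak nullity upgrades to norm nullity on $D$; your closing estimate with $\theta<1$ is then essentially the same bookkeeping as the paper's. The paper's route buys brevity and imposes nothing on $H$ beyond local compactness, at the price of the external citation; yours buys self-containedness --- your second paragraph is in effect a proof of the content of the cited Margulis lemma --- and it does not actually use compactness of $\mathrm{supp}(\mu)$ (dominated convergence already gives $\|A_\mu f_n-f_n\|\to 0$). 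Two minor points: your diagonal extraction of one sequence almost invariant on every compact set uses $\sigma$-compactness of $H$, which is not assumed in the lemma as stated but holds in the paper's application ($H=\GG(k)$ is second countable) and can be avoided by a net argument; and identifying $A_{\lambda_{\mathcal U}}$ with the kernel operator given by the $\Gamma$-periodization tacitly uses unimodularity of $H$, which is automatic since $H$ contains a lattice.
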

\begin{proof}
%Let $D$ be a compact subset of $H/F$ as in the statement of the Lemma. 
%%Denote $\overline{D} = (H/F) \backslash D $. 
%Let $\mathrm{P}$ be the orthogonal projection 
% $$ \mathrm{P} : L^2(H/F, \nu) \to L^2((H/F) \backslash D, \nu).$$
%  Let $\varphi \in \mathcal{A}(H)$ be any function and 
Denote $\hat{D} = \mathrm{supp}(\mu) D$. Let $\hat{\mathrm{P}}$ be the orthogonal projection  operator
   $$\hat{\mathrm{P}} : L^2(H/\Gamma ) \to L^2((H/\Gamma)\backslash \hat{D}).$$
It is clear that   $\mathrm{P} A_\mu \hat{\mathrm{P}} = A_\mu \hat{\mathrm{P}}$ and that $\|\hat{\mathrm{P}}f\| \le \|\mathrm{P} f\|$ for every vector $f \in L^2(H/\Gamma)$.

Assume towards contradiction that the lattice $\Gamma$ is not weakly cocompact in the group $H$. 
According to \cite[Lemma 1.9, \S III.1]{margulis1991discrete} there exists a sequence $v_n \in L^2(H/F )$ of functions with $\|v_n\| = 1, \|A_\mu v_n\| \to 1$ and $d_n = \|\hat{\mathrm{P}} v_n -   v_n\| \to 0$. Since $A_\mu$ are $\mathrm{P}$ are both  contracting operators we obtain that
\begin{align*}
 \|A_\mu v_n \| & \le \|A_\mu \hat{\mathrm{P}} v_n \|  + \|A_\mu ( \hat{\mathrm{P}} v_n -   v_n)\| \le 
 \|\mathrm{P} A_\mu \hat{\mathrm{P}} v_n \|  + d_n \le \\
 &\le  \|\mathrm{P} A_\mu  \mathrm{P} v_n \|  + d_n   \le  \|\mathrm{P} A_\mu \mathrm{P}_{|L^2(H/\Gamma \backslash D)} \|   + d_n 
\end{align*}
for all $n \in \NN$. This is a contradiction to the assumption stated in Equation (\ref{eq:less than one}).
\end{proof}

%\begin{definition}
%\label{def:uniform local spectral gap}
%Let $(Z_n,\nu_n)$ be a sequence of $H$-spaces with invariant probability measures. Let $Z_n \subset Z_n$ be Borel subsets.   Let $\mathrm{P}_n : L^2(Z_n, \nu_n) \to L^2(Z_n, \nu_n)$ be the corresponding orthogonal projections. 
%
%The sequence of $H$-spaces $Y_n$ has \emph{uniform local spectral gap at  $Z_n$} if there is some function $\varphi \in \mathcal{A}(H)$ and a constant $0 < \beta < 1$ such that
%$$ \|\mathrm{P}_n A_\varphi \mathrm{P}_{n | L^2(Z_n, \nu_n) }\| < \beta $$
%for all $n \in \mathbb{N}$.
%\end{definition}
\subsection*{The thin part}
Let us return to our main case of interest.   Namely, let $k$ be either the field $\RR$ or a non-Archimedean local field.  Let $\GG$ be a connected simply-connected  linear $k$-algebraic semisimple group with $k$-anisotropic factors. Denote $G = \GG(k)$ so that $G$ is a $k$-analytic group.

Let $\mu$ be any bi-$K$-invariant probability measure on the group $G$ whose support $\mathrm{supp}(\mu)$ generates the group. Let $\nu$ be any $\mu$-stationary probability measure on the Chabauty space $\Subd{G}$ of discrete subgroups.
%Let $(Z,\nu)$ be a Borel probability measure preserving $G$-space.  Recall that we have introduced a $\nu$-measurable real-valued function $\mathcal{I}_G$ on the Borel space $Z$, see Equation (\ref{eq:fZ}). This in turn uses the function $\mathcal{I}_G$ defined in Equation (\ref{eq:f_G}). 

The following elementary integrability result is an immediate consequence of our main result Theorem \ref{thm:main theorem - local fields}. Its proof is   general and can be applied to any function satisfying the conclusion of Lemma \ref{lem:bound on contracting function}.

 \begin{prop}
 \label{prop:f_G^delta is integrable}
 The function $\mathcal{I}_G^{-\frac{\delta}{2}}$ is $\nu$-integrable.
 \end{prop}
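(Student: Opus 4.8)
The plan is to deduce this from the distributional bound on $\mathcal{I}_G^{-\delta}$ already at our disposal, exploiting that the exponent in the statement is only $\delta/2$. First I would note that the measure $\nu$ is automatically $\mu_{\sse}$-stationary: as observed in the proof of Theorem \ref{thm:main theorem - local fields}, $\mu$-stationarity for a bi-$K$-invariant $\mu$ depends only on the Poisson boundary $(G/B,\eta_B)$, which is the same for $\mu$ and for $\mu_{\sse}=\eta_K*\delta_{\sse}*\eta_K$. Consequently the argument of that proof applies to $\nu$ directly: the key inequality (Theorem \ref{thm:inequality}) provides constants $0<c<1$ and $b>0$ with $A_{\mu_{\sse}}\mathcal{I}_G^{-\delta}\le c\,\mathcal{I}_G^{-\delta}+b$, and Lemma \ref{lem:bound on contracting function} --- valid since $\Subd{G}$ is $\sigma$-compact and $\mathcal{I}_G$ is Chabauty-continuous --- yields
\[
\nu\bigl(\{\Gamma\in\Subd{G}\,:\,\mathcal{I}_G^{-\delta}(\Gamma)\ge M\}\bigr)\le\frac{b}{(1-c)\,M}\qquad\text{for all }M>0.
\]

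The second step I would carry out is a routine layer-cake estimate. Setting $g=\mathcal{I}_G^{-\delta/2}\ge 0$, the previous display gives $\nu(\{g>t\})=\nu(\{\mathcal{I}_G^{-\delta}>t^2\})\le b\,(1-c)^{-1}t^{-2}$ for every $t>0$, while trivially $\nu(\{g>t\})\le 1$. Hence
\[
\int_{\Subd{G}}\mathcal{I}_G^{-\delta/2}\,\mathrm{d}\nu=\int_0^{\infty}\nu(\{g>t\})\,\mathrm{d}t\le\int_0^1 1\,\mathrm{d}t+\int_1^{\infty}\frac{b}{(1-c)\,t^2}\,\mathrm{d}t=1+\frac{b}{1-c}<\infty.
\]

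There is no genuine obstacle here; the only substantive point is the halving of the exponent, which is exactly what turns the non-summable tail $M^{-1}$ of $\mathcal{I}_G^{-\delta}$ into the summable tail $t^{-2}$ of $\mathcal{I}_G^{-\delta/2}$. The same computation shows, more generally, that $\mathcal{I}_G^{-\theta}$ is $\nu$-integrable for every $0<\theta<\delta$; pushing forward through the stabilizer map it also shows that $z\mapsto\mathcal{I}_G(G_z)^{-\delta/2}$ is integrable over any $\mu$-stationary probability $G$-space with almost everywhere discrete stabilizers.
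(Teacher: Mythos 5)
Your proof is correct and follows essentially the same route as the paper: both rest on the tail bound $\nu(\{\mathcal{I}_G^{-\delta}\ge M\})\le \tfrac{b}{(1-c)M}$ coming from the key inequality together with Lemma \ref{lem:bound on contracting function}, with the halved exponent turning the $M^{-1}$ tail into a summable one. The only difference is cosmetic: you integrate the tail via the standard layer-cake formula, whereas the paper phrases the same computation through the inverse cumulative distribution function and Fubini's theorem.
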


 \begin{proof}
Let $D : \left[0,1\right] \to \RR_{\ge 0}$ be the inverse   cumulative distribution function corresponding to $\mathcal{I}_G^{-\frac{\delta}{2}}$. It is defined as
\begin{equation} D(t) = \inf \{ x \in \RR_{\ge 0} \: : \: \nu(\{ \Gamma \in \Subd{G} \: : \: \mathcal{I}_G^{-\frac{\delta}{2}}(\Gamma) \le x\}) \ge t \}. \end{equation}
In other words, the function $D$ satisfies
\begin{equation} \nu( \{ \Gamma \in \Subd{G} \: : \: \mathcal{I}_G^{-\frac{\delta}{2}}(\Gamma) \le  D(t) \} ) = t \quad \forall t \in \left[0,1\right].\end{equation}
Theorem \ref{thm:main theorem - local fields}
 implies
%$$
% \nu (\{z \in Z \: : \: f^{-\delta}_Z(z) > M\}) \le   \beta M^{-1},
%$$
%see Equation (\ref{eq:essentially same}). Therefore
\begin{equation}
 \nu (\{\Gamma \in \Subd{G} \: : \: \mathcal{I}^{-\frac{\delta}{2}}_G(\Gamma) \le M^{-\frac{1}{2}}\}) \ge  1-   \beta M
\end{equation} for some constant $\beta > 0$ and all sufficiently small $M > 0$.
It follows that\begin{equation} D(1 - \beta t) \le t^{-\frac{1}{2}} \end{equation} 
for all sufficiently small values of $t > 0$.
It follows from Fubini's theorem applied to the ``area under the graph" of the function $\mathcal{I}_G^{-\frac{\delta}{2}}$ regarded as a subset of $\Subd{G} \times \RR_{\ge 0}$ that
\begin{equation}
\label{eq:Fubini}
 \int_Z \mathcal{I}_G^{-\frac{\delta}{2}}(\Gamma) \; \mathrm{d} \nu(\Gamma) = \int_{\left[0,1\right]}  D(t) \; \mathrm{d} \lambda(t)
 \end{equation}
where $\lambda$ is the Lebesgue measure.  As the function $ D$ is monotone non-decreasing and the integral $\int_0^1 x^{-\frac{1}{2}} \mathrm{d}x$ converges, 
we conclude from Equation (\ref{eq:Fubini})   that the function $\mathcal{I}_G^{-\frac{\delta}{2}}$ is indeed $\nu$-integrable.
 \end{proof}

A careful examination of \S\ref{sec:nilpotent} and \S\ref{sec:Zass} shows that all of our arguments and proofs apply if the semisimple element $\sse = \sse(G)$ is replaced by its inverse $\sse(G)^{-1}$.
%Recall that in the last paragraph of \S\ref{sec:nilpotent}  we considered an open subset $\mathcal{O}(G) \subset T$ containing the semisimple element $a(G)$, such that 
% by  any other   element from $\mathcal{O}(G)$. Let $\varphi \in \mathcal{A}(G)$ be a continuous function with $\mathrm{supp}(\varphi) \subset \mathcal{O}(G)$. Let $\varphi^* \in \mathcal{A}(G)$ be the function given by $\varphi^*(g) = \varphi(g^{-1})$.
Consider therefore the Borel probability measure 
\begin{equation}
\label{eq:nu0}
 \hat{\mu} =  \eta_K * \frac{1}{2}(\delta_{\sse(G)} + \delta_{\sse(G)^{-1}}) * \eta_K = \frac{1}{2}(\mu_{\sse} + \mu_{\sse}^*)
 \end{equation}
on the   $k$-analytic group $G$. Clearly $\hat{\mu} = \hat{\mu}^*$ so that   the   averaging operator $A_{\hat{\mu}}$ acting on the Hilbert space  $  L^2(Z,\nu)$  is  self-adjoint.  

To ease our notations let $Z = \Subd{G}$ so that $\nu$ is a $\hat{\mu}$-stationary measure on the space $Z$. Recall that  $\mathcal{I}_G^{-\frac{\delta}{2}} \in L^1(Z,\nu)$ according to Proposition   \ref{prop:f_G^delta is integrable}. This implies that 
\begin{equation} F_G = \mathcal{I}_G^{-\frac{\delta}{4}} \in L^2(Z,\nu).\end{equation}
Since $0 <  \frac{\delta}{4} < \delta = \delta(G)$ it is possible to apply Proposition \ref{prop:from expansion to contraction} in a similar way to what was done in the proof of   Theorem \ref{thm:main theorem - local fields} but with respect to the smaller parameter $\frac{\delta}{4}$ and obtain an 
 analogue of the key inequality Equation   (\ref{eq:convolution with f_G}) with a ``worse" constant $ 0 < c < \hat{c} < 1$. A careful examination shows that there is a constant $\rho_1  > 0$ such that the sharper multiplicative inequality
\begin{equation}
\label{eq:multiplicative bound}
A_{\hat{\mu}} F_G \le \hat{c} F_G
\end{equation}
holds for $\nu$-almost every subgroup $\Gamma \in Z$ with $\mathcal{I}_G(\Gamma) < \rho_1$. This constant $\rho_1$ depends only on the $k$-analytic group $G$ and is independent of the measure $\nu$.

 %This modified constant $c'$ depends only on the group $G$. A careful examination of the proof of Proposition \ref{prop:from %expansion to contraction} and Theorem    \ref{thm:main theorem - local fields} shows that in fact
%\begin{equation} \mathrm{P} A_{\mu_0} \mathrm{P} f_G \le c' f_G \end{equation}

%Moreover the key Equation (\ref{eq:convolution with f_G}) continues to hold $\nu$-almost surely with respect to the probability measure $\mu_0$, namely
%$$ A_{\mu_0} f_G^{-\delta }  \le c  f_G^{-\delta } + b. $$
% 
%
%Let $\rho_0 > 0$ be the constant introduced in the proof of   our  Theorem \ref{thm:main theorem - local fields}. Let $Z = Z_{\le \rho_0} \coprod Z_{>\rho_0}$ be the $\nu$-measurable decomposition into the \emph{$\rho_0$-thin} and \emph{$\rho_0$-thick} parts of the Borel space $Z$.  These two subsets are given by the conditions
%$$ Z_{<\rho_0 } = \{x \in Z \: : \: f_G(z) < \rho_0\} \quad \text{and} \quad Z_{\ge \rho_0 } = \{z \in Z \: : \: f_G(z) \ge \rho_0\}.$$
%

\begin{theorem}
\label{thm:uniform local spectral gap at infinity}
Let $Z_{<\rho_1}$ be the \emph{$\rho_1$-thin part} of the Chabauty space $\Subd{G}$, namely
$$ Z_{<\rho_1} = \{ \Gamma \in \Subd{G} \: : \: \mathcal{I}_G(\Gamma) < \rho_1 \}.$$
Let $\mathrm{P}: L^2(Z,\nu) \to L^2(Z_{<\rho_1},\nu)$ be the orthogonal projection to  $Z_{<\rho_1 } $. Then 
\begin{equation}
 \|\mathrm{P} A_{\hat{\mu}} \mathrm{P} _{| L^2(Z_{<\rho_0},\nu)} \| \le \hat{c}  
 \end{equation}
for some constant $0 < \hat{c} < 1$.
\end{theorem}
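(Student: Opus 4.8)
The strategy is to exploit the pointwise multiplicative contraction of Equation~(\ref{eq:multiplicative bound}) together with the self-adjointness of $A_{\hat\mu}$, the latter being a consequence of $\hat\mu = \hat\mu^*$. First I would spell out carefully the domain of validity of the multiplicative inequality: the function $F_G = \mathcal{I}_G^{-\delta/4}$ lies in $L^2(Z,\nu)$ by Proposition~\ref{prop:f_G^delta is integrable}, it is everywhere positive and bounded below on $Z_{<\rho_1}$ only in the sense that $F_G \ge \rho_1^{-\delta/4}$ there, and it satisfies $A_{\hat\mu}F_G \le \hat c \, F_G$ at $\nu$-almost every $\Gamma \in Z_{<\rho_1}$, where $0<\hat c<1$.

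\textbf{Main computation.} Given an arbitrary $h \in L^2(Z_{<\rho_1},\nu)$ with $\|h\|=1$, write $h = u \cdot F_G$ where $u = h/F_G$ is supported on $Z_{<\rho_1}$; then $\|u\|_{L^2(\nu)}^2 = \int_{Z_{<\rho_1}} u^2 F_G^2 \, \mathrm{d}\nu$, which is finite since $h \in L^2$. I would then estimate, using the Cauchy--Schwarz inequality with respect to the probability measure $\mu_m = \frac1m\sum_{i=1}^m \hat\mu^{*i}$ (or directly with $\hat\mu$ via a one-step argument combined with $\nu$-stationarity), the quantity $\langle \mathrm{P} A_{\hat\mu} \mathrm{P}\, h, h\rangle$. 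Concretely, extending $h$ by zero off $Z_{<\rho_1}$, one has $\mathrm{P}A_{\hat\mu}\mathrm{P}\,h = \mathrm{P}A_{\hat\mu}h$, and
\[
\langle \mathrm{P}A_{\hat\mu}h,\, h\rangle = \int_{Z_{<\rho_1}} (A_{\hat\mu}h)(\Gamma)\, h(\Gamma)\, \mathrm{d}\nu(\Gamma).
\]
The trick is to bound $|h| = |u|F_G$ and apply the pointwise inequality to $F_G$. Writing $A_{\hat\mu}|h| \le A_{\hat\mu}(\|u\|_\infty F_G)$ does not work since $u$ need not be bounded, so instead I would pass through Cauchy--Schwarz in the $\hat\mu$-integral: $(A_{\hat\mu}h)(\Gamma)^2 \le A_{\hat\mu}(h^2/F_G)(\Gamma) \cdot A_{\hat\mu}(F_G)(\Gamma)$ pointwise (Cauchy--Schwarz for the probability measure $\hat\mu$ applied to the factorization $h(g\Gamma) = (h/ \sqrt{F_G})(g\Gamma)\cdot \sqrt{F_G}(g\Gamma)$, using positivity of $F_G$), and then $A_{\hat\mu}(F_G)(\Gamma) \le \hat c\, F_G(\Gamma)$ on $Z_{<\rho_1}$. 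Combining and integrating against $|h|/\sqrt{F_G}\cdot\sqrt{F_G}$ and using $\nu$-stationarity to replace $\int A_{\hat\mu}(h^2/F_G)\,\mathrm{d}\nu$ by $\int h^2/F_G\,\mathrm{d}\nu$, a second Cauchy--Schwarz collapses the expression to $\hat c$ times $\|h\|^2 = 1$, yielding $\|\mathrm{P}A_{\hat\mu}\mathrm{P}_{|L^2(Z_{<\rho_1},\nu)}\| \le \hat c$.

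\textbf{Anticipated obstacle.} The delicate point is the interplay between the almost-everywhere nature of the multiplicative inequality~(\ref{eq:multiplicative bound}) and the fact that $A_{\hat\mu}$ integrates over orbits that may leave $Z_{<\rho_1}$. When $\Gamma \in Z_{<\rho_1}$ but $g\Gamma \notin Z_{<\rho_1}$ for some $g \in \mathrm{supp}(\hat\mu)$, the value $F_G(g\Gamma)$ is bounded (by $\rho_1^{-\delta/4}$ at most, since then $\mathcal{I}_G(g\Gamma) \ge \rho_1$), so the pointwise bound $A_{\hat\mu}F_G \le \hat c F_G$ on $Z_{<\rho_1}$ already accounts for these excursions — this is precisely why the multiplicative form was engineered to hold with a worsened constant $\hat c$ rather than the additive form; I would double-check that $\rho_1$ was chosen small enough that this genuinely holds pointwise $\nu$-a.e.\ on $Z_{<\rho_1}$, referring to the construction preceding the theorem. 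A secondary technical issue is justifying the pointwise Cauchy--Schwarz step when $h/\sqrt{F_G}$ or $h\sqrt{F_G}$ fails to be in $L^2$ of the relevant fiber measure; this is handled by a standard truncation argument, replacing $F_G$ by $\min(F_G, n)$ and letting $n \to \infty$ with monotone convergence, so I would not belabor it. The upshot is that the proof is short once the bookkeeping of the support excursions is correctly organized.
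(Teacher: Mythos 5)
The step that fails is the final ``collapse.'' After your pointwise Cauchy--Schwarz and the bound $A_{\hat{\mu}}F_G \le \hat{c}\,F_G$ valid $\nu$-a.e.\ on $Z_{<\rho_1}$, what you are left to estimate is $\int_{Z_{<\rho_1}} \bigl(A_{\hat{\mu}}(h^2/F_G)\bigr)^{1/2} F_G^{1/2}\,|h|\,\mathrm{d}\nu$ (up to the factor $\hat{c}^{1/2}$), and the second Cauchy--Schwarz together with stationarity gives at best $\hat{c}^{1/2}\bigl(\int_Z h^2/F_G\,\mathrm{d}\nu\bigr)^{1/2}\bigl(\int_{Z_{<\rho_1}} F_G\,h^2\,\mathrm{d}\nu\bigr)^{1/2}$. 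This is not $\hat{c}\,\|h\|^2$: by Cauchy--Schwarz the product of the two parentheses is always at least $\|h\|^2$, and it is in general much larger, because the uncontrolled factor $\int F_G h^2\,\mathrm{d}\nu$ pairs the merely square-integrable $h$ against $F_G=\mathcal{I}_G^{-\delta/4}$, which is unbounded precisely on the thin part $Z_{<\rho_1}$ where $h$ is supported (it need not even be finite, since $h\in L^2$ only). Concretely, if $h$ puts half of its $L^2$-mass where $F_G\approx t_1$ and half where $F_G\approx t_2\gg t_1$, the product is of order $\sqrt{t_2/t_1}$, so no bound $<1$ (indeed no finite bound) comes out. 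The asymmetric weighting $|h|(g\Gamma)=\bigl(|h|/\sqrt{F_G}\bigr)(g\Gamma)\cdot\sqrt{F_G}(g\Gamma)$ is what loses; your ``anticipated obstacle'' paragraph worries about excursions out of $Z_{<\rho_1}$ and about truncation, but those are not the problem.

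The natural repair is the symmetric Schur-test weighting $|h(g\Gamma)h(\Gamma)| = \bigl[|h(g\Gamma)|\sqrt{F_G(\Gamma)/F_G(g\Gamma)}\bigr]\cdot\bigl[|h(\Gamma)|\sqrt{F_G(g\Gamma)/F_G(\Gamma)}\bigr]$; then one factor is controlled by $\hat{c}$ exactly as you intend, but the other becomes $\int\!\!\int h^2(g\Gamma)\,\frac{F_G(\Gamma)}{F_G(g\Gamma)}\,\mathrm{d}\hat{\mu}(g)\,\mathrm{d}\nu(\Gamma)$, and here mere $\hat{\mu}$-stationarity of $\nu$ does not help: the integrand depends jointly on $(g,\Gamma)$, so it cannot be replaced by $\int h^2\,\mathrm{d}\nu$; the change of variables reducing it to $\int (h^2/F_G)\,A_{\hat{\mu}}F_G\,\mathrm{d}\nu$ requires $\nu$ to be invariant, and the crude uniform bound $F_G(\Gamma)/F_G(g\Gamma)\le \|\Ad{\sse}\|^{\delta/4}$ on $\mathrm{supp}(\hat{\mu})$ (from Proposition \ref{prop:global expansion}) only yields a constant $\sqrt{\hat{c}\,C_0}$ with $C_0>1$, which is not guaranteed to be smaller than $1$ for the paper's choice of parameters. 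This is exactly where the paper takes a different route: it argues by contradiction via the spectral theorem, taking approximate eigenfunctions $v_n$ of $\mathrm{P}A_{\hat{\mu}}\mathrm{P}$ for a spectral value $|\lambda|>\hat{c}$ and pairing them against the single fixed vector $F_G\in L^2(Z,\nu)$ (this is where Proposition \ref{prop:f_G^delta is integrable} is actually used); self-adjointness moves the operator onto $F_G$, where the pointwise bound (\ref{eq:multiplicative bound}) applies, and no weighted integral of $h^2$ against $F_G$ ever appears. So your overall plan (combine (\ref{eq:multiplicative bound}) with self-adjointness) is sound, but the quadratic-form implementation has a genuine gap in the stationary setting and needs to be replaced by an argument of the paper's type.
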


We emphasize that both constants $\rho_1$ and $\hat{c}$ depend \emph{only} on the   group $G$.
 
%The statement of Theorem  \ref{thm:uniform local spectral gap at infinity} can be interpreted as a \emph{uniform local spectral gap} at the $\rho$-thin part of \emph{any} probability measure preserving Borel $G$-space $(Z,\nu)$.

\begin{proof}[Proof of Theorem \ref{thm:uniform local spectral gap at infinity}]
Consider the spectrum   $S$ of the self-adjoint operator $\mathrm{P} A_{\hat{\mu}} \mathrm{P}$ restricted to the Hilbert subspace $L^2(Z_{<\rho_1},\nu)$. In particular $S$ is a compact subset of the real interval $\left[-1,1\right]$. Assume towards contradiction that the spectrum $S$ admits   a point $\lambda  \in S$ with $|\lambda | > \hat{c}$.

The spectral theorem for self-adjoint operators shows that $\mathrm{P} A_{\hat{\mu}} \mathrm{P}$ is conjugate to a multiplication operator. Therefore the Hilbert space $L^2(Z_{<\rho_1},\nu)$ has approximate eigenvfunctions for the value $\lambda$. Namely, there are vectors $v_n \in L^2(Z_{<\rho_1},\nu)$ with $\|v_n\| = 1$      that satisfy
 \begin{equation}
 \label{eq:approximate eigenvalue}
  \|(\mathrm{P} A_{\hat{\mu}} \mathrm{P}   - \lambda) v_n\| \xrightarrow{n\to\infty} 0.
  \end{equation}

Consider the    inner-products of the vector $F_G = \mathcal{I}_G^{-\frac{\delta}{4}} $ with the vectors $|v_n|$ in the Hilbert space $ L^2(Z,\nu)$.   The   fact that the operator $A_{\hat{\mu}}$ is self-adjoint gives
\begin{align*}
 \hat{c} \left<  F_G, |v_n| \right> &\ge \left< \mathrm{P} A_{\hat{\mu}} \mathrm{P} F_G, |v_n| \right> = 
 \left<  F_G,\mathrm{P} A_{\hat{\mu}} \mathrm{P}  |v_n| \right> \ge
 \\
&    \ge \left<  F_G, |\mathrm{P} A_{\hat{\mu}} \mathrm{P}   v_n| \right> =
 \left<  F_G, | \lambda v_n +  (\mathrm{P} A_{\hat{\mu}} \mathrm{P}   - \lambda) v_n | \right> \ge \\
  &\ge |\lambda|   \left<F_G, |v_n| \right>   -  \left<  F_G,  |(\mathrm{P} A_{\hat{\mu}} \mathrm{P}   - \lambda) v_n | \right>  \ge \\
  &= |\lambda|   \left<F_G, |v_n| \right>  -  \|  F_G\| \|(\mathrm{P} A_{\hat{\mu}} \mathrm{P}   - \lambda) v_n\|.
     \end{align*}
This stands in  a contradiction to Equation (\ref{eq:approximate eigenvalue}).
\end{proof}

%\subsection*{Weakly cocompact lattices}

We are ready to conclude  the proof of Theorem \ref{thm:weakly cocompact} stated in the introduction.  
\begin{cor}
\label{cor:lattices are weakly cocompact}
Every lattice $\Gamma$ in the group $G$ is weakly cocompact, i.e. $L^2(G/\Gamma)$ has spectral gap.
\end{cor}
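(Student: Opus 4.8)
The goal is to deduce Corollary \ref{cor:lattices are weakly cocompact} from Theorem \ref{thm:uniform local spectral gap at infinity} via Lemma \ref{lem:local spectral gap at infinity implies spectral gap}. The bridge is the observation that the homogeneous space $G/\Gamma$ carries a $G$-equivariant Borel map to $\Subd{G}$, namely $g\Gamma \mapsto \Gamma^{g^{-1}} = g\Gamma g^{-1}$, which pushes the $G$-invariant probability measure on $G/\Gamma$ forward to a $G$-invariant (hence $\hat\mu$-stationary) probability measure $\nu$ on $\Subd{G}$ supported on the conjugacy class of $\Gamma$. First I would fix this map and set $D = (G/\Gamma) \setminus \pi^{-1}(Z_{<\rho_1})$, i.e. the preimage of the $\rho_1$-thick part $\{\Gamma' \in \Subd{G} : \mathcal{I}_G(\Gamma') \ge \rho_1\}$. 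Since $\mathcal{I}_G$ is Chabauty-continuous and the thick part is closed in $\Subd{G}$, the set $D$ is closed in $G/\Gamma$; I would then argue it is in fact compact, using that $\mathcal{I}_G(\Gamma^{g^{-1}}) \ge \rho_1$ forces $g\Gamma g^{-1}$ to have no nontrivial element in the fixed identity neighborhood $\exp \mathrm B_{\mathfrak g}(\rho_1)$, which by the standard Mahler/thick-thin compactness for the locally symmetric (or Bruhat–Tits) space cuts out a compact piece of $G/\Gamma$ — this is the classical statement that the $\varepsilon$-thick part of a finite-volume locally symmetric space is compact, here applied with $\varepsilon = \rho_1$.

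\textbf{Key steps.} (1) Identify $L^2(G/\Gamma)$ with an invariant subspace of $L^2(\Subd{G},\nu)$ — more precisely, since $\nu$ is the pushforward of the measure on $G/\Gamma$ under the equivariant map $\pi$, pullback gives an isometric $G$-equivariant embedding $\pi^* : L^2(\Subd{G},\nu) \hookrightarrow L^2(G/\Gamma)$, and this embedding intertwines the two averaging operators $A_{\hat\mu}$ and carries $L^2(Z_{<\rho_1},\nu)$ into $L^2(\pi^{-1}(Z_{<\rho_1}))$. Actually the cleaner route is the reverse: one does not need all of $L^2(G/\Gamma)$, only that Theorem \ref{thm:uniform local spectral gap at infinity} applied to $\nu$ gives $\|\mathrm P A_{\hat\mu}\mathrm P\|_{L^2(Z_{<\rho_1},\nu)} \le \hat c < 1$, and that via $\pi^*$ this transfers to the bound $\|\mathrm P' A_{\hat\mu} \mathrm P'\|_{L^2((G/\Gamma)\setminus D)} \le \hat c$ where $\mathrm P'$ is the projection onto $L^2((G/\Gamma)\setminus D)$ and $D$ is as above — here one must check that the subspace $\pi^*(L^2(\Subd{G},\nu))$ of $L^2(G/\Gamma)$ is exactly the space of functions constant on $\pi$-fibers, but since $A_{\hat\mu}$ already descends to $\Subd{G}$ one can instead just observe that the operator norm on the smaller $\pi^*$-image subspace computed downstairs equals the norm upstairs, and that $(G/\Gamma)\setminus D = \pi^{-1}(Z_{<\rho_1})$ so the projections match. (2) Apply Lemma \ref{lem:local spectral gap at infinity implies spectral gap} with $H = G$, the probability measure $\hat\mu$ (compactly supported by Equation (\ref{eq:nu0})), the compact set $D$, and the conclusion of step (1) giving Equation (\ref{eq:less than one}). (3) Conclude that $\Gamma$ is weakly cocompact, which is the assertion of Theorem \ref{thm:weakly cocompact} once one recalls $G = \GG(k)$ is the group in that theorem's hypothesis.

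\textbf{Main obstacle.} The delicate point is step (1): matching $\|\mathrm P A_{\hat\mu}\mathrm P\|$ computed on $L^2(Z_{<\rho_1},\nu)$ with $\|\mathrm P' A_{\hat\mu}\mathrm P'\|$ on $L^2((G/\Gamma)\setminus D)$. The averaging operator $A_{\hat\mu}$ on $L^2(G/\Gamma)$ does not leave $\pi^*(L^2(\nu))$ invariant unless the $\pi$-fibers are $\hat\mu$-equivariantly structured, which they are not. The correct fix is to note that Lemma \ref{lem:local spectral gap at infinity implies spectral gap} only requires the \emph{compressed} operator norm $\|\mathrm P' A_{\hat\mu}\mathrm P'\|$ to be $< 1$ on $L^2((G/\Gamma)\setminus D)$, and that one can bound this directly without passing through the fiber subspace: for $v \in L^2((G/\Gamma)\setminus D)$ one writes $A_{\hat\mu} v = \int_G v(g^{-1}\cdot)\,d\hat\mu(g)$ and estimates $\|\mathrm P' A_{\hat\mu} v\|^2$ using the pointwise almost-everywhere multiplicative inequality Equation (\ref{eq:multiplicative bound}) for $F_G = \mathcal I_G^{-\delta/4}$, exactly as in the proof of Theorem \ref{thm:uniform local spectral gap at infinity}, applied to the invariant measure coming from $G/\Gamma$ rather than to a general $\nu$ on $\Subd{G}$ — so the whole argument of that theorem's proof runs verbatim with $Z = G/\Gamma$, $\nu = $ normalized Haar on $G/\Gamma$, and $F_G$ replaced by $g\Gamma \mapsto \mathcal I_G(g\Gamma g^{-1})^{-\delta/4}$, which is $L^2$ by Proposition \ref{prop:f_G^delta is integrable}. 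I would therefore present step (1) as: rerun the proof of Theorem \ref{thm:uniform local spectral gap at infinity} with $(Z,\nu) = (G/\Gamma,\text{Haar})$ to get $\|\mathrm P' A_{\hat\mu}\mathrm P'\|_{L^2((G/\Gamma)\setminus D)} \le \hat c < 1$, then invoke Lemma \ref{lem:local spectral gap at infinity implies spectral gap}. The only genuinely new verification is the compactness of $D$, which is the classical thick-part compactness and deserves an explicit one-line citation.
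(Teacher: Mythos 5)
Your proposal is correct and follows essentially the same route as the paper: take $D$ to be the (compact) thick part of $G/\Gamma$, obtain the compressed-norm bound $\|\mathrm{P} A_{\hat{\mu}} \mathrm{P}\| \le \hat{c} < 1$ on the thin part from Theorem \ref{thm:uniform local spectral gap at infinity}, and conclude via Lemma \ref{lem:local spectral gap at infinity implies spectral gap}. The one point where you go beyond the paper's terse proof is in noticing that Theorem \ref{thm:uniform local spectral gap at infinity} is stated for $Z = \Subd{G}$ rather than for $G/\Gamma$, and your fix --- rerunning its proof verbatim with $(Z,\nu) = (G/\Gamma, \mathrm{Haar})$ and $F_G(g\Gamma) = \mathcal{I}_G(g\Gamma g^{-1})^{-\delta/4}$, since the argument only uses self-adjointness of $A_{\hat{\mu}}$, the $L^2$-integrability of $F_G$, and the pointwise inequality (\ref{eq:multiplicative bound}) --- is exactly the correct justification for the transfer that the paper leaves implicit.
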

\begin{proof}
Let $\Gamma  $  be a lattice in the $k$-analytic group $G$. Let $D = (G/\Gamma)_{\ge \rho}$ be the $\rho$-thick part of  the homogenous $G$-space $G/\Gamma$. In particular   $D$ is a compact subset. It follows from Theorem \ref{thm:uniform local spectral gap at infinity} that 
$$  \|\mathrm{P} A_{\hat{\mu}} \mathrm{P} _{| L^2((G/\Gamma)_{<\rho} )} \| \  \le \hat{c} < 1. $$
 Therefore the lattice $\Gamma$ is weakly cocompact by   Lemma \ref{lem:local spectral gap at infinity implies spectral gap}. \end{proof}

\appendix

\section{Maximally compact Cartan subalgebras}

Let $\mathfrak{g}_0$ be a real semisimple Lie algebra with Cartan involution $\theta$. Let   $\mathfrak{g}_0 = \mathfrak{l}_0 \oplus \mathfrak{p}_0$ be the corresponding Cartan decomposition. 
Let $\mathfrak{t}_0$ be a maximal abelian Lie subalgebra of $\mathfrak{l}_0$. Then $\mathfrak{h}_0 = Z_{\mathfrak{g}_0}(\mathfrak{t}_0)$  is a maximally compact  $\theta$-stable Cartan subalgebra of $\mathfrak{g}_0$. It satisfies  $\mathfrak{h}_0 = \mathfrak{t}_0 \oplus \mathfrak{a}_0$ where $\mathfrak{a}_0 \le \mathfrak{p}_0$ is some abelian Lie subalgebra (not necessarily maximal).

 We will use
  $\mathfrak{g},\mathfrak{h},\mathfrak{t},\mathfrak{l}$ and $\mathfrak{p}$ to denote the suitable complexifications. In particular $\mathfrak{g}$ is a complex semisimple Lie algebra with Cartan subalgebra $\mathfrak{h}$. Moreover $\mathfrak{g} = \mathfrak{l} \oplus \mathfrak{p}$. Let $\Delta = \Delta(\mathfrak{g}, \mathfrak{h})$ be the absolute root system. The roots $\Delta$  are real valued  on the real form $\mathfrak{h}_\RR = i\mathfrak{t}_0 \oplus \mathfrak{a}_0$ of   $\mathfrak{h}$. We may write $\mathfrak{g} = \mathfrak{h} \oplus \bigoplus_{\alpha \in \Delta} \mathfrak{g}_\alpha$ where each root space satisfies $\dim_\CC \mathfrak{g}_\alpha = 1$.

The Lie algebra $\mathfrak{l}_0$ is compact. Therefore $\mathfrak{l}_0$ is reductive \cite[Corollary 4.25]{knapp2013lie}. Denote  $\mathfrak{c}_0 = Z(\mathfrak{l}_0)$ so that $\mathfrak{l}_0 = \mathfrak{c}_0 \oplus \left[\mathfrak{l}_0, \mathfrak{l}_0\right]$. Write $\mathfrak{t}_0 = \mathfrak{c} \oplus \mathfrak{t}_0' $ and let denote $\mathfrak{c}, \mathfrak{t}'$ be the complexifications of $\mathfrak{c}_0, \mathfrak{t}_0$, respectively. 
 Then $\mathfrak{t}'$ is Cartan subalgebra of the complex semisimple Lie algebra $\left[\mathfrak{l}, \mathfrak{l}\right]$. Let $\Psi = \Psi (\left[\mathfrak{l}, \mathfrak{l}\right], \mathfrak{t}')$ be the corresponding root system. We will regard each root $\beta \in \Psi$ to be defined on $\mathfrak{t}$ by extending it by zero on the center $\mathfrak{c}$. Note that the roots  $\Psi$ take real values on $i\mathfrak{t}_0$.

Let $\Lambda_\text{root}$   and $\Lambda _\text{algebraic}$  respectively   be  the root lattice (i.e. the $\ZZ$-span of $\Psi$) and the weight lattice (i.e. the set of algebraically integral forms) in $(\mathfrak{t}_0')^*$. We have $\Lambda _\text{root} \subset  \Lambda _\text{algebraic}$ and  $\left[\Lambda _\text{algebraic} : \Lambda _\text{root}\right]$ equals the determinant of the Cartan matrix of $\Psi$. 

%Note that  these lattices belong to the subspace $\{\alpha \in \mathfrak{t}_0^* \: : \: \alpha(\mathfrak{c}_0) = 0\}$, so those are lattices in the strict sense only if $\dim_\RR \mathfrak{c}_0 = 0$.

Let $G$ be a real Lie group with maximal compact subgroup $K$ containing a compact torus $T$ with Lie algebras $\mathfrak{g}_0, \mathfrak{l}_0$ and $\mathfrak{t}_0$, respectively\footnote{It is not  possible in general to assume that $K$ is adjoint, even if   $G$ is.  E.g. the real semisimple Lie group $  \mathrm{SO}(3,4)$ is adjoint but its maximal compact subgroup $ \mathrm{SO}(3) \times \mathrm{SO}(4)$ is not. Therefore we are forced to allow the set of  analytical forms to be strictly larger than  the root lattice.}. Consider the lattice $\Lambda_\text{analytic}$ of analytic roots in $\mathfrak{t}_0^*$. It satisfies  $\Lambda _\text{root} \subset \Lambda_\text{analytic} \cap (\mathfrak{t}'_0)^* \subset \Lambda _\text{algebraic}$. By definition $\dim_\RR \mathfrak{t}_0  = \mathrm{rank}(K)$ so that $\Lambda_\text{analytic} \cong \ZZ^{\mathrm{rank}(K)}$.

The adjoint action of the compact torus $T$ on the   complexification $\mathfrak{g}$ admits a simultaneous eigenvalue decomposition  given by   restricting the  absolute roots $\Delta$ to $\mathfrak{t}_0$. Let $r : \mathfrak{h}^* \to \mathfrak{t}^*$ be the restriction map. 
For every root  $\alpha \in \Delta$ we have
$$ \Ad{\exp(X)} Y = r( \alpha) (X) Y \quad \forall X \in \mathfrak{t}_0, Y \in \mathfrak{g}_\alpha.$$
We conclude that $r(\alpha)  \in \Lambda_\text{analytic}$ for every root $\alpha \in \Delta$.

%. The following table summarizes the possibilities for this restriction for all non-compact, non-complex real semisimple Lie algebras.

Generally speaking,  a root $\alpha \in \Delta$ is called real (imaginary resp.) if $\alpha(\mathfrak{h}_0)$ belongs to $\RR$ ($i\RR$ resp.). Otherwise $\alpha$ is called complex. In other words  $\alpha$ is real if and only if $\alpha$ vanishes on $\mathfrak{t}_0$. Likewise $\alpha$ is imaginary if and only if $\alpha$ vanishes on $\mathfrak{a}_0$.

Since the Cartan subalgebra $\mathfrak{h}_0$ is maximally compact $\Delta$ admits no real roots \cite[Proposition 6.70]{knapp2013lie}. Equivalently $r(\alpha) \neq 0$ for all $\alpha \in \Delta$. The Cartan involution $\theta$ determines an involution of $\Delta$ fixing pointwise the imaginary roots and permuting the complex roots in $2$-cycles.
Every imaginary root $\alpha \in \Delta$ can be classified as  being either compact if $\mathfrak{g}_\alpha \le \mathfrak{l}$ or non-compact if $\mathfrak{g}_\alpha \le \mathfrak{p}$ (one of these possibilities must occur since $\mathfrak{g}$ is $\theta$-stable). Moreover $\Psi \subset r(\Delta)$ for every root of $\Psi$ is the restriction of some compact imaginary root.

Fix a lexicographic notion of positivity  $\Delta^+$ taking  $\mathfrak{t}_0$ before $\mathfrak{a}_0$. Let $\Phi^+$ be the induced notion of positivity. It follows that $r(\alpha) > 0$ for every $\alpha \in \Delta^+$ (however $r(\alpha)$ need not be a root of $\Phi$ in general, just a positive analytic form).

Let $\Pi^+ \subset \Delta^+$ be the set of simple positive roots. Write 
$$\Pi^+ = \Pi^+_\text{complex} \cup  \Pi^+_\text{compact} \cup \Pi^+_\text{non-compact}$$ where $ \Pi^+_\text{complex},  \Pi^+_\text{compact}$ and $ \Pi^+_\text{non-compact}$ consist of the complex, imaginary compact and imaginary non-compact simple positive roots, respectively. Note that $\Pi^+_\text{complex} \neq \emptyset$  if and only if $\theta$ induces an non-trivial automorphism of $\Pi^+$.
By the Borel--de Siebenthal theorem we may assume without loss of generality that the notion of positivity has been chosen so that $|\Pi^+_\text{non-compact} | \le 1$, i.e. there is at most one imaginary non-compact root. See \cite{borel1949sous} and \cite[Theorem 6.96]{knapp2013lie}.

\begin{prop}
\label{prop:restricting to maximally compact}
The restriction of the simple positive absolute roots $\Pi^+$  to the maximal compact abelian Lie subalgebra $\mathfrak{t}_0$ spans a lattice $\Lambda$ with $\Lambda_\text{root} \le \Lambda \le \Lambda_\text{analytic}$ and $\left[\Lambda_\text{analytic}:\Lambda\right] < \infty$.
\end{prop}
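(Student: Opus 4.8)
The plan is to establish the three claimed relations separately, each by a short reduction to the bookkeeping assembled above in this appendix. Write $\Lambda$ for the $\ZZ$-span of $r(\Pi^+)$ inside $\mathfrak{t}_0^*$, so that the assertion is $\Lambda_\text{root}\le\Lambda\le\Lambda_\text{analytic}$ together with finiteness of the index of $\Lambda$ in $\Lambda_\text{analytic}$. The inclusion $\Lambda\le\Lambda_\text{analytic}$ is immediate: we have already recorded that $r(\alpha)\in\Lambda_\text{analytic}$ for every absolute root $\alpha\in\Delta$, in particular for $\alpha\in\Pi^+$, and $\Lambda_\text{analytic}$ is a subgroup of $\mathfrak{t}_0^*$, hence contains the $\ZZ$-span of the finite set $r(\Pi^+)$.

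For $\Lambda_\text{root}\le\Lambda$ it suffices, since $\Lambda_\text{root}$ is by definition the $\ZZ$-span of $\Psi$, to show $\Psi\subseteq\Lambda$. Given $\beta\in\Psi$, I would invoke the fact noted above that $\beta$ is the restriction to $\mathfrak{t}$ of some compact imaginary absolute root $\alpha\in\Delta$; expanding $\alpha=\sum_{\gamma\in\Pi^+}c_\gamma\gamma$ with integer coefficients and applying the linear restriction map $r$ yields $\beta=r(\alpha)=\sum_{\gamma\in\Pi^+}c_\gamma\,r(\gamma)\in\Lambda$. (The compact imaginary hypothesis on $\alpha$ is exactly what forces $r(\alpha)$ to vanish on the centre $\mathfrak{c}$, so that it agrees with the extension-by-zero of $\beta$ from $(\mathfrak{t}_0')^*$ to $\mathfrak{t}_0^*$ by which $\Psi$ is regarded as a subset of $\mathfrak{t}_0^*$.) Since $\Psi$ generates $\Lambda_\text{root}$ this gives the inclusion.

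Finally, for the finiteness of $[\Lambda_\text{analytic}:\Lambda]$: since $\Lambda\le\Lambda_\text{analytic}$ and $\Lambda_\text{analytic}\cong\ZZ^{\mathrm{rank}(K)}$ is a lattice of rank $\dim_\RR\mathfrak{t}_0$ in $\mathfrak{t}_0^*$, it is enough to check that $\Lambda$ has full rank, equivalently that $r(\Pi^+)$ spans $\mathfrak{t}_0^*$ over $\RR$; as $\mathfrak{t}$ is the complexification of $\mathfrak{t}_0$ this is in turn equivalent to $r(\Pi^+)$ spanning $\mathfrak{t}^*$ over $\CC$. That follows from the facts that $\Pi^+$ is a $\CC$-basis of $\mathfrak{h}^*$ (the simple roots span the $\QQ$-span of $\Delta$, of dimension $\dim_\CC\mathfrak{h}$) and that the restriction $r:\mathfrak{h}^*\to\mathfrak{t}^*$ is surjective.

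I do not anticipate a genuine obstacle here; the only points demanding care are keeping the three ambient lattices in $\mathfrak{t}_0^*$ straight (together with the harmless factor of $i$ implicit in the identification $\mathfrak{h}_\RR=i\mathfrak{t}_0\oplus\mathfrak{a}_0$), and observing that the inclusion $\Lambda_\text{root}\le\Lambda$ by itself does \emph{not} suffice for finite index — one also needs the $\mathfrak{c}_0$-directions, and these are supplied automatically by the surjectivity of $r$ and the fact that $\Pi^+$ spans all of $\mathfrak{h}^*$ (so that the Borel--de Siebenthal normalization of the positive system, used elsewhere in the appendix, plays no role in this particular statement).
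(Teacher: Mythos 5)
Your argument is correct, and for the crucial finite-index claim it takes a genuinely different, more elementary route than the paper. The paper proves the proposition by a case analysis of the simple absolute roots according to type --- compact imaginary, complex, and the (at most one, thanks to the Borel--de Siebenthal normalization) non-compact imaginary simple root --- with a further dichotomy on $\dim_\RR \mathfrak{c}_0$: the compact imaginary and complex simple roots are shown to restrict to simple positive roots of $\Psi$, and when $\dim_\RR\mathfrak{c}_0 = 1$ the central direction is accounted for by showing that the restriction of the non-compact imaginary simple root is non-trivial on $\mathfrak{c}_0$. You bypass all of this: you obtain $\Lambda_\text{root} \le \Lambda$ from the recorded fact that every root of $\Psi$ is the restriction of a compact imaginary root of $\Delta$, written as an integral combination of $\Pi^+$ (and your parenthetical observation that such a root annihilates $\mathfrak{c}$, so that its restriction really is the extension of the given root of $\Psi$ by zero, is exactly the point that needs saying); and you obtain full rank of $\Lambda$, hence finite index in $\Lambda_\text{analytic} \cong \ZZ^{\mathrm{rank}(K)}$, purely from linear algebra --- $\Pi^+$ spans $\mathfrak{h}^*$ and $r : \mathfrak{h}^* \to \mathfrak{t}^*$ is surjective, so $r(\Pi^+)$ spans $\mathfrak{t}^*$, and since all the $r(\alpha)$ lie in the real form carrying $\Lambda_\text{analytic}$ the $\ZZ$-span is a full-rank subgroup. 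This is shorter and, as you note, independent of the Borel--de Siebenthal choice of positive system. What the paper's case analysis buys is finer structural information that your argument does not produce --- which restrictions are themselves simple in $\Psi$, and the stronger assertion (invoked in the proof of Proposition \ref{prop:order of adjoint in real case}) that $r(\Pi^+)$ furnishes a $\ZZ$-basis of $\Lambda$ rather than merely a spanning set --- but none of that is required by the proposition as stated.
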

Note that the second conclusion does not automatically  follow from the first, for $\left[\Lambda_\text{analytic}:\Lambda_\text{root}\right] < \infty$ only provided $\dim \mathfrak{c}_0 = 0$.
\begin{proof}[Proof of Proposition \ref{prop:restricting to maximally compact}]
The Lie subalgebra $\mathfrak{c}_0 = Z(\mathfrak{l}_0)$ satisfies  $  \dim_\RR \mathfrak{c}_0 \le 1$ \cite[Appendix C.3]{knapp2013lie}. If $\mathfrak{c}_0$ is non-zero let $\gamma \in \Lambda_\text{analytic}$ denote  the generator of the infinite cyclic group of analytic integral forms on $\mathfrak{c}_0^*$.

%Let $\Theta^+ \subset \Phi^+$ be the simple positive roots corresponding to the chosen notion of positivity. By definition $\Theta^+$ is a $\ZZ$-basis for the root lattice $\Lambda_\text{roots}$.  

We now consider the restrictions $r(\alpha) = \alpha_{|\mathfrak{t}_0}$ of the various types of simple positive roots $\alpha \in \Pi^+$:

\begin{itemize}
\item 
Let $\alpha \in \Pi^+_\text{compact}$ be a \underline{compact imaginary root}. Then $\mathfrak{g}_\alpha \le \mathfrak{l}$ which means that $r(\alpha) \in \Phi^+$. We claim that $r(\alpha)$ is simple. Indeed if $r(\alpha) = \beta_1 + \beta_2$ for some pair of positive roots $\beta_1,\beta_2 \in \Phi^+$ then $\beta_i = r(\alpha_i)$ for some compact imaginary $\alpha_i \in \Delta^+$ so that  $\alpha = \alpha_1 +  \alpha_2$,  a contradiction.

\item Let $\alpha \in \Pi^+_\text{complex}$ be a \underline{complex root}. Then  $\beta = r(\frac{\alpha + \theta \alpha}{2}) \in \Phi^+$. We claim that $\beta$ is simple. Indeed if $\beta = \beta_1 + \beta_2$ for some pair of positive roots $\beta_1,\beta_2 \in \Phi^+$ then $\beta_i = r(\alpha_i)$ for some compact imaginary $\alpha_i \in \Delta^+$. Then $2 \alpha_1 + 2\alpha_2 = \alpha + \theta \alpha$,  a contradiction as both $\alpha$ and $\theta \alpha$ are simple and not imaginary. 

\item Let $\alpha \in \Pi^+_\text{non-compact}$ be a \underline{non-compact imaginary root}. 
\begin{itemize}
\item Assume  $\dim \mathfrak{c}_0 = 0$. Let $\alpha' \in \Delta^+$ be the smallest complex root such that $\alpha < \alpha'$ if $\Pi^+_\text{complex} \neq \emptyset$   and $2\alpha < \alpha'$ otherwise. Write $\alpha' = \sum_{\lambda \in \Pi^+} n_{\lambda} \lambda$ so that $n_\lambda \ge 0$. In addition $n_{\alpha} \ge 1$ or $n_{\alpha}  \ge 2$ in the first and second cases, respectively. Then $\beta = r(\frac{\alpha' + \theta \alpha'}{2}) \in \Phi^+$ is simple \cite[Appendix C.3]{knapp2013lie}.   Rearranging gives
$$ r(\alpha)  \in \frac{1}{n_\alpha}\beta + \sum_{\alpha \in \Pi^+_\text{complex} \cup \Pi^+_\text{compact}} \ZZ r(\alpha).$$
All the roots of $\Phi^+$ that appear on the right hand side are simple.
\item Assume $\dim \mathfrak{c}_0 = 1$. In particular $\Pi^+_\text{complex} = \emptyset$  so that the other   simple roots of $\Pi^+$ are all compact imaginary \cite[Appendix C.3]{knapp2013lie} and restrict to simple roots of $\Phi^+$. As $\dim \mathfrak{t}'_0 = \dim\mathfrak{t}_0 - 1 = |\Pi^+| - 1 = |\Pi^+_\text{compact}|$ the  roots $r(\Pi^+_\text{compact})$ span $(\mathfrak{t}'_0)^*$.   It follows that the restriction $r(\alpha)$ is non-trivial on $\mathfrak{c}_0$.
\end{itemize}
\end{itemize}

Let   $\Lambda \le \Lambda_\text{analytic}$ be the lattice spanned by   $ r(\Phi^+)$. If $\Pi^+_\text{non-compact} = \emptyset$ then $ r(\Phi^+)$ coincides with set of simple positive roots for $\Phi^+$ and so  $\Lambda = \Lambda_\text{root}$. In the general case  the above discussion shows that $ r(\Phi^+)$ is a basis of $\Lambda$, $\Lambda_\text{root} \le \Lambda \le \Lambda_\text{analytic}$ and $\left[\Lambda_\text{analytic}:\Lambda\right] < \infty$, as required.
\end{proof}

\bibliographystyle{alpha}
\bibliography{QWUD}

\end{document}